\documentclass{article}

\usepackage{verbatim}

\usepackage{amsfonts}

\usepackage{amsthm}

\usepackage{amssymb}

\usepackage{amsmath}

\usepackage{mathabx}

\usepackage{enumerate}

\usepackage[all]{xy}

\usepackage{graphicx}

\usepackage{centernot}

\usepackage{mathtools} 

\usepackage{stmaryrd}

\usepackage[pagebackref,colorlinks,linkcolor=blue,citecolor=blue,urlcolor=blue,hypertexnames=true]{hyperref}

\usepackage[pagebackref,hypertexnames=true]{hyperref}

\newcommand{\N}{\mathbb{N}}

\newcommand{\Z}{\mathbb{Z}}

\newcommand{\R}{\mathbb{R}}

\newcommand{\C}{\mathbb{C}}

\newcommand{\K}{\mathcal{K}}

\newcommand{\Hawaii}{Hawai\kern.05em`\kern.05em\relax i}

\newcommand{\emu}{\text{ucp}}

\newcommand{\lin}{\text{lin}}

\newcommand{\LL}{\mathcal{L}}

\theoremstyle{plain}
\newtheorem{theorem}{Theorem}[section]
\newtheorem{lemma}[theorem]{Lemma}
\newtheorem{corollary}[theorem]{Corollary}
\newtheorem{proposition}[theorem]{Proposition}

\newtheorem{definition-theorem}[theorem]{Definition / Theorem}

\newtheorem*{conjecture*}{Conjecture}
\newtheorem*{theorem*}{Theorem}

\theoremstyle{definition}
\newtheorem{definition}[theorem]{Definition}
\newtheorem{example}[theorem]{Example}

\theoremstyle{remark}
\newtheorem{remark}[theorem]{Remark}
\newtheorem{remarks}[theorem]{Remarks}

\newtheorem*{example*}{Example}  
\newtheorem*{remark*}{Remark}

\begin{document}
\title{An Atiyah-Segal homomorphism for approximate representations of infinite groups, and almost multiplicative ucp maps}
\author{Rufus Willett}

\maketitle

\begin{abstract}
The Atiyah-Segal homomorphim relates the representation ring of a finite group to the $K$-theory ring of its classifying space.  We develop a version of this for infinite groups, but based on a ring of approximate representations rather than representations.  We then use known results on the Baum-Connes conjecture to show that our homomorphism is an isomorphism for many groups: this includes all torsion-free amenable groups, and all torsion-free fundamental groups of compact manifolds of dimension at most three.  The main motivation is the desire to understand approximate representations using topological tools: as a sample application we give some examples of torsion phenomena in approximate representation theory.

The methods of proof are $C^*$-algebraic: the key idea is to relate our Atiyah-Segal type homomorphism to Kasparov's coassembly map.  The main step realizes (Hausdorffized) $KK$-theory as approximately multiplicative ucp maps modulo stable approximate unitary equivalence for appropriate $C^*$-algebras.  This is in the spirit of the stable uniqueness theorems of Dadarlat-Eilers and Lin, and may be of purely $C^*$-algebraic interest. 
\end{abstract}

\tableofcontents

\section{Introduction}

Quasi-representations of a group are maps from finite subsets of the group to finite-dimensional unitary groups that approximately preserve the group multiplication table (restricted to the finite subset).  The main goal of this paper is to build the quasi-representations of a group into a sort of $K$-theoretic quasi-representation ring, and then relate this to the $K$-theory of the classifying space of the group.  This is in the spirit of classical work of Atiyah \cite{Atiyah:1961uq} and Atiyah-Segal \cite{Atiyah:1969aa} on finite groups: they relate actual representations to the $K$-theory of the classifying space.  It is also very much inspired by recent work of Dadarlat and co-authors on the relationship between quasi-representations, quasidiagonality, and $K$-theory: see for example \cite{Carrion:2015ld,Dadarlat:2011kx,Dadarlat:2011uq,Dadarlat:384aa,Dadarlat:2026aa}.  This is discussed in more detail in Subsection \ref{qr vb ss} below.

The methods of proof involve $KK$-theory of $C^*$-algebras, and particularly Kasparov's coassembly map \cite{Kasparov:1975ht,Kasparov:1993hc,Kasparov:1988dw}.  Along the way, we develop some purely $C^*$-algebraic results in the spirit of the stable uniqueness theorems of Dadarlat-Eilers \cite{Dadarlat:2002aa} and Lin \cite{Lin:2002aa}: the main point is to define a model of $KK$-theory for unital and quasidiagonal $C^*$-algebras where cycles are (formal differences of) approximately multiplicative ucp maps.  This could be of independent interest, and is discussed in detail in Subsection \ref{c* ss} below (which is readable independent of the group-theoretic material).

\subsection{Quasi-representations and vector bundles}\label{qr vb ss}

Let $\Gamma$ be a finite group, and let $R(\Gamma)$ be the representation ring of $\Gamma$, i.e.\ the ring of formal linear combinations of isomorphism classes of finite-dimensional unitary representations of $\Gamma$, with operations induced by direct sum and tensor product.  Let $B\Gamma$ denote a CW complex model for the classifying space of $\Gamma$.  If $\Gamma$ is non-trivial (and finite), this is an infinite-dimensional complex: we define its $K$-theory $LK^0(B\Gamma)$ as the inverse limit of the $K$-theory groups $K^0(Y)$, as $Y$ varies over all compact subspaces of $B\Gamma$.  An influential result of Atiyah \cite{Atiyah:1961uq}, subsequently generalized and conceptualized by Atiyah-Segal \cite{Atiyah:1969aa}, establishes a ring isomorphism 
\begin{equation}\label{as iso}
\alpha:\widehat{R(\Gamma)}\to LK^0(B\Gamma)
\end{equation}
between a certain completion of $R(\Gamma)$ and $LK^0(B\Gamma)$.  On the level of cycles, the map $\alpha$ from line \eqref{as iso} is determined as follows: it takes the class of a unitary representation $\pi:\Gamma\to U(V)$ (with $V$ a finite-dimensional Hilbert space) to the class $[E_\pi]$ of the associated (flat) vector bundle over $B\Gamma$.  Precisely, $E_\pi$ is the `balanced product' \begin{equation}\label{ass flat}
E_\pi:=E\Gamma\times_\Gamma V,
\end{equation}
where $E\Gamma$ is the universal cover of $B\Gamma$ and the notation means we take the quotient of $E\Gamma\times V$ by the diagonal action given on $E\Gamma$ by deck transformations, and on $V$ by $\pi$.

The map $\alpha$ in line \eqref{as iso} also makes sense for infinite groups, but it fails badly to be an isomorphism.  However, there are several interesting variants for infinite groups that are at least closer to being isomorphisms: see for example \cite{Ramras:2023aa} and \cite[Section 8]{Kasparov:1975ht}.  One particularly notable such adaptation is due to Kasparov (see \cite[Section 8]{Kasparov:1975ht}, \cite[Section 9]{Kasparov:1993hc}, and \cite[Section 6]{Kasparov:1988dw}), who introduced a \emph{coassembly map}
\begin{equation}\label{kca}
\alpha:KK^\Gamma(\C,\C)\to RK^0(B\Gamma).
\end{equation}
Here cycles for the left hand side are pairs of  possibly infinite-dimensional unitary representations which have `compact' difference in some sense, and the right hand side is the representable $K$-theory of $B\Gamma$.  This is known to be an isomorphism in many cases\footnote{But not for finite groups! - for $\Gamma$ finite, the right hand side identifies with $LK^0(B\Gamma)$ in our notation, but the left hand side $KK^\Gamma(\C,\C)$ identifies with the uncompleted representation ring $R(\Gamma)$.} thanks to work of Kasparov and others on this and the related Baum-Connes conjecture \cite{Baum:1994pr}: perhaps most notably, the work of Higson-Kasparov \cite{Higson:2001eb} implies that the map in line \eqref{kca} is an isomorphism whenever $\Gamma$ is torsion-free and has the Haagerup property, and in particular for all torsion-free amenable $\Gamma$.

Our goal in this paper is to produce another generalization of the Atiyah-Segal map for infinite groups, but now based on \emph{quasi-representations}.  Roughly, a quasi-representation is a map from a finite subset of $\Gamma$ to a unitary group that approximately respects the multiplication table for $\Gamma$.  Precisely, for a finite subset $S$ of $\Gamma$ and $\epsilon>0$, an \emph{$(S,\epsilon)$-representation} is a map $\phi:S\to U(V)$ from $S$ to the unitary group $U(V)$ of a finite-dimensional Hilbert space $V$ such that 
$$
\|\phi(s)\phi(t)-\phi(st)\|<\epsilon
$$
whenever $s$, $t$ and $st$ are all in $S$ (the norm here, and throughout, is the operator norm).  We use ``quasi-representation'' as a generic term for an $(S,\epsilon)$-representation when we do not want to record the parameters. In the spirit of $K$-theory, we consider $(S,\epsilon)$-representations up to the following notion of equivalence: $(S,\epsilon)$-representations $\phi:S\to U(V)$ and $\psi:S\to U(W)$ are \emph{stably $(S,\epsilon)$-equivalent} if there exists an $(S,\epsilon)$-representation $\theta:S\to U(X)$ and a unitary isomorphism $u: V\oplus X\to W\oplus X$ such that 
$$
\|u(\phi(s)\oplus \theta(s))u^*-(\psi(s)\oplus \theta(s))\|<\epsilon
$$
for all $s\in S$.  This is an equivalence relation; the set of equivalence classes forms a commutative semigroup under the operation of direct sum, and we define $K_\epsilon(S)$ to be its Grothendieck group.

On the other hand, following ideas in \cite{Connes:1990vo,Carrion:2015ld,Hunger:2016wv,Kubota:2022aa} one can show that for any compact subset $Y$ of the classifying space $B\Gamma$ there exists $S$ and $\epsilon$ such that there is a well-defined homomorphism
$$
K_\epsilon(S)\to K^0(Y).
$$
It is defined using an approximate version of the balanced product construction in line \eqref{ass flat} above.  This construction depends on some choices, but taking the inverse limits of both sides (on the left as $S$ gets bigger and $\epsilon$ smaller, and on the right as $Y$ gets bigger), we get a well-defined ring homomorphism 
\begin{equation}\label{intro qr as}
\alpha:QR(\Gamma)\to LK^0(B\Gamma),
\end{equation}
where the left hand side is defined to be inverse limit of the groups $K_\epsilon(S)$, and should be thought of as a $K$-theoretic `approximate representation ring' with operations induced by direct sum and tensor product of quasi-representations.  The map in line \eqref{intro qr as} does not depend on any of the choices involved in its definition.   It `extends' the Atiyah-Segal map of line \eqref{as iso} in the sense that there is a canonical homomorphism $R(\Gamma)\to QR(\Gamma)$, and that if $[\pi]$ is in the image of this map, then $\alpha([\pi])$ is the class of the flat bundle $[E_\pi]$ in $K^0(B\Gamma)$.

Our main motivation for introducing the homomorphism in line \eqref{intro qr as} is to use it as a topological tool to study quasi-representations: see Section \ref{tor sec} for a sample application. If one knows $\alpha$ is surjective, there should also be applications to the Novikov conjecture: indeed, our approach is in some sense a $K$-theoretic formalization of ideas of Connes-Gromov-Moscovici from \cite{Connes:1990vo}.  We will not give any applications of the latter sort, but it seems interesting to develop this more fully (this would also necessitate introducing an `odd-dimensional' version of $QR(\Gamma)$; we do not do that here).

The following is perhaps our main theorem, although our main motivation is really to study phenomena like that in Theorem \ref{intro tor the} below. 

\begin{theorem}\label{intro main}
Let $\Gamma$ be a countable torsion-free group with the Haagerup property such that $C^*\Gamma$ is quasidiagonal and has Kirchberg's local lifting property.  Then the ring homomorphism 
$$
\alpha:QR(\Gamma)\to LK^0(B\Gamma)
$$
in line \eqref{intro qr as} is a homeomorphic ring isomorphism.
\end{theorem}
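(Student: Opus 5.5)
The plan is to factor $\alpha$ through Kasparov's coassembly map, with $C^*$-algebraic $KK$-theory as the bridge.

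\textbf{Step 1: quasi-representations as almost multiplicative maps.} I would identify $QR(\Gamma)$ with an inverse limit built from approximately multiplicative ucp maps $C^*\Gamma\to M_n(\C)$. An $(S,\epsilon)$-representation extends, after passing to larger $S$ and smaller $\epsilon$, to an almost multiplicative ucp map on $C^*\Gamma$ (a finite-dimensional subspace of it). Conversely, restricting such a map to the group elements gives a quasi-representation. I would check that this correspondence respects stable $(S,\epsilon)$-equivalence. The local lifting property is what lets us pass from maps on finite subsets to genuine ucp maps with controlled error. In the other direction, the restriction to $\Gamma$ is automatically almost unitary, and can be unitarized by polar decomposition.

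\textbf{Step 2: the $C^*$-algebraic model of $KK$-theory.} The central step is the model announced in the abstract. For $A=C^*\Gamma$ unital and quasidiagonal, the inverse limit over $(S,\epsilon)$ of Grothendieck groups of almost multiplicative ucp maps $A\to M_n$, modulo stable approximate unitary equivalence, should be isomorphic to Hausdorffized $KK(A,\C)$, which is $KL(A,\C)$. I would prove this in the spirit of the stable uniqueness theorems of Dadarlat--Eilers and Lin.
\begin{enumerate}[(i)]
\item \emph{Surjectivity.} Quasidiagonality, together with Voiculescu's theorem, lets every Cuntz pair be approximated by block-diagonal finite-dimensional almost multiplicative maps.
\item \emph{Injectivity.} Two almost multiplicative maps with the same $KL$-class become approximately unitarily equivalent after adding a large multiple of a suitable map.
\end{enumerate}
Homeomorphism would follow by matching the inverse-limit topology with the topology on $KL$, which is Hausdorffized via $\varprojlim^1$ phenomena.

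\textbf{Step 3: passing to Kasparov's coassembly map.} We have $KK(C^*\Gamma,\C)\cong KK^\Gamma(\C,\C)$ for the full group algebra, by descent or dual restriction. Since $\Gamma$ is torsion-free with the Haagerup property, Higson--Kasparov shows the coassembly map $KK^\Gamma(\C,\C)\to RK^0(B\Gamma)$ is an isomorphism. Applying the Milnor sequence $0\to\varprojlim^1K^{-1}(Y)\to RK^0(B\Gamma)\to LK^0(B\Gamma)\to0$ and Hausdorffizing, I would get that $KL$ maps isomorphically onto $LK^0(B\Gamma)$. The $\varprojlim^1$ term should be exactly the closure of zero on both sides.

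\textbf{Step 4: compatibility.} Finally I would verify that the composite equals the $\alpha$ from the approximate balanced product construction. For this I would use a Mishchenko line bundle over $Y$ pushed through an almost multiplicative map, which is the standard quasi-flat-bundle argument of Connes--Gromov--Moscovici and Kubota. The ring structure is checked on tensor products.

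The main obstacle is Step 2, especially injectivity. It requires a uniform stable uniqueness statement that works at each finite stage and is compatible with passing to the inverse limit.
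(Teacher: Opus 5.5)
\textbf{Verdict: genuine gap in Step 2.} Your Steps 1, 3 and 4 follow the same architecture as the paper. In Step 1 you use admissibility of $\Gamma\subseteq C^*\Gamma$ plus the LLP to move from quasi-representations to almost multiplicative ucp maps, and polar decomposition to come back. In Step 3 you use Higson--Kasparov for the coassembly map. In Step 4 you check compatibility through the Mishchenko projection $p_{\Gamma,Y}$ and the concrete pairing formula. The gap is Step 2, which is the heart of the proof and which you only announce.

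First, your injectivity statement does not parse as written. An almost multiplicative map has no $KL$-class, only partial $K$-theoretic data. What has to be proved is a statement about the inverse system: the canonical map $KL(A,\C)\to\lim_{\leftarrow}KK^{ucp}_\epsilon(X,\C)$ is well defined and bijective. Concretely, a class in $KK(A,\C)$ that dies at every finite stage must lie in $\overline{\{0\}}$, and every compatible family of finite-stage classes must come from a $KK$-class. Your ``surjectivity'' bullet only describes how to push a $KK$-cycle into a single finite stage.

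Second, the tool you propose is not available off the shelf here. Stable uniqueness theorems of Dadarlat--Eilers/Lin type, and Dadarlat's topology-on-$KK$ results, are formulated under nuclearity hypotheses. Dadarlat--Glebe use exactly that route, and only for amenable groups. The theorem, however, covers for example $\Gamma=F_2$, where $C^*\Gamma$ is not even exact, and the paper stresses that no nuclearity or exactness of $A$ may be assumed. You would have to prove a new, uniform-in-$(X,\epsilon)$ stable uniqueness statement for non-exact $A$, which is essentially the whole content of the paper's Sections 3--6.

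\textbf{The missing idea is controlled $KK$-theory.} By Willett--Yu, $KL(A,B)\cong\lim_{\leftarrow}KK^{\pi,u}_\epsilon(X,B)$. Here the cycles are projections $p\in M_2(\LL(E))$ with $p-\mathrm{diag}(1,0)$ compact and $\|[p,a^{(2)}]\|<\epsilon$ for $a\in X$, taken modulo homotopy, all inside one fixed strongly unitally absorbing representation over the nuclear coefficient algebra $B=\C$. This is a finite-stage model of $KL$ with the Hausdorffization already built in, so no uniqueness theorem is needed. The rest of the argument then runs as follows.
\begin{enumerate}[(i)]
\item Quasidiagonality lets one compress by the projections $e_n$, replacing such $p$ by compact almost-commuting projections (Theorem \ref{k to u the}).
\item Homotopy is traded for Murray--von Neumann equivalence via Lipschitz paths, at the cost of constants.
\item A compact projection with $\|[p,a]\|<\epsilon$ gives, by compression, a ucp map with multiplicativity defect $\epsilon^2$. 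Conversely, Stinespring/absorption turns an $\epsilon$-multiplicative ucp map into a projection with commutator bound $\epsilon^{1/2}$.
\item These parameter-shifting comparisons are assembled by the triangle lemma for inverse limits.
\end{enumerate}

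\textbf{A smaller hole is in Step 3.} You assert that $\overline{\{0\}}\subseteq KK(C^*\Gamma,\C)$ corresponds exactly to the term $\lim^1_{\leftarrow}K^{-1}(Y)$ in $RK^0(B\Gamma)$, and this needs proof. Continuity of the Kasparov product gives only one inclusion. The other requires the UCT for $C^*\Gamma$ (Tu) to identify $\overline{\{0\}}$ with a $\mathrm{Pext}$ group, and then Baum--Connes to compare that with the $\lim^1$-term. The paper avoids $RK^0$ altogether: it uses the Dadarlat--Loring universal multicoefficient theorem for $KL$ together with Baum--Connes in total $K$-theory.
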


We will not go into details on what the hypotheses mean here, but let us at least say that the theorem applies in particular to all countable torsion-free amenable groups.  Indeed, amenable groups (if characterized appropriately) are easily seen to have the Haagerup property.  Moreover, if $\Gamma$ is amenable then $C^*\Gamma$ is nuclear by the main results of \cite{Hulanicki:1964aa} and \cite{Lance:1973aa}, whence it is quasidiagonal by the main result of \cite{Tikuisis:2015kx}, and has the (local) lifting property by the main result of \cite{Choi:1976aa}.  The theorem also applies to other interesting classes of groups such as torsion-free fundamental groups of compact $2$- and $3$-manifolds, and many (possibly all) torsion-free one-relator groups: the necessary group-theoretic inputs for this are established in \cite{Fournier-Facio:2026aa}.  

\begin{remark}
There are some recent results of other authors that are closely related to Theorem \ref{intro main}; all were arrived at independently using different proofs, and all emphasize somewhat different classes of examples\footnote{For me, I have been particularly interested in $K$-theoretic torsion phenomena in this context (see \cite[Section 6]{Willett:2024aa} and \cite{Willett:2026aa}) and was motivated largely by proving analogues of Theorem \ref{intro tor the} below.}.  First, in \cite[Theorems 1.2 and 1.3]{Dadarlat:2026aa} Dadarlat and Glebe establish results that are close to being equivalent to Theorem \ref{intro main} for torsion-free amenable groups, although stated much more concretely and without packaging everything into a ring $QR(\Gamma)$ as we do.  The methods used by Dadarlat-Glebe are also based on comparison with Kasparov's coassembly map, although they go through Dadarlat's work on stable uniqueness and the topology on $KK$-theory \cite[Section 3]{Dadarlat:2005aa}, whereas ours go through `controlled $KK$-theory' as developed by the author and Yu \cite{Willett:2020aa}.  Our methods have the advantage that they apply to some non-amenable groups, but the work of Dadarlat-Glebe gives a much shorter approach in the amenable case.

Second, Weinberger, Wu, and Yu \cite[Theorem 1.1]{Weinberger:2026aa} establish essentially the same result as our Theorem \ref{intro main}, but in more generality: these authors also allow the presence of non-trivial torsion elements in $\Gamma$, which necessitates replacing $K^0(B\Gamma)$ with the equivariant $K$-theory of the classifying space for proper actions in the spirit of the Baum-Connes conjecture \cite{Baum:1994pr}.  On the other hand, the methods of Weinberger, Wu, and Yu rely more on equivariant $KK$-theory and explicitly use the Higson-Kasparov machine \cite{Higson:2001eb} (we just black-box it), and so are relatively specific to groups: the current paper contains an approach that gives a new description of $KK$- (or $KL$-)theory for a fairly general class of $C^*$-algebras (see Subsection \ref{c* ss} below), which we hope will be useful in other contexts.  
\end{remark}

Let us describe an application of Theorem \ref{intro main} (more precisely, of a slightly elaborated version of it that applies when the group $C^*$-algebra is residually finite-dimensional: see Section \ref{rfd sec} below).  This gives examples of new torsion phenomena in the theory of quasi-representations and specifically the sort of `weak stability' considered by Dadarlat in \cite{Dadarlat:384aa} and the author in \cite{Willett:2024aa}.  Being able to construct and understand non-trivial torsion examples like this was one of our main motivations for proving Theorem \ref{intro main}.

\begin{theorem}\label{intro tor the}
Let $\Gamma$ be the fundamental group of the product $T\times K$, where $T$ is the $2$-torus, and $K$ is the Klein bottle.  Then there exists a finite subset $S_0$ of $\Gamma$ and $\epsilon_0>0$ such that for any finite subset $S$ of $\Gamma$ containing $S_0$, and any $\epsilon\in (0,\epsilon_0$), there exists an $(S,\epsilon)$-representation $\phi:S\to U(V)$ of $\Gamma$ such that:
\begin{enumerate}[(i)]
\item \label{far part} for any finite-dimensional unitary representations $\pi:\Gamma\to U(W)$ and $\sigma:\Gamma\to U(V\oplus W)$ of $\Gamma$,
$$
\|\phi(s)\oplus \pi(s)-\sigma(s)\|\geq \epsilon_0
$$
for all $s\in S_0$;
\item \label{close part} there exist finite-dimensional representations $\pi:\Gamma\to U(W)$ and $\sigma:\Gamma\to U(V\oplus V\oplus W)$ of $\Gamma$ such that 
$$
\|\phi(s)\oplus \phi(s)\oplus \pi(s)-\sigma(s)\|< \epsilon
$$
for all $s\in S$.
\end{enumerate}
\end{theorem}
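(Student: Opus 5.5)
The plan is to find a $2$-torsion class in $K^0(M)$, where $M=T\times K$ is a closed $4$-manifold and a model for $B\Gamma$ (it is aspherical, being a product of aspherical surfaces). That class should not come from flat bundles, and should be realized by quasi-representations through the isomorphism of Theorem \ref{intro main}, or rather its residually finite-dimensional refinement from Section \ref{rfd sec}.

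First, the group-theoretic input. $\Gamma=\Z^2\times\pi_1(K)$ is torsion-free and amenable; indeed it is polycyclic, so it is residually finite. Theorem \ref{intro main} therefore applies: $\alpha:QR(\Gamma)\to K^0(M)$ is an isomorphism. Since $M$ is compact, $LK^0=K^0$. Because $C^*\Gamma$ is residually finite-dimensional, the refined version should identify the image of honest finite-dimensional representations, $R(\Gamma)\to QR(\Gamma)\cong K^0(M)$, with the flat-bundle classes. I expect this image to be exactly the subgroup generated by $[E_\pi]$. Every flat bundle has rational Chern classes zero, so its class lies in the rank part plus torsion with vanishing rational Chern character. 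I would sharpen this and show the flat classes sit inside $\Z\cdot[1]\oplus F$ for a controlled subgroup $F$.

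Second, the topology. By Künneth, $K^0(T\times K)$ receives $K^1(T)\otimes K^1(K)$ and $\mathrm{Tor}$ terms. We have $K^1(K)\cong\Z$ and $K^0(K)\cong\Z\oplus\Z/2$, with the torsion coming from $H^2(K;\Z)=\Z/2$, and $K^0(T)\cong\Z^2$ with the Bott class $\beta$. So $\beta\boxtimes\tau$, where $\tau\in\tilde K^0(K)$ is the torsion class, gives a nonzero element $x$ of order $2$ in $K^0(M)$. Its first Chern class is $c_1=$ (the top class of $T$)$\times$(the generator of $H^2(K)$) in $H^4(M)$, and that is nonzero. The key claim is that $x$ is not a difference of flat classes plus a multiple of $[1]$. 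For this I would use that flat bundles of a product group pulled back from $\Gamma$ have characteristic classes killed by something like Chern–Weil: rational classes vanish, and the torsion first Chern class of a flat bundle $E_\pi$ equals $c_1(\det\pi)$, which is pulled back from $H^1(\Gamma;U(1))$ via the Bockstein. For $\det\pi:\Gamma\to U(1)$, that Bockstein class lies in the torsion of $H^2(M;\Z)$, not $H^4$. A class whose $c_1$ vanishes but whose higher Chern class is $2$-torsion needs more care. I would instead take $x=\beta\boxtimes\tau$ viewed through $c_1$ on $T\times K$ pulled back in degrees $2+2$, and compare against products of flat line bundles. This step, showing $x$ (and $x+$ any flat class) is nonzero, is the main obstacle. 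What I would try first is to reduce via the $\Z^2$ factor using a Fourier/Mayer–Vietoris slant product with the $K$-homology class of $T$. This maps flat classes of $\Gamma$ to flat classes over $K$, and maps $x$ to $\tau$. Flat bundles over $K$ come from representations of $\pi_1(K)$, so one can check directly that $\tau$ is realized flatly or not. If $\tau$ is flat, I would instead use that the slant with $T$ of a flat bundle over $T\times K$ vanishes in reduced $K$-theory up to rank, since flat bundles restricted to $T$ are trivial in $\tilde K^0(T)$, having zero $c_1$ rationally and $H^2(T)$ torsion-free.

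Finally, conclude. Choose a quasi-representation class mapping to $x$ under $\alpha$, and realize it by a single $(S,\epsilon)$-representation $\phi$ after adding an honest representation. Then part (i) follows by contradiction: if $\phi\oplus\pi$ were $\epsilon_0$-close to $\sigma$ on $S_0$, the $K_{\epsilon}(S)$-class of $\phi$, and hence $x$, would be flat. Part (ii) follows since $2x=0$: the class of $\phi\oplus\phi$ equals that of a representation in $QR(\Gamma)$. Stable equivalence then produces $\pi,\sigma$ with the stated estimate, using the stable-uniqueness/inverse-limit description to pass from equality in the limit to equality at level $(S,\epsilon)$ for $S\supseteq S_0$.
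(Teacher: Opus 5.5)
Your reduction of (i) and (ii) to topology matches the paper. You take a preimage $x$ of the torsion class, written as $([\phi_{S,\epsilon}]-[\pi_{S,\epsilon}])$ via Corollary \ref{rep form}. Part (i) then follows by contradiction, since $\alpha$ sends honest representations to flat bundles. Part (ii) follows from $2x=0$ plus stabilization by an honest representation, with $\phi$ taken at a level finer than $(S,\epsilon)$.

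The gap is the topological input, which you flag as the main obstacle and never close. You do not prove that $\beta\boxtimes\tau$ is not a difference of flat bundles, and the routes you sketch do not work as stated.
\begin{itemize}
\item The Chern class remark is off. $\beta\boxtimes\tau$ has $c_1=0$; its nonzero invariant is $c_2=b\times l\in H^4(M)\cong\Z/2$, with $b$ and $l$ generating $H^2(T;\Z)$ and $H^2(K;\Z)$. Torsion Chern classes of flat bundles need not vanish (e.g.\ $c_1(L)\neq 0$ on $K$), so Chern--Weil-type reasoning decides nothing here.
\item Your first slant route collapses because $\tau$ \emph{is} flat: $\tau=[L]-[1]$ for the character $a\mapsto -1$, $b\mapsto 1$ of $\pi_1(K)$.
\item In the second route the conclusion (slant with $[T]$ kills flat classes) is true, but your justification cannot give it. $\beta\boxtimes\tau$ itself restricts to $0$ on every slice $T\times\{k\}$, because $\tau$ vanishes at a point. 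Yet its slant is $\langle[T],\beta\rangle\tau=\pm\tau\neq 0$. Triviality on fibres does not control the family index.
\end{itemize}

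What is missing is the product structure $\Gamma=\pi_1(T)\times\pi_1(K)$ with $\pi_1(T)\cong\Z^2$ central.
\begin{itemize}
\item Flat classes are sums of $[E_\pi]$ with $\pi$ irreducible, since finite-dimensional unitary representations are completely reducible.
\item By Schur's lemma an irreducible $\pi$ is scalar on $\pi_1(T)$. So $\pi\cong\chi\otimes\rho$ with $\chi$ a character of $\Z^2$ and $\rho$ an irreducible representation of $\pi_1(K)$, and hence $E_\pi\cong L_\chi\boxtimes E_\rho$.
\item Only now does your observation about $T$ legitimately apply. $L_\chi$ is a flat line bundle on $T$, hence trivial, since $H^2(T;\Z)$ is torsion-free. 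The paper instead deforms $\chi$ to the trivial character through the connected character torus.
\item Thus every flat class lies in $\pi_K^*K^0(K)$, whose torsion is $\{0,1\boxtimes\tau\}$.
\item Künneth has no Tor terms, as $K^*(T)$ is free, so the torsion of $K^0(M)$ is $(\Z/2)^2$ with basis $1\boxtimes\tau$, $\beta\boxtimes\tau$. Hence $\beta\boxtimes\tau$ is not flat. Equivalently, slant with $[T]$ kills $\pi_K^*K^0(K)$ because $\langle[T],1\rangle=0$, but sends $\beta\boxtimes\tau$ to $\pm\tau$.
\end{itemize}
This is the content of Proposition \ref{tor vs flat}. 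With it in hand, the rest of your argument goes through.
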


One can write down an explicit formula for a quasi-representation $\phi$ with the properties in the theorem: see Remark \ref{exp phi} below.  For this and related computations, it seems quite useful that Theorem \ref{intro main} gives a \emph{ring} isomorphism, not `just' an isomorphism of groups.

The key topological ingredient in the proof of Theorem \ref{intro tor the} is the construction of torsion classes in $LK^0(B\Gamma)$ (which equals $K^0(B\Gamma)$ in this case) that are not in the subgroup generated by flat bundles; this is elementary, but the existence of such classes seems not to be well-known and may be of interest in its own right.  Similar examples (also based on fundamental groups of closed flat manifolds) show the existence of $p$-torsion phenomena for `quasi-representations modulo representations' for any prime $p$.

Let us conclude this subsection by remarking that the methods we use to show Theorem \ref{intro main} rely on identifying the map of line \eqref{intro qr as} with (a slight variant) of the map of line \eqref{kca}.  As such, they cannot possibly give an isomorphism for any group for which the map of line \eqref{kca} is known to fail to be an isomorphism, and in particular for any infinite property (T) group.  However, we do not see an a priori obstruction to the map of line \eqref{intro qr as} being an isomorphism for some property (T) groups, and it would be interesting to develop proof methods that work in that case.  One such approach might be to show that $QR(\Gamma)$ is \emph{always} isomorphic to some sort of `almost flat' $K$-theory group of $B\Gamma$ along the lines of \cite{Carrion:2015ld,Hunger:2016wv,Kubota:2022aa}, and then try to directly relate this almost flat $K$-theory group to $K^0(B\Gamma)$.  We do not pursue this here, but it seems an interesting avenue for future work.

\subsection{$C^*$-algebra techniques}\label{c* ss}

The techniques we use to show Theorem \ref{intro main} rely on a long detour through $KK$-theory of $C^*$-algebras.  The key point is that the assumptions that $C^*\Gamma$ is quasidiagonal and has the local lifting property allow us to identity the map in line \eqref{intro qr as} with the `Hausdorffification' of the map in line \eqref{kca}; we can then appeal to the deep result \cite{Higson:2001eb} that the latter is an isomorphism when $\Gamma$ is torsion-free, and has the Haagerup property.

The key technical point is as follows.  Let $A$ and $B$ be separable unital $C^*$-algebras, and let $KL(A,B)$ denote R\o{}rdam's $KL$-group \cite[Section 5]{Rordam:1995aa}.  Following Dadarlat \cite[Section 5]{Dadarlat:2005aa} and Lin \cite{Lin:2005aa}, we consider $KL(A,B)$ as the maximal Hausdorff quotient of Kasparov's $KK$-group $KK(A,B)$; unlike R\o{}rdam's original approach, this definition makes sense for general separable $C^*$-algebras.  Assume that $A$ is quasidiagonal, and that $B$ is nuclear.  Then we show that $KL(A,B)$ can be realized as an inverse limit of groups of approximately multiplicative completely positive maps from $A$ into matrix algebras over $B$, modulo stable approximate unitary equivalence.  This should be viewed as an analogue of the stable uniqueness theorems of Dadarlat-Eilers \cite{Dadarlat:2002aa} and Lin \cite{Lin:2002aa}, although we will not pursue that analogy here.  If in addition $A$ satisfies the local lifting property and we assume some additional structure (roughly, a suitably well-behaved generating set), we may remove the condition that the maps need to be completely positive: applying all of this to the case $A=C^*\Gamma$ and $B=\C$ gives the desired isomorphism $QR(\Gamma)\cong KL(C^*\Gamma,
\C)$ that underlies our proof of Theorem \ref{intro main}.

To give a more precise idea of all this without getting too weighed down in technicalities, let us discuss some precise statements in the case that $B=\C$, so we are just dealing with $K$-homology rather than $KK$-theory; let us also ignore the role of the local lifting property for now.  Let $X$ be a finite subset of the unit ball of $A$.  Then a unital completely positive map $\phi:A\to M_n(\C)$ is an \emph{$(X,\epsilon)$-$*$-homomorphism} if for all $a,b\in X$,
$$
\|\phi(ab)-\phi(a)\phi(b)\|<\epsilon.
$$
Two $(X,\epsilon)$-$*$-homomorphisms $\phi,\psi:A\to M_n(\C)$ are \emph{stably $(X,\epsilon)$-equivalent} if there exists another $(X,\epsilon)$-$*$-homomorphism $\theta:A\to M_m(\C)$ and a unitary $u\in M_{n+m}(\C)$ such that
$$
\|u(\phi(a)\oplus \theta(a))u^*-\psi(a)\oplus \theta(a)\|<\epsilon
$$
for all $a\in X$.  The set of equivalence classes is an abelian semigroup when equipped with the direct sum operation, and we define $K_\epsilon(X)$ to be its Grothendieck group.  As $X$ and $\epsilon$ vary, the collection of groups $K_\epsilon(X)$ naturally define an inverse system, and we may define $KL(A)$ to be its inverse limit.  We then have the following theorem.

\begin{theorem}\label{k hom the}
Assume that $A$ is a separable, unital, and quasidiagonal $C^*$-algebra.  Then the group $KL(A)$ defined above is canonically homeomorphically isomorphic to the maximal Hausdorff quotient of the $K$-homology group $K^0(A)$.
\end{theorem}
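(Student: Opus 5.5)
The plan is to build a map $KL(A)\to K^0(A)/\overline{\{0\}}$ by sending quasi-representations to Kasparov cycles, and then to prove injectivity and surjectivity separately.

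\textbf{Step 1: a homomorphism $K_\epsilon(X)\to$ controlled $K$-homology.} Fix an increasing sequence of finite sets $X_1\subseteq X_2\subseteq\cdots$ with dense union in the unit ball of $A$, and $\epsilon_n\to 0$; this sequence is cofinal in the inverse system. Represent $K^0(A)$ by Paschke duality, i.e.\ as $K_1$ of the dual algebra $\mathcal{D}(A)$ of $A$ acting ampliatively on $H_A=H\otimes\ell^2$. The compact operators are not assumed to meet $A$ trivially, so absorbing representations must be used throughout.

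Given an $(X,\epsilon)$-$*$-homomorphism $\phi:A\to M_n(\C)$, take its Stinespring dilation inside a fixed ample representation. Via Voiculescu's theorem, the compression of $\phi$ becomes a projection $p_\phi$ of finite rank which almost commutes with $X$ (up to a constant multiple of $\epsilon^{1/2}$). A difference $[\phi]-[\psi]$ then gives an almost-commuting pair of finite rank projections, hence a controlled $K$-homology class in the sense of Willett--Yu controlled $KK$-theory.

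Stable $(X,\epsilon)$-equivalence is sent to controlled homotopy. The conjugating unitary in $M_{n+m}(\C)$ is connected to the identity, so it gives an explicit path. As a result we get compatible maps $K_\epsilon(X)\to K^0_{\epsilon'}(X)$ into the controlled groups. We will use the known fact that $\varprojlim K^0_{\epsilon}(X)$ is the Hausdorffization of $K^0(A)$: the kernel of $K^0(A)\to\varprojlim$ consists of the classes that are arbitrarily close to zero. Taking limits gives a continuous homomorphism $KL(A)\to K^0(A)/\overline{\{0\}}$.

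\textbf{Step 2: surjectivity, using quasidiagonality.} A class in $K^0(A)$ is a projection $P$ in $\mathcal{D}(A)$, modulo compacts, paired with a reference. Quasidiagonality supplies finite rank projections $Q_k\uparrow 1$ in the ample representation that almost commute with $X$. Compressing by $Q_k$ gives finite-dimensional ucp maps $a\mapsto Q_kPaPQ_k$ and $a\mapsto Q_k(1-P)a(1-P)Q_k$, and these are $(X,\epsilon)$-multiplicative for large $k$ because $[P,a]$ is compact.

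The formal difference (``$P$-part'' minus a complementary part with matching dimension) maps to $[P]$ up to an element arbitrarily close to zero. Doing this compatibly in $(X,\epsilon)$ defines an inverse-limit element. The delicate point is the compatibility of the choices as $X$ grows and $\epsilon$ shrinks. This should follow from a uniqueness statement: two such compressions are stably $(X,\epsilon)$-equivalent after adding one large quasidiagonal block.

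\textbf{Step 3: injectivity. This is the main obstacle.} Suppose $[\phi]-[\psi]$ maps to zero at the controlled level. We must produce $\theta$ and a unitary conjugating $\phi\oplus\theta$ to within $\epsilon$ of $\psi\oplus\theta$. The strategy is a stable uniqueness argument in the spirit of Dadarlat--Eilers:
\begin{enumerate}[(i)]
\item Vanishing in controlled $K$-homology gives an almost-commuting homotopy of projections.
\item Discretize this homotopy into finitely many small steps.
\item Realize each step finite-dimensionally, again by compressing with quasidiagonal projections. The compression absorbs the infinite-dimensional ample summand into a single finite $(X,\epsilon)$-$*$-homomorphism $\theta$, which is built as a direct sum of compressions of the ample representation.
\end{enumerate}
Controlling the loss of $\epsilon$ through this chain is where I expect most of the work, since one must pass from the ``for all $\epsilon$'' statement in the inverse limit to a fixed-$\epsilon$ statement. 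I would first try to prove a controlled lemma of the following form: if the images agree in $K^0_{\delta}(Y)$ with $\delta\ll\epsilon$ and $Y\supseteq X$ large, then the classes agree in $K_\epsilon(X)$. This lemma gives injectivity on inverse limits directly.

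\textbf{Step 4: homeomorphism and canonicity.} Both sides carry inverse-limit topologies, and the maps respect the levels up to a change of parameters. Hence the map and its inverse are continuous. Independence of the choices of Stinespring dilation and ample representation follows from Voiculescu's theorem: any two such choices are unitarily equivalent modulo compacts.
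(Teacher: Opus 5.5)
Your architecture matches the paper's: Willett--Yu controlled groups with $KL(A,\C)\cong\varprojlim KK^{\pi,u}_\epsilon(X,\C)$, dilation of almost multiplicative ucp maps to almost commuting finite-rank projections as in Lemma \ref{proj real}, and quasidiagonal compression as in Theorem \ref{k to u the}. However, the step you flag as the main obstacle is missing the idea that makes it work. Suppose you cut a homotopy of $\delta$-commuting projections into $N$ pieces of size $<1/4$ and compose the unitaries from Lemma \ref{uni com}. The commutator bound is then of order $N\delta$, and $N$ depends on the homotopy. So no $(Y,\delta)$ chosen in advance yields your fixed-$\epsilon$ lemma, and compressing by quasidiagonal projections does not address this.

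The paper's device (Lemma \ref{lip path}) is a telescoping stabilization. With $p^{(i)}=p_{t_i}$, add $r=\bigoplus_i p^{(i)}$, placed using Cuntz isometries in $\mathcal{B}(\ell^2)$ that commute exactly with $A$. Then $p_0\oplus r$ and a permutation-conjugate of $p_1\oplus r$ are joined by a single $2$-Lipschitz path in which every summand moves one step simultaneously. Lemma \ref{lip path to ue} therefore gives a unitary conjugating one to the other with a commutator bound independent of $N$. Lemma \ref{mvn to ucp lem} then turns this into stable approximate unitary equivalence, with $\theta$ the compression of $A$ to the range of $r$.

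Step 2 fails as written. If $[P,a]$ is merely compact, then $a\mapsto Q_kPaPQ_k$ is not unital. Its multiplicativity defect contains $Q_kPa(1-P)bPQ_k\to Pa(1-P)bP=[P,a](1-P)[b,P]$, which is compact but not small; it vanishes for all $a,b$ only if $P$ commutes with $A$. Compression works only for controlled cycles with $\|[P,a]\|<\epsilon$, compressing to the range of $\chi(Q_kPQ_k)$. At that level the paper proves that $KK^{\pi,\K}_\epsilon(X,\C)\to KK^{\pi,u}_\epsilon(X,\C)$ is an isomorphism for each fixed $(X,\epsilon)$, via Kasparov stabilization and contractibility of the unitary group. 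This also disposes of your compatibility-of-choices issue.

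In Step 1, connectedness of $U(n+m)$ is beside the point, since replacing the dilating isometry $v$ by $vu^*$ does not change its range. The real issue is that two dilations of the same map give range projections related only by an almost commuting partial isometry. Converting that into a path of almost commuting projections needs the rotation trick of Lemma \ref{mvn to hom}.

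Finally, the paper avoids separate surjectivity and injectivity arguments. It constructs maps in both directions at each level whose composites are forget-control maps, and applies Lemma \ref{triangle isos}. This yields the homeomorphic isomorphism of inverse limits directly.
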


In connection with this theorem, the pairing between $K^0(A)$ and the $K$-theory group $K_0(A)$ can be realized concretely as follows.  Let a class $x$ in $K^0(A)$ be given, and let a projection $p\in M_n(A)$ represent a $K$-theory class $[p]\in K_0(A)$.  Then for all suitably large finite subsets $X$ of the unit ball of $A$, and all suitably small $\epsilon>0$ the following holds.   Write the canonical image of $x$ in $K_\epsilon(X)$ as a formal difference $[\phi]-[\psi]$ of $(X,\epsilon)$-$*$-homomorphisms.  Then the pairing of $x$ with $[p]$ is given by  
$$
\text{rank}(\chi((\phi\otimes \text{id}_{M_n(\C)})(p))) - \text{rank}(\chi((\psi\otimes \text{id}_{M_n(\C)})(p)))\in \Z,
$$ 
where $\chi$ is the characteristic function of $(1/2,\infty)$.  Informally, one computes the ranks of any projections that are suitably close to $\phi(p)$ and $\psi(p)$, and takes their difference.

Analogous results (to both Theorem \ref{k hom the} and the subsequent comments on pairings) hold for $KK(A,B)$ in place of $K^0(A)$, at least if one assumes that $B$ is separable, nuclear, and unital: see for example Theorems \ref{main ucp} and \ref{kl ck hom}, and Remark \ref{lin pair} below for details.  We note however that we do not make any nuclearity or exactness assumptions on $A$: this is crucial for our applications.

\subsection{Outline of paper}

In Section \ref{qr bun sec}, we define the rings and homomorphism in line \eqref{intro qr as}.  This is all done fairly concretely, requiring `only' some  basic operator $K$-theory at the level of say \cite{Rordam:2000mz} or \cite{Wegge-Olsen:1993kx}.  In particular, we do not use any $KK$-theory or advanced $C^*$-algebra theory here.

We then completely switch track, and spend most of the rest of the paper on abstract $C^*$-algebra machinery.  Section \ref{qd sec} recalls the controlled $KK$-theory groups $KK_\epsilon(X,B)$ of the author and Yu, and presents a `finite-dimensionalization' of these in the presence of quasidiagonality: roughly, the new groups consist of formal differences of compact projections on a Hilbert $B$-module that commute with elements of the finite subset $X$ of $A$ up to error $\epsilon$.  Sections \ref{mvn sec} and \ref{ucp sec} then reformulate this picture of $KK$-theory based on almost commuting compact projections in terms of almost multiplicative ucp maps from $A$ into the operators on finitely generated projective modules over $B$, up to approximate unitary equivalence; this is a fairly standard translation exercise using Stinespring's theorem and absorbing representations in the sense of Voiculescu's theorem, although complicated by the fact that we have to keep track of estimates throughout.  Section \ref{llp sec} then uses the lifting property to drop the ucp assumption on our almost multiplicative maps. 

All this $C^*$-algebraic work culminates in Section \ref{main sec} where we give our main theorems describing $KL(A,B)$ in terms of inverse limits of groups of approximately multiplicative (ucp) maps.

The next two sections refine the theory.  Section \ref{rfd sec} shows that under a residual finite-dimensionality assumption we can give more restrictive descriptions of our groups of approximately multiplicative maps.  Section \ref{pair sec} then describes the pairing of $KL(A,B)$ with $K$-theory in concrete terms: a cycle in $KL(A,B)$ represented by an approximately multiplicative ucp map acts on $K$-theory simply by applying the map to a projection, and using the functional calculus to perturb the image to an honest projection.

Finally, in the last two sections we go back to group theory.  Section \ref{qr finish} puts everything together and in particular deduces that the map of line \eqref{intro qr as} is an isomorphism under appropriate assumptions.  Finally, Section \ref{tor sec} establishes Theorem \ref{intro tor the}.

We also summarize some necessary algebraic facts about inverse limits in Appendix \ref{inv lim app}.

The paper is written in such a way that a reader interested only in the group-theoretic material can read only Sections \ref{qr bun sec}, \ref{qr finish}, and \ref{tor sec}, treating the $C^*$-algebraic material as a black box.  On the other hand, a reader interested only in the $C^*$-algebraic material can read only Sections \ref{qd sec} through \ref{pair sec}, and ignore the group-theoretic applications.

\subsection{Acknowledgements}

I am grateful to Jos\'{e} Carri\'{o}n, Robin Deeley, Christopher Schafhauser, Andreas Thom, Jianchao Wu, and Guoliang Yu for several useful discussions around the issues in this paper.  I am also grateful to Guihua Gong for an invitation to give a lecture series at Hebei Normal University during the summer of 2025, and the audience from that lecture series: this motivated me (eventually) to develop some of the ideas in this paper more fully.  Support from the US NSF (DMS 2247968) and Simons Foundation (MP-TSM-00002363) is also gratefully acknowledged.

\subsection{AI statement}

The ideas in this paper predate the really big advances in the uses of AI in mathematics from Summer 2026, and almost all of the mathematics was done without any attempt to use AI assistance. 

There is one exception where I at least tried quite hard to find an AI-assisted proof: this was to show that there is a closed aspherical manifold $M$ and torsion classes in $K^0(M)$ that are not in the subgroup generated by flat bundles.  I was not successful, and the example appearing in the paper (see Proposition \ref{flat sg}) was found without AI input.  However, this was around three months ago and the models are of course increasingly powerful: I suspect it would do a better job now.  

The writing was not in any way AI-assisted (probably foolishly).

\section{Quasi-representations and vector bundles}\label{qr bun sec}

Let $\Gamma$ be a countable group.  A quasi-representation is a map from a finite subset of $\Gamma$ to a finite-dimensional unitary group that approximately satisfies the relations needed to be a representation.  If $Y$ is a reasonably nice space equipped with a map $\pi_1(Y)\to \Gamma$, then such a quasi-representation can be used to build a vector bundle over $Y$, and thus an element of $K^0(Y)$.  This is a variant of the Atiyah-Segal maps \cite{Atiyah:1961uq,Atiyah:1969aa} taking honest representations to vector bundles.  This is well-known (see for example \cite{Carrion:2015ld,Hunger:2016wv,Kubota:2022aa}), although we will develop this independently in this section in a way that is convenient for our purposes.  

In this section we formalize all of this.  In particular, we build the collection of all quasi-representations into a commutative ring, and build an Atiyah-Segal type homomorphism from this ring to the $K$-theory ring of a suitably nice space with fundamental group $\Gamma$ (in applications, the space will be the classifying space of $\Gamma$).

Throughout this section, if $V$ is a finite-dimensional complex Hilbert space, $U(V)$ denotes its unitary group.  The group $U(V)$ will always be equipped with the operator norm.

\begin{definition}\label{qr}
Let $\Gamma$ be a group, let $S\subseteq \Gamma$ be a finite subset that contains the identity, and let $\epsilon>0$.  An \emph{$(S,\epsilon)$-(unitary) representation} of $\Gamma$ is a map $\phi:S\to U(V)$ from $S$ to the unitary group of a finite-dimensional complex Hilbert space $V$ such that $\phi(1)=1$, and such that
$$
\|\phi(st)-\phi(s)\phi(t)\|<\epsilon
$$
whenever $s,t$ and $st$ are all in $S$.  We allow the case when $V$ is a zero-dimensional Hilbert space, in which case the conditions should be read as vacuous, i.e.\ the unique map $\phi:S\to U(V)$ is automatically an $(S,\epsilon)$-representation.

If we want to be ambiguous about the data $(S,\epsilon)$, we will just say that $\phi$ is a \emph{quasi-representation}.  We will often write $V_\phi$ for the Hilbert space underlying a quasi-representation $\phi$.

If $\phi,\psi$ are $(S,\epsilon)$-representations, we write $\phi\oplus \psi$ for the map defined by
$$
\phi\oplus \psi:S\to U(V_\phi\oplus V_\psi),\quad g\mapsto \phi(g)\oplus \psi(g).
$$
\end{definition}

\begin{remark}
With notation as in Definition \ref{qr}, if $\phi,\psi$ are $(S,\epsilon)$-representations, then so is $\phi\oplus \psi$.  This follows as the operator norm satisfies $\|a\oplus b\|=\max\{\|a\|,\|b\|\}$ for the direct sum of operators on a direct sum of Hilbert spaces.  This fact fails for all other `reasomable' matrix norms, and is one reason we work only with the operator norm.
\end{remark}

\begin{remark}
In what follows, we will often want to talk about the collection of `all' $(S,\epsilon)$-representations modulo a certain equivalence relation.  As the collection of all $(S,\epsilon)$-representations is not a set, this takes a little care: one way to get around this issue is (for example) to look only at those $(S,\epsilon)$-representations whose underlying Hilbert space is a subspace of some fixed Hilbert space such as $\ell^2(\N)$; as we are only interested in quasi-representations up to unitary conjugation, fixing such a choice makes no real difference.  We will not say any more on this point, or on similar points as they arise later in the paper leaving the necessary choices to the preference of the reader.
\end{remark}

We record a basic lemma for later use: it says that an quasi-representation can always be perturbed to an inverse-preserving quasi-representation.

\begin{lemma}\label{sa approx}
Let $\Gamma$ be a group, let $S\subseteq \Gamma$ be a finite subset that contains the identity, and let $\epsilon>0$.  Let $\phi:S\to U(V)$ be an $(S,\epsilon)$-representation as in Definition \ref{qr}.  Then there exists a map $\psi:S\to U(V)$ such that $\psi(1)=1$, such that $\psi(s^{-1})=\psi(s)^*$ for all $s\in S$ such that $s^{-1}$ is also in $S$, and such that $\|\psi(s)-\phi(s)\|<\epsilon$ for all $s\in S$.
\end{lemma}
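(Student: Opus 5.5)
Partition $S$ into orbits of the involution $s\mapsto s^{-1}$, restricted to those $s$ with $s^{-1}\in S$, and define $\psi$ orbit by orbit. Put $\psi(1)=1$. On elements $s\in S$ with $s^{-1}\notin S$, put $\psi(s)=\phi(s)$. For $s\in S$ with $s=s^{-1}$ (involutions, $s\neq 1$), we need a self-adjoint unitary close to $\phi(s)$. For pairs $\{s,s^{-1}\}$ with $s\neq s^{-1}$, choose one representative $s$ and set $\psi(s)=\phi(s)$, $\psi(s^{-1})=\phi(s)^*$. The only estimate to check in this last case is $\|\phi(s)^*-\phi(s^{-1})\|<\epsilon$. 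Since $s,s^{-1}$ and $ss^{-1}=1$ all lie in $S$, and $\phi(1)=1$, we have $\|\phi(s)\phi(s^{-1})-1\|<\epsilon$. Multiplying on the left by the unitary $\phi(s)^*$ preserves norms, so $\|\phi(s^{-1})-\phi(s)^*\|<\epsilon$.

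The involution case is the main step. Here $u=\phi(s)$ is a unitary with $\|u^2-1\|<\epsilon$, the case $t=s$ of the defining inequality. We must find a unitary $v$ with $v=v^*$, $v^2=1$, and $\|v-u\|<\epsilon$. Since $u$ is normal, we use the functional calculus. Spectral mapping gives $|\lambda^2-1|<\epsilon$ for every $\lambda\in\sigma(u)\subseteq \mathbb{T}$. Define $v=f(u)$, where $f(\lambda)=1$ if $\mathrm{Re}\,\lambda\geq 0$ and $f(\lambda)=-1$ otherwise. Then $v$ is a self-adjoint unitary, and $\|v-u\|=\max_{\lambda\in\sigma(u)}|f(\lambda)-\lambda|$ because $u$ is normal and $V$ is finite-dimensional.

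So it remains to prove the scalar inequality: if $\lambda\in\mathbb{T}$ and, say, $\mathrm{Re}\,\lambda\ge0$, then $|\lambda-1|\le|\lambda^2-1|$. This follows from $|\lambda^2-1|=|\lambda-1||\lambda+1|$ together with $|\lambda+1|\geq 1$ whenever $\mathrm{Re}\,\lambda\geq 0$ (as $|\lambda+1|^2=2+2\mathrm{Re}\,\lambda$). The case $\mathrm{Re}\,\lambda<0$ is symmetric, using $|\lambda-1|>1$. Hence each eigenvalue moves by strictly less than $\epsilon$, and $\|v-u\|<\epsilon$ because the spectrum is finite. Setting $\psi(s)=v$ completes the construction.

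Two remaining checks are immediate. First, $\psi(1)=1$ is consistent with the identity being its own inverse, and $\|\psi(1)-\phi(1)\|=0$. Second, the requirement $\psi(s^{-1})=\psi(s)^*$ holds by construction in every case. I expect the only real content to be the involution case, namely the spectral estimate $|\lambda\mp1|\le|\lambda^2-1|$; this is where normality of unitaries and the operator norm are used.
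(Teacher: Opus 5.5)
Your proof is correct and follows the paper's argument. Both use the same partition of $S$ into $\{1\}$, inverse pairs, nontrivial involutions and elements whose inverse lies outside $S$, the same estimate $\|\phi(s^{-1})-\phi(s)^*\|<\epsilon$ coming from $\phi(1)=1$, and the same sign-of-real-part functional calculus on involutions. Your convention $f(\lambda)=1$ when $\mathrm{Re}\,\lambda=0$ is slightly cleaner than the paper's choice $f=0$ there, which fails to give a unitary when $\epsilon>2$ and an eigenvalue $\pm i$ occurs (a regime where the involution hypothesis is vacuous anyway).
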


\begin{proof}
Partition $S$ into the following sets 
$$
S=\{1\}\sqcup \{s\in S\mid s^{-1}\in S,s^2\neq 1\}\sqcup \{s\in S\mid s\neq1,s^2=1\}\sqcup\{s\in S\mid s^{-1}\not\in S\}.
$$
For $s\in \{1\}\sqcup \{s\in S\mid s^{-1}\not\in S\}$, we define $\psi(s):=\phi(s)$.  

For the set $\{s\in S\mid s^{-1}\in S,s^2\neq 1\}$ we can choose a partition $\{s\in S\mid s^{-1}\in S,s^2\neq 1\}=A\sqcup B$ such that the map $s\mapsto s^{-1}$ is a bijection from $A$ to $B$.  We define $\psi(s):=\phi(s)$ for $s\in A$, and $\psi(s):=\phi(s^{-1})^*$ for $s\in B$.  For any $s\in B$, we then have that 
$$
\|\phi(s)-\psi(s)\|=\|\phi(s)-\phi(s^{-1})^*\|=\|\phi(s^{-1})\phi(s)-1\|=\|\phi(s^{-1})\phi(s)-\phi(s^{-1}s)\|,
$$
which is less that $\epsilon$ by definition of an $(S,\epsilon)$-representation (the second equality follows as $\phi(s^{-1})$ is unitary and the third as $\phi(s^{-1}s)=\phi(1)=1$).  

Finally, for $s\in \{s\in S\mid s\neq1,s^2=1\}$, the definition of an $(S,\epsilon)$-representation implies that $\|\phi(s)^2-1\|<\epsilon$.  Hence for any $z$ in the spectrum of $\phi(s)$, $|z^2-1|<\epsilon$, i.e.\ $|1+z||1-z|<\epsilon$.  As $z$ is in the unit circle, one of $|1-z|$ or $|1+z|$ is at least one, whence the other of these terms is less than $\epsilon$.  Hence if $f(z)=\text{sign}(\text{Re}(z))$ (interpreted as $0$ if $\text{Re}(z)=0$) then $f(\phi(s))^2=1$ and $\|\phi(s)-f(\phi(s))\|<\epsilon$ by the functional calculus.  We define $\psi(s):=f(\phi(s))$.  This completes the construction of $\psi$, and the proof.
\end{proof}

\begin{definition}\label{qr equiv}
Let $\Gamma$ be a group.  Two $(S,\epsilon)$-representations $\phi,\psi$ are \emph{stably $(S,\epsilon)$-equivalent} if there exists another $(S,\epsilon)$-representation $\theta$ and a unitary isomorphism $v:V_\phi\oplus V_\theta\to V_\psi\oplus V_\theta$ such that 
$$
\|v(\phi(s)\oplus \theta(s))v^*-\psi(s)\oplus \theta(s)\|<\epsilon
$$
for all $s\in S$.  
\end{definition}

\begin{lemma}
With notation as in Definition \ref{qr equiv}, stable $(S,\epsilon)$-equivalence is an equivalence relation on the collection of all $(S,\epsilon)$-representations.
\end{lemma}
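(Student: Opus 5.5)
The plan is to check the three axioms directly. The only real point is transitivity: we must avoid the errors adding up to something like $2\epsilon$, which a naive composition of the two conjugations would produce. The fix is to place the two approximate equivalences on \emph{disjoint} direct summands. The operator norm then takes a maximum of the errors rather than a sum, by the fact recorded in the remark after Definition \ref{qr}.

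\emph{Reflexivity.} Given $\phi$, take $\theta$ to be the $(S,\epsilon)$-representation on the zero-dimensional Hilbert space, which Definition \ref{qr} explicitly allows, and take $v$ to be the identity. The difference is then $0<\epsilon$.

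\emph{Symmetry.} Suppose $\theta$ and $v$ witness that $\phi$ is stably $(S,\epsilon)$-equivalent to $\psi$. Use the same $\theta$ and the unitary $v^*:V_\psi\oplus V_\theta\to V_\phi\oplus V_\theta$. Since conjugation by a unitary preserves the operator norm,
$$
\|v^*(\psi(s)\oplus\theta(s))v-\phi(s)\oplus\theta(s)\|=\|\psi(s)\oplus\theta(s)-v(\phi(s)\oplus\theta(s))v^*\|<\epsilon
$$
for all $s\in S$.

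\emph{Transitivity.} Suppose $(\theta_1,v_1)$ witnesses $\phi\sim\psi$ and $(\theta_2,v_2)$ witnesses $\psi\sim\chi$. Set
$$
\Theta:=\psi\oplus\theta_1\oplus\theta_2 .
$$
This is an $(S,\epsilon)$-representation by the remark after Definition \ref{qr}. Let $P$ and $Q$ be the canonical permutation unitaries (exact unitary equivalences, obtained by reordering summands)
$$
P:V_\phi\oplus V_\Theta\to (V_\phi\oplus V_{\theta_1})\oplus(V_\psi\oplus V_{\theta_2}),\qquad
Q:V_\chi\oplus V_\Theta\to (V_\psi\oplus V_{\theta_1})\oplus(V_\chi\oplus V_{\theta_2}).
$$
Define $w:=Q^*(v_1\oplus v_2)P$. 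Then, by unitary invariance of the norm, $\|w(\phi(s)\oplus\Theta(s))w^*-\chi(s)\oplus\Theta(s)\|$ equals the norm of the block-diagonal operator
$$
\bigl(v_1(\phi(s)\oplus\theta_1(s))v_1^*-\psi(s)\oplus\theta_1(s)\bigr)\oplus\bigl(v_2(\psi(s)\oplus\theta_2(s))v_2^*-\chi(s)\oplus\theta_2(s)\bigr).
$$
The norm of this block-diagonal operator is the maximum of the two block norms, each of which is $<\epsilon$. Hence $(\Theta,w)$ witnesses $\phi\sim\chi$.

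The only step needing care is this bookkeeping of summands in transitivity. The key idea is to include $\psi$ inside the stabilizer $\Theta$, so that both given equivalences can be applied simultaneously rather than one after the other.
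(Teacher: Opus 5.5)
Your proposal is correct and matches the paper's proof. The paper also puts $\psi$ into the stabilizer, taking $\theta_1\oplus\psi\oplus\theta_2$, and uses the unitary $w(v_1\oplus v_2)$ with $w$ a reordering of summands, so the two errors sit in orthogonal blocks and combine as a maximum rather than a sum. The differences are cosmetic. The paper's ordering of the stabilizer makes $V_\phi\oplus V_{\theta_1}\oplus V_\psi\oplus V_{\theta_2}$ already the domain of $v_1\oplus v_2$, so it needs one permutation where you use two ($P$ and $Q$). The paper also just calls reflexivity and symmetry clear, whereas you check them explicitly.
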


\begin{proof}
We just show transitivity: reflexivity and symmetry are clear.  For this, assume that $\phi_1$ and $\psi$, and $\psi$ and $\phi_2$, are pairs of stably $(S,\epsilon)$-equivalent $(S,\epsilon)$-representations.  Let $\theta_1,\theta_2$ and $v_1$, $v_2$ realize these equivalences in the sense that 
$$
\|v_1(\phi_1(s)\oplus \theta_1(s))v_1^*-\psi(s)\oplus \theta_1(s)\|<\epsilon,\quad \|v_2(\psi(s)\oplus \theta_2(s))v_2^*-\phi_2(s)\oplus \theta_2(s)\|<\epsilon
$$
for all $s\in S$.  Define now $\theta:=\theta_1\oplus \psi\oplus \theta_2$.  Let 
$$
w:V_{\psi}\oplus V_{\theta_1}\oplus V_{\phi_2}\oplus V_{\theta_2}\to V_{\phi_2}\oplus V_{\theta_1} \oplus V_\psi\oplus V_{\theta_2}
$$
be the canonical unitary changing the order of the summands.  Define $v:=w\begin{psmallmatrix} v_1 & 0 \\ 0 & v_2\end{psmallmatrix}$.  Then one checks that 
$$
\|v(\phi_1(s)\oplus \theta(s))v^*-\phi_2(s)\oplus \theta(s)\|<\epsilon
$$
for all $s\in S$, completing the proof.
\end{proof}

\begin{definition}\label{equiv cl}
Let $\Gamma$ be a group, let $S$ be a finite subset of $\Gamma$ containing the identity, and let $\epsilon>0$.  With terminology as in Definitions \ref{qr} and \ref{qr equiv}, we define $V_\epsilon(S)$ to be the set of all $(S,\epsilon)$-representations modulo stable $(S,\epsilon)$-equivalence.
\end{definition}

We leave the details of the next lemma to the reader.

\begin{lemma}
Let $V_\epsilon(S)$ be as in Definition \ref{equiv cl}.  Then the operation ``\,$\oplus$'' defined on $(S,\epsilon)$-representations in Definition \ref{qr} descends to a well-defined binary operation on $V_\epsilon(S)$ that makes it into a commutative monoid, with identity the class of the zero quasi-representation. \qed
\end{lemma}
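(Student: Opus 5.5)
We need to check three things: that $\oplus$ is well defined on classes in $V_\epsilon(S)$, that the induced operation is associative and commutative, and that the zero quasi-representation is an identity. Throughout we use that $\phi\oplus\psi$ is again an $(S,\epsilon)$-representation, by the remark following Definition \ref{qr}.

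\emph{Well-definedness.} It suffices to vary one argument at a time and then use transitivity of stable $(S,\epsilon)$-equivalence. So suppose $\phi\sim\phi'$ via $\theta$ and $v:V_\phi\oplus V_\theta\to V_{\phi'}\oplus V_\theta$, and let $\psi$ be arbitrary. We show $\phi\oplus\psi\sim\phi'\oplus\psi$ using the same $\theta$. Let $w:V_\phi\oplus V_\psi\oplus V_\theta\to V_\phi\oplus V_\theta\oplus V_\psi$ be the canonical unitary permuting summands, and let $w'$ be the analogous unitary for $\phi'$. Set $u:=w'^*(v\oplus 1_{V_\psi})w$. For each $s\in S$, the operator
$$
u(\phi(s)\oplus\psi(s)\oplus\theta(s))u^*-(\phi'(s)\oplus\psi(s)\oplus\theta(s))
$$
is unitarily conjugate, via $w'$, to
$$
\big(v(\phi(s)\oplus\theta(s))v^*-\phi'(s)\oplus\theta(s)\big)\oplus 0 .
$$
The operator norm is invariant under unitary conjugation and satisfies $\|a\oplus b\|=\max\{\|a\|,\|b\|\}$. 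Hence the norm of the displayed difference is less than $\epsilon$, as required. The same argument, with the permutation adjusted, handles varying the second argument.

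\emph{Monoid axioms.} Associativity holds because $(\phi\oplus\psi)\oplus\chi$ and $\phi\oplus(\psi\oplus\chi)$ are identical once the Hilbert spaces are canonically identified. Commutativity holds because the flip unitary $V_\phi\oplus V_\psi\to V_\psi\oplus V_\phi$ conjugates $\phi\oplus\psi$ exactly onto $\psi\oplus\phi$. For the identity, $\phi\oplus 0$ equals $\phi$ on the nose. In each of these three cases, taking $\theta=0$ and the canonical unitary gives error $0<\epsilon$, so the classes agree.

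The only step requiring any care is well-definedness, and the point there is simply that the max-norm property of direct sums lets the auxiliary $\theta$ and the extra summand $\psi$ be carried along without increasing the error.
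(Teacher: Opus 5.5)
Your proposal is correct. The paper leaves this lemma to the reader, and your argument supplies the intended routine verification. For well-definedness you carry the auxiliary $\theta$ along, absorb the extra summand using permutation unitaries and the identity $\|a\oplus b\|=\max\{\|a\|,\|b\|\}$, and combine one-variable changes via transitivity. For the monoid axioms you use canonical unitaries with $\theta=0$. This is the same technique the paper uses in its proof of transitivity.
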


\begin{definition}\label{gp gp}
Let $\Gamma$ be a group, let $S$ be a finite subset of $\Gamma$ containing the identity, and let $\epsilon>0$.  We write $K_\epsilon(S)$ for the Grothendieck group of the monoid $V_\epsilon(S)$ from Definition \ref{equiv cl}. 
\end{definition}

\begin{definition}\label{gp order}
Let $\Gamma$ be a group.  We write $I_\Gamma$ for the directed set consisting of pairs $(S,\epsilon)$ with $S$ a finite subset of $\Gamma$, $\epsilon>0$, and where we say $(S,\epsilon)\leq (T,\delta)$ if $S\subseteq T$ and $\delta\leq \epsilon$.
\end{definition}

Note that if $(S,\epsilon)\leq (T,\delta)$ in $I_\Gamma$, then there is a canonical `forget control' map 
\begin{equation}\label{fc}
K_\delta(T)\to K_\epsilon(S)
\end{equation}
defined by considering $(T,\delta)$-representations as $(S,\epsilon)$-representations.

\begin{definition}\label{inv lim approx}
Let $\Gamma$ be a countable group, and let $I_\Gamma$ be as in Definition \ref{gp order}.   These forget control maps of line \eqref{fc} make the collection $(K_\epsilon(S))_{(S,\epsilon)\in I_\Gamma}$ into an inverse system, and we define 
$$
QR(\Gamma):=\lim_{\leftarrow}K_\epsilon(S)
$$
to be their inverse limit in the category of topological abelian groups (compare Definition \ref{inv lim def}).
\end{definition}

\begin{remark}
In \cite{Weinberger:2026aa}, Weinberger, Wu, and Yu independently introduce the same notation ``$QR(\Gamma)$'' for what turns out to be the same object, although the definition is slightly different.  Their $QR(\Gamma)$ is built as an inverse limit of groups they call ``$QR(S,\epsilon)$''.  These are slightly different from our $K_\epsilon(S)$: for example, their $(S,\epsilon)$-representations are defined on $S^2$, so that the expression ``$\phi(s)\phi(t)-\phi(st)$'' always make sense for $s,t\in S$; and the equivalence relation used is very slightly different.  However, none of these variants make any difference in the inverse limit (at least for the group structure: Weinberger-Wu-Yu do not seem to introduce a ring structure on their $QR(\Gamma)$, but this could be done in the same way we do, of course).
\end{remark}

Our next goal is to use tensor products to make $QR(\Gamma)$ into a ring using tensor products of quasi-representations.  This is a little more complicated than the group structure as the ring structure is not present in the groups $K_\epsilon(S)$, only appearing `in the limit'.

\begin{definition}\label{tp qr}
Let $\Gamma$ be a group.  Let $(S,\epsilon)$, $(T,\delta)$ be elements of the set $I_\Gamma$ of Definition \ref{gp order}, and let $\phi$ and $\psi$ be an $(S,\epsilon)$-representation and a $(T,\delta)$-representation respectively.  Their \emph{tensor product} is the map 
$$
\phi\otimes \psi:S\cap T\to U(V_\phi\otimes V_\psi),\quad s\mapsto \phi(s)\otimes \psi(s).
$$
\end{definition}

\begin{lemma}\label{tp qr lem}
With notation as in Definition \ref{tp qr}, if $\phi$ is an $(S,\epsilon)$-representation and $\psi$ is a $(T,\delta)$ representation, then $\phi\otimes \psi$ is a $(S\cap T,\epsilon+\delta)$-representation.  Moreover, if $\phi_1$ and $\phi_2$ are stably $(S,\epsilon)$-equivalent $(S,\epsilon)$-representations, then $\phi_1\otimes \psi$ and $\phi_2\otimes \psi$ are stably $(S\cap T,\epsilon)$-equivalent, and similarly with the roles of $(S,\epsilon)$ and $(T,\delta)$ reversed.
\end{lemma}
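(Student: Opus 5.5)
The plan is to verify both claims directly from the definitions. The only analytic input is that the operator norm on $V_\phi\otimes V_\psi$ satisfies $\|a\otimes b\|=\|a\|\|b\|$, so in particular $\|a\otimes u\|=\|a\|$ whenever $u$ is unitary.

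\textbf{The tensor product is a quasi-representation.} First, $1\in S\cap T$ and $(\phi\otimes\psi)(1)=1\otimes 1=1$. Each $\phi(s)\otimes\psi(s)$ is unitary. Now take $s,t\in S\cap T$ with $st\in S\cap T$. I will use the telescoping decomposition
$$
\phi(st)\otimes\psi(st)-\phi(s)\phi(t)\otimes\psi(s)\psi(t)=\bigl(\phi(st)-\phi(s)\phi(t)\bigr)\otimes\psi(st)+\phi(s)\phi(t)\otimes\bigl(\psi(st)-\psi(s)\psi(t)\bigr).
$$
Since $\psi(st)$ and $\phi(s)\phi(t)$ are unitaries, the two terms have norms less than $\epsilon$ and $\delta$ respectively. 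Hence $\phi\otimes\psi$ is an $(S\cap T,\epsilon+\delta)$-representation.

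\textbf{Compatibility with stable equivalence.} Suppose $\theta$ and $v:V_{\phi_1}\oplus V_\theta\to V_{\phi_2}\oplus V_\theta$ witness the stable $(S,\epsilon)$-equivalence of $\phi_1$ and $\phi_2$. I will take $\theta\otimes\psi$ as the auxiliary quasi-representation. As the unitary I will use $v\otimes 1_{V_\psi}$, transported along the canonical unitary identifications
$$
(V_{\phi_i}\oplus V_\theta)\otimes V_\psi\cong (V_{\phi_i}\otimes V_\psi)\oplus(V_\theta\otimes V_\psi).
$$
Under these identifications, $(\phi_i\otimes\psi)\oplus(\theta\otimes\psi)$ corresponds to $(\phi_i\oplus\theta)\otimes\psi$. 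The difference to estimate therefore becomes
$$
\bigl(v(\phi_1(s)\oplus\theta(s))v^*-\phi_2(s)\oplus\theta(s)\bigr)\otimes\psi(s),
$$
and since $\psi(s)$ is unitary its norm is less than $\epsilon$ for all $s\in S\cap T$. The case with the roles of $(S,\epsilon)$ and $(T,\delta)$ reversed follows by the same argument, or by conjugating with the flip unitary $V_\phi\otimes V_\psi\to V_\psi\otimes V_\phi$.

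The one point needing care is bookkeeping rather than analysis. By the first part, $\phi_i\otimes\psi$ and $\theta\otimes\psi$ are in general only $(S\cap T,\epsilon+\delta)$-representations. So the conjugation estimate is at level $\epsilon$, but the equivalence should be understood as taking place among $(S\cap T,\epsilon+\delta)$-representations. Since $\epsilon<\epsilon+\delta$, the same data also witness stable $(S\cap T,\epsilon+\delta)$-equivalence, which is what is needed to get a well-defined map $K_\epsilon(S)\times K_\delta(T)\to K_{\epsilon+\delta}(S\cap T)$. I expect no genuine obstacle beyond checking these identifications.
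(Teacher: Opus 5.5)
Your proof is correct and follows the paper's approach: the same triangle-inequality estimate using that tensoring with a unitary preserves norms, and for the equivalence part (which the paper leaves as ``similar'') the natural witnesses $\theta\otimes\psi$ and $v\otimes 1$. Your telescoping identity is the accurate one (the paper's displayed split harmlessly has $\psi(st)$ where $\psi(s)\psi(t)$ belongs in its second term), and your remark that this is really a stable $(S\cap T,\epsilon+\delta)$-equivalence with conjugation error below $\epsilon$ is exactly what Proposition~\ref{klg ring} needs.
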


\begin{proof}
We compute that whenever $s,t,st\in S\cap T$,
\begin{align*}
\|(\phi(st)&\otimes \psi(st)) -(\phi(s)\otimes \psi(s))(\phi(t)\otimes \psi(t))\| \\ &\leq \|\phi(st)\otimes (\psi(st)-\psi(s)\psi(t))\|+\|(\phi(st)-\phi(s)\phi(t))\otimes \psi(st)\|\\
& \leq   \|\phi(st)\|\|(\psi(st)-\psi(s)\psi(t))\|+\|(\phi(st)-\phi(s)\phi(t))\|\|\psi(st)\|
\\ &<\epsilon+\delta.
\end{align*}
Computations checking the equivalence relations behave as claimed are similar.
\end{proof}

\begin{proposition}\label{klg ring}
Let $\Gamma$ be a group, and let $(S,\epsilon),(T,\delta)$ be elements of the set $I_\Gamma$ of Definition \ref{gp order}.  Then the assignment 
\begin{equation}\label{tp v}
 [\phi]\otimes [\psi]\mapsto [\phi\otimes \psi]
\end{equation}
uniquely determines a well-defined homomorphism
\begin{equation}\label{tp small group}
K_\epsilon(S)\otimes K_\delta(T)\to K_{\epsilon+\delta}(S\cap T).
\end{equation}
These maps fit together to induce a product map $QR(\Gamma)\otimes QR(\Gamma)\to QR(\Gamma)$, making $QR(\Gamma)$ into a commutative, unital topological ring (with multiplicative identity the class of the trivial representation - compare Remark \ref{act rep rem}). 
\end{proposition}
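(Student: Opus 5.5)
The plan is to first build the pairing at each finite stage, then pass to the inverse limit, and finally verify the ring axioms by checking them on cycles at a single finite stage.

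\textbf{Step 1: the finite-stage pairing.} By Lemma \ref{tp qr lem}, tensor product gives a well-defined map
$$
V_\epsilon(S)\times V_\delta(T)\to V_{\epsilon+\delta}(S\cap T),\qquad ([\phi],[\psi])\mapsto[\phi\otimes\psi].
$$
Well-definedness in each variable is exactly the second part of that lemma. One point needs checking there: the lemma gives stable $(S\cap T,\epsilon)$-equivalence, and I need to see that the witnessing quasi-representation $\theta\otimes\psi$ is an $(S\cap T,\epsilon+\delta)$-representation. It is, by the first part of the lemma, and an $\epsilon$-estimate is in particular an $(\epsilon+\delta)$-estimate. So the relation is stable $(S\cap T,\epsilon+\delta)$-equivalence, which is what we want.

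The map is biadditive on monoids because there are canonical unitary isomorphisms
$$
(V_1\oplus V_2)\otimes W\cong (V_1\otimes W)\oplus (V_2\otimes W)
$$
that exactly intertwine the quasi-representations. The universal property of the Grothendieck group, applied once in each variable, then gives a unique bilinear map. This is the homomorphism \eqref{tp small group}; uniqueness holds because the classes $[\phi]$ generate.

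\textbf{Step 2: passing to the inverse limit.} These finite-stage maps are compatible with the forget control maps \eqref{fc}. Restricting both factors and then tensoring is the same as tensoring and then restricting. Now let $x,y\in QR(\Gamma)$ with components $x_{(S,\epsilon)}$ and $y_{(S,\epsilon)}$. I define
$$
(xy)_{(S,\epsilon)}:=\text{image of } x_{(S,\epsilon/2)}\otimes y_{(S,\epsilon/2)} \text{ under } K_{\epsilon/2}(S)\otimes K_{\epsilon/2}(S)\to K_\epsilon(S).
$$
Compatibility of the components of $xy$ under the forget control maps, and independence from the choice of the splitting $\epsilon=\epsilon_1+\epsilon_2$, both follow from the naturality just noted together with the fact that $x$ and $y$ are themselves compatible families. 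Continuity of the product uses the topology in Definition \ref{inv lim def}. Each coordinate $(xy)_{(S,\epsilon)}$ depends only on finitely many coordinates of $x$ and $y$, namely those at $(S,\epsilon/2)$. If the groups are discrete and the limit carries the subspace topology of the product, then each coordinate map is locally constant, so multiplication is jointly continuous.

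\textbf{Step 3: the ring axioms.} Each axiom is checked on cycles at a single finite stage, and reduces to an honest unitary identification.
\begin{enumerate}[(i)]
\item \emph{Commutativity:} the flip unitary $V\otimes W\to W\otimes V$ conjugates $\phi\otimes\psi$ exactly to $\psi\otimes\phi$.
\item \emph{Associativity:} this uses the associator unitary, with the errors adding as $\epsilon_1+\epsilon_2+\epsilon_3$.
\item \emph{Distributivity:} this uses the isomorphism displayed in Step 1.
\item \emph{Unit:} the trivial one-dimensional representation $1$ defines a compatible family in $QR(\Gamma)$, and $1\otimes\phi\cong\phi$ via $\C\otimes V\cong V$.
\end{enumerate}
Exact unitary equivalences give stable equivalences with $\theta=0$, so all of these identities hold in every $K_\epsilon(S)$.

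\textbf{Expected obstacle.} I expect the main difficulty to be the bookkeeping in Step 2. Because the product of groups lands only in a group with degraded parameters, I have to check carefully that the limiting product is well-defined and independent of choices, and that the cofinality arguments interact correctly with the chosen inverse-limit topology (Appendix \ref{inv lim app}). The remaining steps are formal.
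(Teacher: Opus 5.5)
Your proposal is correct and follows essentially the same route as the paper: you get the finite-stage pairing from Lemma \ref{tp qr lem} and the universal properties of the Grothendieck group and tensor product, define the product on $QR(\Gamma)$ componentwise through the $(S,\epsilon/2)$ components, check compatibility with the forget control maps, and verify the ring axioms on cycles via exact unitary identifications. You fill in details the paper leaves to the reader, namely that the witness $\theta\otimes\psi$ is an $(S\cap T,\epsilon+\delta)$-representation, independence of the splitting $\epsilon=\epsilon_1+\epsilon_2$, and continuity via locally constant coordinate maps, but the argument is the same.
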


\begin{proof}
The formula in line \eqref{tp v} induces a well-defined ($\N$-)bilinear map $V_\epsilon(S)\times V_\delta(T)\to V_{\epsilon+\delta}(S\cap T)$ by Lemma \ref{tp qr lem}.  It uniquely determines a well-defined homomorphism as in line \eqref{tp small group} by the universal properties of the Grothendieck group, and of the tensor product.  

We now provisionally define a homomorphism $QR(\Gamma)\otimes QR(\Gamma)\to QR(\Gamma)$ as follows.  Recall (see Definition \ref{inv lim def} and \ref{inv lim approx}) that an element of $QR(\Gamma)$ consists of a tuple $([\phi_{S,\epsilon}])_{(S,\epsilon)\in I_\Gamma}$ that is compatible with the forget control maps.  Given two such tuples $[\phi]=([\phi_{(S,\epsilon)}])$ and $[\psi]=([\psi_{(S,\epsilon)}])$, we provisionally define their product to be the tuple $(\theta_{(S,\epsilon)})$, where $\theta_{(S,\epsilon)}:=\phi_{(S,\epsilon/2)}\otimes \psi_{(S,\epsilon/2)}$.  We then have to check that if $(T,\delta)\geq (S,\delta)$ in the set $I_\Gamma$ of Definition \ref{gp order} that the classes defined by $\phi_{(S,\epsilon/2)}\otimes \psi_{(S,\epsilon/2)}$ and $\phi_{(T,\delta/2)}\otimes \psi_{(T,\delta/2)}$ in $K_\epsilon(S)$ are the same.  This follows directly from Lemma \ref{tp qr lem} and that the classes defined by $\phi_{(T,\delta/2)}$ and $\phi_{(S,\epsilon/2)}$ are the same in $K_0(S,\epsilon/2)$, and similarly for the corresponding `$\psi$-classes'.

Having got that the map $QR(\Gamma)\otimes QR(\Gamma)\to QR(\Gamma)$ is well-defined, checking that $QR(\Gamma)$ satisfies the properties of a commutative, unital, topological ring is routine based on standard commutativity, associativity, and distributivity properties of tensor products and direct sums (that hold up to unitary equivalence): we leave the details to the reader.
\end{proof}

\begin{remark}\label{act rep rem}
Let $\Gamma$ be a group, and let $R(\Gamma)$ denote the ring of unitary equivalence classes of finite-dimensional unitary representations of $\Gamma$: the addition operation is defined by direct sum, and the multiplication by tensor product.  There is then a natural ring homomorphism $R(\Gamma)\to QR(\Gamma)$ defined by treating an honest representation as an $(S,\epsilon)$-representation for all $S$ and $\epsilon$.  Partly to get used to the definitions, let us show directly that this homomorphism is in general a long way from being injective.  

For example, if $\Gamma=\Z$, then $R(\Gamma)$ is isomorphic to the ring of Laurent polynomials in a single variable with integer coefficients, but we claim that the image of $R(\Gamma)$ in $QR(\Gamma)$ is just integer multiples of the trivial representation.  Indeed, it suffices to show that the class of any irreducible representation of $\Z$ in $QR(\Gamma)$ is the same as the class of the trivial representation $\pi_0$ in any $K_\epsilon(S)$.  Any such irreducible representation is canonically isomorphic to the representation $\pi_z:\Gamma\to U(\C)$ defined by $\pi_z(n)=z^n$ for all $n\in \Z$ and some fixed $z\in \C$ with modulus one.  Given $(S,\epsilon)$, choose a finite string of points $1=z_0,z_1,\cdots,z_N=z$ with the property that $|z^n_i-z_{i+1}^n|<\epsilon$ for all $n\in S$ and all $i\in \{0,...,N-1\}$.  Let $\phi:=\bigoplus_{i=1}^{N-1} \pi_{z_i}$; this is an honest representation, so certainly an $(S,\epsilon)$-representation for any $S$ and $\epsilon$.  Then for an appropriately chosen permutation unitary $v$, one sees that 
$$
\|v(\pi_0(s)\oplus \phi(s))v^*-\pi_N(s)\oplus \phi(s)\|<\epsilon
$$
for all $s\in S$.  Hence $\pi_0$ and $\pi_z$ are stably $(S,\epsilon)$-equivalent for all $(S,\epsilon)$ as claimed.  

A similar direct argument shows that for any finitely generated torsion-free abelian group the image of the group $R(\Gamma)$ in $QR(\Gamma)$ just consists of multiples of the trivial representation: the essential point is that the space of characters is metrizable and path connected.  It will follow from the results of this paper (for example, from Theorem \ref{intro main}) that $KL(\Z)$ itself just reduces to integer multiples of the trivial representation; this is, however, not true for $\Z^n$ for $n\geq 2$ and other more complicated abelian groups.
\end{remark}

In the remainder of this section, we construct a map relating the ring $QR(\Gamma)$ to the $K$-theory of any reasonable space with fundamental group $\Gamma$.  The following definitions follows Atiyah \cite[Section 4]{Atiyah:1961uq}.\footnote{Our notation however follows Kasparov \cite[Section 9]{Kasparov:1993hc}; Atiyah uses ``$\mathcal{K}(X)$'' for what we have called ``$LK^0(X)$''.}

\begin{definition}\label{lim kt}
Let $X$ be a Hausdorff topological space   We define the \emph{limit $K$-theory} of $X$ to be the inverse limit 
$$
LK^0(X):= \lim_{\leftarrow} K^0(Y)
$$
where $Y$ ranges over all compact subspaces of $X$, and the connecting maps in the inverse system are the usual restriction maps functorially induced by inclusion.  Here each $K^0(Y)$ is considered as a topological ring (with the discrete topology), and the inverse limit is defined in the category of topological rings, so $LK^0(X)$ is considered as a topological ring (compare Definition \ref{inv lim def}).
\end{definition}

\begin{remark}\label{bun rem}
Note that if $E$ is an honest (locally trivial, finite-dimensional) complex vector bundle over $X$, then $E$ canonically defines a class $[E]\in LK^0X)$: indeed, its restrictions to compact subspaces define a coherent family, so an element of the inverse limit group.  In particular, if $X$ is path connected and admits a universal cover, $\Gamma$ is the fundamental group of $X$, and if $\pi:\pi_1(X)\to U(V)$ is a finite-dimensional unitary representation, then we get the associated flat bundle $E_\pi:=\widetilde{X}\times_\Gamma V$ over $X$, and associated class $[E_\pi]\in LK^0X)$ (here $\widetilde{X}$ is the universal cover of $X$ with action of $\Gamma$ by deck transformations, and ``$\widetilde{X}\times_\Gamma V$'' means the quotient of $\widetilde{X}\times V$ by the diagonal action $g(x,v)=(gx,\pi(g)v)$).
\end{remark}

Let now $X$ be a paracompact, Hausdorff, path connected, locally path connected, and semilocally simply connected topological space; we will only actually use the case where $X=B\Gamma$ is a CW complex model for the classifying space for a discrete group $\Gamma$, but it seemed simpler to work with some extra generality.  The connectedness assumptions imply that there is a universal cover $\pi:\widetilde{X}\to X$ of $X$ (see for example \cite[Theorem 82.1]{Munkres:2000fy}).  Let $\Gamma$ be the fundamental group of $X$, and have $\Gamma$ act on $\widetilde{X}$ on the left by deck transformations.  The following ad-hoc definition will be useful.

\begin{definition}\label{cov dat}
Let $X$, $\Gamma$, and $\pi:\widetilde{X}\to X$ be as in the discussion above.  A collection of \emph{covering coordinate data} for $X$ consists of the following:
\begin{enumerate}[(i)]
\item \label{cov co} a locally finite cover $\mathcal{U}$ of $X$ and for each $U\in \mathcal{U}$ a $\Gamma$-equivariant homeomorphism $\psi_U:\pi^{-1}(U)\to U\times \Gamma$ such that $\pi(\psi_U^{-1}(x,g))=s$ for all $x\in U$ and $g\in \Gamma$ (here $\Gamma$ equivariance is with respect to the action on $\pi^{-1}(U)$ by deck transformations, and the product action on $U\times \Gamma$ coming from the trivial action on $U$, and the left translation action of $\Gamma$ on itself);
\item a partition of unity $(\chi_U)_{U\in \mathcal{U}}$ on $X$ subordinate to $\mathcal{U}$.
\end{enumerate}
\end{definition}

Note that the assumptions imply that a collection of covering coordinate data always exists: indeed, by definition of the universal cover there exists a cover and homeomorphisms with the properties in part \eqref{cov co}, other than the cover may not be locally finite; as $X$ is paracompact and Hausdorff, this cover then has a locally finite refinement that admits a partition of unity (see for example \cite[Theorem 41.7]{Munkres:2000fy}).

Now, fix a collection of covering coordinate data for $X$, and let $U,V\in \mathcal{U}$ with $U\cap V\neq \varnothing$.  Then the composition 
$$
\psi_V\circ \psi_U^{-1}:(U\cap V)\times \Gamma\to (U\cap V)\times \Gamma
$$
is necessarily of the form $(x,h)\mapsto (x,hg_{VU})$ for some element $g_{VU}\in \Gamma$.  Note that if $U\cap V\cap W\neq \varnothing$, then 
$$
g_{VU}\circ g_{UW}=\pi_V\circ \pi_U^{-1}\circ \pi_U\circ \pi_W^{-1}=g_{VW},
$$
i.e.\ the family $(g_{VU})$ satisfies the cocycle identity.  

Variants of the next technical lemma are well-known: we just need the second part now, but record the first part for later use.  

\begin{definition}\label{delta qp}
Let $C$ be a $C^*$-algebra, and $\delta\in (0,1/2)$.  A self-adjoint element $c\in C$ is a \emph{$\delta$-quasiprojection} if the spectrum of $c$ does not intersect $[\delta,1-\delta]$.
\end{definition}

\begin{lemma}\label{chi com}
Let $C$ be a $C^*$-algebra. Let $\delta\in (0,1/2)$, and let $E_\delta$ denote the set of $\delta$-quasiprojections in $C$.  Let $\chi$ be the characteristic function of $(1/2,\infty)$.  Then the following hold:
\begin{enumerate}[(i)]
\item \label{chi com est} for any $x\in E_\delta$ and $c\in C$,
$$
\|[\chi(x),c]\|\leq \frac{\|[x,c]\|}{1-2\delta};
$$
\item \label{chi lip} the map $E_\delta\to C$ defined by $x\mapsto \chi(x)$ is $(1-2\delta)^{-1}$-Lipschitz.
\end{enumerate}
\end{lemma}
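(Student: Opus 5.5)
Both parts rest on a resolvent (Cauchy integral) formula for $\chi(x)$. For $x\in E_\delta$, the spectrum of $x$ lies in $(-\infty,\delta)\cup(1-\delta,\infty)$, and $\|x\|$ may be large. I will first reduce to bounded spectrum: $\chi(x)=\chi(f(x))$, where $f$ clamps the real line to $[0,1]$ (identity on $[0,1]$, $0$ below, $1$ above). However, $f$ need not preserve commutator norms with constant $1$, so I prefer to handle general spectrum directly.

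For part \eqref{chi com est}, work in the unitization and write $\chi(x)$ as a spectral projection. Then use the identity
$$
[\chi(x),c]=\chi(x)[\chi(x),c](1-\chi(x))+(1-\chi(x))[\chi(x),c]\chi(x).
$$
The diagonal corners vanish because $\chi(x)$ is a projection. Set $p=\chi(x)$ and $q=1-p$. Then $pcq$ satisfies the Sylvester equation
$$
xp\,(pcq)-(pcq)\,xq=p[x,c]q,
$$
since $p[x,c]q=pxcq-pcxq$ and $x$ commutes with $p,q$. The spectrum of $x$ restricted to the range of $p$ lies in $[1-\delta,\infty)$, and restricted to the range of $q$ it lies in $(-\infty,\delta]$. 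These spectra are separated by a gap of length $1-2\delta$, and $x$ is self-adjoint.

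By the Bhatia--Davis--McIntosh estimate for Sylvester equations with self-adjoint coefficients whose spectra are separated by a gap $d$, we get $\|pcq\|\le \frac{\pi}{2}\|p[x,c]q\|/d$. This loses a factor $\pi/2$, so it has to be sharpened. The sharp version I intend to use is the following. When the two spectra lie on opposite sides of an interval of length $d$, i.e.\ one is $\ge a+d$ and the other $\le a$, the constant is exactly $1/d$. To prove it, shift so the spectra are $\ge d/2$ and $\le -d/2$. Then $X=\int_0^\infty e^{-tA}Ye^{tB}\,dt$ with $A=xp-(a+d/2)$ positive on the range of $p$ and $B=xq-(a+d/2)$ negative on the range of $q$. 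This gives $\|X\|\le\int_0^\infty e^{-td}\,dt\,\|Y\|=\|Y\|/d$. This semigroup trick works for unbounded-above spectrum as well, since $e^{-tA}$ is bounded with norm at most $e^{-td/2}$. This is the key step.

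It then remains to combine the two off-diagonal corners. The operator $[\chi(x),c]=pcq-qcp$ is block off-diagonal, so its norm is $\max(\|pcq\|,\|qcp\|)$. Each corner is bounded by $\|[x,c]\|/(1-2\delta)$, which gives \eqref{chi com est}.

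For part \eqref{chi lip}, take $x,y\in E_\delta$ and apply the same Sylvester argument to $p=\chi(x)$ and $p'=\chi(y)$. We have
$$
\chi(x)-\chi(y)=p(1-p')-(1-p)p'.
$$
The term $p(1-p')$ satisfies $x\,p(1-p')-p(1-p')\,y=p(x-y)(1-p')$, with spectra of $x$ on $p$ at least $1-\delta$ and of $y$ on $1-p'$ at most $\delta$. The semigroup trick gives $\|p(1-p')\|\le\|x-y\|/(1-2\delta)$, and similarly for $(1-p)p'$.

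The difference $p-p'$ is not automatically block off-diagonal, so combining the two pieces needs care. Write $p-p'=p(1-p')-(1-p)p'$. The first term has range in $pH$, while the second has range in $(1-p)H$. Hence $\|p-p'\|^2\le\|p(1-p')\|^2+\|(1-p)p'\|^2$, which a priori loses a factor $\sqrt2$. To remove it, I will use the standard fact that for projections, $\|p-p'\|=\max(\|p(1-p')\|,\|(1-p)p'\|)$. Granting this, the Lipschitz constant is $(1-2\delta)^{-1}$. This combination step is the main obstacle; if the sharp constant fails, a constant such as $\sqrt2/(1-2\delta)$ would still suffice for later use.
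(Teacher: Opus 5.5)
Your proof is correct, but it takes a genuinely different route from the paper. Despite your opening sentence, you never actually use a Cauchy integral.

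\textbf{The paper's route.} The paper proves (ii) with the holomorphic functional calculus, and takes (i) from the same resolvent argument in the cited reference. It writes
$\chi(x)-\chi(y)=\frac{1}{2\pi i}\int_\gamma (z-x)^{-1}(x-y)(z-y)^{-1}\,dz$,
where $\gamma$ is a rectangle whose left side lies on $\mathrm{Re}\,z=1/2$. Self-adjointness gives $\|(z-x)^{-1}\|\le((1/2-\delta)^2+t^2)^{-1/2}$ at $z=1/2+it$. So that side contributes at most $\frac{\|x-y\|}{2\pi}\int_{\R}\frac{dt}{(1/2-\delta)^2+t^2}=\frac{\|x-y\|}{1-2\delta}$, and the other three sides vanish as the rectangle grows. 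For (i) one uses $[(z-x)^{-1},c]=(z-x)^{-1}[x,c](z-x)^{-1}$ in the same way.

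\textbf{Your route, and what each buys.} You split into corners relative to $p=\chi(x)$ and $p'=\chi(y)$. Each off-diagonal corner solves a Sylvester equation whose coefficients have spectra on opposite sides of a gap of length $1-2\delta$. You then bound it with the semigroup integral $\int_0^\infty e^{-tA}Ye^{tB}\,dt$. This is the spectral-gap ($\sin\Theta$-type) mechanism. It shows transparently that the constant is exactly the reciprocal of the gap. It also yields finer corner-wise bounds such as $\|p(1-p')\|\le\|p(x-y)(1-p')\|/(1-2\delta)$. The contour argument instead treats (i) and (ii) uniformly with a single resolvent identity and needs no corner bookkeeping.

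\textbf{Two points to tighten.}
\begin{enumerate}[(a)]
\item You should say why $pcq$ actually equals the integral. The map $t\mapsto e^{-tA}Xe^{tB}$ has derivative $-e^{-tA}Ye^{tB}$ and tends to $0$ as $t\to\infty$. Integrating gives $X=\int_0^\infty e^{-tA}Ye^{tB}\,dt$; alternatively, quote Rosenblum's uniqueness theorem. The reversed-sign version of this formula handles $qcp$ and $(1-p)p'$.
\item The fact you merely ``grant'' in (ii) is true, so the combination step is not an obstacle. Indeed, $(p-p')^2=p(1-p')p+(1-p)p'(1-p)$ commutes with $p$, hence $\|p-p'\|^2=\max\{\|p(1-p')\|^2,\|(1-p)p'\|^2\}$. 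So you do get the sharp constant $(1-2\delta)^{-1}$. The $\sqrt2$ fallback is unnecessary, and it would not prove the lemma as stated anyway.
\end{enumerate}
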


Examples in $M_2(\C)$ based on the pair $x=\begin{psmallmatrix} \delta & 0 \\ 0 & 1-\delta\end{psmallmatrix}$ and $c=\begin{psmallmatrix} 0 & 1 \\ 1 & 0 \end{psmallmatrix}$ show that the estimates obtained are in some sense optimal: we leave the precise statements to the interested reader.

\begin{proof}[Proof of Lemma \ref{chi com}]
Part \eqref{chi com est} is \cite[Lemma 2.2.4]{Willett:2021te}.  Part \eqref{chi lip} follows from essentially the same argument as in \cite[Lemma 2.2.4]{Willett:2021te}; we give the argument for the reader's convenience.  Indeed, given $x,y\in E_\delta$, we use the holomorphic functional calculus to write 
$$
\chi(x)-\chi(y)=\frac{1}{2\pi i} \int_\gamma (z-x)^{-1}-(z-y)^{-1}dz,
$$
where $\gamma$ is the positively oriented contour with vertices $\frac{1}{2}\pm iN$ and $N+\frac{1}{2}\pm iN$ for some $N$ satisfying $N>\max\{\|y\|,\|x\|\}$.  Using the resolvent identity
$$
(z-x)^{-1}-(z-y)^{-1}=(z-x)^{-1}(x-y)(z-y)^{-1}
$$ 
we estimate 
$$
\|\chi(x)-\chi(y)\|\leq \frac{1}{2\pi} \int_\gamma \|(z-x)^{-1}\|\|x-y\|\|(z-y)^{-1}\|d|z|. 
$$
Let $\gamma_1$ be the side of $\gamma$ given by $\{\frac{1}{2}+it\mid -N\leq t\leq N\}$, and $\gamma_2$ be the union of the other three sides, so we have 
\begin{align}\label{chi est1}
\|\chi(x)-\chi(y)\|\leq \frac{\|x-y\|}{2\pi} \Bigg(\int_{\gamma_1} \nonumber & \|(z-x)^{-1}\|\|(z-y)^{-1}\|d|z| \\ &+\int_{\gamma_2}\|(z-x)^{-1}\|\|(z-y)^{-1}\|d|z|\Bigg).
\end{align}  
The continuous functional calculus implies that $\|(z-x)^{-1}\|\leq ((\frac{1}{2}-\delta)^2+t^2)^{-1/2}$ for $z$ in the image of $\gamma_1$, and similarly for $\|(z-y)^{-1}\|$, whence
\begin{align}\label{gam1 est}
\int_{\gamma_1}\|(z-x)^{-1}\|\|(z-y)^{-1}\|d|z| & \leq \int_{-N}^N\frac{1}{(\frac{1}{2}-\delta)^2+t^2}dt\leq \int_{-\infty}^\infty\frac{1}{(\frac{1}{2}-\delta)^2+t^2}dt \nonumber \\ & =\frac{\pi}{\frac{1}{2}-\delta}.
\end{align}
On the other hand, for $z$ in the image of $\gamma_2$, we have that $\|(z-x)^{-1}\|\leq (N-\|x\|)$ and similarly for $y$.  As $\gamma_2$ has length $3N$, we thus see that 
\begin{equation}\label{gam2 est}
\int_{\gamma_2}\|(z-x)^{-1}\|\|(z-y)^{-1}\|d|z|\leq \frac{3N}{(N-\|x\|)(N-\|y\|)}.
\end{equation}
Combining lines \eqref{chi est1}, \eqref{gam1 est} and \eqref{gam2 est} and letting $N\to\infty$, the result follows.
\end{proof}

\begin{lemma}\label{coass tech lem}
Let now $X$ be a paracompact, Hausdorff, path connected, locally path connected, and semilocally simply connected topological space with fundamental group $\Gamma$.  let  $(\mathcal{U},(\psi_U),(\chi_U))$ be a fixed collection of covering coordinate data as in Definition \ref{cov dat} above.  

For a compact subspace $Y$ of $X$, let $U_1,...,U_n$ be an enumeration\footnote{Local finiteness of the cover and compactness of $Y$ imply that there are only finitely many such sets.} of the elements of $\mathcal{U}$ that non-trivially intersect $Y$.  Let $\chi$ be the characteristic function of $(1/2,\infty)$.  Write $g_{ij}$ as shorthand for $g_{U_iU_j}$ and $\chi_{i}$ as shorthand for $\chi_{U_i}$.  Let $S_Y:=\{g_{ij}\mid i,j=1,...,n\}\subseteq \Gamma$, and let $\epsilon_Y:=1/24n^3$.  The following hold.
\begin{enumerate}[(i)]
\item \label{con proj} Let $\phi:S_Y\to U(V)$ be an $(S_Y,\epsilon_Y)$-representation.  Let $\psi:S_Y\to U(V)$ be a unital function such that $\|\psi(s)-\phi(s)\|<\epsilon_Y$ for all $s\in S_Y$, and such that $\psi(s^{-1})=\psi(s^*)$ for all $s\in S_Y$ such that $s^{-1}$ is also in $S_Y$; such a $\psi$ exists by Lemma \ref{sa approx}.  Then the formula 
$$
p_\phi:=\chi\Bigg(\sum_{i,j=1}^n e_{ij}\otimes \psi(g_{ij})\otimes \chi_i^{1/2}\chi_j^{1/2}\Bigg)
$$
defines a projection in $M_n(\C)\otimes \mathcal{B}(V)\otimes C(Y)$ (here $e_{ij}$ is the usual matrix unit of $M_n(\C)$ and we abuse notation slightly by identifying each $\chi_i$ with its restriction to $Y$). 
\item \label{coass base hom} With notation as in part \eqref{con proj}, if $(S,\epsilon)\geq (S_Y,\epsilon_Y)$ in the directed set $I_\Gamma$ of Definition \ref{gp order}, then the assignment $[\phi]\to [p_\phi]$ determines a well-defined group homomorphism $\alpha:K_\epsilon(S)\to K^0(Y)$ (depending on the choice of covering coordinate data).
\item \label{coass nat} If $Y\subseteq Z$ are finite subcomplexes of $X$ and $S_Y$, $S_Z$ and $\epsilon_Y$, $\epsilon_Z$ are as defined above, then for any $(S,\epsilon)\geq (S_Y,\epsilon_Y)$ and any $(T,\delta)\geq (S\cup S_Z,\min\{\epsilon_Z,\epsilon))$ in the directed set $I_\Gamma$ of Definition \ref{gp order}, the diagram 
$$
\xymatrix{K_\delta(T) \ar[r] \ar[d] &  K_\epsilon(S) \ar[d] \\
K^0(Z) \ar[r] & K^0(Y) },
$$
commutes, where: the top horizontal map is the usual forget control map; the bottom horizontal map is the usual restriction morphism; and the vertical maps are the homomorphisms from part \eqref{coass base hom}.
\end{enumerate}
\end{lemma}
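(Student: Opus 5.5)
The plan is to show that the matrix inside $\chi$ is a self-adjoint element that is nearly idempotent, hence a $\delta$-quasiprojection, and then get everything else from Lemma \ref{chi com}, homotopy invariance of $K^0$ and the universal property of the Grothendieck group.

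\textbf{Part \eqref{con proj}.} Write
$$
q_\psi:=\sum_{i,j=1}^n e_{ij}\otimes \psi(g_{ij})\otimes \chi_i^{1/2}\chi_j^{1/2}.
$$
Terms with $U_i\cap U_j=\varnothing$ vanish, since then $\chi_i\chi_j=0$; so only genuinely defined $g_{ij}$ occur.
\begin{enumerate}[(a)]
\item Self-adjointness. The cocycle identity gives $g_{ji}=g_{ij}^{-1}$ and $g_{ii}=1$. The choice of $\psi$ (so that $\psi(s^{-1})=\psi(s)^*$) then gives $q_\psi=q_\psi^*$.
\item The $(i,j)$ entry of $q_\psi^2$ is $\sum_k \psi(g_{ik})\psi(g_{kj})\,\chi_i^{1/2}\chi_k\chi_j^{1/2}$. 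At any point where $\chi_i\chi_k\chi_j\neq 0$ we have $U_i\cap U_j\cap U_k\neq\varnothing$, so $g_{ik}g_{kj}=g_{ij}$. Hence
$$
\|\psi(g_{ik})\psi(g_{kj})-\psi(g_{ij})\|<3\epsilon_Y,
$$
using two $\epsilon_Y$-perturbations from $\psi$ to $\phi$ and one almost-multiplicativity error.
\item Since $\sum_k\chi_k=1$ on $Y$, each entry of $q_\psi^2-q_\psi$ has norm at most $3\epsilon_Y$ pointwise (it is a convex combination of the errors in (b)). Bounding the block matrix norm by the sum of its entry norms gives
$$
\|q_\psi^2-q_\psi\|\le 3n^2\epsilon_Y=1/(8n)\le 1/8.
$$
\item The spectral mapping theorem then puts the spectrum of $q_\psi$ in $\{t:|t^2-t|\le 1/8\}$, which misses $[\delta,1-\delta]$ for an explicit $\delta<1/2$. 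So $\chi$ is continuous on the spectrum and $p_\phi=\chi(q_\psi)$ is a projection.
\end{enumerate}

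\textbf{Part \eqref{coass base hom}.} First note that any $(S,\epsilon)$-representation with $(S,\epsilon)\ge(S_Y,\epsilon_Y)$ restricts to an $(S_Y,\epsilon_Y)$-representation. Then I would check three things.
\begin{enumerate}[(a)]
\item Independence of $\psi$. Two admissible choices $\psi,\psi'$ are within $2\epsilon_Y$ of each other, so $\|q_\psi-q_{\psi'}\|\le 2n^2\epsilon_Y$, which is small. Every point on the straight-line path between them still satisfies an estimate $\|c^2-c\|<1/4$, so it remains a quasiprojection. Lemma \ref{chi com}\eqref{chi lip} makes $\chi$ of this path a norm-continuous path of projections, so $[p_\phi]$ does not depend on $\psi$.
\item Additivity. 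Choosing $\psi_{\phi\oplus\theta}=\psi_\phi\oplus\psi_\theta$ gives $p_{\phi\oplus\theta}=p_\phi\oplus p_\theta$ after a canonical shuffle of tensor factors. Conjugation by this shuffle unitary does not change $K^0$-classes.
\item Invariance under stable equivalence. Suppose $\|v(\phi\oplus\theta)v^*-\phi'\oplus\theta\|<\epsilon$ on $S$. Then $(1\otimes v\otimes1)\,q\,(1\otimes v\otimes 1)^*$ is an admissible-type perturbation of $q_{\phi'\oplus\theta}$. The same path argument as in (a), together with unitary invariance of $K$-classes, gives $[p_\phi]+[p_\theta]=[p_{\phi'}]+[p_\theta]$.
\end{enumerate}
These show the assignment is a monoid map $V_\epsilon(S)\to K^0(Y)$, and the universal property of the Grothendieck group yields the homomorphism $\alpha:K_\epsilon(S)\to K^0(Y)$.

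\textbf{Part \eqref{coass nat}.} Forget control is the identity on cycles. List the sets of $\mathcal U$ meeting $Z$ so that those meeting $Y$ come first, and take the $\psi$ chosen for $Z$, restricted to $S_Y$, as the admissible $\psi$ for $Y$; this is allowed by the independence in part \eqref{coass base hom}(a).
\begin{enumerate}[(a)]
\item The extra $\chi_k$ vanish on $Y$. So $q^Z|_Y$ is $q^Y$ padded by zero rows and columns.
\item Since $\chi(0)=0$, restriction commutes with functional calculus. Hence $p^Z_\phi|_Y=p^Y_\phi\oplus 0$, which has the same class.
\end{enumerate}

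The main obstacle will be the quantitative bookkeeping in part \eqref{coass base hom}. One must check that all the straight-line homotopies stay inside the set of $\delta$-quasiprojections, with $\epsilon_Y=1/24n^3$ leaving enough room. This uses the spectral estimate that a self-adjoint $c$ with $\|c^2-c\|<1/4$ has spectrum avoiding $1/2$, plus the Lipschitz bound of Lemma \ref{chi com}.
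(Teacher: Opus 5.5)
Your proposal is correct and follows the paper's argument: self-adjointness from $g_{ji}=g_{ij}^{-1}$ and the inverse-preserving $\psi$, a bound on $\|q_\psi^2-q_\psi\|$ giving a spectral gap, and then Lemma \ref{chi com}\eqref{chi lip} plus naturality of $\chi$ under unitary conjugation for independence of $\psi$ and for invariance under stable equivalence. The paper compares endpoints directly (e.g.\ $\|\chi(uau^*)-\chi(a')\|<2/8<1$) instead of using a straight-line homotopy, which spares you checking a uniform gap along the path, and it leaves part \eqref{coass nat} to the reader, where your zero-padding argument is what is needed.

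Two small points are shared with the paper and neither affects the method. First, passing from $\psi(g_{ik})\psi(g_{kj})$ to $\psi(g_{ij})$ costs $4\epsilon_Y$, not $3\epsilon_Y$ (three $\psi$-to-$\phi$ swaps plus one multiplicativity error); this still gives $\|q_\psi^2-q_\psi\|\le 1/6$, which keeps the spectrum off $[1/4,3/4]$. Second, with the stated convention $\psi_V\psi_U^{-1}(x,h)=(x,hg_{VU})$, composing right multiplications actually gives $g_{ij}=g_{kj}g_{ik}$, so strictly the $(i,j)$ entries should be $\psi(g_{ji})$ rather than $\psi(g_{ij})$, which is a harmless relabelling.
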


\begin{proof}
For part \eqref{con proj}, define 
\begin{equation}\label{app}
a:=\sum_{i,j=1}^n e_{ij}\otimes \psi(g_{ij})\otimes \chi_i^{1/2}\chi_j^{1/2}\in M_n(\C)\otimes \mathcal{B}(V)\otimes C(Y).
\end{equation}
As $g_{ij}^{-1}=g_{ji}$ and as $\psi(g^{-1})=\psi(g)^*$ for all $g\in S_Y$, we see that $a$ is self-adjoint.  Moreover, 
$$
a^2=\sum_{i,j,k=1}^n e_{ij}\otimes \psi(g_{ik})\psi(g_{kj})\otimes \chi_i^{1/2}\chi_k\chi_j^{1/2}.
$$
As $g_{ik}g_{kj}=g_{ij}$ on the support of $\chi_i^{1/2}\chi_j\chi_k^{1/2}$ and as $\psi$ is an $(S,3/(24n^3))$-representation, this is less than $1/8$ away from 
$$
\sum_{i,j,k=1}^n e_{ij}\otimes \psi(g_{ij})\otimes \chi_i^{1/2}\chi_k\chi_j^{1/2}=a.
$$ 
As $\|a^2-a\|<1/8$, we have that the spectrum of $a$ avoids $[1/4,3/4]$, and therefore $\chi(a)$ is a well-defined projection in $M_n(\C)\otimes \mathcal{B}(V)\otimes C(Y)$ as claimed.

For part \eqref{coass base hom}, we first show that $p_\phi$ does not depend on the choice of $\psi$.  Indeed, if $\psi$ and $\psi'$ are any two choices satisfying the conditions in part \eqref{con proj}, then $\|\psi(s)-\psi'(s)\|<1/(12n^3)$ for all $s\in S_Y$.  Hence if $a$ and $a'$ are as in line \eqref{app} associated to $\psi$ and $\psi'$ respectively, we have $\|a-a'\|<1/(12n)\leq 1/12$.  Using Lemma \ref{chi com} part \eqref{chi lip} and the fact observed above that the spectra of $a$ and $a'$ both avoid $[1/4,3/4]$, we see that $\|\chi(a)-\chi(a')\|<2/12$.  Hence the projections $\chi(a)$ and $\chi(a')$ define the same class in the $K$-theory of $Y$ as claimed.  

We next show that the map $\phi\mapsto [p_\phi]$ descends to a well-defined monoid homomorphism $V_{\epsilon}(S)\to K^0(Y)$; this monoid homomorphism will then induce the desired homomorphism on the Grothendieck group.  Indeed, as this assignment clearly sends block sums to block sums, it suffices to show that if $\phi:S\to V$ and $\phi':S\to V$ are $(S,\epsilon)$ representations and $u\in U(V)$ is a unitary such that $\|u\phi(s)u^*-\phi'(s)\|<\epsilon_Y$ for all $s\in S_Y$, then $[p_\phi]$ and $[p_{\phi'}]$ are the same class in $K^0(Y)$.  Indeed, let $\psi$ and $\psi'$ be as in part \eqref{con proj} for $\phi$ and $\phi'$ respectively, and let $a$ and $a'$ be built from $\psi$ and $\psi'$ as in line \eqref{app}.  Then 
\begin{align*}
\|u\psi(s)u^*-\psi'(s)\| & \leq \|u(\phi(s)-\psi(s))u^*\|+\|u\phi(s)u^*-\phi'(s)\|+\|\phi'(s)-\psi'(s)\| \\ & <3\epsilon\leq 3\epsilon_Y=1/(8n^3).
\end{align*}
Hence $\|uau^*-a'\|< 1/(8n)\leq 1/8$ (here we abuse notation, identifying $u$ with the unitary $1_{M_n(\C)}\otimes u\otimes 1_{C(X)}$ in $M_n(\C)\otimes \mathcal{B}(V)\otimes C(X)$).  As argued above, Lemma \ref{chi com} part \eqref{chi lip} thus implies that $\|\chi(uau^*)-\chi(a')\|<2/8$, and thus the projections $\chi(uau^*)$ and $\chi(a')$ define the same class in $K^0(Y)$.  Finally, note that by naturality of the functional calculus, we have that $\chi(uau^*)=u\chi(a)u^*$, whence $\chi(uau^*)$ defines the same class in $K$-theory as $\chi(a)$, and we are done with this part of the argument.

Part \eqref{coass nat} is straightforward from the definitions: we leave the details to the reader.
\end{proof}

We are finally ready for the main result of this section.

\begin{proposition}\label{coass}
Let $X$ be a paracompact, Hausdorff, path connected, locally path connected, and semilocally simply connected topological space with fundamental group $\Gamma$.  Let  $(\mathcal{U},(\psi_U),(\chi_U))$ be a fixed collection of covering coordinate data as in Definition \ref{cov dat} above.  The following hold.
\begin{enumerate}[(i)]
\item \label{hom exists}
The homomorphisms $K_\epsilon(S)\to K^0(Y)$ of Lemma \ref{coass tech lem} part \eqref{coass base hom} fit together to give a well-defined continuous ring homomorphism 
$$
\alpha:QR(\Gamma)\to LK^0X)
$$
on the level of inverse limits (where the left hand side is as in Definition \ref{inv lim approx} and the right hand side is as in Definition \ref{lim kt}).  
\item The homomorphism $\alpha$ from part \eqref{hom exists} depends only on $X$, and not on the collection of covering coordinate data made at the start of the construction.
\item The homomorphism $\alpha$ from part \eqref{hom exists} takes the class of an actual representation (compare Remark \ref{act rep rem}) to the class of the associated flat bundle in $LK^0X)$ (see Remark \ref{bun rem}).
\end{enumerate}
\end{proposition}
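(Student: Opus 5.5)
Part (i) reduces to bookkeeping with inverse limits plus one multiplicativity check. By Lemma \ref{coass tech lem} part \eqref{coass nat}, the maps $K_\epsilon(S)\to K^0(Y)$, defined for $(S,\epsilon)\geq (S_Y,\epsilon_Y)$, are compatible with the forget control and restriction maps. So for $x=([\phi_{(S,\epsilon)}])\in QR(\Gamma)$ I would define $\alpha(x)_Y:=\alpha([\phi_{(S_Y,\epsilon_Y)}])\in K^0(Y)$. Compatibility of the tuple with forget control, together with part \eqref{coass nat}, shows that $(\alpha(x)_Y)_Y$ is a coherent family, hence lies in $LK^0(X)$. Additivity is immediate. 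Continuity follows because each coordinate $\alpha(x)_Y$ factors through the projection $QR(\Gamma)\to K_{\epsilon_Y}(S_Y)$, which is continuous onto a discrete group.

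For multiplicativity, I would check that for an $(S,\epsilon/2)$-representation $\phi$ and a $(S,\epsilon/2)$-representation $\phi'$ with $(S,\epsilon)$ large, the projection $p_{\phi\otimes\phi'}$ is close in $K^0(Y)$ to $p_\phi\otimes_{C(Y)} p_{\phi'}$. The plan is to compare $a_{\phi\otimes\phi'}=\sum e_{ij}\otimes\psi(g_{ij})\otimes\psi'(g_{ij})\otimes\chi_i^{1/2}\chi_j^{1/2}$ with the product $a_\phi a_{\phi'}$, taken after embedding both in $M_n\otimes M_n\otimes\mathcal B(V\otimes V')\otimes C(Y)$ via the isometry $\C^n\to\C^n\otimes\C^n$, $e_i\mapsto e_i\otimes e_i$, weighted by the functions $\chi_i^{1/2}$. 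Concretely, I would use the isometry $W=\sum_i e_i\otimes e_i\otimes \chi_i^{1/2}$ twisted by the coordinate cocycle. Alternatively, and more cleanly, I would interpret $p_\phi$ as the bundle obtained by gluing trivial bundles $U_i\times V$ with transition functions $\psi(g_{ij})$, so that products correspond to tensor products of transition data. The multiplicative identity goes to the trivial line bundle, since the trivial representation yields $a=\sum e_{ij}\otimes\chi_i^{1/2}\chi_j^{1/2}$, a rank one projection with image spanned by $(\chi_i^{1/2})_i$. I expect this multiplicativity estimate to be the main obstacle, and I would handle it via Lemma \ref{chi com}\eqref{chi lip} once an approximate equality of quasiprojections is established.

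For (iii), if $\pi$ is an honest representation I can take $\psi=\pi$. Then $a$ is exactly a projection, namely the standard projection whose range is the clutching description of $E_\pi=\widetilde X\times_\Gamma V$ with transition functions $\pi(g_{ij})$, realized inside the trivial bundle $Y\times \C^n\otimes V$ via the local sections $\chi_i^{1/2}$. The identification is made through the map sending a section $s$ of $E_\pi$ with local trivializations $s_i:U_i\to V$ to $(\chi_i^{1/2}s_i)_i$. This map is an isometric bundle embedding, and its range projection is $a$.

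For (ii), given two choices of covering coordinate data, I would pass to a common refinement: take the union of the two covers with a partition of unity $t\chi_U+(1-t)\chi'_{U'}$, and note that trivializations on overlaps differ by right multiplication by fixed group elements. Either a homotopy in $t\in[0,1]$ of quasiprojections over $Y\times[0,1]$, built using the union cover, shows that the classes agree, or one compares both to the construction for the union data. Enlarging $S$ to include the new transition elements costs nothing in the inverse limit. Since the resulting classes coincide in each $K^0(Y)$ at sufficiently fine $(S,\epsilon)$, $\alpha$ is independent of the choices.
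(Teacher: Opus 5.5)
Your treatment of coherence, additivity and continuity in (i), and your argument for (iii), are the same as the paper's. For (ii) you take a different but valid route. The paper passes to the product cover $\{U\cap V\}$ and compares each construction to it via the isometry \eqref{isometry} and the approximate unitary \eqref{unitary}. You instead use the disjoint union of the two covers with partition of unity $(t\chi_U)_{U}\sqcup((1-t)\chi'_{U'})_{U'}$. The estimate of Lemma \ref{coass tech lem}\eqref{con proj} only uses that the functions sum to one and the cocycle identity on supports. So it runs verbatim over $Y\times[0,1]$, with $n_1+n_2$ in place of $n$ and $T$ containing the mixed transition elements, and $\chi(a_t)$ is a homotopy from $p^{(1)}\oplus 0$ to $0\oplus p^{(2)}$. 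This avoids the conjugation bookkeeping. The paper's device has the advantage that it is reused for multiplicativity, which is exactly where your proposal falls short.

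The multiplicativity step, which you rightly single out, is not carried out, and the concrete mechanism you propose fails. The map $\delta_i\mapsto\chi_i^{1/2}\,\delta_i\otimes\delta_i$ is not an isometry: its $W^*W$ is $\mathrm{diag}(\chi_1,\dots,\chi_n)$. More fundamentally, any embedding through the diagonal $i\mapsto(i,i)$ yields an operator with matrix entries only at positions $((i,i),(j,j))$. By contrast, $a_\phi\otimes a_{\phi'}=\sum_{i,j,k,l}e_{(i,k)(j,l)}\otimes\psi(g_{ij})\otimes\psi'(g_{kl})\otimes(\chi_i\chi_j\chi_k\chi_l)^{1/2}$ has non-negligible entries at all positions $((i,k),(j,l))$ (e.g.\ $\chi_i\chi_k$ at $((i,k),(i,k))$). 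So no approximate equality can come out of it. The gluing alternative is not available either. For a quasi-representation the $\psi(g_{ij})$ do not satisfy the cocycle identity, so there is no bundle glued from them, and $p_\phi$ exists only through the functional calculus. Relating it to an approximate gluing is precisely what would need proof.

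The missing idea is to spread the index over all pairs and then correct the second factor.
First, conjugate $a_{\phi\otimes\phi'}$ by the isometry $\delta_i\otimes w\mapsto\sum_k\delta_i\otimes\delta_k\otimes\chi_k^{1/2}w$ (the analogue of \eqref{isometry}). This puts $\psi(g_{ij})\otimes\psi'(g_{ij})\otimes(\chi_i\chi_j\chi_k\chi_l)^{1/2}$ at position $((i,k),(j,l))$.
Second, conjugate by the constant block-diagonal unitary with $(i,k)$-block $1\otimes\psi'(g_{ki})$ (the analogue of \eqref{unitary}); use $1$ when $U_i\cap U_k=\varnothing$, where the weights vanish anyway. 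Since $\psi'(g_{lj})^*=\psi'(g_{jl})$, the second factor becomes $\psi'(g_{ki})\psi'(g_{ij})\psi'(g_{jl})$. By the cocycle identity on the quadruple overlap, this is within a fixed multiple of $\epsilon$ of $\psi'(g_{kl})$ wherever $\chi_i\chi_j\chi_k\chi_l\neq 0$.
The composite of the two steps is the ``cocycle-twisted'' isometry you were reaching for, $\delta_i\otimes w\mapsto\sum_k\delta_i\otimes\delta_k\otimes\chi_k^{1/2}(1\otimes\psi'(g_{ki}))w$. It is isometric because $\psi'$ is unitary-valued and $\sum_k\chi_k=1$.
Bounding the resulting $n^2\times n^2$ error matrix requires $\epsilon$ small relative to $n^{-2}$. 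It is also safer to compare with the honest projection $p_\phi\otimes p_{\phi'}$ rather than $a_\phi\otimes a_{\phi'}$: at the tolerance of Lemma \ref{coass tech lem}, a tensor product of two quasiprojections can have spectrum meeting $[1/4,3/4]$. Lemma \ref{chi com}\eqref{chi lip} then gives $[p_{\phi\otimes\phi'}]=[p_\phi][p_{\phi'}]$ in $K^0(Y)$.
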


\begin{proof}
The fact that we have a continuous homomorphism $\alpha:QR(\Gamma)\to LK^0X)$ of abelian groups (a priori depending on the collection of covering coordinate data) follows directly from Lemma \ref{coass tech lem}.  

We now show that this homomorphism does not depend on any of the choices involved; we will show that it is a ring homomorphism.  

To show that the homomorphism $QR(\Gamma)\to LK^0X)$ does not depend on the choices involved, let $(\mathcal{U}^{(i)},(\psi_U^{(i)})_{U\in \mathcal{U}},(\chi_U^{(i)})_{U\in \mathcal{U}})$ for $i\in \{1,2\}$ be two collections of covering coordinate data as in Definition \ref{cov dat}.   Let $\alpha^{(i)}:QR(\Gamma)\to LK^0X)$ be the homomorphisms constructed from this data as in part \eqref{hom exists}.  Let $Y$ be a compact subset of $X$, and let $(S_1,\epsilon_1)$ and $(S_2,\epsilon_2)$ be such that the maps $\alpha^{(i)}:K_{\epsilon_i}(S_i)\to K^0(Y)$ from Lemma \ref{coass tech lem} are well-defined based on the two sets of data.  We need to find a pair $(T,\delta)$ in the directed set $I_\Gamma$ of Definition \ref{gp order} that dominates $(S_1,\epsilon_1)$ and $(S_2,\epsilon_2)$, and with the property that the diagram
$$
\xymatrix{ & K_{\epsilon_1}(S_1)\ar[dr]^{\alpha^{(1)}} & \\
K_\delta(T) \ar[ur] \ar[dr] & & K^0(Y) \\
& K_{\epsilon_2}(S_2) \ar[ur]_{\alpha^{(2)}} & }
$$
commutes (the unlabeled arrows on the left are the usual forget control maps).  

For this, consider a new collection of covering coordinate data with: open sets of the form $U\cap V$ with $U\in \mathcal{U}^{(1)}$, $V\in \mathcal{U}^{(2)}$, and where the index set is considered to be $\mathcal{U}^{(1)}\times \mathcal{U}^{(2)}$; the coordinate function associated to a pair $(U,V)$ is the restriction of $\psi_U^{(1)}$ to $U\cap V$; and the partition of unity is $(\psi_U\psi_V)_{(U,V)\in \mathcal{U}\times \mathcal{V}}$.  Let now $(T,\delta)$ be such that $\delta\leq \min\{\epsilon_1,\epsilon_2,1/(16(n_1n_2)^2)\}$, and such that $T$ contains all elements of $S_1$, all elements of $S_2$, and also all group elements corresponding to the compositions $\psi^{(1)}_{U_i}\circ (\psi^{(2)}_{V_k})^{-1}$ as $i$ ranges over $\{1,...,n_1\}$ and $k$ over $\{1,...,n_2\}$, and their inverses, and also all products of at most three elements from these sets; enlarging $T$ and shrinking $\delta$, we may also assume that the map $K_\delta(T)\to K^0(Y)$ of Lemma \ref{coass tech lem} is well-defined for this new `product' covering coordinate data.  We claim that this $T$ and $\delta$ have the right properties.

Now, let $\phi:S\to U(V)$ be a $(T,\delta)$-representation.  Let $U_1,...,U_{n_1}$ be the elements of $\mathcal{U}^{(1)}$ that have non-empty intersection with $Y$, and similarly for $V_1,...,V_{n_2}$ and $\mathcal{U}^{(2)}$.  For $i,j\in \{1,...,n_1\}$, let $g_{ij}$ be the group element corresponding to $\psi_{U_i}^{(1)}\circ (\psi_{U_j}^{(1)})^{-1}$, and for $k,l\in \{1,...,n_2\}$, let $h_{kl}$ be the group element corresponding to $\psi_{V_k}^{(2)}\circ (\psi_{V_l}^{(2)})^{-1}$.  With $\chi$ as usual the characteristic function of $(1/2,\infty)$, we want to show that the classes of the projections
\begin{equation}\label{sps}
\chi\Bigg(\sum_{i,j=1}^{n_1}e_{ij}\otimes \phi(g_{ij})\otimes (\chi^{(1)}_i\chi_j^{(1)})^{1/2}\Bigg)\quad \text{and}\quad \chi\Bigg(\sum_{k,l=1}^{n_2}e_{kl}\otimes \phi(h_{kl})\otimes (\chi^{(2)}_k\chi_l^{(2)})^{1/2}\Bigg)
\end{equation}
are the same.  We will do this by showing that they both agree with the class of the projection 
\begin{equation}\label{bp}
\chi\Bigg(\sum_{i,j=1;k,l=1}^{n_1;n_2}e_{(i,k)(j,l)}\otimes \phi(g_{ij})\otimes (\chi^{(1)}_i\chi^{(2)}_k)^{1/2}(\chi_j^{(1)}\chi^{(2)}_l)^{1/2}\Bigg)
\end{equation}
which we think of as the image of the class of $\phi$ under the homomorphism $K_\delta(T)\to K^0(Y)$ defined using the `product data' (here we think of $e_{(i,j)(k,l)}$ as a matrix unit in $M_{n_1\times n_2}(\C)$).

We first show that the first element in line \eqref{sps} equals the element in line \eqref{bp}.  Indeed, let $\delta_1,...,\delta_n$ be the standard basis of $\C^{n_1}$, and let $\C^{n_1\times n_2}$ denote the vector space with basis $\{\delta_{(i,j)}\mid i\in \{1,...,n_1\},j\in \{1,...,n_2\}\}$ and define a map 
\begin{equation}\label{isometry}
v:\C^{n_1}\otimes V\otimes C(X)\to \C^{n_1\times n_2}\otimes V\otimes  C(X),\quad \delta_i\otimes \xi\otimes  a\mapsto \sum_{j=1}^{n_2}\delta_{(i,j)}\otimes \xi\otimes \chi_j^{1/2}a.
\end{equation}
Then one checks that $v$ is an isometry (of Hilbert $C(X)$-modules) and moreover that
\begin{align*}
v\Bigg(\sum_{i,j=1}^{n_1}&e_{ij}\otimes \phi(g_{ij})\otimes (\chi^{(1)}_i\chi_j^{(1)})^{1/2}\Bigg)v^* \\ & =\Bigg(\sum_{i,j=1;k,l=1}^{n_1;n_2}e_{(i,k)(j,l)}\otimes \phi(g_{ij})\otimes (\chi^{(1)}_i\chi^{(2)}_k)^{1/2}(\chi_j^{(1)}\chi^{(2)}_l)^{1/2}\Bigg).
\end{align*}
As isometries act as the identity on $K$-theory, and as the functional calculus is natural, this gives the equality of the class defined by the the first element in line \eqref{sps} and the element in line \eqref{bp}.  

Similarly, we have the equality of class defined by the second element in line \eqref{sps} with the class of the element 
\begin{equation}\label{pw h}
\chi\Bigg(\sum_{i,j=1;k,l=1}^{n_1;n_2}e_{(i,k)(j,l)}\otimes \phi(h_{kl})\otimes (\chi^{(1)}_i\chi^{(2)}_k)^{1/2}(\chi_j^{(1)}\chi^{(2)}_l)^{1/2}\Bigg).
\end{equation}
Now, for each $i\in \{1,...,n_1\}$ and $k\in \{1,...,n_2\}$, define $f_{ik}$ to be the group element corresponding to $\psi^{(1)}_{U_i}\circ (\psi^{(2)}_{V_k})^{-1}$.  Define a unitary in $M_{n_1\times n_2}(\C)\otimes \mathcal{B}(V)$ by 
\begin{equation}\label{unitary}
u:=\sum_{i,j=1;k,l=1}^{n_1;n_2} e_{(i,i)(k,k)}\otimes \phi(f_{ik}).
\end{equation}
Then one checks that for each $i,k$, $\|\phi(f_{ik})\phi(h_{kl})\phi(f_{jl}^{-1})-\phi(g_{ij})\|<2\delta<1/8(n_1n_2)^2$, whence the difference
\begin{align*}
u\Bigg(\sum_{i,j=1;k,l=1}^{n_1;n_2} & e_{(i,k)(j,l)}\otimes \phi(h_{kl})\otimes (\chi^{(1)}_i\chi^{(2)}_k)^{1/2}(\chi_j^{(1)}\chi^{(2)}_l)^{1/2}\Bigg)\\&-\Bigg(\sum_{i,j=1;k,l=1}^{n_1;n_2}e_{(i,k)(j,l)}\otimes \phi(g_{ij})\otimes (\chi^{(1)}_i\chi^{(2)}_k)^{1/2}(\chi_j^{(1)}\chi^{(2)}_l)^{1/2}\Bigg)
\end{align*}
has norm at most $1/8$.  Using Lemma \ref{chi com} part \eqref{chi lip} and naturality of the functional caclulus, we therefore see that the class of the element in line \eqref{pw h} equals the class of the element in line \eqref{bp} in $K^0(Y)$.  This completes the proof of well-definedness.

We next check that the abelian group homomorphism that we have constructed is a ring homomorphism.  We need to show that for any compact subset $Y$ of $X$ there is a finite subset $T$ of $\Gamma$ containing $S_Y$ and $\delta<\epsilon_Y$ such that if $\phi$ and $\psi$ are in $K_\delta(T)$, then the classes of $p_{\phi\otimes \psi}$ and $\Delta^*(p_\phi\otimes p_\psi)$ are the same in $K^0(Y)$ (here $p_\phi\otimes p_\psi$ defines a class in a matrix algebra over $C(Y\times Y)$, and $\Delta^*$ is the dual map to the diagonal inclusion $\Delta:Y\to Y\times Y$, amplified to matrices as necessary).  We compute: with our usual conventions
\begin{equation}\label{p phi psi}
p_{\phi\otimes \psi}=\chi\Bigg(\sum_{i,j=1}^n e_{ij}\otimes \phi(g_{ij})\otimes\psi(g_{ij})\otimes \chi_i^{1/2}\chi_j^{1/2}\Bigg)
\end{equation}
and 
$$
\Delta^*(p_\phi\otimes p_\psi)=\chi\Bigg(\sum_{i,j,k,l=1}^n e_{(i,j)(k,l)}\otimes \phi(g_{ij})\otimes \psi(g_{kl})\otimes \chi_{i}^{1/2}\chi_j^{1/2}\chi_k^{1/2}\chi_l^{1/2}\Bigg).
$$
The proof is now similar to that of well-definedness.  Indeed, using conjugation by an appropriate isometry analogous to the one in line \eqref{isometry}, the element in line \eqref{p phi psi} can be replaced with 
$$
\chi\Bigg(\sum_{i,j,k,l=1}^n e_{(i,j)(k,l)}\otimes \phi(g_{ij})\otimes \phi(g_{ij})\otimes \chi_{i}^{1/2}\chi_j^{1/2}\chi_k^{1/2}\chi_l^{1/2}\Bigg).
$$
Then up to conjugation by an approximate unitary analogous to the one in line \eqref{unitary}, and an approximation, we can replace the second tensor factor $\phi(g_{ij})$ above with $\phi(g_{kl})$ as long as $T$ is suitably large, and $\delta$ suitably small.  We leave the details to the reader.

For the last statement, the point is that if $\pi$ is an honest representation, the the projection 
$$
p_\pi=\sum_{i,j=1}^n e_{ij}\otimes \pi(g_{ij})\otimes \chi_i^{1/2}\chi_j^{1/2}
$$
represents the flat bundle $E_\pi$ under the usual Serre-Swan correspondence.  We leave the remaining details to the reader.
\end{proof}

Specializing to the case when $X$ is a CW-complex model for the classifying space of $\Gamma$, we have the following definition.

\begin{definition}\label{aas}
The \emph{approximate Atiyah-Segal homomorphism} is the homomorphism 
$$
\alpha:QR(\Gamma)\to LK^0B\Gamma)
$$
arising from specializing the map in Proposition \ref{coass} to the case $X=B\Gamma$.
\end{definition}

\begin{remark}
The classical Atiyah-Segal map is built originally as a map $\alpha:R(\Gamma)\to LK^0B\Gamma)$ for a finite group $\Gamma$, and then extended to a certain completion of the left hand side.  Precisely, one defines the \emph{augmentation ideal} $I(\Gamma)$ as the kernel of the map $\text{dim}:R(\Gamma)\to \Z$ which takes a formal difference of representations to the corresponding difference of dimensions of the underlying Hilbert spaces.  One then shows that $\alpha:R(\Gamma)\to LK^0B\Gamma)$ is continuous when the left hand side is equipped with the $I(\Gamma)$-adic topology (see for example \cite[page 105]{Atiyah:1969ab}) and the right hand side with the inverse limit topology (compare for example \cite[page 46]{Atiyah:1961uq}), and so extends to the completion.

Now, let $\Gamma$ be a possibly infinite group.  One can show that there is still a well-defined `dimension' homomorphism $\text{dim}:QR(\Gamma)\to \Z$, and associated `augmentation ideal' $QI(\Gamma)$ defined as its kernel.  Moreover, one can show that the map $\alpha:QR(\Gamma)\to LK^0\Gamma)$ is continuous for the $QI(\Gamma)$-adic topology: the key point (which is the same in the classical case) is that $\alpha(QI(\Gamma))$ is contained in the reduced $K$-theory $\widetilde{K}^0(B\Gamma)$, and that elements of $\widetilde{K}^0(B\Gamma)$ are multiplicatively nilpotent when restricted to finite subcomplexes (see for example \cite[Corollary 3.1.6 (and page 120 for notation)]{Atiyah:1967kn}).  Thus in the quasi-representation context, we again get a ring homomorphism
$$
\alpha:\widehat{QR(\Gamma)}\to LK^0B\Gamma).
$$
This is a more direct generalization of the classical case, and seems likely to have better properties than the map from Definition \ref{aas} in the presence of torsion.  As we are mainly interested in torsio-free groups, we do not develop this in detail and focus instead on the homomorphism from line \eqref{aas}.  It seems an interesting avenue for further study.
\end{remark}

\section{Controlled $KK$-theory and quasidiagonality}\label{qd sec}

We now switch subjects: Sections \ref{qd sec} through \ref{pair sec} are written purely in the language of $C^*$-algebra $K$-theory; we will only go back to quasi-representations and vector bundles at the end of the paper.   We also make use of significantly more advanced $C^*$-algebra background: for example the theory of absorbing representations on Hilbert modules, completely positive maps, and quasidiagonality.

In this section we recall the definition of the controlled $KK$-theory groups from \cite{Willett:2020aa}: more precisely, we use the variant from \cite[Appendix A.1]{Willett:2020aa} that is tailored to the case that the algebra in the first (`$K$-homology') variable is unital.  We then reformulate the definition in the presence of a quasidiagonality assumption.

Throughout this section, $A$ and $B$ denote separable $C^*$-algebras with $A$ unital.  All Hilbert modules will be over $B$ and countably generated unless otherwise stated.  We write $\LL(E)$, respectively $\K(E)$, for the $C^*$-algebras of adjointable, respectively compact, operators on a Hilbert module $E$.  Our conventions on Hilbert modules follow \cite{Lance:1995ys}.

The next definition recalls the notion of a unitally absorbing representation.  Thomsen \cite[Theorem 2.4]{Thomsen:2000aa} shows that unitally absorbing representations on Hilbert modules always exist (in the separable case).

\begin{definition}\label{ab unital def}
A  unital representation $\sigma:A \to \LL(F)$ of $A$ on a Hilbert $B$-module is \emph{unitally absorbing} (for the pair $(A,B)$) if for any Hilbert $B$-module $E$ and ucp map $\phi:A \to \LL(E)$, there is a sequence $(v_n)$ of isometries in $\LL(E,F)$ such that:
\begin{enumerate}[(i)]
\item $\phi(a)-v_n^*\sigma(a)v_n\in \K(E)$ for all $a\in A$ and $n\in\N$;
\item $\|\phi(a)-v_n^*\sigma(a)v_n\|\to 0$ as $n\to\infty$ for all $a\in A$.
\end{enumerate}
A unital representation $\pi:A\to \LL(E)$ is \emph{strongly unitally absorbing} if it comes with a decomposition as a countably infinite direct sum $(\pi,E)=\bigoplus_{n=1}^\infty(\sigma,F)$, where $(\sigma,F)$ is a unitally absorbing representation.
\end{definition}

Note that a unitally absorbing representation is faithful; as such if $\pi:A\to \mathcal{L}(E)$ is a (strongly) unitally absorbing representation we generally just write $a\in \LL(E)$ for the image of $a\in A$ under this representation to avoid notational clutter. 
\begin{remark}\label{bl2 rem}
Note that if $(\pi,E)$ is strongly unitally absorbing then the decomposition $(\pi,E)=\bigoplus_{n=1}^\infty(\sigma,F)$ from Definition \ref{ab unital def} canonically induces an isomorphism $E\cong \ell^2\otimes F$ (here $\ell^2$ is shorthand for $\ell^2(\N))$ and therefore a canonical inclusion 
\begin{equation}\label{bl2 inc}
\mathcal{B}(\ell^2)=\mathcal{B}(\ell^2)\otimes \C1_F\subseteq \LL(E).
\end{equation}
In this way we identity $\mathcal{B}(\ell^2)$ with a subalgebra of $\LL(F)$ that commutes with the image of $A$.
\end{remark}

\begin{remark}\label{ab unique}
If $(\pi,E)$ is a unitally absorbing representation of $A$ on a Hilbert $B$-module, then $E$ is necessarily isomorphic to the standard Hilbert $B$-module $\ell^2\otimes B$.  Indeed, Thomsen \cite[Theorem 2.4]{Thomsen:2000aa} shows that a unitally absorbing representation on $\ell^2\otimes B$ always exists, and it follows from \cite[Theorem 2.1]{Thomsen:2000aa} that the modules underlying any two unitally absorbing representations are unitarily isomorphic (this is not true of the representations themselves: there one has only approximate unitary equivalence).
\end{remark}

\begin{definition}\label{kku}
Let $\pi:A\to \LL(E)$ be a strongly unitally absorbing representation of $A$ on a Hilbert $B$-module.  Let $X$ be a finite subset of the unit ball $A_1$ of $A$, and let $\epsilon>0$.  For $b\in \LL(E)$, we write $b^{(2)}$ for the diagonal matrix $\begin{psmallmatrix} b & 0 \\ 0 & b \end{psmallmatrix}$ in $M_2(\LL(E))$.

Define\footnote{The ``$u$'' is for ``unital''.} $\mathcal{P}^{\pi,u}_{\epsilon}(X,B)$ to be the set of projections in $M_2(\LL(E))$ satisfying the following conditions:
\begin{enumerate}[(i)]
\item $p-\begin{psmallmatrix} 1 & 0 \\ 0 & 0 \end{psmallmatrix}$ is in $\K(E)$;
\item $\|[p,a^{(2)}]\|<\epsilon$ for all $a\in X$.
\end{enumerate}
Equip $\mathcal{P}^{\pi,u}_{\epsilon}(X,B)$ with the norm topology it inherits from $M_2(\LL(E))$, and define
$$
KK^{\pi,u}_\epsilon(X,B):=\pi_0(\mathcal{P}^{\pi,u}_{\epsilon}(X,B)),
$$
i.e.\ $KK^{\pi,u}_\epsilon(X,B)$ is the set of path components of $\mathcal{P}^{\pi,u}_{\epsilon}(X,B)$.
\end{definition}

With respect to the inclusion in Remark \ref{bl2 rem}, let $s_1,s_2\in \mathcal{B}(\ell^2)\subseteq \LL(E)$ be a pair of isometries that commute with the image of $A$ and satisfy the \emph{Cuntz relation} 
\begin{equation}\label{cun rel}
s_1s_1^*+s_2s_2^*=1;
\end{equation}
we then say that $(s_1,s_2)$ is a \emph{Cuntz pair} in $\mathcal{B}(\ell^2)$.  With notation as in Definition \ref{kku}, define an operation (called \emph{Cuntz sum}) on $KK^{\pi,u}_\epsilon(X,B)$ by 
$$
[p]+[q]:=[s_1ps_1^*+s_2ps_2^*].
$$
It is shown in \cite[Lemma A.5]{Willett:2020aa} that this operation makes $KK^{\pi,u}_\epsilon(X,B)$ into an abelian group with identity element the class of $\begin{psmallmatrix} 1 & 0 \\ 0 & 0 \end{psmallmatrix}$, and moreover that the group operation does not depend on the choice of Cuntz pair $(s_1,s_2)$.  The argument of \cite[Lemma 6.6]{Willett:2020aa} shows that the group $KK^{\pi,u}_\epsilon(X,B)$ does not depend on the choice of strongly unitally absorbing representation up to canonical isomorphism.

We now introduce an alternative formulation: this is different in general, but gives the same group as $KK^{\pi,u}_\epsilon(X,B)$ in the presence of a quasidiagonality assumption that we will introduce later.

\begin{definition}\label{vk}
Let $\pi:A\to \LL(E)$ be a strongly unitally absorbing representation of $A$ on a Hilbert $B$-module.  Let $X$ be a finite subset of the unit ball $A_1$ of $A$, and let $\epsilon>0$.  

Define $\mathcal{P}^{\pi,\K}_{\epsilon}(X,B)$\footnote{The ``$\K$'' is for ``compact''.} to be the set of projections in $\K(E)$ such that $\|[p,a]\|<\epsilon$ for all $a\in X$.  Equip $\mathcal{P}^{\pi,\K}_{\epsilon}(X,B)$ with the norm topology it inherits from $\K(E)$, and define
$$
V^{\pi,\K}_\epsilon(X,B):=\pi_0(\mathcal{P}^{\pi,\K}_{\epsilon}(X,B)),
$$
i.e.\ $V^{\pi,\K}_\epsilon(X,B)$ is the set of path components of $\mathcal{P}^{\pi,\K}_{\epsilon}(X,B)$.
\end{definition}

We next define an operation on the set $V^{\pi,\K}_\epsilon(A,B)$.  We need a definition.

\begin{definition}\label{so plus}
Let $\pi:A\to \LL(E)$ be a strongly unitally absorbing representation of $A$ on a Hilbert $B$-module, and let $\mathcal{B}(\ell^2)\subseteq \LL(E)$ be the canonical inclusion from Remark \ref{bl2 rem}.  Choose two isometries $s_1,s_2\in \mathcal{B}(\ell^2)$ satisfying the Cuntz relation as in line \eqref{cun rel}.  For $[p],[q]\in V^{\pi,\K}_\epsilon(X,B)$, define 
$$
[p]+[q]:=[s_1ps_1^*+s_2qs_2^*].
$$
\end{definition}




The next lemma is standard: we leave the details to the reader (compare \cite[Lemma A.5]{Willett:2020aa} as cited above).
 
\begin{lemma}\label{v op}
The operation from Definition \ref{so plus} makes the set $V^{\pi,\K}_\epsilon(X,B)$ of Definition \ref{vk} into a commutative monoid with identity element the class of the zero projection.  The monoid stricture does not depend on the choice of Cuntz pair $(s_1,s_2)$.  \qed
\end{lemma}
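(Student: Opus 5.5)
We need to check four things about the operation of Definition \ref{so plus}: it is well defined, the identity is the zero class, it is associative and commutative, and it does not depend on the Cuntz pair. The basic tool is the following observation. If $w\in \mathcal{B}(\ell^2)\subseteq\LL(E)$ is an isometry and $p\in\mathcal{P}^{\pi,\K}_\epsilon(X,B)$, then $wpw^*$ is a projection. It lies in $\K(E)$, because $\K(E)$ is an ideal in $\LL(E)$. Since $w$ commutes with $A$, we have $\|[wpw^*,a]\|=\|w[p,a]w^*\|\le\|[p,a]\|<\epsilon$. So conjugation by such isometries preserves $\mathcal{P}^{\pi,\K}_\epsilon(X,B)$. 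In the same way, $s_1ps_1^*+s_2qs_2^*$ is a projection in $\K(E)$, since the two summands have orthogonal ranges. Its commutator with $a$ is $s_1[p,a]s_1^*+s_2[q,a]s_2^*$, which has norm $\max\{\|[p,a]\|,\|[q,a]\|\}<\epsilon$.

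For well-definedness, note that a norm-continuous path $p_t$ in $\mathcal{P}^{\pi,\K}_\epsilon$ gives a continuous path $s_1p_ts_1^*+s_2qs_2^*$, so the sum depends only on path components.

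The central step is a homotopy lemma. If $w_0,w_1$ are isometries in $\mathcal{B}(\ell^2)$, then $[w_0pw_0^*]=[w_1pw_1^*]$ in $V^{\pi,\K}_\epsilon$. To prove it, set $u=w_1w_0^*+(1-w_1w_1^*)(1-w_0w_0^*)$-type corrections. Concretely, I would use the standard trick: in $M_2(\mathcal{B}(\ell^2))\cong\mathcal{B}(\ell^2\oplus\ell^2)\cong\mathcal{B}(\ell^2)$ (via a Cuntz pair), the rotation $R_t=\begin{psmallmatrix}\cos t & -\sin t\\ \sin t&\cos t\end{psmallmatrix}$ gives a path of isometries $\begin{psmallmatrix}w_0&0\\0&w_1\end{psmallmatrix}$-conjugated. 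More simply, since $\mathcal{U}(\mathcal{B}(\ell^2))$ is norm-path-connected (Kuiper, or just Borel functional calculus giving $u=e^{ih}$), it suffices to connect $w_0$ to $w_1$ through isometries in $\mathcal{B}(\ell^2)$. The space of isometries on $\ell^2$ with infinite-codimensional range is norm-path-connected. This follows by writing $w_1=uw_0$ with $u$ unitary, which is possible when both ranges have infinite-dimensional complements, and then taking $u_tw_0$. In general, first connect $w_i$ to $w_is_1$, which has infinite corange. To do this, use the path $p\mapsto w_iv_tpv_t^*w_i^*$ with $v_t$ a path of isometries from $1$ to $s_1$. Such a path exists because $s_1\oplus s_1^*$-rotation gives a unitary path in $M_2$; to avoid this, I would instead just use $V_t=\cos t\,1+\ldots$ as in Rørdam's book. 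Along any such norm-continuous path of isometries $w_t$ commuting with $A$, the compressions $w_tpw_t^*$ form a continuous path in $\mathcal{P}^{\pi,\K}_\epsilon$ by the observation above. This lemma is the main obstacle: one must make sure every intermediate isometry lies in $\mathcal{B}(\ell^2)$, so that it commutes with $A$ and no $\epsilon$ is lost. This is exactly where the strongly absorbing structure of Remark \ref{bl2 rem} is used.

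Given the lemma, the algebraic properties follow by standard Cuntz-sum manipulations. The zero class is an identity, since $[p]+[0]=[s_1ps_1^*]=[p]$ by the lemma with $w_0=s_1$, $w_1=1$. For commutativity, $s_1ps_1^*+s_2qs_2^*$ and $s_2ps_2^*+s_1qs_1^*$ are both of the form $W(p\oplus q)W^*$, where $W\colon\ell^2\oplus\ell^2\to\ell^2$ is an isometry built from $(s_1,s_2)$ or $(s_2,s_1)$. The $2\times2$ version of the lemma (isometries $\ell^2\oplus\ell^2\to\ell^2$ tensored with $1_F$) connects these. Associativity holds similarly: both bracketings are $W(p\oplus q\oplus r)W^*$ for isometries $W\colon(\ell^2)^{\oplus3}\to\ell^2$. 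Independence of the Cuntz pair is the same statement applied to the two isometries $\ell^2\oplus\ell^2\to\ell^2$ coming from the two pairs.
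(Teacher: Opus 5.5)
Most of your proposal is fine. Well-definedness is clear. For commutativity, associativity and independence of the Cuntz pair, the isometries you compare (for instance $W=(s_1\;s_2)$ and $W'=(s_2\;s_1)$ from $\ell^2\oplus\ell^2$ to $\ell^2$) are in fact unitaries. So $W'=(W'W^*)W$ with $W'W^*$ a unitary in $\mathcal{B}(\ell^2)$, and norm path-connectedness of the unitary group finishes the argument.

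\textbf{The gap is the identity axiom} $[s_1ps_1^*]=[p]$, which you derive from your homotopy lemma with $w_0=s_1$, $w_1=1$. The reduction you propose, a norm-continuous path of isometries in $\mathcal{B}(\ell^2)$ from $1$ to $s_1$, does not exist:
\begin{itemize}
\item If $w,w'$ are isometries with $\|w-w'\|<1/2$, then $\|ww^*-w'w'^*\|<1$. Hence $(1-ww^*)\ell^2$ and $(1-w'w'^*)\ell^2$ have the same dimension.
\item This codimension is therefore constant along any norm-continuous path of isometries. It is $0$ for $1$ but infinite for $s_1$.
\item The $M_2$ rotation trick only connects $\iota_1$ and $\iota_1s_1$ as isometries $\ell^2\to\ell^2\oplus\ell^2$, both of infinite codimension. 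Transporting back with a Cuntz pair just reproduces the infinite-codimension case.
\end{itemize}
More fundamentally, your argument never uses that $p$ is compact. It would equally show that $1$ and $s_1s_1^*$ are homotopic through projections in $\LL(E)$, which is false.

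The missing step uses compactness; the paper records it as Lemma \ref{equiv to comp}.
\begin{enumerate}
\item Take finite rank projections $q_n\in\mathcal{B}(\ell^2)$ increasing strongly to $1$. They converge strictly to $1$ in $\LL(E)$, so $q_npq_n\to p$ in norm because $p\in\K(E)$.
\item Since $q_n$ commutes with $A$, we have $\|[q_npq_n,a]\|\le\|[p,a]\|$. The argument of Lemma \ref{close lem} (via Lemma \ref{chi com}) then shows that, for large $n$, the projection $p':=\chi(q_npq_n)\in\K(E)$ lies in the same path component of $\mathcal{P}^{\pi,\K}_\epsilon(X,B)$ as $p$. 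Conjugating by $s_1$, the same holds for $s_1p's_1^*$ and $s_1ps_1^*$.
\item Now $p'=q_np'q_n$. Choose an isometry $w\in\mathcal{B}(\ell^2)$ of infinite codimension with $wq_n=q_n$; then $wp'w^*=p'$.
\item Your own infinite-codimension lemma now gives $[s_1p's_1^*]=[wp'w^*]=[p']$.
\end{enumerate}
With this step inserted, the rest of your proof goes through.
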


\begin{lemma}\label{k to u}
With notation as in Definitions \ref{kku} and \ref{vk}, there is a well-defined monoid homomorphism determined by  
$$
V^{\pi,\K}_\epsilon(X,B)\to KK^{\pi,u}_\epsilon(X,B),\quad [p]\mapsto \begin{bmatrix} 1 & 0 \\ 0 & p \end{bmatrix}
$$
(here we use the operation on the left hand side defined by Lemma \ref{v op}).
\end{lemma}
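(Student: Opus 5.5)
We have to check three things about the map $p\mapsto \begin{psmallmatrix} 1 & 0 \\ 0 & p\end{psmallmatrix}$: that it sends $\mathcal{P}^{\pi,\K}_{\epsilon}(X,B)$ into $\mathcal{P}^{\pi,u}_{\epsilon}(X,B)$, that it respects path components, and that it is additive and unital for the operations of Lemma \ref{v op} and the Cuntz sum.

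\textbf{Into the right set.} Let $p\in\mathcal{P}^{\pi,\K}_{\epsilon}(X,B)$ and put $P:=\begin{psmallmatrix} 1 & 0 \\ 0 & p\end{psmallmatrix}$. This is a projection in $M_2(\LL(E))$. Its difference from $\begin{psmallmatrix} 1 & 0 \\ 0 & 0\end{psmallmatrix}$ is $\begin{psmallmatrix} 0 & 0 \\ 0 & p\end{psmallmatrix}$, which is compact because $p\in\K(E)$. For $a\in X$,
$$
[P,a^{(2)}]=\begin{pmatrix} 0 & 0 \\ 0 & [p,a]\end{pmatrix},
$$
and this has norm $\|[p,a]\|<\epsilon$, since the norm of a diagonal matrix is the maximum of the norms of its entries. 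The map $p\mapsto P$ is isometric, hence continuous. So paths in $\mathcal{P}^{\pi,\K}_{\epsilon}(X,B)$ go to paths in $\mathcal{P}^{\pi,u}_{\epsilon}(X,B)$, and the map descends to path components. It sends $0$ to $\begin{psmallmatrix} 1 & 0 \\ 0 & 0\end{psmallmatrix}$, which is the identity of $KK^{\pi,u}_\epsilon(X,B)$.

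\textbf{Additivity.} Fix a Cuntz pair $(s_1,s_2)$ and use it for both operations; this is allowed because neither operation depends on the choice of pair (Lemma \ref{v op} and \cite[Lemma A.5]{Willett:2020aa}). In $KK^{\pi,u}_\epsilon(X,B)$, Cuntz sum is conjugation by $s_1^{(2)}$ and $s_2^{(2)}$. Hence
$$
[P]+[Q]=\left[s_1^{(2)}Ps_1^{(2)*}+s_2^{(2)}Qs_2^{(2)*}\right]=\left[\begin{pmatrix} s_1s_1^*+s_2s_2^* & 0 \\ 0 & s_1ps_1^*+s_2qs_2^*\end{pmatrix}\right].
$$
By the Cuntz relation \eqref{cun rel} the top-left entry is $1$. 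The right-hand side is therefore exactly the image of $[s_1ps_1^*+s_2qs_2^*]=[p]+[q]$, which gives additivity.

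\textbf{Main obstacle.} The only point needing care is the form of the Cuntz sum in the unital picture. If it is taken entrywise by conjugating with $s_i^{(2)}$, as above, the identity is exact. If instead it is written literally as $s_1Ps_1^*+s_2Qs_2^*$ with $s_i$ acting on $E\oplus E$, this is the same thing, since $s_i$ then acts diagonally and equals $s_i^{(2)}$.

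Two further facts are used. First, $s_i$ commutes with $A$, so that conjugating by it preserves the commutator estimates. Second, conjugating by an isometry preserves compactness of the difference from $\begin{psmallmatrix} 1 & 0 \\ 0 & 0\end{psmallmatrix}$. Both are already implicit in \cite[Lemma A.5]{Willett:2020aa}, so no further homotopy argument is needed.
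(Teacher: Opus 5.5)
Your proposal is correct and follows essentially the same route as the paper. You check that $p\mapsto\begin{psmallmatrix} 1 & 0 \\ 0 & p\end{psmallmatrix}$ lands in $\mathcal{P}^{\pi,u}_\epsilon(X,B)$, preserves paths and the identity, and is additive when the same Cuntz pair (acting diagonally on $E\oplus E$) defines both operations, so that the Cuntz relation makes the top-left entry equal to $1$. You simply write out the details that the paper calls ``clear'' and ``straightforward''.
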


\begin{proof}
It is clear that we have a well-defined set-map taking $\mathcal{P}^{\pi,\K}_\epsilon(X,B)$ to $\mathcal{P}^{\pi,u}_\epsilon(X,B)$ given by $p\mapsto\begin{psmallmatrix} 1 & 0 \\ 0 & p \end{psmallmatrix}$.  This map takes homotopies to homotopies, so descends to a set map as in the statement; it moreover clearly takes the identity $[0]$ on the left hand side to the identity $\begin{bsmallmatrix} 1 & 0 \\ 0 & 0 \end{bsmallmatrix}$ on the right hand side.  As the operations on both sides are given by conjugation by Cuntz sums (and we can use the same Cuntz pair of isometries to define them) it is also straightforward to check that the operation is preserved.
\end{proof}

\begin{definition}\label{kkuk}
With $V^{\pi,\K}_\epsilon(X,B)$ as in Definition \ref{vk} equipped with the operation from Lemma \ref{v op}, we let $KK^{\pi,\mathcal{K}}_\epsilon(X,B)$ denote its Grothendieck group.
\end{definition}

\begin{remark}\label{fc1}
Let $I_A$ be the set of all pairs $(X,\epsilon)$, where $X$ is a finite subset of the unit ball of $A$, and $\epsilon>0$.  Define a partial order ``$\leq_u$'' on $I_A$ by stipulating that $(X,\epsilon)\leq_u (Y,\delta)$ if for any separable $C^*$-algebra $B$, and any strongly unitally absorbing representation $\pi$ of $A$ on a Hilbert $B$-module, there is an inclusion 
$$
\mathcal{P}_\delta^{\pi,u}(Y,B)\subseteq \mathcal{P}_\epsilon^{\pi,u}(X,B)
$$
of the sets from Definition \ref{kku}.  In this case, the inclusion induces a canonical \emph{forget control map}
$$
KK^{\pi,u}_\delta(X,B)\to KK^{\pi,u}_\epsilon(X,B).
$$

We may similarly define an order ``$\leq_\K$'' by on $I_A$ by replacing the role of $\mathcal{P}_\epsilon^{\pi,u}(X,B)$ with $\mathcal{P}_\epsilon^{\pi,\K}(X,B)$, and get forget control maps 
$$
KK^{\pi,\K}_\delta(X,B)\to KK^{\pi,\K}_\epsilon(X,B)
$$
in that case similarly.
\end{remark}

Now, using the universal property of the Grothendieck group, the homomorphism of Lemma \ref{k to u} induces a unique group homomorphism 
\begin{equation}\label{k to u gp}
KK^{\pi,\mathcal{K}}_\epsilon(X,B)\to KK^{\pi,u}_\epsilon(X,B).
\end{equation}
In general, this map will be far from being an isomorphism: for example if $A=\C$, then $KK^{\pi,u}_\epsilon(X,B)$ is always isomorphic to the $K$-theory group $K_0(B)$, but if $B\otimes \K$ is projectionless (for example, if $B=C_0(\R^2)$), then $KK^{\pi,\mathcal{K}}_\epsilon(X,B)$ will be zero.  There are also more subtle obstructions coming from failure of quasidiagonality: it may be that $B\otimes \K$ does not contain any projections that approximately commute with the representation of $A$.  For example, let $B=\C$, and let $A$ be the universal $C^*$-algebra generated by a single isometry $s$.  Then if $X=\{s\}$, there cannot be any compact projections that commute very well with $s$.

Our goal in the remainder of this section is to show, however, that the map in line \eqref{k to u gp} is an isomorphism in the presence of an appropriate quasidiagonality assumption.  The following definition is a Hilbert module version of the standard definition of a quasidiagonal representation (compare for example \cite[Definition 2.5]{Dadarlat:2002aa}).

\begin{definition}\label{qd rep}
A representation $\pi:A\to \LL(E)$ of $A$ on a Hilbert $B$-module is \emph{quasidiagonal} if there is an increasing sequence $(e_n)$ of projections in $\K(E)$ that converges strictly to the identity, and such that $\|[\pi(a),e_n]\|\to 0$ for all $a\in A$.
\end{definition}

\begin{example}\label{qd ex}
A separable unital $C^*$-algebra $A$ is said to be \emph{quasidiagonal} if it admits a quasidiagonal representation on the Hilbert $\C$-module (i.e.\ Hilbert space) $\ell^2$ in the sense of Definition \ref{qd rep} above: compare for example \cite[Chapter 7]{Brown:2008qy} for background.   

Let $\pi:A\to \mathcal{B}(\ell^2)$ be a unital representation such that $\pi^{-1}(\mathcal{K}(\ell^2))=\{0\}$.  If $A$ is a quasidiagonal $C^*$-algebra, then this will always be a quasidiagonal representation: see for example \cite[Theorem 7.2.5]{Brown:2008qy}.  On the other hand, for a separable $C^*$-algebra $B$ we may consider the amplified representation $\pi\otimes 1_B:A\to \mathcal{L}(\ell^2\otimes B)$ built from $\pi$.  This will inherit quasidiagonality from $\pi$ as long as $B$ is unital, or more generally, has an approximate unit consisting of projections: indeed, $\K(\ell^2\otimes B)\cong \K(\ell^2)\otimes B$, and if $(e_n)$ is as in the definition of quasdiagonality and $(f_n)$ is a approximate unit for $B$ consisting of projections, then $(e_n\otimes f_n)$ has the properties needed to show quasidiagonality of $\pi\otimes 1_B$.

Note moreover that if $A$ or $B$ is nuclear, the representation $\pi\otimes 1_B$ will be unitally absorbing by Kasparov's Hilbert module version of Voiculescu's theorem \cite[Theorem 6]{Kasparov:1980sp}; if moreover $\pi$ has infinite multiplicity, $\pi\otimes 1_B$ will be strongly unitally absorbing.

In particular, if $A$ is separable, unital, and quasidiagonal, and if $B$ is separable, nuclear and has an approximate unit consisting of projections, then $A$ admits a strongly unitally absorbing and quasidiagonal representation on a Hilbert $B$-module.  This will be our main source of examples.
\end{example}

\begin{remark}\label{sub com rem}
Let $\pi:A\to \LL(E)$ be a quasidiagonal representation of $A$ on a Hilbert $B$-module with $(e_n)$ a sequence of projections in $\K(E)$ as in the definition.  Assume moreover that the representation is strongly unitally absorbing, and let $\mathcal{B}(\ell^2)\subseteq \LL(E)$ be the canonical copy from Remark \ref{bl2 rem}.  We may assume that for each $n$ there is a projection $p_n\in \mathcal{K}(\ell^2)$ such that $p_ne_n=e_n=e_np_n$.  Indeed, as each $e_n$ is compact there exists $p_n\in \K(\ell^2)$ such that $\|p_ne_n-e_n\|<1/(3n+1)$ for each $n$.  Then if $\chi$ is the characteristic function of $(1/2,\infty)$, and we define $f_n:=\chi(p_ne_np_n)$, then one checks that $(f_n)$ still has the properties required by quasidiagonality, and moreover that $p_nf_n=f_np_n=f_n$ for all $n$.
\end{remark}

Finally, we are ready for the main theorem of this section.

\begin{theorem}\label{k to u the}
Let $A$ be a separable unital $C^*$-algebra, let $B$ be a separable $C^*$-algebra, and assume that there exists a strongly unitally absorbing and quasidiagonal representation $\pi:A\to \LL(E)$ of $A$ on a Hilbert $B$-module.  Then the homomorphism
$$
KK^{\pi,\mathcal{K}}_\epsilon(X,B)\to KK^{\pi,u}_\epsilon(X,B)
$$ 
of line \eqref{k to u gp} is an isomorphism.  

Moreover, with notation as in Remark \ref{fc1}, if $(X,\epsilon)\leq_u (Y,\delta)$ and also $(X,\epsilon)\leq_\K(Y,\delta)$ then the diagram  
$$
\xymatrix{KK^{\pi,\mathcal{K}}_\delta(Y,B) \ar[d]^-\cong  \ar[r] & KK^{\pi,\mathcal{K}}_\epsilon(X,B) \ar[d]^-\cong \\
KK^{\pi,u}_\delta(Y,B)  \ar[r] & KK^{\pi,u}_\epsilon(X,B) }
$$
commutes, where the vertical maps are the isomorphisms constructed above, and the horizontal maps are the forget control maps of Remark \ref{fc1}.
\end{theorem}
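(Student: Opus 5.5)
We will show that the homomorphism of line \eqref{k to u gp} is surjective and injective by cutting with the quasidiagonal projections. The basic construction is as follows. Let $(e_n)$ be as in Definition \ref{qd rep}, refined as in Remark \ref{sub com rem}. Take $p\in\mathcal{P}^{\pi,u}_\epsilon(X,B)$ and write $p=\begin{psmallmatrix}1&0\\0&0\end{psmallmatrix}+k$ with $k$ compact. Since $e_n\to 1$ strictly, $e_n^{(2)}ke_n^{(2)}\to k$ and $(1-e_n^{(2)})k\to 0$ in norm. Hence for $n$ large, $p$ is norm-close to the block-diagonal element
$$(1-e_n^{(2)})\begin{psmallmatrix}1&0\\0&0\end{psmallmatrix}+e_n^{(2)}pe_n^{(2)}.$$
The compression $e_n^{(2)}pe_n^{(2)}$ is a compact quasiprojection that almost commutes with $X$, and its commutator with $a^{(2)}$ is controlled by $\|[p,a^{(2)}]\|$ plus $2\|[e_n,a]\|$. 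Applying $\chi$ and using Lemma \ref{chi com}, we get a compact projection $q_n\in M_2(\K(E))$ with $\|[q_n,a^{(2)}]\|<\epsilon$. The strict inequality $\|[p,a^{(2)}]\|<\epsilon$ leaves slack, which absorbs the errors.

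Next we bring $q_n$ into the image of the map. We have $q_n=q_n'\oplus q_n''$ relative to the two matrix coordinates, up to rotation. The element $1-e_n$ is a projection commuting approximately with $A$, so $p$ is homotopic inside $\mathcal{P}^{\pi,u}_\epsilon$ (along the straight-line path, followed by $\chi$) to $\begin{psmallmatrix}1-e_n&0\\0&0\end{psmallmatrix}+q_n$. Then $\begin{psmallmatrix}1-e_n&0\\0&0\end{psmallmatrix}$ is the image of $[0]$ minus $[e_n\oplus 0]$. To see this, we use the standard $K_0$ trick: $\begin{psmallmatrix}1-e&0\\0&0\end{psmallmatrix}\oplus\begin{psmallmatrix}1&0\\0&e\end{psmallmatrix}$ is homotopic to $\begin{psmallmatrix}1&0\\0&0\end{psmallmatrix}\oplus\begin{psmallmatrix}1&0\\0&0\end{psmallmatrix}$ by a rotation that commutes with $A$ up to $\|[e,a]\|$. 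Here we use the Cuntz sum and the embedding $\mathcal{B}(\ell^2)$ commuting with $A$ to move the $2\times2$ matrix coordinates into $E$. This rewrites $[p]$ as the image of the formal difference $[\tilde q_n]-[\tilde e_n]$ of compact projections in $\mathcal{P}^{\pi,\K}_\epsilon(X,B)$, giving surjectivity.

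For injectivity, suppose $[p]-[q]$ maps to zero. Then $\begin{psmallmatrix}1&0\\0&p\end{psmallmatrix}\oplus r$ is connected to $\begin{psmallmatrix}1&0\\0&q\end{psmallmatrix}\oplus r$ by a path in $\mathcal{P}^{\pi,u}_\epsilon$ for some $r$; by the group structure we may take $r$ to be the inverse-type element, and it can also be arranged to lie in the image. The path is compact-perturbation continuous, so by compactness of $[0,1]$ we can choose a single $n$ so that compressing by $e_n$ works uniformly along the path, producing a path of compact $\epsilon$-almost-commuting projections. We then add back the corner $1-e_n$ and reverse the trick above. This shows $[p]\oplus[\text{stuff}]=[q]\oplus[\text{stuff}]$ in $V^{\pi,\K}_\epsilon$, so the classes agree in the Grothendieck group.

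Naturality with respect to the forget control maps is immediate from the definitions: the vertical maps are induced by $p\mapsto\begin{psmallmatrix}1&0\\0&p\end{psmallmatrix}$, which commutes with inclusions of the projection sets.

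The main obstacle is keeping the commutator estimates strictly below $\epsilon$ at every stage: after compression, after applying $\chi$ (which costs the factor $(1-2\delta)^{-1}$), and along the rotation homotopies. I would handle this by fixing $\eta=\epsilon-\max_{a\in X}\|[p,a^{(2)}]\|>0$, which is uniform along a path by compactness. I would then choose $n$ so large that $\|[e_n,a]\|$ and the cut-off errors are tiny relative to $\eta$, making $\delta$ in Lemma \ref{chi com} arbitrarily small.
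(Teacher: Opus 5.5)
Your outline is the paper's proof. You cut $p$ down by $e_n$, apply $\chi$, and use Lemma \ref{close lem} to get $p\simeq \begin{psmallmatrix}1-e_n&0\\0&0\end{psmallmatrix}+q_n$ with $q_n=\chi(e_n^{(2)}pe_n^{(2)})$. The preimage is $[uq_nu^*]-[e_n]$, injectivity comes from compressing a homotopy uniformly in $t$, and naturality is trivial.

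The one real divergence is the step you pass over with ``up to rotation''. Since $q_n$ sits under $e_n\oplus e_n$, it overlaps the hole of $(1-e_n)\oplus 0$, and all of it must be moved into the second coordinate before it can be split off as a Cuntz summand. Read literally, ``$q_n=q_n'\oplus q_n''$'' is not right: $q_n$ need not be diagonal, and diagonalizing it would not suffice anyway.

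The paper does this step with a unitary $v$ of $e_nE\oplus E$ that agrees with $iu$ on $e_nE\oplus e_nE$, together with a path $v_t\to 1$ coming from contractibility of the unitary group. Such a path, however, gives no control of $\|[w_tq_nw_t^*,a^{(2)}]\|$ at intermediate $t$. A rotation that almost commutes with $A$ is therefore the sounder tool, but you should make it explicit.

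Take $y:=iue_n^{(2)}$, with $u$ chosen as in the paper so that $ue_n^{(2)}u^*\perp e_n$. Its source $e_n\oplus e_n$ and range $0\oplus ue_n^{(2)}u^*$ are orthogonal to each other and to $(1-e_n)\oplus 0$. Set $U_t:=1+(\cos t-1)(y^*y+yy^*)+\sin t\,(y-y^*)$. This is a path of unitaries in $1+M_2(\K(E))$ that fixes $(1-e_n)\oplus 0$, satisfies $\|[U_t,a^{(2)}]\|\leq 4\|[e_n,a]\|$ (because $iu$ commutes exactly with $A$), and has $U_{\pi/2}q_nU_{\pi/2}^*=0\oplus uq_nu^*$. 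After that, diagonal unitaries from $\mathcal{B}(\ell^2)$ and your rotation identifying $[\begin{psmallmatrix}1-e_n&0\\0&0\end{psmallmatrix}]$ with minus the image of $[e_n]$ finish surjectivity.

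For injectivity, drop the auxiliary $r$. The group $KK^{\pi,u}_\epsilon(X,B)$ is already $\pi_0$ of $\mathcal{P}^{\pi,u}_\epsilon(X,B)$, so the image of $[p]-[q]$ vanishes exactly when $\begin{psmallmatrix}1&0\\0&p\end{psmallmatrix}$ and $\begin{psmallmatrix}1&0\\0&q\end{psmallmatrix}$ lie in one path component. Also drop the step of ``adding back the corner $1-e_n$'', which only returns you to $\mathcal{P}^{\pi,u}_\epsilon$.

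What actually closes the argument is that $p$ and $q$ are themselves compact. So the endpoints $e_n\oplus\chi(e_npe_n)$ and $e_n\oplus\chi(e_nqe_n)$ of the compressed path are norm-close to $e_n\oplus p$ and $e_n\oplus q$. A straight-line homotopy followed by $\chi$, then transport by Cuntz isometries, gives $[p]+[e_n]=[q]+[e_n]$ in $V^{\pi,\K}_\epsilon(X,B)$.
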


We need an ancillary lemma.

\begin{lemma}\label{close lem}
With notation as in Definition \ref{kku}, let $p\in \mathcal{P}^{\pi,u}_\epsilon(X,B)$.  Let $\chi$ be the characteristic function of $(1/2,\infty)$  Then there exists $\delta>0$ (depending on $p$) such that if $b\in \LL(E)$ is self-adjoint and satisfies $\|b-p\|<\delta$ and $b-\begin{psmallmatrix} 1 & 0 \\ 0 & 0 \end{psmallmatrix}$, then $\chi(b)$ is in $\mathcal{P}^{\pi,u}_\epsilon(X,B)$, and is moreover in the same path component of $\mathcal{P}^{\pi,u}_\epsilon(X,B)$ as $p$.
\end{lemma}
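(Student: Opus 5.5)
The plan is to exploit the fact that $\mathcal{P}^{\pi,u}_\epsilon(X,B)$ is defined by a \emph{strict} inequality over a \emph{finite} set $X$, so $p$ has some slack. (We read the truncated hypothesis as ``$b-\begin{psmallmatrix} 1 & 0 \\ 0 & 0 \end{psmallmatrix}$ is in $M_2(\K(E))$''.) Set
$$
\eta:=\epsilon-\max_{a\in X}\|[p,a^{(2)}]\|>0,
$$
and choose $\delta>0$ small; we will aim for something like $\delta<\min\{1/8,\eta/8\}$.

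First, the spectral step. If $\|b-p\|<\delta<1/4$ with $b$ self-adjoint, then the spectrum of $b$ lies within $\delta$ of $\{0,1\}$, so $b$ is a $\delta$-quasiprojection in the sense of Definition \ref{delta qp}, and $\chi(b)$ is a projection. Lemma \ref{chi com} part \eqref{chi lip}, applied to $b$ and $p$ (note that $\chi(p)=p$), gives $\|\chi(b)-p\|\leq \delta/(1-2\delta)<2\delta$. To see that $\chi(b)-\begin{psmallmatrix} 1 & 0 \\ 0 & 0 \end{psmallmatrix}$ is compact, pass to the Calkin-type quotient $M_2(\LL(E)/\K(E))$. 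There $b$ maps to the projection $\begin{psmallmatrix} 1 & 0 \\ 0 & 0 \end{psmallmatrix}$, and the functional calculus commutes with quotient maps, so the image of $\chi(b)$ is $\chi$ of that projection, which is the projection itself. The same argument shows that every $\chi(b_t)$ in the homotopy below also differs from $\begin{psmallmatrix} 1 & 0 \\ 0 & 0 \end{psmallmatrix}$ by a compact.

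Second, the commutator estimate. For $a\in X$ we have $\|a^{(2)}\|\leq 1$ since $X$ lies in the unit ball, so
$$
\|[\chi(b),a^{(2)}]\|\leq \|[p,a^{(2)}]\|+2\|\chi(b)-p\|<\epsilon-\eta+4\delta<\epsilon
$$
once $\delta<\eta/4$. Hence $\chi(b)\in\mathcal{P}^{\pi,u}_\epsilon(X,B)$.

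Third, the path. Use the straight-line family $b_t:=(1-t)p+tb$ for $t\in[0,1]$. Each $b_t$ is self-adjoint, satisfies $\|b_t-p\|\leq\|b-p\|<\delta$, and differs from $\begin{psmallmatrix} 1 & 0 \\ 0 & 0 \end{psmallmatrix}$ by a compact. The first two steps therefore show that $\chi(b_t)\in\mathcal{P}^{\pi,u}_\epsilon(X,B)$ for every $t$. Continuity of $t\mapsto\chi(b_t)$ follows from the Lipschitz estimate of Lemma \ref{chi com} part \eqref{chi lip}, since all the $b_t$ lie in the same set $E_\delta$ of $\delta$-quasiprojections. This gives a path from $\chi(b_0)=p$ to $\chi(b_1)=\chi(b)$ inside $\mathcal{P}^{\pi,u}_\epsilon(X,B)$.

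I do not expect any step to be a serious obstacle. The only point needing care is remembering that $\delta$ depends on $p$ through the slack $\eta$, and that the strictness of the defining inequality is exactly what makes $\eta>0$.
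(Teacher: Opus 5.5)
Your proof is correct and follows essentially the same route as the paper: the straight-line path $t\mapsto\chi((1-t)p+tb)$, with $\delta$ chosen small relative to the slack $\epsilon-\max_{a\in X}\|[p,a^{(2)}]\|$. The only differences are that you bound the commutators using the Lipschitz estimate of Lemma~\ref{chi com} part~\eqref{chi lip} plus the triangle inequality, whereas the paper applies the commutator estimate of part~\eqref{chi com est} directly to $(1-t)p+tb$, and that you spell out the compactness and continuity checks which the paper leaves implicit.
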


\begin{proof}
We may assume $\delta<1/2$, and consider the path $p_t:=\chi(t b+(1-t)p)$ for $t\in [0,1]$.  This then connects $\chi(b)$ and $p$, and consists of projections in $\LL(E)$ that satisfy $p_t-\begin{psmallmatrix} 1 & 0 \\ 0 & 0 \end{psmallmatrix}\in \K(E)$ for all $t$.  Moreover, using Lemma \ref{chi com} part \eqref{chi com est}, for any $a\in X$,
$$
\|[p_t,a^{(2)}]\|\leq \frac{\|[tb+(1-t)p,a^{(2)}]\|}{1-2\delta}<\frac{\|b-p\|}{1-2\delta}+\frac{\|[p,a^{(2)}]\|}{1-2\delta}<\frac{\delta}{1-2\delta}+\frac{\|[p,a^{(2)}]\|}{1-2\delta}
$$
As $\|[p,a^{(2)}]\|<\epsilon$ for all $a\in X$, we can ensure that this is also smaller than $\epsilon$ for all $t$ by making $\delta$ small enough.  Hence the homotopy $(p_t)_{t\in [0,1]}$ passes through $\mathcal{P}^{\pi,u}_\epsilon(X,B)$ and we are done.
\end{proof} 

\begin{proof}[Proof of Theorem \ref{k to u the}]
Let $(e_n)$ be a sequence of projections in $\K(E)$ as in the definition of a quasidiagonal representation.  We assume that $(e_n)$ also satisfies the condition in Remark \ref{sub com rem}.  We write $e_n^{(2)}$ for the operator $\begin{psmallmatrix} e_n & 0 \\ 0 & e_n\end{psmallmatrix}$ in $M_2(\K(E))$.

We first show that the homomorphism is surjective, so let $p\in \mathcal{P}^{\pi,u}_\epsilon(X,B)$ be given.  Let $\chi$ be the characteristic function of $(1/2,\infty)$.  Choose any unitary $u:\ell^2\oplus \ell^2\to \ell^2$, so conjugation by $u$ induces an isomorphism $M_2(\LL(E))\cong \LL(E)$ that intertwines the representations of $A$ on $E\oplus E$ and $E$.  We then define a class
\begin{equation}\label{pre p}
[u\chi(e_n^{(2)}pe_n^{(2)})u^*]-[e_n]\in KK^{\pi,\mathcal{K}}_\epsilon(X,B)
\end{equation}
(this makes sense for $n$ large enough by Lemma \ref{chi com}) and claim this maps to $[p]$ under the class in the statement.  We first note that the class in line \eqref{pre p} does not depend on the choice of $u$ (as any two choices are homotopic in the unitary group of $\mathcal{B}(\ell^2\oplus \ell^2,\ell^2)$).  

Let now $p_n\in \K(\ell^2)\subseteq \LL(E)$ be any projection that dominates $e_n$.  As the choice of $u$ does not matter, we may as well assume that $up_n^{(2)}u^*$ is orthogonal to $p_n$; note moreover that this implies that $ue_n^{(2)}u^*$ is orthogonal to $e_n$.

Now, to show that the class in line \eqref{pre p} maps to $[p]$, we need to show that $[p]+\begin{bsmallmatrix} 1 & 0 \\ 0 & e_n\end{bsmallmatrix} =\begin{bsmallmatrix} 1 & 0 \\ 0 & u\chi(e_n^{(2)}pe_n^{(2)})u^*\end{bsmallmatrix}$, or in other words that 
\begin{equation}\label{surj desid}
s_1\begin{pmatrix} 1 & 0 \\ 0 & e_n \end{pmatrix}s_1^*+s_2ps_2^* \quad \text{and}\quad \begin{pmatrix} 1 & 0 \\ 0 & u\chi(e_n^{(2)}pe_n^{(2)})u^*\end{pmatrix}
\end{equation}
are homotopic in $\mathcal{P}_\epsilon^{\pi,u}(X,B)$.  Indeed, we note first that as $p-\begin{psmallmatrix} 1 & 0 \\ 0 & 0 \end{psmallmatrix}\in M_2(\K(E))$, we have that $\chi(e_n^{(2)}pe_n^{(2)})+\begin{psmallmatrix} 1-e_n & 0 \\ 0 & 0 \end{psmallmatrix}\to p$ as $n\to\infty$.  Hence using Lemma \ref{close lem}, the element on the left hand side of line \eqref{surj desid} is homotopic in $\mathcal{P}_\epsilon^{\pi,u}(X,B)$ to 
\begin{equation}\label{surj desid 2}
s_1\begin{pmatrix} 1 & 0 \\ 0 & e_n \end{pmatrix}s_1^*+s_2\Bigg(\begin{pmatrix} 1-e_n & 0 \\ 0 & 0 \end{pmatrix}+\chi(e_n^{(2)}pe_n^{(2)})\Bigg)s_2^*.
\end{equation}

Let now $i:\ell^2\to \ell^2\oplus \ell^2$ be the second factor inclusion, and consider the isometry $u_0:=iu\in \mathcal{B}(\ell^2\oplus \ell^2)\subseteq \L(E\oplus E)=M_2(\LL(E))$.  We claim that there is a unitary isomorphism $v:eE_n\oplus E_n\to eE_n\oplus E_n$ whose restriction to $e_nE\oplus e_nE$ agrees with the restriction of $iu$ to $e_nE\oplus e_nE$.  It suffices to show that 
\begin{enumerate}[(a)]
\item \label{orth dom} the orthogonal complement of $e_nE\oplus e_nE$ in $e_nE\oplus E$ is isomorphic to $E$;
\item \label{orth ran} the orthogonal complement of $u_0(e_nE\oplus e_nE)$ in $e_nE\oplus E$ is isomorphic to $E$.
\end{enumerate}
Indeed, let $p_n\in \K(\ell^2)\subseteq \LL(E)$ denote a compact projection that dominates $e_n$.  For part \eqref{orth dom}, the orthogonal complement is (canonically) isomorphic to $(1-p_n)E\oplus (p_n-e_n)E$.  As $p_n\in \K(\ell^2)$ is finite rank, $(1-p_n)E$ is just isomorphic to $E$.  Hence by Kasparov's stabilization theorem \cite[Theorem 2]{Kasparov:1980sp} (or see \cite[Chapter 6]{Lance:1995ys} for a textbook treatment) and Remark \ref{ab unique}, $(1-p_n)E\oplus (p_n-e_n)E$ is isomorphic to $E$ and we are done with part \eqref{orth dom}.  For part \eqref{orth ran}, consider $iup_n^{(2)}E$, which is a subspace of the second summand of $e_nE\oplus E$.  Again, as $up_n^{(2)}u^*$ is a finite rank projection in $\mathcal{B}(\ell^2)$, the orthogonal complement of this subspace in the second summand is isomorphic to $E$.  Hence the orthogonal complement of the image of $u_0(e_nE\oplus e_nE)$ in $e_nE\oplus E$ is isomorphic to $e_nE\oplus (u(p_n^{(2)}-e_n^{(2)})u^*)E\oplus E$, which is again isomorphic to $E$ by Kasparov's stabilization theorem.  We are done with the claim.

Now, recalling that $e_nE\oplus E\cong B\otimes \ell^2$ by Remark \ref{ab unique} and Kasparov's stabilization theory, we see that the unitary group of $\LL(e_nE\oplus E)$ is connected (even contractible) by the main result of \cite{Cuntz:1987ly}.  We may thus find a continuous path $(v_t)_{t\in [0,1]}$ connecting $v$ and the identity.  Let $j:e_n\oplus E\to E\oplus E$ be the inclusion, and define $w_t:=jv_tj^*\in \LL(E)$.  Consider the path 
$$
t\mapsto s_1\begin{pmatrix} 1 & 0 \\ 0 & e_n \end{pmatrix}s_1^*+s_2\Bigg(\begin{pmatrix} 1-e_n & 0 \\ 0 & 0 \end{pmatrix}+w_t\chi(e_n^{(2)}pe_n^{(2)})w_t^*\Bigg)s_2^*.
$$
This connects the element in line \eqref{surj desid 2} to 
\begin{equation}\label{surj desid 3}
s_1\begin{pmatrix} 1 & 0 \\ 0 & e_n \end{pmatrix}s_1^*+s_2\begin{pmatrix} 1-e_n & 0 \\ 0 & u\chi(e_n^{(2)}pe_n^{(2)})u^* \end{pmatrix}s_2^*
\end{equation}
inside $\mathcal{P}_\epsilon^{\pi,u}(X,B)$ (for $n$ suitably large).  

To complete the proof, we define $r=s_1s_2^*+s_2s_1^*$, and $r_t:=\frac{1}{2}r(1+e^{\pi i t})+\frac{1}{2}(1-e^{\pi i t})$ for $t\in [0,1]$, so $(r_t)_{t\in [0,1]}$ is a path of unitaries connecting $r$ to the identity in $\mathcal{B}(\ell^2)$.  We then consider the path 
$$
s_1\begin{pmatrix} 1 & 0 \\ 0 & 0 \end{pmatrix}s_1^*+r_ts_1\begin{pmatrix} 0 & 0 \\ 0 & e_n\end{pmatrix} s_1^*r_t^*.
$$
One checks that this is a norm continuous path of projections between $s_1\begin{psmallmatrix} 1 & 0 \\ 0 & e_n \end{psmallmatrix}s_1^*$ and $s_1\begin{psmallmatrix} 1 & 0 \\ 0 & 0 \end{psmallmatrix}s_1^*+ s_2\begin{psmallmatrix} 0 & 0 \\ 0 & e_n \end{psmallmatrix}s_2^*$, and moreover that for $n$ suitably large the path 
$$
s_1\begin{pmatrix} 1 & 0 \\ 0 & 0 \end{pmatrix}s_1^*+r_ts_1\begin{pmatrix} 0 & 0 \\ 0 & e_n\end{pmatrix} s_1^*r_t^*+s_2\begin{pmatrix} 1-e_n & 0 \\ 0 & u\chi(e_n^{(2)}pe_n^{(2)})u^* \end{pmatrix}s_2^*
$$
stays within $\mathcal{P}^{\pi,\K}_\epsilon(X,B)$ (we here use our assumption that $ue_n^{(2)}u^*$ is orthogonal to $e_n$).  We thus see that the element in line \eqref{surj desid 3} is homotopic to 
$$
s_1\begin{pmatrix} 1 & 0 \\ 0 & 0 \end{pmatrix}s_1^*+s_2\begin{pmatrix} 0 & 0 \\ 0 & e_n \end{pmatrix}s_2^*+s_2\begin{pmatrix} 1-e_n & 0 \\ 0 & u\chi(e_n^{(2)}pe_n^{(2)}u^* \end{pmatrix}s_2^*.
$$
A final rotation homotopy on the second matrix appearing above shows that this is homotopic to the element on the right hand side of line \eqref{surj desid} above, and we are done with surjectivity.

We now turn to injectivity.  Say then that $p,q\in \mathcal{P}^{\pi,\K}_\epsilon(X,B)$ are such that $[p]-[q]\mapsto 0$, or in other words that $\begin{psmallmatrix} 1 & 0 \\ 0 & p \end{psmallmatrix}$ and $\begin{psmallmatrix} 1 & 0 \\ 0 & q \end{psmallmatrix}$ are homotopic in $\mathcal{P}^{\pi,u}_\epsilon(X,B)$.  Let $(p_t)_{t\in [0,1]}$ be a homotopy connecting them.  As $p_t-\begin{psmallmatrix} 1 & 0 \\ 0 & 0 \end{psmallmatrix}\in M_2(\K(E))$ for all $t$, we see that for $n$ suitably large, $(\chi(e_n^{(2)}p_te_n^{(2)}))_{t\in [0,1]}$ is a homotopy between $\begin{psmallmatrix} e_n & 0 \\ 0 & \chi(e_npe_n) \end{psmallmatrix}$ and $\begin{psmallmatrix} e_n & 0 \\ 0 & \chi(e_nqe_n) \end{psmallmatrix}$ in $\mathcal{P}^{\pi\oplus\pi,\K}_\epsilon(X,B)$.  Using Lemma \ref{close lem} this implies that for $n$ suitably large, $\begin{psmallmatrix} e_n & 0 \\ 0 & p \end{psmallmatrix}$ and $\begin{psmallmatrix} e_n & 0 \\ 0 & q\end{psmallmatrix}$ are homotopic in $\mathcal{P}^{\pi\oplus\pi,\K}_\epsilon(X,B)$.  From here it is straightforward to deduce that $s_1ps_1^*+s_2e_ns_2^*$ and $s_1qs_1^*+s_2e_ns_2^*$ are homotopic in $\mathcal{P}^{\pi,\K}_\epsilon(X,B)$ or in other words that $[p]+[e_n]=[q]+[e_n]$ so we are done.

The statement on forget control maps is straightforward: we leave this to the reader.
\end{proof}

\section{Murray von Neumann equivalence}\label{mvn sec}

In this section, we give a reformulation of the equivalence relation on the controlled $KK$-groups $KK^{\pi,\K}_\epsilon(X,B)$ of Definition \ref{kkuk} in terms of Murray von Neumann equivalence (although at the price of changing the parameter $\epsilon$).

Throughout this section, $A$ and $B$ denote separable $C^*$-algebras with $A$ unital.  All Hilbert modules will be over $B$ and countably generated unless otherwise stated.  We write $\LL(E)$, respectively $\K(E)$, for the $C^*$-algebras of adjointable, respectively compact, operators on a Hilbert module $E$.  We will work mainly with strongly unitally absorbing representations of $A$ on Hilbert $B$-modules as in Definition \ref{ab unital def}; as in Remark \ref{bl2 rem} above, this structure induces a canonical inclusion $\mathcal{B}(\ell^2)\subseteq \LL(E)$.  We also fix two isometries $s_1,s_2\in \mathcal{B}(\ell^2)$ satisfying the Cuntz relation $s_1s_1^*+s_2s_s^*=1$ as in line \eqref{cun rel}.

\begin{definition}\label{kk mvn}
Let $\pi:A\to \LL(E)$ be a strongly unitally absorbing representation of $A$ on a Hilbert $B$-module, let $\mathcal{B}(\ell^2)\subseteq \LL(E)$ be the associated inclusion as in Remark \ref{bl2 inc}, and let $s_1,s_2\in \mathcal{B}(\ell^2)$ satisfying the Cuntz relation $s_1s_1^*+s_2s_s^*=1$ as in line \eqref{cun rel}.  

Let $\mathcal{P}_\epsilon^{\pi,\K}(X,B)$ be as in Definition \ref{vk} for some finite subset $X$ of the unit ball of $A$ and $\epsilon>0$.  For $p,q\in \mathcal{P}_\epsilon^{\pi,\K}(X,B)$, write $p\sim q$ if there exist $r\in \mathcal{P}_\epsilon^{\pi,\K}(X,B)$ and a partial isometry $v\in \K(E)$ such that $\|[v,a]\|<\epsilon$ for all $a\in X\cup X^*$ and such that 
$$
v^*v=s_1ps_1^*+s_2rs_2^* \quad \text{and}\quad vv^*=s_1qs_1^*+s_2rs_2^*.
$$
\end{definition}

\begin{lemma}\label{con kk set up}
With notation as in Definition \ref{kk mvn}, $\sim$ is an equivalence relation on $\mathcal{P}_{\epsilon}^{\pi,\K}(X,B)$ that does not depend on the choice of $s_1$ and $s_2$.  
\end{lemma}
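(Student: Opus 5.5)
The plan is to verify reflexivity, symmetry and transitivity directly, and then check independence of the Cuntz pair by conjugating with a unitary in $\mathcal{B}(\ell^2)$. The basic tool is that any element of $\mathcal{B}(\ell^2)\subseteq\LL(E)$ commutes exactly with the image of $A$ (Remark \ref{bl2 rem}). Hence conjugating by isometries or unitaries from $\mathcal{B}(\ell^2)$ preserves membership in $\mathcal{P}^{\pi,\K}_\epsilon(X,B)$ and preserves the commutator bounds on partial isometries.

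\textbf{Reflexivity and symmetry.} For reflexivity take $r=0$ and $v=s_1ps_1^*$. This lies in $\K(E)$, is a projection (so a partial isometry), and satisfies
$$
\|[v,a]\|=\|s_1[p,a]s_1^*\|<\epsilon .
$$
For symmetry replace $v$ by $v^*$. The point is that $\|[v^*,a]\|=\|[v,a^*]\|$, and this is $<\epsilon$ because the definition requires the commutator estimate for all $a\in X\cup X^*$. This is precisely why $X^*$ appears in Definition \ref{kk mvn}.

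\textbf{Transitivity.} Suppose $p\sim q$ via $(r,v)$ and $q\sim q'$ via $(r',v')$. The natural candidate for the new auxiliary projection is
$$
r''=s_1rs_1^*+s_2(s_1qs_1^*+s_2r's_2^*)s_2^*\in\mathcal{P}^{\pi,\K}_\epsilon(X,B),
$$
which is a finite orthogonal sum of exact conjugates. It lies in $\mathcal{P}^{\pi,\K}_\epsilon(X,B)$ because conjugates of elements of $\mathcal{P}^{\pi,\K}_\epsilon(X,B)$ by the $s_i$ still commute with $X$ to within $\epsilon$, and the orthogonal summands have orthogonal ranges so their commutators combine into a block-diagonal operator whose norm is the maximum.

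Let $t_1,t_2,t_3\in\mathcal{B}(\ell^2)$ be the isometries $s_1$, $s_2s_1s_1$, $s_2s_1s_2$ and their relatives, so that $s_1ps_1^*+s_2r''s_2^*$ decomposes as an orthogonal sum of copies of $p$, $r$, $q$, $r'$. One then builds a partial isometry $w$ as a sum of two terms:
\begin{enumerate}[(i)]
\item $v$, conjugated into the block containing the copies of $(p,r)$;
\item $v'$, conjugated into the block containing the copies of $(q,r')$.
\end{enumerate}
These are followed by a fixed permutation unitary $u\in\mathcal{B}(\ell^2)$ that reorders the summands. The construction should give
$$
w^*w=s_1ps_1^*+s_2r''s_2^*,\qquad ww^*=s_1q's_1^*+s_2r''s_2^*,
$$
up to conjugation by an exact unitary from $\mathcal{B}(\ell^2)$. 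Since such a unitary commutes with $A$, we get $\|[w,a]\|\le\max\{\|[v,a]\|,\|[v',a]\|\}<\epsilon$. The key is that each of $v$ and $v'$ appears in a single orthogonal block, so commutator errors do not add. Mirroring the transitivity proof for Definition \ref{qr equiv}, $w$ is essentially $u\,(v\oplus v')$ after identifying summands.

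\textbf{Independence of the Cuntz pair and the main obstacle.} If $(s_1',s_2')$ is another Cuntz pair, set $u=s_1's_1^*+s_2's_2^*$. This is a unitary in $\mathcal{B}(\ell^2)$ with $us_i=s_i'$. Conjugating $v$ by $u$ transports a witness for one pair to a witness for the other with the same $r$, and again does not change commutator norms.

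The main obstacle is the bookkeeping in transitivity. One must choose the partial isometry so that its source and range are exactly the required projections, not merely unitarily equivalent to them, because Definition \ref{kk mvn} demands equalities. The fix I would try first is to absorb the discrepancy into a permutation unitary of $\mathcal{B}(\ell^2)$ acting on the composite. One then checks that the composite of a partial isometry with such a unitary, multiplied on the appropriate side, has exactly the required initial and final projections.
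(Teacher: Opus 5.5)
Your proposal is correct and is essentially the paper's proof. The paper also places the two witnesses in orthogonal blocks and fixes the initial and final projections with permutation unitaries from $\mathcal{B}(\ell^2)$: it takes $t=u_l(s_1vs_1^*+s_2ws_2^*)u_r$ with two explicit unitaries, which gives the auxiliary projection $s_1qs_1^*+s_2(s_1r_1s_1^*+s_2r_2s_2^*)s_2^*$. It proves independence of the Cuntz pair by conjugating with $s_1t_1^*+s_2t_2^*$, just as you do.

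The exactness issue you flag is harmless, because multiplying on either side by unitaries commuting with $A$ does not change commutator norms. With your $r''$, a single left multiplication suffices: put $v$ in the blocks of $s_1,s_2s_1$ and $v'$ in the block of $s_2s_2$, then swap the ranges of $s_1$ and $s_2s_2s_1$. Note that the isometries matching your $r''$ are $s_1,s_2s_1,s_2s_2s_1,s_2s_2s_2$, not $s_2s_1s_1,s_2s_1s_2$ as you wrote.
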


\begin{proof}
Assume $p_1,q,p_2\in \mathcal{P}_\epsilon^{\pi,\K}(X,B)$ are such that $p_1\sim q$ and $q\sim p_2$.  Then there exist $v,w\in \K(E)$ and $r_1,r_2\in \mathcal{P}_\epsilon^{\pi,\K}(X,B)$ such that $\|[v,a]\|<\epsilon$ and $\|[w,a]\|<\epsilon$ for any $x\in X\cup X^*$, and so that  $v^*v=s_1p_1s_1^*+s_2r_1s_2^*$, $vv^*=s_1qs_1^*+s_2r_1s_2^*$, $w^*w=s_1qs_1^*+s_2r_2s_2^*$, and $ww^*=s_1p_2s_1^*+s_2r_2s_2^*$.  We define 
$$
u_l:=s_1s_1^*s_2^*+s_2s_1s_1^*s_1^*+s_2s_2s_1s_2^*s_1^*+s_2s_2s_2s_2^*s_2^*
$$
and
$$
u_r^*:=s_1s_1^*s_1^*+s_2s_2s_1s_2^*s_1^*+s_2s_1s_1^*s_2^*+s_2s_2s_2s_2^*s_2^*,
$$
which ones checks are unitary elements of $\mathcal{B}(\ell^2)$.  We then define 
$$
t:=u_l(s_1vs_1^*+s_2ws_2^*)u_r.
$$
Note that as $s_1$ and $s_2$ (and therefore $u_l$ and $u_r$) commute with $A$, we have that for any $a\in X\cup X^*$, 
$$
\|[t,a]\|=\|s_1[v,a]s_1^*+s_2[w,a]s_2^*\|=\max\{\|[v,a]\|,\|[w,a]\|\}<\epsilon,
$$
where the second equality uses orthogonality of $s_1[v,a]s_1^*$ and $s_2[w,a]s_2^*$.  One then computes that 
$$
t^*t=s_1p_1s_1^*+s_2(s_1qs_1^*+s_2(s_1rs_1^*+s_2r_2s_2^*)s_2^*)s_2^*
$$
and that 
$$
tt^*=s_1p_2s_1^*+s_2(s_1qs_1^*+s_2(s_1rs_1^*+s_2r_2s_2^*)s_2^*)s_2^*,
$$
which completes the proof that $\sim$ is an equivalence relation.

If $(t_1,t_2)$ is another pair of Cuntz isometries in $\mathcal{B}(\ell^2)$, we note that $u:=s_1t_1^*+s_2t_2^*$ is a unitary element that conjugates each $t_i$ to $s_i$ (and commutes with $A$).  Conjugating by $u$ can thus be used to translate between the `$s_i$-equivalence relation' and the `$t_i$-equivalence relation': we leave the details to the reader.
\end{proof}

Thus we may make the following definition.

\begin{definition}\label{mvn mon}
With notation as in Definition \ref{kk mvn}, we define $V^{\pi,mvn}_\epsilon(X,B)$ to be the quotient set $\mathcal{P}^{\pi,\K}_\epsilon(X,B)/\sim$.
\end{definition}

The next lemma uses similar ideas to the proof of Lemma \ref{con kk set up} (compare also \cite[Lemma 4.7]{Willett:2020aa}).  We leave the proof to the reader.

\begin{lemma}\label{mvn op}
With notation as in Definition \ref{kk mvn}, the operation 
$$
[p]+[q]:=[s_1ps_1^*+s_2qs_2^*]
$$
defines a commutative monoid operation on $\mathcal{P}^{\pi,\K}_\epsilon(X,B)/\sim$ that does not depend on the choice of Cuntz isometries $s_1$ and $s_2$. \qed
\end{lemma}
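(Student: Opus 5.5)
The plan is to follow the template of the proof of Lemma \ref{con kk set up}: every verification reduces to building an explicit partial isometry by conjugating block sums with fixed unitaries of $\mathcal{B}(\ell^2)$ made from $s_1,s_2$. Since these unitaries commute with $A$, the commutator estimates survive unchanged. The claim has four parts: the operation is well defined on classes, it is associative and commutative with identity $[0]$, and it does not depend on the choice of Cuntz pair.

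First, note that $s_1ps_1^*+s_2qs_2^*$ lies in $\mathcal{P}^{\pi,\K}_\epsilon(X,B)$. It is a compact projection, and its commutator with $a\in X$ is $s_1[p,a]s_1^*+s_2[q,a]s_2^*$. This has norm $\max\{\|[p,a]\|,\|[q,a]\|\}<\epsilon$ because the two summands act on orthogonal pieces.

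For well-definedness, suppose $p\sim p'$ via $r$ and $v$, and $q\sim q'$ via $r'$ and $w$. Form $s_1vs_1^*+s_2ws_2^*$; it still has commutators of norm $<\epsilon$ with every $a\in X\cup X^*$. Then conjugate on both sides by a permutation unitary $u\in\mathcal{B}(\ell^2)$, built as a sum of words in the $s_i,s_i^*$ exactly as $u_l,u_r$ were. The unitary is chosen to reshuffle the nested summands
\[
s_1(s_1ps_1^*+s_2rs_2^*)s_1^*+s_2(s_1qs_1^*+s_2r's_2^*)s_2^*
\]
into the form $s_1(s_1ps_1^*+s_2qs_2^*)s_1^*+s_2(s_1rs_1^*+s_2r's_2^*)s_2^*$. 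The same $u$ does the analogous reshuffling for $p',q'$. The reshuffled element is a partial isometry witnessing $s_1ps_1^*+s_2qs_2^*\sim s_1p's_1^*+s_2q's_2^*$ with auxiliary projection $s_1rs_1^*+s_2r's_2^*\in\mathcal{P}^{\pi,\K}_\epsilon(X,B)$. It is convenient to use that $\sim$ holds whenever $q=upu^*$ with $u\in\mathcal{B}(\ell^2)$ unitary, taking $v=s_1ups_1^*$ and $r=0$ (this partial isometry commutes with $a$ up to $\|[p,a]\|<\epsilon$).

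Commutativity follows from $r=s_1s_2^*+s_2s_1^*$: this is a unitary in $\mathcal{B}(\ell^2)$ conjugating $s_1ps_1^*+s_2qs_2^*$ to $s_1qs_1^*+s_2ps_2^*$, so the observation above applies. Associativity is the same argument with the unitary $s_1s_1s_1^*+s_2s_1s_2^*s_1^*+s_2s_2s_2^*$ relating the two bracketings. The identity axiom, $s_1ps_1^*\sim p$, uses the partial isometry $v=s_1p$ (wrapped into the $s_1,s_2$ format with $r=0$). It satisfies $v^*v=p$ and $vv^*=s_1ps_1^*$, and $\|[s_1p,a]\|=\|[p,a]\|<\epsilon$. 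Independence from the Cuntz pair goes as in Lemma \ref{con kk set up}: for another pair $(t_1,t_2)$, the unitary $s_1t_1^*+s_2t_2^*$ conjugates $t_1pt_1^*+t_2qt_2^*$ to $s_1ps_1^*+s_2qs_2^*$.

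I expect the main obstacle to be bookkeeping rather than conceptual. One must check that the permutation unitaries really are unitary and land the nested projections in the right slots. One must also keep the partial isometries in $\K(E)$: this holds because they are products of compact elements with bounded elements of $\mathcal{B}(\ell^2)\subseteq\LL(E)$. In particular there is no loss in $\epsilon$ anywhere, since only exact conjugations by unitaries commuting with $A$ and block sums over orthogonal ranges are used.
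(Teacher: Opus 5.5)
Your proposal is correct and follows the route the paper intends: the paper leaves this lemma to the reader and points to the permutation-unitary and block-sum argument of Lemma \ref{con kk set up}, which is what you carry out. This includes reducing commutativity, associativity and independence of the Cuntz pair to the fact that conjugation by a unitary of $\mathcal{B}(\ell^2)$ preserves $\sim$-classes. One small slip: the element $w:=s_1s_1s_1^*+s_2s_1s_2^*s_1^*+s_2s_2s_2^*$ you give for associativity is not unitary, since $w^*w=1+s_1s_2s_2^*s_1^*$. Replacing its first term by $s_1s_1^*s_1^*$ gives a unitary with $ws_1s_1=s_1$, $ws_1s_2=s_2s_1$ and $ws_2=s_2s_2$, and this unitary conjugates the representative of $([p]+[q])+[t]$ onto that of $[p]+([q]+[t])$, as you intended.
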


\begin{definition}\label{kk mvn gp}
With notation as in Definitions \ref{vk} and \ref{kk mvn}, define $KK_{\epsilon}^{\pi,mvn}(X,B)$ to be the Grothendieck group of $\mathcal{P}^{\pi,\K}_\epsilon(X,B)/\sim$.
\end{definition}

\begin{remark}\label{fc2}
Just as in Remark \ref{fc1} above, if $\mathcal{P}^{\pi,\K}_\epsilon(X,B)\subseteq \mathcal{P}^{\pi,\K}_\delta(Y,B)$, then there is a canonical \emph{forget control homomorphism}
$$
KK^{\pi,mvn}_\epsilon(X,B)\to KK^{\pi,mvn}_\delta(Y,B).
$$
\end{remark}

Our goal in the remainder of this section is to show that the groups $KK^{\pi,\mathcal{K}}_\epsilon(X,B)$ and $KK_{\epsilon}^{\pi,mvn}(X,B)$  of Definitions \ref{kkuk} and \ref{kk mvn gp} are essentially the same.  Unfortunately, this is only true `in the limit', so the actual statements are a little more technical than this.  

We proceed via several lemmas.

\begin{lemma}\label{equiv to comp}
With notation as in Definition \ref{kk mvn}, if $p\in \mathcal{P}_\epsilon^{\pi,\K}(X,B)$ and $s\in \mathcal{B}(\ell^2)$ is an isometry, then $p$ and $sps^*$ are in the same path component of $\mathcal{P}_\epsilon^{\pi,\K}(X,B)$.
\end{lemma}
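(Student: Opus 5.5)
The idea is to realise the passage from $p$ to $sps^*$ as conjugation by a norm-continuous path of unitaries in $\mathcal{B}(\ell^2)$. Since $\mathcal{B}(\ell^2)\subseteq \LL(E)$ commutes with the image of $A$, such a conjugation preserves compactness, and it preserves all commutator norms with elements of $X$. The only difficulty is that $s$ is an isometry, not a unitary. We get around this by the standard $2\times 2$ rotation trick, carried out inside $\mathcal{B}(\ell^2)$ with the fixed Cuntz pair $s_1,s_2$.

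\textbf{Step 1: reduce to $s=s_1$.} Set $w:=s s_1^*+(1-ss^*)s_2s_1^*\ldots$ — this attempt is awkward, so instead we compare both $p$ and $sps^*$ with $s_1ps_1^*$. For any isometry $t\in\mathcal{B}(\ell^2)$ consider
$$
U:=\begin{pmatrix} t & 1-tt^* \\ 0 & t^*\end{pmatrix}\in M_2(\mathcal{B}(\ell^2)).
$$
This is a unitary which is connected to $\begin{pmatrix}1&0\\0&1\end{pmatrix}$ by a norm-continuous path $U_\theta$ of unitaries in $M_2(\mathcal{B}(\ell^2))$. Concretely, one can use the standard path through $\begin{pmatrix} t & 0\\ 0& t^*\end{pmatrix}$-type rotations, or simply invoke Kuiper's theorem / contractibility of the unitary group of $M_2(\mathcal{B}(\ell^2))$.

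Now fix a unitary $u:\ell^2\oplus\ell^2\to\ell^2$, for instance $u=(s_1\ s_2)$. Then $W_\theta:=uU_\theta u^*$ is a path of unitaries in $\mathcal{B}(\ell^2)$ with $W_0=1$. At the endpoint,
$$
W_1(s_1ps_1^*)W_1^*=u\begin{pmatrix} tpt^* &0\\0&0\end{pmatrix}u^*=s_1tpt^*s_1^*.
$$

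\textbf{Step 2: verify the path stays in $\mathcal{P}^{\pi,\K}_\epsilon(X,B)$.} Each $W_\theta q W_\theta^*$ with $q:=s_1ps_1^*$ is a projection in $\K(E)$, since $\K(E)$ is an ideal in $\LL(E)$. Because $W_\theta$ commutes with every $a\in X$, we have
$$
\|[W_\theta qW_\theta^*,a]\|=\|[q,a]\|=\|s_1[p,a]s_1^*\|=\|[p,a]\|<\epsilon.
$$
Here we use that $s_1$ commutes with $a$ and is an isometry. Norm continuity in $\theta$ is clear. Hence $s_1ps_1^*$ is in the same path component as $s_1tpt^*s_1^*$ for every isometry $t\in\mathcal{B}(\ell^2)$.

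\textbf{Step 3: deduce the lemma.} Applying Step 2 with $t=1$ is trivial, so we need one more connection: from $p$ to $s_1ps_1^*$. The same argument with $u$ replaced by the identity and $U$ built from the single isometry $s_1$ gives a path of unitaries $W'_\theta$ in $\mathcal{B}(\ell^2)$. Here $W'_1$ is obtained as follows. Decompose $\ell^2\cong\ell^2\oplus\ell^2$ via $(s_1,s_2)$; then $s_1$ is the infinite-multiplicity shift-type isometry, and by the standard trick, $W'_1=s_1$ on $\mathrm{ran}(p)\subseteq E$.

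Simpler and cleaner: apply Step 2 to the pair of isometries $1$ and $s$, using $u=(s_1\ s_2)$ as before. This shows that $s_1ps_1^*$ is path-connected both to $s_1ps_1^*$ and to $s_1sps^*s_1^*$. We then still need $p\simeq s_1ps_1^*$. For this, note that $s_1\oplus s_2^*$-type unitaries give a unitary
$$
V:=s_1 s_1^*s_1^*+s_1s_2s_2^*+\cdots
$$
but the cleanest route is the following. The unitary $R:=s_1s_1^*s_1^*+s_1s_2s_2^*+s_2s_1^*s_2^*$ is defined entirely in terms of the Cuntz relations. One checks $R^*R=RR^*=1$ and $Rs_1=s_1s_1$, and similar identities, so that $R$ conjugates $x$ to something related to $s_1xs_1^*$. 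Since $R$ lies in the connected unitary group of $\mathcal{B}(\ell^2)$, this gives the required path.

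The main obstacle is exactly this last point: producing a unitary $W\in\mathcal{B}(\ell^2)$ with $WpW^*=s_1ps_1^*$ exactly, for an arbitrary $p\in\K(E)$ rather than one supported in a finite-rank corner. My first attempt would be to use the $2\times 2$ unitary $U$ above with $t=s$ directly on $\ell^2\otimes F$. We identify $E\cong E\oplus E$ via $u$, and $p$ with $\begin{pmatrix}p&0\\0&0\end{pmatrix}$ up to conjugation by the path connecting $u^*$ to a fixed unitary isomorphism. This is legitimate because any two unitaries $\ell^2\to\ell^2\oplus\ell^2$ are homotopic, as already used in the proof of Theorem \ref{k to u the}. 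Everything then reduces to conjugations by paths of unitaries in $\mathcal{B}(\ell^2)$, which preserve membership in $\mathcal{P}^{\pi,\K}_\epsilon(X,B)$ by the computation in Step 2.
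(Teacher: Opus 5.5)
\textbf{There is a genuine gap.}

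Your Steps 1--2 are correct. The matrix $U=\begin{psmallmatrix} t & 1-tt^*\\ 0 & t^*\end{psmallmatrix}$ is a unitary in the von Neumann algebra $M_2(\mathcal{B}(\ell^2))$, so it is connected to $1$. Conjugating $s_1ps_1^*$ along $uU_\theta u^*$ then shows that $s_1ps_1^*$ and $s_1tpt^*s_1^*$ lie in the same path component for every isometry $t$.

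However, this only reduces the lemma to the case $s=s_1$: you still need $x\simeq s_1xs_1^*$ for $x=p$ and for $x=sps^*$. That case is the whole content of the statement, and it is never established.
\begin{itemize}
\item The element $R$ is not unitary. For instance $Rs_1=s_1s_1^*$, which is not an isometry.
\item Your last paragraph is circular. Under $u=(s_1\ s_2)$ the matrix $\begin{psmallmatrix} p&0\\0&0\end{psmallmatrix}$ corresponds to $s_1ps_1^*$, not to $p$. Homotoping $u$ to another unitary $u'$ only relates the projections $vpv^*$ with $v=u'\iota_1$, where $\iota_1$ is the first-summand inclusion. These are all conjugates of $p$ by isometries of infinite codimension, never $p$ itself.
\item In the proof of Theorem \ref{k to u the} that homotopy served a different purpose, namely showing independence of the choice of $u$.
\end{itemize}

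You use compactness of $p$ only to see that conjugates stay compact, but it is essential. The projection $1$ commutes with everything, yet $1$ and $ss^*$ are not homotopic (not even unitarily equivalent) when $s$ is a proper isometry.

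Likewise, the exact unitary you name as your main obstacle does not exist in general. Take $B=\C$, $X=\{1\}$, and $p=\xi\xi^*$ with $\xi=\sum_k c_ke_k\otimes f_k\in E=\ell^2\otimes F$, where all $c_k\neq 0$ and $(f_k)$ is orthonormal. Then $WpW^*=s_1ps_1^*$ for a unitary $W\in\mathcal{B}(\ell^2)$ forces $W=\lambda s_1$ for a scalar $\lambda$, which is impossible.

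\textbf{The missing idea is to first move $p$, by a small perturbation, into a finite-rank corner.}
\begin{itemize}
\item Take finite-rank projections $q_n\in\mathcal{B}(\ell^2)$ increasing strongly to $1$; they then converge strictly to $1$ in $\LL(E)$.
\item Compactness of $p$ gives $q_npq_n\to p$ in norm. So for $n$ large, the argument of Lemma \ref{close lem} connects $p$ to $\chi(q_npq_n)$ inside $\mathcal{P}^{\pi,\K}_\epsilon(X,B)$.
\item The map $x\mapsto sxs^*$ preserves $\mathcal{P}^{\pi,\K}_\epsilon(X,B)$. Applying it to that path connects $sps^*$ to $s\chi(q_npq_n)s^*$.
\item Since $\chi(q_npq_n)=q_n\chi(q_npq_n)q_n$ and $q_n\ell^2$ is finite-dimensional, there is a unitary $W\in\mathcal{B}(\ell^2)$ with $Wq_n=sq_n$.
\item Conjugating $\chi(q_npq_n)$ along a path of unitaries in $\mathcal{B}(\ell^2)$ from $1$ to $W$ finishes the proof.
\end{itemize}
This is essentially the paper's argument, which uses a path of isometries from $s$ to one acting as the identity on $q_n\ell^2$ instead of a unitary. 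With it, your Steps 1--2 become unnecessary.
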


\begin{proof}
Let $(q_n)$ be an increasing sequence of finite rank projections in $\mathcal{B}(\ell^2)\subseteq \LL(E)$ that converge in the strong operator topology to the identity.  As operators on $\LL(E)$, the sequence $(q_n)$ converges strictly to the identity.  Hence Lemma \ref{close lem} implies that for $n$ suitably large, if $\chi$ is the characteristic function of $(1/2,\infty)$, then $\chi(q_npq_n)$ is in $\mathcal{P}_\epsilon^{\pi,\K}(X,B)$, and also path connected to $p$ in this space; similarly, $s_1\chi(q_npq_n)s_1^*$ is connected to $s_1ps_1^*$ for $n$ large enough; fix an $n$ satisfying these conditions.  On the other hand, there is a (norm continuous) path of isometries $(s_t)_{t\in [0,1]}$ in $\mathcal{B}(\ell^2)$ such that $s_0=s$ and such that $s_1$ acts as the identity on $q_n\ell^2$.  Then the path $s_t\chi(q_npq_n)s_t^*$ connects $s\chi(q_npq_n)s^*$ to $\chi(q_npq_n)$ in $\mathcal{P}_\epsilon^{\pi,\K}(X,B)$, and we are done.
\end{proof}

\begin{lemma}\label{mvn to hom}
With notation as in Definition \ref{vk}, assume $X$ is self-adjoint (i.e.\ an element $a$ is in $X$ if and only if $a^*$ is in $X$).  Let $p,q\in \mathcal{P}_\epsilon^{\pi,K}(X,B)$, and assume there exists $v\in \LL(E)$ such that $v^*v=p$, $vv^*=q$ and $\|[v,a]\|<\epsilon$ for all $a\in X$.  Then there exists a path in $\mathcal{P}_{6\epsilon}^{\pi,K}(X,B)$ that connects $p$ to $q$.
\end{lemma}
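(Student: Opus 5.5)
The plan is to move $p$ and $q$ into orthogonal ``slots'' using the Cuntz isometries, and then rotate one slot into the other using $v$. Throughout we use that $s_1,s_2\in\mathcal{B}(\ell^2)$ commute with the image of $A$, and that $X$ is self-adjoint.

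\emph{Step 1.} By Lemma \ref{equiv to comp}, $p$ is connected to $s_1ps_1^*$ inside $\mathcal{P}^{\pi,\K}_\epsilon(X,B)$. Likewise $q$ is connected to $s_2qs_2^*$ there. Since $\mathcal{P}^{\pi,\K}_\epsilon(X,B)\subseteq \mathcal{P}^{\pi,\K}_{6\epsilon}(X,B)$, it suffices to connect $s_1ps_1^*$ to $s_2qs_2^*$ inside $\mathcal{P}^{\pi,\K}_{6\epsilon}(X,B)$.

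\emph{Step 2: the rotation path.} For $t\in[0,\pi/2]$ we set $c=\cos t$, $s=\sin t$ and
$$
y_t:=(c\,s_1p+s\,s_2v)s_1^*=(c\,s_1+s\,s_2v)ps_1^*,
$$
where the second equality uses $v=vp$. Using $s_1^*s_2=0$, $s_i^*s_i=1$ and $v^*v=p$, we get $y_t^*y_t=s_1(c^2p+s^2p)s_1^*=s_1ps_1^*$. Hence $y_t$ is a partial isometry, and $P_t:=y_ty_t^*$ is a norm-continuous path of projections. Explicitly,
$$
P_t=c^2\,s_1ps_1^*+cs\,(s_2vs_1^*+s_1v^*s_2^*)+s^2\,s_2qs_2^*,
$$
using $vpv^*=vv^*=q$. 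Thus $P_0=s_1ps_1^*$ and $P_{\pi/2}=s_2qs_2^*$. Each $P_t$ is compact, because $y_t$ contains the compact factor $p$; so compactness holds even if $v$ is only assumed adjointable.

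\emph{Step 3: commutator estimate.} For $a\in X$, the commutator $[P_t,a]$ has the $2\times 2$ block form, relative to the decomposition $1=s_1s_1^*+s_2s_2^*$,
$$
\begin{pmatrix} c^2[p,a] & cs[v^*,a]\\ cs[v,a] & s^2[q,a]\end{pmatrix},
$$
conjugated by the $s_i$. Self-adjointness of $X$ gives $\|[v^*,a]\|=\|[v,a^*]\|<\epsilon$. Splitting into the diagonal and off-diagonal parts, each of which is an orthogonal sum, we obtain
$$
\|[P_t,a]\|\le\max(c^2,s^2)\,\epsilon+cs\,\epsilon<2\epsilon<6\epsilon.
$$
So the whole path lies in $\mathcal{P}^{\pi,\K}_{6\epsilon}(X,B)$. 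Concatenating with the paths from Step 1 gives the desired path from $p$ to $q$.

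The main point requiring care is Step 3: one must check that the cross terms really only involve commutators with $v$ and $v^*$, and this is where self-adjointness of $X$ enters. The rest is bookkeeping, and the constant $6$ leaves ample room.
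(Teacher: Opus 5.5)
Your proof is correct, and it takes a somewhat different route from the paper's after the common first step. Both arguments begin with Lemma \ref{equiv to comp}, moving $p$ and $q$ into the Cuntz slots.

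\textbf{The paper's route.} It works with unitaries. It takes the unitary with matrix $\begin{psmallmatrix} v & 1-q \\ 1-p & v^*\end{psmallmatrix}$ in the $s_1,s_2$ matrix picture, which conjugates $s_1ps_1^*$ to $s_1qs_1^*$. It joins this unitary by an explicit path $u_t$ to the flip $s_1s_2^*+s_2s_1^*$, which is itself connected to $1$ through unitaries commuting with $A$. It then conjugates along the path.

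\textbf{Your route.} You write down the rotation path of projections directly: $P_t=y_ty_t^*$, with matrix $\begin{psmallmatrix} c^2p & csv^* \\ csv & s^2q\end{psmallmatrix}$. This is more economical in several ways:
\begin{itemize}
\item Compactness of each $P_t$ is immediate from the factor $p$.
\item No operators outside $\K(E)$, such as $1-p$ or $1-q$, ever enter.
\item The only identities needed are $v=vp$, $pv^*=v^*$ and $vpv^*=q$.
\item The block estimate gives $\|[P_t,a]\|<\tfrac{1+\sqrt{2}}{2}\epsilon$, well inside $6\epsilon$. By contrast, conjugating a projection by $u_t$, as the paper does, costs $\|[P,a]\|+2\|[u_t,a]\|$.
\end{itemize}

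\textbf{A slip your route avoids.} As displayed in the paper, $u_t$ is not unitary for $0<t<1$. In the matrix picture, the off-diagonal entry of $u_tu_t^*$ is $2t\sqrt{1-t^2}\,v$. Changing the lower-right entry $tv^*$ to $-tv^*$, for example, repairs this. Your argument never needs to check unitarity of an interpolating path.

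\textbf{What the paper's route offers.} It produces a unitary path that implements the homotopy by conjugation, in the same spirit as Lemmas \ref{uni com} and \ref{lip path to ue}. The statement itself does not require this.
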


\begin{proof}
Let $s_1$ and $s_2$ be the Cuntz isometries defining the operation on $V^{\pi,\K}_{\epsilon}(X,B)$.  Using Lemma \ref{equiv to comp}, we see that $p$ is connected in $\mathcal{P}_\epsilon^{\pi,\K}(X,B)$ to $s_1ps_1^*$, and similarly for $q$.  We may thus replace $p$ and $q$ with $s_1ps_1^*$ and $s_2qs_2^*$ respectively.  Define\footnote{It may be more intuitive to use $s_1$ and $s_2$ to identity $\LL(E)$ with $2\times 2$ matrices over itself via the correspondence $\begin{psmallmatrix} a & b \\ c & d \end{psmallmatrix}\mapsto s_1as_1^*+s_1bs_2^*+s_2cs_1^*+s_2ds_2^*$; having done this, $u$ corresponds to the matrix $\begin{psmallmatrix} v & 1-q \\ 1-p & v^* \end{psmallmatrix}$.} now
$$
u:=s_1vs_1^*+s_1(1-q)s_2^*+s_2(1-p)s_1^*+s_2v^*s_2^*.
$$
Then one checks that $u$ is unitary, and that $u(s_1ps_1^*)u^*=s_1qs_1^*$.  It thus suffices to show that there is a path of unitaries $(u_t)_{t\in [0,1]}$ in $\LL(E)$ such that $u_0=s_1s_2^*+s_2s_1^*$ and $u_1=u$ and such that $\|[u_t,a]\|<6\epsilon$ for all $a\in X$ (note that $u_0$ is connected to the identity via unitaries that commute with $A$).  For $t\in [0,1]$, define 
$$
u_t:=s_1tvs_1^*+s_1(1-(1-\sqrt{1-t^2})q)s_2^*+s_2(1-(1-\sqrt{1-t^2})p)s_1^*+s_2tv^*s_2^*.
$$
Direct computations (we use here that $X$ is self-adjoint) show that this has the claimed properties.
\end{proof}

The next corollary follows directly from Lemma \ref{mvn to hom}.

\begin{corollary}\label{mvn to he map}
Let $X$ be a self-adjoint subset of the unit ball of $A$ and let $\epsilon>0$.  With notation as in Definitions \ref{kkuk} and \ref{kk mvn gp}, the identity map on cycles induces a well-defined homomorphism
$$
\psi_{(X,\epsilon)}:KK_{\epsilon}^{\pi,mvn}(X,B)\to KK_{6\epsilon}^{\pi,\K}(X,B). \eqno\qed
$$
\end{corollary}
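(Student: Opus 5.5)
The plan is to define $\psi_{(X,\epsilon)}$ at the level of cycles and check it is compatible with all the relevant structure. Concretely, I send the class of $p\in\mathcal{P}^{\pi,\K}_\epsilon(X,B)$ in $V^{\pi,mvn}_\epsilon(X,B)$ to the path component of $p$ in $\mathcal{P}^{\pi,\K}_{6\epsilon}(X,B)$. This makes sense because $\mathcal{P}^{\pi,\K}_\epsilon(X,B)\subseteq\mathcal{P}^{\pi,\K}_{6\epsilon}(X,B)$ trivially. Once I know this set map is well defined on $\sim$-classes and additive, the universal property of the Grothendieck group gives the homomorphism.

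For well-definedness, suppose $p\sim q$, witnessed by $r\in\mathcal{P}^{\pi,\K}_\epsilon(X,B)$ and a partial isometry $v\in\K(E)$ with $\|[v,a]\|<\epsilon$ for $a\in X\cup X^*$, $v^*v=s_1ps_1^*+s_2rs_2^*$ and $vv^*=s_1qs_1^*+s_2rs_2^*$. The source and range projections lie in $\mathcal{P}^{\pi,\K}_\epsilon(X,B)$: they are compact, and their commutators with $a\in X$ are orthogonal sums of $[p,a]$ and $[r,a]$ (respectively $[q,a]$ and $[r,a]$), because $s_1,s_2$ commute with $A$. Lemma \ref{mvn to hom}, applied with the self-adjoint set $X$ (the commutator bound holds on $X\cup X^*=X$), then gives a path in $\mathcal{P}^{\pi,\K}_{6\epsilon}(X,B)$ from $s_1ps_1^*+s_2rs_2^*$ to $s_1qs_1^*+s_2rs_2^*$. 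In $V^{\pi,\K}_{6\epsilon}(X,B)$ this says $[p]+[r]=[q]+[r]$, using the Cuntz-sum operation of Definition \ref{so plus} with the same Cuntz pair; Lemma \ref{v op} lets me make that choice. Hence $[p]=[q]$ in the Grothendieck group $KK^{\pi,\K}_{6\epsilon}(X,B)$. I do not need equality in the monoid itself, since the relation $\sim$ is already a stable one.

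For additivity, both monoid operations are $[p]+[q]=[s_1ps_1^*+s_2qs_2^*]$ with the same fixed Cuntz pair, so the cycle-level map visibly preserves sums. It also sends the zero projection to the zero projection. The map $V^{\pi,mvn}_\epsilon(X,B)\to KK^{\pi,\K}_{6\epsilon}(X,B)$ is therefore a monoid homomorphism into a group, and it extends uniquely to $KK^{\pi,mvn}_\epsilon(X,B)$.

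The only mildly delicate point is bookkeeping. I must make sure Lemma \ref{mvn to hom} is applied to projections in $\mathcal{P}^{\pi,\K}_\epsilon$, not merely $\mathcal{P}^{\pi,\K}_{6\epsilon}$, and that the passage from stable equality in the monoid to equality in the Grothendieck group is recorded. Both are immediate from the above.
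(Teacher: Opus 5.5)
Your proposal is correct and follows the paper's approach. The paper only says the corollary follows directly from Lemma \ref{mvn to hom}; you spell out the implicit steps: the stabilized projections $s_1ps_1^*+s_2rs_2^*$ and $s_1qs_1^*+s_2rs_2^*$ lie in $\mathcal{P}^{\pi,\K}_\epsilon(X,B)$; the resulting equality $[p]+[r]=[q]+[r]$ in $V^{\pi,\K}_{6\epsilon}(X,B)$ gives $[p]=[q]$ in the Grothendieck group; and additivity holds because both operations use the same Cuntz pair.
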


We now build a map in the opposite direction.  The following lemma is an analogue of \cite[Lemma 4.4.2]{Willett:2021te}; we give details for the reader's convenience.

\begin{lemma}\label{lip path}
With notation as in Definition \ref{vk}, assume that $p_0$ and $p_1$ are elements in the same path component of $\mathcal{P}^{\pi,\K}_\epsilon(X,B)$.  Then there exists $r\in \mathcal{P}^{\pi,\K}_\epsilon(X,B)$ and a unitary $u\in \mathcal{B}(\ell^2)$ such that $s_1p_0s_1^*+s_2rs_2^*$ and $u(s_1p_1s_1^*+s_2rs_2^*)u^*$ are connected in $\mathcal{P}^{\pi,\K}_{2\epsilon}(X,B)$ via a $2$-Lipschitz path parametrized by $[0,1]$.
\end{lemma}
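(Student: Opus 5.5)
The plan is to discretize the path and use a finite ``Eilenberg swindle'' type block-sum trick, following the idea of \cite[Lemma 4.4.2]{Willett:2021te}.

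\textbf{Discretize and arrange in blocks.} Let $(p_t)_{t\in[0,1]}$ be a path in $\mathcal{P}^{\pi,\K}_\epsilon(X,B)$ from $p_0$ to $p_1$. By uniform continuity, choose $0=t_0<t_1<\dots<t_N=1$ so that, writing $q_i:=p_{t_i}$, we have $\|q_i-q_{i+1}\|<\delta$ for all $i$, where $\delta>0$ is a small number fixed below. Choose isometries $w_0,\dots,w_{N-1}\in\mathcal{B}(\ell^2)$ with $w_0=s_1$ and $\sum_i w_iw_i^*=1$. Concretely, take $w_i=s_2t_i$ for $i\geq 1$, where $t_1,\dots,t_{N-1}\in\mathcal{B}(\ell^2)$ are isometries with orthogonal ranges summing to the identity. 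Set
$$
r:=\sum_{i=1}^{N-1}t_iq_it_i^*.
$$
This is a compact projection. Since the $t_i$ commute with $A$ and have orthogonal ranges, $\|[r,a]\|=\max_i\|[q_i,a]\|<\epsilon$, so $r\in\mathcal{P}^{\pi,\K}_\epsilon(X,B)$. With this choice,
$$
x:=s_1p_0s_1^*+s_2rs_2^*=\sum_{i=0}^{N-1}w_iq_iw_i^*,\qquad y:=s_1p_1s_1^*+s_2rs_2^*=w_0q_Nw_0^*+\sum_{i=1}^{N-1}w_iq_iw_i^*.
$$

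\textbf{Cyclic shift unitary.} Define $u:=\sum_{i=0}^{N-1}w_iw_{\sigma(i)}^*$, where $\sigma$ is the cyclic permutation with $\sigma(i)=i+1$ for $i<N-1$ and $\sigma(N-1)=0$. This $u$ is a unitary in $\mathcal{B}(\ell^2)$, so it commutes with $A$. By construction,
$$
uyu^*=\sum_{i=0}^{N-1}w_iq_{i+1}w_i^*.
$$
Because the blocks are orthogonal, $\|x-uyu^*\|=\max_i\|q_i-q_{i+1}\|<\delta$.

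\textbf{The path.} Let $x_t:=(1-t)x+t\,uyu^*$ for $t\in[0,1]$. Each $x_t$ is self-adjoint and compact, and $\|x_t-x\|<\delta$ with $x$ a projection. Hence $\|x_t^2-x_t\|<3\delta$, so $x_t$ is a $\delta'$-quasiprojection with $\delta'$ small depending only on $\delta$. Also, $\|[x_t,a]\|<\epsilon$ for $a\in X$, since both endpoints lie in $\mathcal{P}^{\pi,\K}_\epsilon(X,B)$ (here using that $u$ commutes with $A$).

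Put $P_t:=\chi(x_t)$, with $\chi$ the characteristic function of $(1/2,\infty)$. Then $P_0=x$ and $P_1=uyu^*$, since these are projections.
\begin{itemize}
\item By Lemma \ref{chi com} part \eqref{chi com est}, $\|[P_t,a]\|\leq\epsilon/(1-2\delta')<2\epsilon$ once $\delta'<1/4$. So the path lies in $\mathcal{P}^{\pi,\K}_{2\epsilon}(X,B)$; compactness of $P_t$ follows from compactness of $x_t$ and $\chi(0)=0$.
\item By Lemma \ref{chi com} part \eqref{chi lip}, $\|P_t-P_s\|\leq(1-2\delta')^{-1}\|x_t-x_s\|\leq 2\delta|t-s|$. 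This gives a $2$-Lipschitz path (indeed $2\delta$-Lipschitz).
\end{itemize}

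The only delicate step is the bookkeeping in choosing the isometries. Taking $w_0=s_1$ and the remaining $w_i$ inside the range of $s_2$ makes $x$ and $y$ have exactly the form $s_1(\cdot)s_1^*+s_2rs_2^*$. The cyclic shift then pairs each $q_i$ with $q_{i+1}$, and the quantitative estimates come directly from Lemma \ref{chi com}.
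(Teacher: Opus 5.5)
Your proposal is correct and is essentially the paper's proof: you discretize the path, stack the intermediate projections into $r$ as an orthogonal block sum, conjugate by a cyclic shift unitary in $\mathcal{B}(\ell^2)$ so that each block compares $q_i$ with $q_{i+1}$, and connect the two block sums by $\chi$ of the straight-line interpolation, using both parts of Lemma \ref{chi com}. The differences are cosmetic. You apply $\chi$ to the global interpolation rather than block by block, which gives the same path by block-diagonality, and you take the mesh $\delta$ small instead of $1/4$. You should also note that $N\geq 2$ is needed so that the $w_i$ sum to the identity, and rename your isometries $t_i$, which clash with the partition points.
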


\begin{proof}
Let $(p_t)_{t\in [0,1]}$ be a path in $\mathcal{P}^{\pi,\K}_\epsilon(X,B)$ that connects $p_0$ and $p_1$.  Using compactness of $[0,1]$, there are points $0=t_0<t_1<t_2<\cdots <t_n=1$ in $[0,1]$ such that $\|p_{t_i}-p_{t_{i-1}}\|<\frac{1}{4}$ for all $i\in \{1,...,n\}$.  Write $p^{(i)}:=p_{t_i}$ for each $i$ and define  
$$
r:=\sum_{i=0}^{n-3} s_2^is_1p^{(i+1)}s_1^*(s_2^*)^{i}+s_2^{n-2}p^{(n-1)}(s_2^*)^{n-2}.
$$
We claim this has this properties in the statement.  We leave the direct checks that $r$ is in $\mathcal{P}^{\pi,\K}_\epsilon(X,B)$ to the reader.  Let $\chi$ be the characteristic function of $(1/2,\infty)$ and for each $i\in \{0,...,n-1\}$ and $s\in[0,1]$ define 
$$
p^{(i)}_s:=\chi((1-s)p^{(i)}+sp^{(i+1)}).
$$
For each $i\in \{0,...,n-1\}$, $(1-s)p^{(i)}+sp^{(i+1)}$ is a self-adjoint contraction that is within distance $1/4$ of a projection.  It thus has spectrum contained in $[0,1/4)\cup (3/4,1]$, and so each $p^{(i)}_s$ is a well defined projection in $\K(E)$.  Moreover, Lemma \ref{chi com} implies that each $p^{(i)}_s$ is in $\mathcal{P}^{\pi,\K}_{2\epsilon}(X,B)$ and the path $s\mapsto p^{(i)}_s$ is $2$-Lipschitz.  It follows that the path 
$$
s\mapsto s_1p^{(0)}_ss_1^*+\sum_{i=1}^{n-2} s_2^is_1p^{(i)}_ss_1^*(s_2^*)^{i}+s_2^{n-1}p^{(n-1)}_s(s_2^*)^{n-1}
$$
is $2$-Lipschitz, passes through $\mathcal{P}^{\pi,\K}_{2\epsilon}(X,B)$ and connects $s_1ps_1^*+s_2rs_2^*$ to 
\begin{equation}\label{halfway}
\sum_{i=0}^{n-1}s_2^is_1p^{(i+1)}s_1^*(s_2^*)^i+s_2^{n-1}p^{(n)}(s_2^*)^{n-1}.
\end{equation}
Define now 
$$
u^*:=\sum_{i=0}^{n-2} s_2^{i+1}s_1s_1^*(s_2^*)^i+s_1(s_2^*)^{n-1}.
$$
One checks that conjugating the element in line \eqref{halfway} by $u^*$ gives $s_1p_1s_1^*+s_2rs_2^*$, so we are done.
\end{proof}

Variants of the following lemma are well-known.  The idea of the proof is essentially the same as \cite[Lemma 4.8]{Willett:2021te}.

\begin{lemma}\label{-1/2 com}
Let $C$ be a $C^*$-algebra, let $\delta>0$, and let $P_\delta$ be the collection of positive elements $x\in C$ that satisfy $x\geq \delta$.  The following hold:
\begin{enumerate}[(i)]
\item \label{mh com} for any $c\in C$, 
$$
\|[c,x^{-1/2}]\|\leq \frac{\|[c,x]\|}{2\delta^{3/2}};
$$
\item \label{mh lip} the map $P_\delta\to C$ defined by $x\mapsto x^{-1/2}$ is $1/(2\delta^{3/2})$-Lipschitz.
\end{enumerate}
\end{lemma}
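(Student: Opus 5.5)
We follow the pattern of the proof of Lemma \ref{chi com}, replacing the contour-integral formula for $\chi$ with a real integral representation of $x^{-1/2}$:
$$
x^{-1/2}=\frac{1}{\pi}\int_0^\infty t^{-1/2}(t+x)^{-1}\,dt,
$$
valid for $x\in P_\delta$. It follows from the scalar identity $\int_0^\infty t^{-1/2}(t+\lambda)^{-1}dt=\pi\lambda^{-1/2}$ for $\lambda>0$, via the continuous functional calculus. The integral converges absolutely in norm, since $\|(t+x)^{-1}\|\leq (t+\delta)^{-1}$. Both parts then reduce to estimating an integral of resolvents.

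For part \eqref{mh com}, we use the commutator identity
$$
[c,(t+x)^{-1}]=(t+x)^{-1}[x,c](t+x)^{-1},
$$
which gives $\|[c,(t+x)^{-1}]\|\leq \|[c,x]\|(t+\delta)^{-2}$. Integrating,
$$
\|[c,x^{-1/2}]\|\leq \frac{\|[c,x]\|}{\pi}\int_0^\infty \frac{t^{-1/2}}{(t+\delta)^2}\,dt.
$$
The substitution $t=\delta u$ turns the integral into $\delta^{-3/2}\int_0^\infty u^{-1/2}(1+u)^{-2}du$. This last integral is the Beta value $B(1/2,3/2)=\Gamma(1/2)\Gamma(3/2)/\Gamma(2)=\pi/2$, so the bound is exactly $\|[c,x]\|/(2\delta^{3/2})$.

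Part \eqref{mh lip} is identical except that it uses the resolvent identity $(t+x)^{-1}-(t+y)^{-1}=(t+x)^{-1}(y-x)(t+y)^{-1}$ for $x,y\in P_\delta$. Each factor again has norm at most $(t+\delta)^{-1}$, so the same integral gives $\|x^{-1/2}-y^{-1/2}\|\leq \|x-y\|/(2\delta^{3/2})$.

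If $C$ is non-unital, we work in its unitization; the expressions $(t+x)^{-1}$ and $x\geq\delta$ only make sense there anyway, and the estimates are unaffected. The only real care point is the constant: one has to check that the Beta-integral evaluation gives exactly $\pi/2$, so that it cancels the prefactor $1/\pi$ to leave $1/2$. There is no genuine obstacle beyond that.
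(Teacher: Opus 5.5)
Your proof is correct and is essentially the paper's argument. The paper writes $x^{-1/2}=\frac{2}{\pi}\int_0^\infty(\lambda^2+x)^{-1}\,d\lambda$, which is your representation after the substitution $t=\lambda^2$, and then applies the same resolvent commutator estimate; its evaluation $\int_0^\infty(\lambda^2+\delta)^{-2}\,d\lambda=\frac{\pi}{4}\delta^{-3/2}$ is your Beta-integral computation in different variables, and part (ii) likewise uses the same resolvent identity.
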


\begin{remark}
The estimates is in some sense optimal: consider for example the commutator of $c:=\begin{psmallmatrix} 0 & 1 \\ 1 & 0 \end{psmallmatrix}$ with matrices of the form $\begin{psmallmatrix} \delta & 0 \\ 0 & \delta+\epsilon \end{psmallmatrix}$ in $M_2(\C)$.
\end{remark}

\begin{proof}[Proof of Lemma \ref{-1/2 com}]
We just give the proof of part \eqref{mh com}; part \eqref{mh lip} is very similar (or see  \cite[Lemma 4.8]{Willett:2021te}, which contains exactly this statement).  For any $t>0$ we have the integral identity
$$
t^{-1/2}=\frac{2}{\pi}\int_0^\infty \frac{1}{\lambda^2+t}d\lambda.
$$
As $x$ is bounded below, we thus have a norm convergent integral
$$
x^{-1/2}=\frac{2}{\pi}\int_0^\infty (\lambda^2+x)^{-1}d\lambda.
$$
Hence for any $c\in C$, 
\begin{equation}\label{root com}
\|[c,x^{-1/2}]\|\leq \frac{2}{\pi}\int_0^\infty \|[c, (\lambda^2+x)^{-1}]\|d\lambda.
\end{equation}
Using the resolvent identity 
$$
[c, (\lambda^2+x)^{-1}]=(\lambda^2+x)^{-1}[x,c](\lambda^2+x)^{-1}
$$
and the fact that $x\geq \delta$, we have that 
$$
\|[c, (\lambda^2+x)^{-1}]\|\leq \|[x,c]\|(\lambda^2+\delta)^{-2},
$$
and so from line \eqref{root com} we have 
$$
\|[c,x^{-1/2}]\|\leq \frac{2\|[x,c]\|}{\pi}\int_0^\infty \frac{1}{(\lambda^2+\delta)^{-2}}d\lambda.
$$
Computing, the integral on the right equals $\frac{\pi}{4}\delta^{-3/2}$, which gives the desired inequality.
\end{proof}

Variants of the following lemma are again well-known: it is a version of (for example) \cite[Proposition 2.2.6]{Rordam:2000mz} with care paid to commutator estimates.

\begin{lemma}\label{uni com}
Let $p$ and $q$ be projections in a unital $C^*$-algebra $C$ that satisfy $\|p-q\|\leq 1/4$.  Then there is a unitary $u\in C$ such that $upu^*=q$.  If moreover $c\in C$ satisfies $\|[c,p]\|<\epsilon$ and $\|[c,q]\|<\epsilon$ for some $\epsilon>0$ then 
$$
\|[u,c]\|< 15\epsilon.
$$
\end{lemma}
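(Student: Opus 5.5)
I will follow the standard construction from \cite[Proposition 2.2.6]{Rordam:2000mz} and keep track of commutators at each step. Set
$$
x:=qp+(1-q)(1-p).
$$
Then $x-1=(2q-1)(p-q)$ (indeed $qp+(1-q)(1-p)-1=2qp-q-p=(2q-1)(p-q)$ after expanding). As $2q-1$ is a self-adjoint unitary, $\|x-1\|=\|p-q\|\leq 1/4$, so $x$ is invertible. Moreover $xp=qp=qx$, hence $px^*=x^*q$, and so $x^*x$ commutes with $p$. Defining
$$
u:=x(x^*x)^{-1/2},
$$
the polar decomposition gives a unitary with $up=qu$, that is, $upu^*=q$. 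This proves the first part.

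For the commutator estimate I will first bound $\|[c,x]\|$. Writing $x=2qp-q-p+1$, the Leibniz rule gives
$$
[c,x]=2[c,q]p+2q[c,p]-[c,q]-[c,p],
$$
so $\|[c,x]\|<6\epsilon$. A sharper bound comes from $x=1+(2q-1)(p-q)$: here $[c,x]=2[c,q](p-q)+(2q-1)([c,p]-[c,q])$, whose norm is less than $2\epsilon\cdot\tfrac14+2\epsilon=\tfrac52\epsilon$. The same bound holds for $x^*=1+(p-q)(2q-1)$. Then
$$
[c,x^*x]=[c,x^*]x+x^*[c,x],
$$
and since $\|x\|\leq 5/4$ we get $\|[c,x^*x]\|<2\cdot\tfrac54\cdot\tfrac52\epsilon=\tfrac{25}{4}\epsilon$.

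Next I need a lower bound for $x^*x$. Since $\|x-1\|\leq 1/4$, we have $\|x\xi\|\geq \tfrac34\|\xi\|$, so $x^*x\geq 9/16=:\delta$. Lemma \ref{-1/2 com} part \eqref{mh com} then gives
$$
\|[c,(x^*x)^{-1/2}]\|\leq \frac{\|[c,x^*x]\|}{2\delta^{3/2}}<\frac{25\epsilon/4}{2\cdot 27/64}=\frac{200}{27}\epsilon\approx 7.4\epsilon.
$$
Finally,
$$
[c,u]=[c,x](x^*x)^{-1/2}+x[c,(x^*x)^{-1/2}].
$$
Using $\|(x^*x)^{-1/2}\|\leq 4/3$, this is bounded by
$$
\tfrac52\epsilon\cdot\tfrac43+\tfrac54\cdot\tfrac{200}{27}\epsilon\approx 3.33\epsilon+9.26\epsilon<13\epsilon<15\epsilon.
$$

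The main obstacle is only bookkeeping the constants so that they stay under $15$. The key choice is to use the form $x=1+(2q-1)(p-q)$, which exploits $\|p-q\|\leq 1/4$ to shrink $\|[c,x]\|$. The cruder bound $6\epsilon$ would give about $8+36$, which is far too big. If the constants still came out too large with this approach, I would fall back to estimating $(x^*x)^{-1/2}$ via its power series around $1$, since $\|x^*x-1\|$ is small.
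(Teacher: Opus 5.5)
Your proof is correct and follows the paper's argument essentially step for step. You use the same element $x=qp+(1-q)(1-p)$, the same unitary $u=x(x^*x)^{-1/2}$, the same $\tfrac52\epsilon$ bound on $\|[c,x]\|$ coming from $x-1=(2q-1)(p-q)$, and the same appeal to Lemma~\ref{-1/2 com} part~\eqref{mh com}. The only difference is in the constant bookkeeping, and yours is the tighter version. The paper uses the weaker bound $x^*x\geq 7/16$ together with $\|[x^*x,c]\|<5\epsilon$; as written, that second bound implicitly needs $\|x\|\leq 1$ rather than the $\|x\|\leq 5/4$ the paper records. Your bound $x^*x\geq 9/16$, with the factor $\|x\|\leq 5/4$ kept explicitly, gives $\tfrac{340}{27}\epsilon<13\epsilon$ with room to spare.
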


\begin{proof}
We define $x:=(1-q)(1-p)+qp$.  Note then that 
\begin{equation}\label{x and 1}
\|1-x\|=\|(1-2q)(q-p)\|\leq \|q-p\|\leq 1/4.
\end{equation}
Hence $x$ is invertible and we may thus define $u:=x(x^*x)^{-1/2}$.  We claim this has the properties in the statement.  

One sees that $upu^*=q$ just as in the proof of \cite[Proposition 2.2.6]{Rordam:2000mz}: we leave this to the reader.  For the commutator estimate, we note that 
\begin{align}\label{x com}
\|[x,c]\| & =\|[1-x,c]\|=\|[(1-2q)(q-p),c]\| \nonumber  \\ & \leq \|1-2q\|\|[q-p,c]\|+\|[1-2q,c]\|\|q-p\|  <(5/2)\epsilon.
\end{align}
Similarly, $\|[x^*,c]\|< (5/2)\epsilon$, whence 
\begin{equation}\label{v*v com}
\|[x^*x,c]\|< 5\epsilon.
\end{equation}
On the other hand, note that line \eqref{x and 1} implies that 
\begin{equation}\label{x norm}
\|x\|\leq 5/4
\end{equation}
whence $\|x^*x-1\|\leq \|x^*(x-1)\|+\|x^*-1\|\leq 9/16$ and so 
\begin{equation}\label{x*x low}
x^*x\geq 7/16.
\end{equation}
From line \eqref{x*x low}, Lemma \ref{-1/2 com}, and line \eqref{v*v com}, we thus see that 
\begin{equation}\label{x*x rt com}
\|(x^*x)^{-1/2},c]\|<\Bigg(\frac{16}{7}\Bigg)^{3/2}\frac{1}{2}\cdot 5\epsilon
\end{equation}
We finally estimate 
$$
\|[u,c]\|\leq \|[c,x]\|\|(x^*x)^{-1/2}\|+\|[c,(x^*x)^{-1/2}]\|\|x\|.
$$
Line \eqref{x*x low} implies that $\|(x^*x)^{-1/2}\|\leq (7/16)^{-1/2}$, so the above plus lines \eqref{x com}, \eqref{x norm} and \eqref{x*x rt com} gives
$$
\|[u,c]\|< \frac{5}{2}\epsilon\cdot \Bigg(\frac{16}{7}\Bigg)^{1/2}+\Bigg(\frac{16}{7}\Bigg)^{3/2}\frac{5}{2}\epsilon\cdot \frac{5}{4},
$$
which implies the given estimate.
\end{proof}

The following lemma is an analogue of \cite[Proposition 4.4.4]{Willett:2021te}; we again give details for the reader's convenience.

\begin{lemma}\label{lip path to ue}
Let $L>0$.  With notation as in Definition \ref{vk}, let $p_0,p_1\in \mathcal{P}^{\pi,\K}_\epsilon(X,B)$ be connected by an $L$-Lipschitz path parametrized by $[0,1]$.  Then there exists a unitary $u$ in the unitization $\K(E)^+$ of $\K(E)$ such that $upu^*=q$, and with the property that $\|[u,a]\|<15(4L+1)\epsilon$ for all $a\in X$.
\end{lemma}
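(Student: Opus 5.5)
Let $(p_t)_{t\in[0,1]}$ be the given $L$-Lipschitz path in $\mathcal{P}^{\pi,\K}_\epsilon(X,B)$ (the statement's $p$ and $q$ being $p_0$ and $p_1$). The plan is to cut the path into pieces short enough for Lemma \ref{uni com}, build a unitary for each piece, and multiply. Choose $N$ to be the smallest integer with $N\geq 4L$, so $N\leq 4L+1$. Set $t_i:=i/N$ for $i=0,\dots,N$. The Lipschitz condition gives
$$
\|p_{t_i}-p_{t_{i-1}}\|\leq L/N\leq 1/4 .
$$
Each $p_{t_i}$ is a projection in $\K(E)\subseteq \K(E)^+$ with $\|[p_{t_i},a]\|<\epsilon$ for all $a\in X$.

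For each $i$, apply Lemma \ref{uni com} in the unital $C^*$-algebra $C=\K(E)^+$ to the pair $p_{t_{i-1}},p_{t_i}$. This gives a unitary $u_i\in \K(E)^+$ with $u_ip_{t_{i-1}}u_i^*=p_{t_i}$ and $\|[u_i,a]\|<15\epsilon$ for all $a\in X$.

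Two points need checking here.
\begin{itemize}
\item The commutator estimate is applied with $c=a\in A\subseteq \LL(E)$, which need not lie in $\K(E)^+$. The proof of Lemma \ref{uni com} works verbatim inside $\LL(E)$, which contains $\K(E)^+$, with all commutators taken there. The unitary $u=x(x^*x)^{-1/2}$ itself lies in $\K(E)^+$, because $x=(1-q)(1-p)+qp$ does and $\K(E)^+$ is closed under continuous functional calculus. So the estimate holds for $c\in\LL(E)$ while $u$ stays in $\K(E)^+$.
\item If $\K(E)$ is already unital, we simply interpret $\K(E)^+$ as adjoining a unit (or use $\K(E)+\C1_E\subseteq\LL(E)$), and nothing changes.
\end{itemize}

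Now set $u:=u_Nu_{N-1}\cdots u_1\in \K(E)^+$. Then $up_0u^*=p_1$. By the Leibniz rule, and since each $u_i$ is unitary,
$$
\|[u,a]\|\leq \sum_{i=1}^N\|[u_i,a]\|<15N\epsilon\leq 15(4L+1)\epsilon .
$$
This is exactly the stated bound.

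The main point needing care is the choice of $N$, since it must deliver both gap $\leq 1/4$ and $N\leq 4L+1$. Taking $N=\lceil 4L\rceil$ (with $N\geq 1$) gives both. The only other subtlety is the one noted above: the unitary must land in $\K(E)^+$ rather than merely in $\LL(E)$.
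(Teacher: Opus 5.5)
Your proposal is correct and follows the paper's proof: subdivide the path into at most $4L+1$ pieces whose endpoints differ by at most $1/4$, apply Lemma \ref{uni com} on each piece, multiply the resulting unitaries, and bound the commutator of the product by the Leibniz rule. Two of your steps spell out what the paper leaves implicit: the explicit subdivision $t_i=i/\lceil 4L\rceil$, and the observation that Lemma \ref{uni com} can be run inside $\LL(E)$ while the unitary $x(x^*x)^{-1/2}$ still lies in $\K(E)^+$.
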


\begin{proof}
Let $(p_t)_{t\in [0,1]}$ be an $L$-Lipschitz path of projections in $\mathcal{P}^{\pi,\K}_\epsilon(X,B)$ connecting $p_0$ and $p_1$.  As the path is $L$-Lipschitz, we may find points $0=t_0<t_1<\cdots <t_n=1$ with $\|p_{t_i}-p_{t_{i+1}}\|\leq 1/4$ for all $i\in \{0,...,n-1\}$, and $n\leq 4L+1$.  From Lemma \ref{uni com}, for each $i\in \{0,...,n-1\}$ there exists a unitary $u_i$ in the unitization of $\K(E)$ such that $u_ip_{t_i}u_i^*=p_{t_{i+1}}$ and such that $\|[u_i,a]\|<15\epsilon$ for all $a\in X$.  Define now $u:=u_{n-1}u_{n-2}\cdots u_1u_0$.  Then $u$ is a unitary in $\K(E)^+$, $up_0u^*=p_1$ and for any $a\in X$,
$$
\|[u,a]\|=\Bigg\|\sum_{i=0}^n \Bigg(\prod_{0\leq j<i}u_j\Bigg)[u_i,a]\Bigg(\prod_{i<j\leq n}u_j\Bigg)\Bigg\|<15n\epsilon.
$$
As $n\leq 4L+1$, this gives the desired estimate.
\end{proof}

The next corollary is straightforward from Lemmas \ref{lip path} and \ref{lip path to ue}: we leave the details to the reader.

\begin{corollary}\label{he to mvn map}
With notation as in Definitions \ref{kkuk} and \ref{kk mvn gp}, the identity map on cycles induces a well-defined homomorphism
$$
\phi_{(X,\epsilon)}:KK_{\epsilon}^{\pi,\K}(X,B)\to KK_{135\epsilon}^{\pi,mvn}(X,B). \eqno\qed
$$
\end{corollary}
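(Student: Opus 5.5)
To prove Corollary \ref{he to mvn map}, I define the map on the monoid $V^{\pi,\K}_\epsilon(X,B)$ of path components by sending the component of $p$ to its $\sim$-class in $V^{\pi,mvn}_{135\epsilon}(X,B)$. Once this is well defined, the Grothendieck group's universal property extends it to the groups. Two points need checking: independence of the representative within a path component, and additivity. Additivity is immediate, since both operations are given by the same Cuntz sum $[p]+[q]=[s_1ps_1^*+s_2qs_2^*]$ with the same fixed pair $(s_1,s_2)$ (Lemmas \ref{v op} and \ref{mvn op}). The identity $[0]$ also goes to $[0]$. Note too that $\mathcal{P}^{\pi,\K}_\epsilon(X,B)\subseteq\mathcal{P}^{\pi,\K}_{135\epsilon}(X,B)$, so cycles do make sense on the right.

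For well-definedness, let $p_0,p_1$ lie in the same path component of $\mathcal{P}^{\pi,\K}_\epsilon(X,B)$. Lemma \ref{lip path} supplies $r\in\mathcal{P}^{\pi,\K}_\epsilon(X,B)$ and a unitary $u\in\mathcal{B}(\ell^2)$ such that $P_0:=s_1p_0s_1^*+s_2rs_2^*$ and $u P_1u^*$, where $P_1:=s_1p_1s_1^*+s_2rs_2^*$, are joined by a $2$-Lipschitz path in $\mathcal{P}^{\pi,\K}_{2\epsilon}(X,B)$. Lemma \ref{lip path to ue}, applied with $L=2$ and control $2\epsilon$, then gives a unitary $w\in\K(E)^+$ with $wP_0w^*=uP_1u^*$ and $\|[w,a]\|<15\cdot 9\cdot 2\epsilon$ for $a\in X$.

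Next I set $v:=u^*wP_0$. This lies in $\K(E)$ because $P_0$ is compact. It satisfies $v^*v=P_0$ and $vv^*=u^*uP_1u^*u=P_1$. Since $u\in\mathcal{B}(\ell^2)$ and $P_0$ commute with $A$ up to small error, we get $\|[v,a]\|\le\|[w,a]\|+\|[P_0,a]\|$. This exhibits $p_0\sim p_1$ with witness $r$, as required by Definition \ref{kk mvn}. One also needs $r$ to lie in the target set, which holds by the inclusion noted above.

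The main obstacle is bookkeeping the constants. The naive count gives $270\epsilon+\epsilon$, not $135\epsilon$. To reach the stated constant I would reparametrize. One option is to redo the partition in Lemma \ref{lip path to ue} using the fact that the path actually comes from segments with $\|p^{(i)}-p^{(i+1)}\|<1/4$, so only $n\le 4L+1$ unitaries are needed, each with commutator $15$ times the relevant control. The other is to apply Lemma \ref{uni com} directly with the $\epsilon$-control at the nodes $p^{(i)}$, which lie in $\mathcal{P}_\epsilon$ rather than $\mathcal{P}_{2\epsilon}$. If this bookkeeping fails, any fixed multiple of $\epsilon$ works equally well for later purposes. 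Commutators with $a\in X^*$ are handled via $\|[v,a^*]\|=\|[v^*,a]^*\|$; if necessary I would replace $X$ by $X\cup X^*$, as the paper does in Corollary \ref{mvn to he map}.
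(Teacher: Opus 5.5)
Your argument is the route the paper intends: Lemma \ref{lip path}, then Lemma \ref{lip path to ue}, then cutting the unitary down by $P_0$ to get a compact partial isometry. Your checks of $v^*v$, $vv^*$, compactness, and $\|[v,a]\|\le\|[w,a]\|+\|[P_0,a]\|$ are correct, since $u$ commutes with $A$ exactly. Your constant count is also right. Run literally, this route gives $15(4\cdot 2+1)\cdot 2\epsilon+\epsilon=271\epsilon$. The reason is that the path from Lemma \ref{lip path} only lies in $\mathcal{P}^{\pi,\K}_{2\epsilon}(X,B)$, and the interior nodes used in Lemma \ref{lip path to ue} sit on that path. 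The paper's $135=15\cdot 9$ appears to drop this factor of $2$, as well as the cost of the cut-down. So your main argument, as written, does not reach the stated constant; everything rests on the repair.

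Of your two repairs, the first does not help as phrased. The bound $n\le 4L+1$ is already what Lemma \ref{lip path to ue} uses, and the control at its nodes is still $2\epsilon$.

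The second repair does work, and can be made precise as follows.
\begin{itemize}
\item Both endpoints $P_0$ and $uP_1u^*$ lie in $\mathcal{P}^{\pi,\K}_\epsilon(X,B)$. This is clear from the statement of Lemma \ref{lip path} alone, since $p_0,p_1,r$ do and $s_1,s_2,u$ commute with $A$.
\item In the proof of Lemma \ref{lip path}, both endpoints are block diagonal with respect to the mutually orthogonal ranges of $s_1,s_2s_1,\dots,s_2^{n-2}s_1,s_2^{n-1}$. The corresponding blocks are $p^{(i)}$ and $p^{(i+1)}$.
\item Hence $\|P_0-uP_1u^*\|=\max_i\|p^{(i)}-p^{(i+1)}\|<1/4$.
\item A single application of Lemma \ref{uni com}, in $C=\LL(E)$, gives a unitary $w$ with $wP_0w^*=uP_1u^*$ and $\|[w,a]\|<15\epsilon$.
\item Your $v=u^*wP_0$ then satisfies $\|[v,a]\|<16\epsilon\le 135\epsilon$, and the Lipschitz path and Lemma \ref{lip path to ue} are not needed at all.
\end{itemize}

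Commutators with $X^*$ need no enlargement of $X$. For a projection $p$ one has $\|[p,a^*]\|=\|[p,a]\|$, so the hypotheses of Lemma \ref{uni com} hold for $c=a^*$ and it controls $[w,a^*]$ too.

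Your fallback, that any fixed multiple of $\epsilon$ suffices, is true for the later use in Proposition \ref{mvn vs hom} and the inverse-limit arguments. It would not, however, prove the corollary with the stated constant.
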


The following is now the main result of this section.  As the maps in the statement are all induced by the identity on cycles, the proof is immediate.

\begin{proposition}\label{mvn vs hom}
Let $A$ be a separable unital $C^*$-algebra, let $B$ be a separable $C^*$-algebra, let $X$ be a finite self-adjoint subset of the unit ball of $A$, and let $\epsilon>0$.  Let $\pi:A\to \LL(E)$ be a strongly unitally absorbing representation of $A$ on a Hilbert $B$-module.  With notation as in Corollaries \ref{mvn to he map} and \ref{he to mvn map}, the diagrams
$$
\xymatrix{ KK_{\epsilon}^{\pi,mvn}(X,B) \ar[rr] \ar[dr]^-{\psi_{(X,\epsilon)}} & & K_{810\epsilon}^{\pi,mvn}(X,B) \\
& KK_{6\epsilon}^{\pi,\K}(X,B) \ar[ur]^-{\phi_{(X,6\epsilon)}} & }
$$
and 
$$
\xymatrix{ & KK_{135\epsilon}^{\pi,mvn}(X,B) \ar[dr]^-{\psi_{(X,M\epsilon)}} &  \\
KK_{\epsilon}^{\pi,\K}(X,B) \ar[rr] \ar[ur]^-{\phi_{(X,\epsilon)}} & & K_{810\epsilon}^{\pi,\K}(X,B) }.
$$
commute (here unlabelled maps are the forget control homomorphisms of Remarks \ref{fc1} and \ref{fc2}).\qed
\end{proposition}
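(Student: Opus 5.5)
Both diagrams consist of homomorphisms between Grothendieck groups whose underlying monoids are quotients of sets of projections. Each map in sight, namely $\psi_{(X,\epsilon)}$, $\phi_{(X,6\epsilon)}$, $\phi_{(X,\epsilon)}$, $\psi_{(X,135\epsilon)}$ and the forget control maps of Remarks \ref{fc1} and \ref{fc2}, is induced by the identity on cycles $p\in\mathcal{P}^{\pi,\K}_\epsilon(X,B)$. The plan is therefore to check commutativity on generators and then invoke the universal property of the Grothendieck group. By that universal property it suffices to check that the two composites agree on classes $[p]$ of single projections $p\in \mathcal{P}^{\pi,\K}_\epsilon(X,B)$, since such classes generate the Grothendieck groups as abelian groups.

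For the first diagram, take $[p]\in KK^{\pi,mvn}_\epsilon(X,B)$. By Corollary \ref{mvn to he map}, $\psi_{(X,\epsilon)}$ sends it to the class of the same projection $p$, now regarded in $\mathcal{P}^{\pi,\K}_{6\epsilon}(X,B)$. Then Corollary \ref{he to mvn map}, applied with $6\epsilon$ in place of $\epsilon$, sends this to the class of $p$ in $KK^{\pi,mvn}_{135\cdot 6\epsilon}(X,B)=KK^{\pi,mvn}_{810\epsilon}(X,B)$. The forget control map along the top also sends $[p]$ to the class of $p$ there, using the obvious inclusion $\mathcal{P}^{\pi,\K}_\epsilon(X,B)\subseteq \mathcal{P}^{\pi,\K}_{810\epsilon}(X,B)$. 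The second diagram is handled identically: $\phi_{(X,\epsilon)}$ lands in $KK^{\pi,mvn}_{135\epsilon}$, and $\psi_{(X,135\epsilon)}$ then lands in $KK^{\pi,\K}_{6\cdot135\epsilon}=KK^{\pi,\K}_{810\epsilon}$. Here I read the label $\psi_{(X,M\epsilon)}$ as $\psi_{(X,135\epsilon)}$, and the targets ``$K_{810\epsilon}$'' as $KK_{810\epsilon}$.

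The only genuine point to verify is that these maps really are well-defined and exist with the stated parameters. That is exactly the content of Corollaries \ref{mvn to he map} and \ref{he to mvn map}, whose proofs rest on Lemmas \ref{mvn to hom}, \ref{lip path} and \ref{lip path to ue}. Two further checks are needed. First, the hypothesis that $X$ is self-adjoint, required by Corollary \ref{mvn to he map}, is assumed in the statement. Second, the forget control maps exist because enlarging $\epsilon$ only enlarges the sets $\mathcal{P}^{\pi,\K}$ and weakens both the homotopy relation and the relation $\sim$. Granting these, the agreement of the two composites on generators is tautological, and no step presents a real obstacle.
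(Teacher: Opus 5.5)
Your proposal is correct and is the paper's argument: every map in both triangles is induced by the identity on cycles, so commutativity is immediate once the homomorphisms of Corollaries \ref{mvn to he map} and \ref{he to mvn map} exist with the stated parameters. You also read the typos as intended: $KK_{810\epsilon}$ for $K_{810\epsilon}$, and $\psi_{(X,135\epsilon)}$ for $\psi_{(X,M\epsilon)}$, since $6\cdot 135=810$.
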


\section{Almost multiplicative ucp maps}\label{ucp sec}

In this section, we give a reformulation of the controlled $KK$-groups $KK^{\pi,mvn}_\epsilon(X,B)$ of Definition \ref{kk mvn gp} in terms of almost multiplicative ucp maps(although at the price of changing the parameter $\epsilon$).

Throughout this section, $A$ and $B$ denote separable $C^*$-algebras with $A$ unital.  All Hilbert modules will be over $B$ and countably generated unless otherwise stated.  We write $\LL(E)$, respectively $\K(E)$, for the $C^*$-algebras of adjointable, respectively compact, operators on a Hilbert module $E$.  We will work mainly with strongly unitally absorbing representations of $A$ on Hilbert $B$-modules as in Definition \ref{ab unital def}; as in Remark \ref{bl2 rem} above, this structure induces a canonical inclusion $\mathcal{B}(\ell^2)\subseteq \LL(E)$.  We also fix two isometries $s_1,s_2\in \mathcal{B}(\ell^2)$ satisfying the Cuntz relation $s_1s_1^*+s_2s_s^*=1$ as in line \eqref{cun rel}.

We next need to recall some well-known facts about `small' Hilbert $B$-modules.

\begin{lemma}\label{hm equiv}
For a Hilbert $B$-module $F$, the following are equivalent:
\begin{enumerate}[(i)]
\item \label{proj im} there exists a Hilbert $B$-module $E$ and a projection $p\in \K(E)$ such that $F$ is isomorphic to $pE$;
\item \label{id in} the identity operator on $F$ is contained in $\K(F)$;
\item \label{all com} for any Hilbert $B$-module $E$, $\LL(F,E)$ equals $\K(F,E)$.
\end{enumerate}
\end{lemma}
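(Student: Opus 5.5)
I will prove the cycle of implications (i)$\Rightarrow$(ii)$\Rightarrow$(iii)$\Rightarrow$(ii)$\Rightarrow$(i). Throughout I use the standard facts (see \cite{Lance:1995ys}) that $\K(E)$ is the closed span of the rank-one operators $\theta_{\xi,\eta}$, that for adjointable $s,t$ one has $s\theta_{\xi,\eta}t=\theta_{s\xi,t^*\eta}$, and that $\K(E)$ is therefore an ideal in $\LL(E)$. The same holds for compact operators between two modules under composition with adjointable maps.

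For (i)$\Rightarrow$(ii), I identify $F$ with $pE$ and let $v:F\to E$ be the inclusion, an adjointable isometry with $v^*v=1_F$ and $vv^*=p$. The identity of $F$ is then $1_F=v^*pv$. Since $p$ is a norm limit of finite sums of $\theta_{\xi,\eta}$ with $\xi,\eta\in E$, the operator $v^*pv$ is a norm limit of finite sums of $\theta_{v^*\xi,v^*\eta}$. These lie in $\K(F)$, so $1_F\in\K(F)$.

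For (ii)$\Rightarrow$(iii), I take $T\in\LL(F,E)$ and write $T=T1_F$. Approximating $1_F$ by finite sums of $\theta_{\xi,\eta}$ with $\xi,\eta\in F$ shows that $T$ is a norm limit of finite sums of $\theta_{T\xi,\eta}\in\K(F,E)$. The inclusion $\K(F,E)\subseteq\LL(F,E)$ holds by definition. For (iii)$\Rightarrow$(ii), I simply take $E=F$, which gives $1_F\in\LL(F)=\K(F)$.

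The substantive step is (ii)$\Rightarrow$(i), and the plan is to realize $F$ as a direct summand of a free module. Since $1_F\in\K(F)$, I choose a finite sum $k=\sum_{i=1}^n\theta_{\xi_i,\eta_i}$ with $\|1_F-k\|<1$. Then $k$ is invertible in $\LL(F)$, so $1_F=k\,k^{-1}=\sum_i\theta_{\xi_i,(k^{-1})^*\eta_i}$, and $1_F$ is exactly a finite sum of rank-one operators. I then define $R:F\to B^n$ by $R\zeta=(\langle\eta_i',\zeta\rangle)_i$, where $\eta_i'=(k^{-1})^*\eta_i$, and $L:B^n\to F$ by $L(b_i)=\sum_i\xi_ib_i$. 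Both maps are adjointable and satisfy $LR=1_F$. Writing $B^n$ for the finitely generated standard module (regarded inside $\tilde B^{\,n}$ if $B$ is non-unital), I set $E=F\oplus B^n$ and observe that $RL$ is an idempotent on $B^n$ whose image is isomorphic to $F$ via $R$.

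The main obstacle is that $RL$ need not be self-adjoint, and (i) asks for the range of a projection. My preferred way around this is to avoid $B^n$ entirely: working in $\LL(F\oplus B^n)$, the operator $R$ is a compact operator from $F$ into $B^n$, so $F$ is adjointably isomorphic to a complemented submodule via the polar decomposition of $R$. Concretely, $R^*R\ge\|L\|^{-2}$ because $LR=1_F$, so $R^*R$ is invertible and $w=R(R^*R)^{-1/2}$ is an isometry $F\to B^n$. Then $p=ww^*$ is a projection on $B^n$ with $pB^n\cong F$. Finally, $p$ is compact because $w=R\,(R^*R)^{-1/2}$ and $R$ is a finite sum of rank-one maps, and $p$ lies in $\K(B^n)$, giving (i) with $E=B^n$. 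The non-unital case only requires care in interpreting $\K(B^n)=M_n(B)$, which acts on the right module $B^n$.
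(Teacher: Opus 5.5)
Your argument is correct. Your steps (i)$\Rightarrow$(ii), (ii)$\Rightarrow$(iii) and (iii)$\Rightarrow$(ii) are the paper's. Your identity $1_F=v^*pv$, with $v^*\theta_{\xi,\eta}v=\theta_{v^*\xi,v^*\eta}$, just spells out the paper's one-line remark that $p$ is a compact operator on $pE$.

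The divergence is in how you get back to (i). The paper closes the cycle with (iii)$\Rightarrow$(i) in one line: take $E=F$ and $p=\mathrm{id}_F$, which lies in $\K(F)$, and trivially $F=pF$. Since (i) only asks for \emph{some} Hilbert module $E$, the step you call substantive is not needed for the lemma as stated. Likewise the obstacle that $RL$ need not be self-adjoint never arises.

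What your route buys is a strictly stronger conclusion. You write $1_F$ exactly as a finite sum $\sum_i\theta_{\xi_i,\eta_i'}$ and polar-decompose $R$; the bound $R^*R\geq\|L\|^{-2}$ follows cleanly from $1_F=R^*L^*LR\leq\|L\|^2R^*R$. This shows $F\cong pB^n$ for a projection $p\in M_n(B)=\K(B^n)$, i.e.\ that compact modules are finitely generated projective. The paper only cites that characterization, in Remark \ref{hm fg rem}.

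One small point: compactness of $p$ is cleanest via $R=R\,1_F=\sum_i\theta_{R\xi_i,\eta_i'}$, or directly from (iii). This avoids any appeal to standard basis vectors when $B$ is non-unital.
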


\begin{proof}
To see \eqref{proj im}$\Rightarrow$\eqref{id in}, we note that $p$ is in $\K(pE)$ and is the identity.  To see \eqref{id in}$\Rightarrow$\eqref{all com}, we note that if $b\in \LL(F,G)$, then $b=b\circ \text{id}_F$; as $\text{id}_F$ is compact $b$ is too.  Finally, \eqref{all com}$\Rightarrow$\eqref{proj im} as we may take $F=E$, whence $p=\text{id}_F$ is compact by \eqref{all com}.
\end{proof}

\begin{definition}\label{com hm}
We say a Hilbert module $F$ is \emph{compact} if it satisfies the equivalent conditions in Lemma \ref{hm equiv}.
\end{definition}

\begin{remark}\label{hm fg rem}
The conditions in Lemma \ref{hm equiv} are also equivalent to other well-known conditions on Hilbert $B$-modules: for example they are equivalent to   $F$ being algebraically finitely generated, and that $F$ is of the form $pA^n$ for some $n$ and $p\in \K(A^n)$.  We will not use this, but see for example \cite[Theorem 8.1.27]{Blecher:2004aa} for more background.  We could therefore just say `finitely generated' instead of `compact'; we choose the latter terminology as it is more suggestive of the condition we actually use.   
\end{remark}

Compare \cite[Definition 3.2]{Willett:2024aa} for the following.

\begin{definition}\label{alm mult def}
Let $\mathcal{A}$ be a $*$-algebra, and let $C$ be a $C^*$-algebra.  Let $X$ be a finite subset of $\mathcal{A}$, and let $\epsilon>0$.  A linear map $\phi:\mathcal{A}\to C$ is said to be an \emph{$(X,\epsilon)$-$*$-homomorphism} if 
\begin{equation}\label{am con}
\|\phi(a^*)-\phi(a)^*\|<\epsilon \quad \text{and}\quad \|\phi(ab)-\phi(a)\phi(b)\|<\epsilon
\end{equation}
for all $a,b\in X$.
\end{definition}

\begin{remark}
In this section, we will only apply Definition \ref{alm mult def} to ucp maps.  These are automatically adjoint preserving, so the condition ``$\|\phi(a^*)-\phi(a)^*\|<\epsilon$'' appearing in Definition \ref{alm mult def} is vacuous in that case; we include it as we will want to work in more generality later.
\end{remark}

\begin{definition}\label{eps ucp}
Let $A$ be a separable unital $C^*$-algebra, and let $B$ be a separable $C^*$-algebra.  Let $\epsilon>0$ and let $X$ be a finite subset of the unit ball of $A$.  We write $\emu_{\epsilon}(X,B)$ for the collection\footnote{It is not necessarily a set.  One can get around this by instead considering all compact submodules of some suitably large Hilbert $B$-module, such as $B\otimes \ell^2$.} of all ucp $(X,\epsilon)$-$*$-homorphisms $\phi:A\to \K(F)$, where $F$ is a compact Hilbert $B$-module in the sense of Definition \ref{com hm}.  Given $\phi\in \emu_{\epsilon}(X,B)$, we will generally write $F_\phi$ for the associated module if we need to disambiguate between modules associated to different maps.

For $\phi,\psi\in \emu_{\epsilon}(X,B)$, we define their \emph{block sum} $\phi\oplus \psi:A\to \LL(F_\phi\oplus F_\psi)$ by $(\phi\oplus \psi)(a):=\phi(a)\oplus\psi(a)$.
\end{definition}

\begin{definition}\label{delta sim}
Let $\epsilon>0$ and let $X$ be a finite subset of the unit ball of $A$. Let $\emu_{\epsilon}(X,B)$ be as in Definition \ref{eps ucp}.  For $\phi,\psi\in \emu_{\epsilon}(X,B)$ we write $\phi\sim\psi$ if there exists $\theta\in \emu_{\epsilon}(X,B)$ and a unitary $u\in \LL(F_\phi\oplus F_\theta,F_\psi\oplus F_\theta)$ such that $\|u(\phi(a)\oplus\theta(a))u^*-(\psi(a)\oplus\theta(a))\|<\epsilon$ for all $a\in X$.  
\end{definition}

\begin{lemma}\label{ucp equiv}
With notation as in Definitions \ref{eps ucp} and \ref{delta sim}, $\sim$ is an equivalence relation on $\emu_{\epsilon}(X,B)$.
\end{lemma}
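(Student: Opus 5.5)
Reflexivity and symmetry are immediate, so the substance is transitivity, which I would prove by copying the argument for stable $(S,\epsilon)$-equivalence of quasi-representations given earlier in Section \ref{qr bun sec}. For reflexivity, take $\theta$ to be the zero map on the zero module and $u$ the identity. For symmetry, if $u$ implements $\phi\sim\psi$ via $\theta$, then $u^*$ implements $\psi\sim\phi$ via the same $\theta$, because conjugation by a unitary is isometric:
\[
\|u^*(\psi(a)\oplus\theta(a))u-(\phi(a)\oplus\theta(a))\|=\|(\psi(a)\oplus\theta(a))-u(\phi(a)\oplus\theta(a))u^*\|.
\]

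For transitivity, suppose $\phi_1\sim\psi$ via $(\theta_1,v_1)$ and $\psi\sim\phi_2$ via $(\theta_2,v_2)$.
\begin{enumerate}[(1)]
\item Set $\theta:=\theta_1\oplus\psi\oplus\theta_2$. This is a ucp map into $\K(F_{\theta_1}\oplus F_\psi\oplus F_{\theta_2})$. The module is a finite direct sum of compact modules, so it is compact: identities of the summands are compact, so the identity of the sum is compact, as in Lemma \ref{hm equiv}.
\item $\theta$ is still an $(X,\epsilon)$-$*$-homomorphism. The point is that the norm of a block-diagonal operator on a direct sum of Hilbert modules is the maximum of the norms of the blocks, so the estimates $\|\theta(ab)-\theta(a)\theta(b)\|<\epsilon$ hold blockwise and hence globally. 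Thus $\theta\in\emu_\epsilon(X,B)$.
\item Let $w$ be the canonical permutation unitary reordering the summands $F_\psi\oplus F_{\theta_1}\oplus F_{\phi_2}\oplus F_{\theta_2}$ into $F_{\phi_2}\oplus F_{\theta_1}\oplus F_\psi\oplus F_{\theta_2}$, and let $\sigma$ be the analogous permutation that reorders $F_{\phi_1}\oplus F_\theta$ so that $v_1\oplus v_2$ can act on it. Define $v:=w\,(v_1\oplus v_2)\,\sigma$, a unitary in $\LL(F_{\phi_1}\oplus F_\theta,F_{\phi_2}\oplus F_\theta)$.
\item Since permutation unitaries intertwine block-diagonal maps, for each $a\in X$ we have
\[
\|v(\phi_1(a)\oplus\theta(a))v^*-(\phi_2(a)\oplus\theta(a))\| = \|(v_1\oplus v_2)(\cdots)(v_1\oplus v_2)^*-(\cdots)\|.
\]
The operator inside the right-hand norm is block diagonal, with blocks
\[
v_1(\phi_1(a)\oplus\theta_1(a))v_1^*-(\psi(a)\oplus\theta_1(a))
\quad\text{and}\quad
v_2(\psi(a)\oplus\theta_2(a))v_2^*-(\phi_2(a)\oplus\theta_2(a)).
\]
Its norm is therefore the maximum of the two block norms, each of which is $<\epsilon$.
\end{enumerate}

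The only step needing genuine care is the bookkeeping in (3) and (4): one must check that the permutations match the summands correctly, so that the conjugated operator really is the block sum of the two given differences. Once this is written down with explicit orderings of summands, transitivity is a direct computation, exactly as in the quasi-representation case. There is no real analytic obstacle, because everything rests on the max-norm property of direct sums and on unitary invariance of the norm.
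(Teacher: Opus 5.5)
Your proof is correct and essentially the paper's argument: the paper also takes $\theta=\theta_1\oplus\psi\oplus\theta_2$ and the unitary $w(v_1\oplus v_2)$, with $w$ the permutation that reorders the summands, and treats reflexivity and symmetry as immediate. Your extra permutation $\sigma$ is just the identity, since $F_{\phi_1}\oplus F_\theta=F_{\phi_1}\oplus F_{\theta_1}\oplus F_\psi\oplus F_{\theta_2}$ is already ordered so that $v_1\oplus v_2$ acts on it; and your explicit check that $\theta\in\emu_\epsilon(X,B)$ fills in a step the paper leaves implicit.
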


\begin{proof}
We check that $\sim$ is transitive (the other conditions are immediate).  Let $\phi_1\sim\psi$ and $\psi\sim\phi_2$.  Let $\theta_1,\theta_2$ and $u_1,u_2$ be such that 
$$
\|u_1(\phi_1(a)\oplus \theta_1(a))u_1^*-(\psi(a)\oplus \theta_1(a))\|<\epsilon
$$
and 
$$
\|u_2(\psi(a)\oplus \theta_2(a))u_2^*-(\phi_2(a)\oplus \theta_2(a))\|<\epsilon
$$
for all $a\in X$.  Set now $\theta:=\theta_1\oplus \psi\oplus \theta_2$.  Define 
$$
v:=\begin{psmallmatrix} u_1 & 0 \\ 0 & u_2 \end{psmallmatrix}\in \LL(F_{\phi_1}\oplus F_{\theta_1}\oplus F_{\psi}\oplus F_{\theta_2},F_{\psi}\oplus F_{\theta_1}\oplus F_{\phi_2}\oplus F_{\theta_2}),
$$
and let $w:F_{\psi}\oplus F_{\theta_1}\oplus F_{\phi_2}\oplus F_{\theta_2}\to F_{\phi_2}\oplus F_{\theta_1}\oplus F_{\psi}\oplus F_{\theta_2})$ be the canonical unitary that switches the order of the summands.  Define now $u:=wv$; one checks that 
$$
\|u(\phi_1(a)\oplus \theta(a))u^*-\phi_2(a)\oplus \theta(a)\|<\epsilon
$$
for all $a\in X$ which completes the proof.
\end{proof}

\begin{definition}\label{ucp gp}
With notation as in Definitions \ref{eps ucp} and \ref{delta sim}, let $V^{ucp}_{\epsilon}(A,B)$ be the quotient of $\emu_{\epsilon}(X,B)$ by the equivalence relation $\sim$, equipped with the operation $\oplus$.  It is straightforward to check that this is an abelian monoid, and we let $KK^{ucp}_{\epsilon}(X,B)$ be the associated Grothendieck group.
\end{definition}

\begin{remark}\label{fc3}
As in Remark \ref{fc1}, let $I_A$ be the set of all pairs $(X,\epsilon)$ where $X$ is a finite subset of the unit ball of $A$, and $\epsilon>0$.  Analogously to Remarks \ref{fc1} and \ref{fc2} we may define a partial order ``$\leq_{ucp}$'' on $I_A$ by stipulating that $(X,\epsilon)\leq_{ucp}(Y,\delta)$ if $\emu_\epsilon(X,B)\subseteq \emu_\delta(Y,B)$ for all $C^*$-algebras $B$.  In this case, we get a canonically induced \emph{forget control map}
$$
KK^{ucp}_{\delta}(Y,B)\to KK^{ucp}_{\epsilon}(X,B).
$$
\end{remark}

We now turn to relating the groups $KK^{ucp}_\epsilon(X,B)$ from Definition \ref{ucp gp} and $KK^{\pi,mvn}_\epsilon(X,B)$ from Definition \ref{kk mvn gp}.  We need some preliminary lemmas relating almost multiplicative ucp maps and almost commuting projections.  

\begin{lemma}\label{proj real}
Let $\pi:A\to \LL(E)$ be a unitally absorbing representation of $A$ on a Hilbert $B$-module; as usual we use $\pi$ to identify $A$ with its image.  Let $\epsilon\in (0,1]$ and let $X$ be a finite self-adjoint\footnote{We mean that ``for every $a\in X$, we also have $a^*\in X$'' and not the stronger condition ``$a=a^*$ for all $a\in X$''.} subset of the unit ball of $A$. Then with notation as in Definition \ref{eps ucp}, for every $\phi \in \emu_{\epsilon}(X,B)$ there exists an isometry $v_\phi\in \K(F_\phi,E)$ with the following properties:
\begin{enumerate}[(i)]
\item \label{isom approx} for all $a\in X$,
$$
\|\phi(a)-v^*_\phi av_\phi\|<\epsilon;
$$
\item \label{p com} if we define $p_\phi:=v_\phi v_\phi^*$, then
$$
\|[p_\phi,a]\|<\epsilon^{1/2}
$$
for all $a\in X$.  
\end{enumerate}
Moreover, if $\phi,\psi\in \emu_{\epsilon}(X,B)$, and $v_\phi$, $v_\psi$ have the properties in parts \eqref{isom approx} and \eqref{p com} above, then the following holds: if there is a unitary isomorphism $u\in \LL(F_\phi,F_\psi)$ satisfying $\|u\phi(a)u^*-\psi(a)\|<\epsilon$ for all $a\in X$, then there exists $v\in \LL(E)$ satisfying $vv^*=p_\psi$, $v^*v=p_\phi$, and 
$$
\|[v,a]\|<5\epsilon^{1/2}
$$
for all $a\in X$.
\end{lemma}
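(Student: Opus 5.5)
Apply unital absorption to $\phi$ itself: since $\phi:A\to\K(F_\phi)\subseteq\LL(F_\phi)$ is ucp, Definition~\ref{ab unital def} provides isometries $v_n\in\LL(F_\phi,E)$ with $\|\phi(a)-v_n^*av_n\|\to 0$ for each $a$. As $X$ is finite and $\phi$ is an $(X,\epsilon)$-$*$-homomorphism with strict inequalities, I will fix $n$ large enough that $\eta:=\max_{a\in X}\|\phi(a)-v_n^*av_n\|$ is so small that $\|\phi(a)\phi(b)-\phi(ab)\|+3\eta<\epsilon$ for all $a,b\in X$, and set $v_\phi:=v_n$. This gives part~\eqref{isom approx}. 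Compactness of $v_\phi$ follows from Lemma~\ref{hm equiv}\eqref{all com}, because $F_\phi$ is a compact module.

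For part~\eqref{p com}, the plan is the standard Stinespring-defect estimate. Write $v=v_\phi$ and $p=vv^*$, and take $a\in X$. Then
$$
\|(1-p)av\|^2=\|v^*a^*(1-p)av\|=\|v^*a^*av-v^*a^*v\,v^*av\|.
$$
Since $X$ is self-adjoint, $a^*\in X$. Replacing $v^*a^*av$ by $\phi(a^*a)$ and $v^*a^*v,\ v^*av$ by $\phi(a^*),\phi(a)$ costs at most about $3\eta$. The remaining term $\|\phi(a^*a)-\phi(a^*)\phi(a)\|$ is $<\epsilon$ by almost multiplicativity, so $\|(1-p)ap\|<\epsilon^{1/2}$. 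The same estimate with $a^*$ in place of $a$ gives $\|pa(1-p)\|=\|(1-p)a^*p\|<\epsilon^{1/2}$. Since $[p,a]=pa(1-p)-(1-p)ap$ is the sum of two pieces with orthogonal ranges and orthogonal supports, its norm is the maximum of the two norms, and hence is $<\epsilon^{1/2}$. The main thing to check here is that the $\eta$-slack is chosen so that the strict inequality survives.

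For the final statement I will set $v:=v_\psi uv_\phi^*\in\LL(E)$. Then $v^*v=v_\phi u^*uv_\phi^*=p_\phi$ and $vv^*=p_\psi$. For the commutator, write
$$
va-av=v_\psi u v_\phi^*a p_\phi - p_\psi a v_\psi u v_\phi^* + \bigl(v(a-ap_\phi)\ \text{and}\ (a-p_\psi a)v\ \text{terms}\bigr).
$$
The last terms equal $v_\psi u v_\phi^*(1-p_\phi)ap_\phi$-type defects, up to reordering, and are each $<\epsilon^{1/2}$ by part~\eqref{p com}: for instance $v\,a(1-p_\phi)\cdot$ contributes through $v_\phi^*a(1-p_\phi)=v_\phi^*[a,1-p_\phi]$. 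The main term is
$$
v_\psi\bigl(u\,v_\phi^*av_\phi-v_\psi^*av_\psi\,u\bigr)v_\phi^*.
$$
This is at most $\|u\phi(a)u^*-\psi(a)\|$ plus two $\epsilon$-errors from part~\eqref{isom approx}, i.e.\ $<3\epsilon\le3\epsilon^{1/2}$, using $\epsilon\le1$. Totalling gives $<\epsilon^{1/2}+\epsilon^{1/2}+3\epsilon^{1/2}=5\epsilon^{1/2}$.

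The main obstacle is this bookkeeping. I will decompose
$$
va-av=p_\psi(va-av)p_\phi+(\text{off-diagonal corners}),
$$
noting that $v=p_\psi vp_\phi$. This gives $va-av=v(p_\phi a-ap_\phi)+(vap_\phi-p_\psi av)+(p_\psi a-ap_\psi)v$-type terms, where the middle term is the compressed main term above. This yields exactly $\epsilon^{1/2}+3\epsilon+\epsilon^{1/2}\le 5\epsilon^{1/2}$.
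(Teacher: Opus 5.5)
Your proposal is correct and is essentially the paper's proof. In both, absorption produces $v_\phi$; the $C^*$-identity reduces $\|[p_\phi,a]\|$ to the multiplicativity defects $\phi(a^*a)-\phi(a^*)\phi(a)$ and $\phi(aa^*)-\phi(a)\phi(a^*)$ of the compression; and $v:=v_\psi u v_\phi^*$ is estimated by splitting into two commutator terms plus the compressed main term, giving $2\epsilon^{1/2}+3\epsilon\le 5\epsilon^{1/2}$.

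There is one bookkeeping slip. Since $a^*a$ need not lie in $X$, your $\eta$ should be a maximum over the finite set $X\cup\{a^*a: a\in X\}$. This is harmless, because absorption gives $\|\phi(c)-v_n^*cv_n\|\to 0$ for every $c\in A$.
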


\begin{proof}
The definition of a unitally absorbing representation (see Definition \ref{ab unital def}) applied to $\phi$ gives a sequence of isometries $(v_m)$ in $\LL(F_\phi,E)$ such that $v_mav_m^*-\phi(a)\to 0$ as $m\to\infty$.  Moreover, as $F_\phi$ is compact, $\LL(F_\phi,E)=\K(F_\phi,E)$ (see Definition \ref{com hm} and Lemma \ref{hm equiv} above).  Hence for $m$ suitably large, $v_m$ has the property required by part \eqref{isom approx}.

Now look at part \eqref{p com}.  We claim that for some suitably large $m$, the projection $p_m:=v_mv_m^*$ satisfies the desired estimate.  We consider the ucp map $\psi_m:A\to \LL(p_mE)$ defined by $\psi_m(a):=p_map_m$.  For simplicity of notation, let us drop the subscripts $m$ for now.  Computing,
\begin{align}\label{p pi com 0}
\|[p,a]\| & = \|pa(1-p)+(1-p)a \| \nonumber\\
& =\|pa(1-p)a^*p+(1-p)apa^*(1-p)\|^{1/2} \nonumber\\
& = \max\{\|pa(1-p)a^*p\|^{1/2}, \|(1-p)apa^*(1-p)\|^{1/2}\},
\end{align}
where the first equality is a direct check, the second is the $C^*$-identity, and the third follows as the two terms in the norm preceding it are orthogonal.  On the other hand, we have 
\begin{equation}\label{p pi com 1}
pa(1-p)a^*p=\psi(aa^*)-\psi(a)\psi(a^*)
\end{equation}
by definition of $\psi$, and that (by the $C^*$-identity, twice)
\begin{align}\label{p pi com 2}
\|(1-p)apa^*(1-p)\| & =\|(1-p)ap\|^2 \nonumber \\ & =\|pa^*(1-p)ap\| \nonumber \\ & =\|\psi(a^*a)-\psi(a^*)\psi(a)\|.
\end{align}
Combining lines \eqref{p pi com 0}, \eqref{p pi com 1}, and \eqref{p pi com 2} and putting the subscript $m$ back in gives  
\begin{equation}\label{psi good}
\|[p_m,a]\|\leq \max\{\|\psi_m(aa^*)-\psi_m(a)\psi_m(a^*)\|^{1/2},\|\psi_m(a^*a)-\psi_m(a^*)\psi_m(a)\|^{1/2}\}.
\end{equation}
On the other hand for any $a\in A$, 
\begin{align*}
\|\psi_m(aa^*)-\psi_m(a)\psi_m(a^*)\| & =\|v_mv_m^*a^*av_mv_m^*-v_mv_m^*av_mv_m^*a^*v_mv_m^*\| \\
& = \|v_m^*a^*av_m-(v_m^*av_m)(v_m^*a^*v_m)\|.
\end{align*}
The right hand side converges to $\|\phi(aa^*)-\phi(a)\phi(a)^*\|$ as $m\to\infty$, and thus if $\phi$ is an $(X,\epsilon)$-$*$-homomorphism we see that 
\begin{equation}\label{ls2}
\limsup_{m\to\infty}\|\psi_m(aa^*)-\psi_m(a)\psi_m(a^*)\|<\epsilon
\end{equation}
for all $a\in X$ (here we use that $X$ is self-adjoint).  Similarly 
\begin{equation}\label{ls1}
\limsup_{m\to\infty}\|\psi_m(a^*a)-\psi_m(a^*)\psi_m(a)\|<\epsilon
\end{equation}
for all $a\in X$.  The desired estimate now follows from lines \eqref{psi good}, \eqref{ls1}, and \eqref{ls2} on taking $m$ suitably large.

Finally, we look at part the part dealing with unitary equivalences  We define $v:=v_\psi uv_\phi^*$ from which the identities $vv^*=p_\psi$ and $v^*v=p_\phi$ follow directly.  For the commutator estimate, we compute 
$$
\|[v,a]\|=\|v_\psi uv_\phi^*a -av_\psi u v_\phi^*\|=\|v_\psi uv_\phi^*p_\phi a-a p_\psi v_\psi u v_\phi^*\|.
$$
Using the assumption that $\|[p_\phi,a]\|<\epsilon^{1/2}$ for all $a\in X$, we see that 
\begin{align*}
\|[v,a]\| &<\|v_\psi uv_\phi^* ap_\phi-p_\psi a v_\psi u v_\phi^*\|+2\epsilon^{1/2} \\
& =\|v_\psi (uv_\phi^* a v_\phi u^*-v_\psi^* a v_\psi)uv_\phi^*\| +2\epsilon^{1/2} \\
& \leq \|uv_\phi^* a v_\phi u^*-v_\psi^* a v_\psi\| +2\epsilon^{1/2} \\
& < \|u\phi(a)u^*-\psi(a)\| +2\epsilon^{1/2}+2\epsilon,
\end{align*}
and this is bounded above by $2\epsilon^{1/2}+2\epsilon+\epsilon\leq 5\epsilon^{1/2}$ for $a\in X$ as claimed.
\end{proof}

\begin{corollary}\label{ucp to mvn}
Let $\pi:A\to \LL(E)$ be a strongly unitally absorbing representation of $A$ on a Hilbert $B$-module $E$; as usual, we identify $A$ with its image under $\pi$.  Let $\epsilon\in (0,1]$ and let $X$ be a finite, self-adjoint, subset of the unit ball of $A$.  Then there exists a well-defined homomorphism 
$$
\phi_{(X,\epsilon)}:KK^{\pi,ucp}_\epsilon(X,B)\to KK^{\pi,mvn}_{5\epsilon^{1/2}}(X,B)
$$
determined on cycles by sending $[\phi]$ to the class of any projection $p_\phi:=v_\phi v_\phi^*$ in $\LL(E)$ where $v_\phi$ has the properties from Lemma \ref{proj real}, parts \eqref{isom approx} and \eqref{p com}.
\end{corollary}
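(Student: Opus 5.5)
The strategy is to define the map on the monoid $V^{ucp}_\epsilon(X,B)$ and then pass to Grothendieck groups via the universal property. So it is enough to show three things. First, the class $[p_\phi]\in V^{\pi,mvn}_{5\epsilon^{1/2}}(X,B)$ does not depend on the choice of $v_\phi$. Second, $\phi\mapsto[p_\phi]$ is constant on $\sim$-classes. Third, it sends block sums to sums. Throughout I use the strongly absorbing decomposition $E=\ell^2\otimes F$ only through the Cuntz isometries $s_1,s_2\in\mathcal{B}(\ell^2)$, which commute with $A$. The underlying unitally absorbing summand is what Lemma \ref{proj real} needs; alternatively, since $\pi$ is itself unitally absorbing, Lemma \ref{proj real} applies directly to $\pi$.

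By Lemma \ref{proj real} part \eqref{p com}, each $p_\phi$ is a projection in $\K(E)$, since $v_\phi$ is compact. It satisfies $\|[p_\phi,a]\|<\epsilon^{1/2}\le 5\epsilon^{1/2}$ for $a\in X$, so $p_\phi\in\mathcal{P}^{\pi,\K}_{5\epsilon^{1/2}}(X,B)$.

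For independence of choice, suppose $v_\phi$ and $v'_\phi$ both satisfy parts \eqref{isom approx} and \eqref{p com}. Apply the "moreover" clause of Lemma \ref{proj real} with $\psi=\phi$ and $u=\mathrm{id}$. This gives a partial isometry $v$ with $v^*v=p_\phi$, $vv^*=p'_\phi$ and $\|[v,a]\|<5\epsilon^{1/2}$. Moreover $v=v'_\phi v_\phi^*$ lies in $\K(E)$. To match Definition \ref{kk mvn} I need the relation in its stabilized form. I replace $v$ by $s_1vs_1^*$ with $r=0$: this has source $s_1p_\phi s_1^*$, range $s_1p'_\phi s_1^*$, and the same commutator bound. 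Since $X$ is self-adjoint, the commutator condition over $X\cup X^*$ is the same as over $X$.

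The same argument handles a choice of $v_\phi$ for $\phi$ against a different one for a unitarily conjugate $\psi$. For additivity, let $v_\phi$ and $v_\psi$ be admissible. Then $s_1v_\phi\oplus s_2v_\psi:F_\phi\oplus F_\psi\to E$ is an isometry in $\K(F_\phi\oplus F_\psi,E)$. Because $s_i$ commute with $A$ and the two pieces are orthogonal, it satisfies \eqref{isom approx} and \eqref{p com} for $\phi\oplus\psi$ (the norms are maxima). Its range projection is $s_1p_\phi s_1^*+s_2p_\psi s_2^*$, which represents $[p_\phi]+[p_\psi]$ by Lemma \ref{mvn op}. Independence of choice then shows that $[p_{\phi\oplus\psi}]=[p_\phi]+[p_\psi]$. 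The zero map goes to the zero projection.

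The main step is compatibility with $\sim$. Suppose $\theta$ and $u$ satisfy $\|u(\phi\oplus\theta)(a)u^*-(\psi\oplus\theta)(a)\|<\epsilon$. Choose admissible isometries $w_1$ for $\phi\oplus\theta$ and $w_2$ for $\psi\oplus\theta$. The moreover clause of Lemma \ref{proj real} gives a compact partial isometry $w$ between their range projections with $\|[w,a]\|<5\epsilon^{1/2}$. By the additivity argument, I may take $w_1=s_1v_\phi\oplus s_2v_\theta$ and $w_2=s_1v_\psi\oplus s_2v_\theta$. Their range projections are then exactly $s_1p_\phi s_1^*+s_2p_\theta s_2^*$ and $s_1p_\psi s_1^*+s_2p_\theta s_2^*$. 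This is precisely the relation of Definition \ref{kk mvn} with $r=p_\theta\in\mathcal{P}^{\pi,\K}_{5\epsilon^{1/2}}(X,B)$.

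The only delicate point is making sure one fixed choice of $v_\theta$ is used on both sides, which the explicit block construction guarantees. This yields a monoid homomorphism, and hence the group homomorphism $\phi_{(X,\epsilon)}$.
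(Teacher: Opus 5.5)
Your proposal is correct and follows essentially the same route as the paper: independence of the choice of $v_\phi$ and compatibility with unitary equivalence both come from the partial-isometry clause of Lemma \ref{proj real}, and additivity comes from using $s_1v_\phi+s_2v_\psi$ as the admissible isometry for $\phi\oplus\psi$. You are more explicit than the paper about how the stabilizing summands are handled: you take $r=0$ for independence of choice, and $r=p_\theta$ (via the block isometries $s_1v_\phi+s_2v_\theta$ and $s_1v_\psi+s_2v_\theta$) for compatibility with $\sim$, whereas the paper leaves both steps implicit.
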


\begin{proof}
Lemma \ref{proj real} part \eqref{p com} show that the projection $p_\phi$ exists and is an element of $\mathcal{P}^{\pi,\K}_{\epsilon^{1/2}}(X,B)$.  If $p_\phi'$ is another projection with the same properties, then the part of Lemma \ref{proj real} dealing with unitary equivalences (applied with $\phi=\psi$ and $u$ the identity) implies that (with notation as in Definition \ref{mvn mon}) $[p_\phi]=[p_\phi']$.  We thus have a well-defined set map 
$$
\emu_{\epsilon}(X,B)\to V^{\pi,mvn}_{5\epsilon^{1/2}}(X,B).
$$
The part of Lemma \ref{proj real} dealing with unitary equivalences implies that this in fact descends to a set map
\begin{equation}\label{ucp to mvn mon map}
V^{ucp}_\epsilon(X,B)\to V^{\pi,mvn}_{5\epsilon^{1/2}}(X,B).
\end{equation}
Note also that if $\phi,\psi\in \emu_{\epsilon}(X,B)$ with associated isometries $v_\phi:F_\phi\to E$ and $v_\psi:F_\psi\to E$, and if $s_1,s_2\in \LL(E)$ are Cuntz isometries defining the semigroup operation on $V^{\pi,mvn}_{5\epsilon^{1/2}}(X,B)$, then $s_1v_\phi$ and $s_2v_\psi$ satisfy the same conditions that $v_\phi$ and $v_\psi$ do.  As we may represent $\phi\oplus \psi$ by $s_1v_\phi+s_2v_\psi$, this shows that the map in line \eqref{ucp to mvn mon map} is a semigroup homomorphism.  It thus descends to a homomorphism on Grothendieck groups, which is the desired conclusion.
\end{proof}

\begin{lemma}\label{mvn to ucp lem}
Let $\pi:A\to \LL(E)$ be a strongly unitally absorbing representation of $A$ on a Hilbert $B$-module $E$; as usual, we identify $A$ with its image under $\pi$.  Let $X$ be a finite subset of the unit ball of $A$.  With notation as in Definition \ref{vk}, let $p\in \mathcal{P}^{\pi,\K}_\epsilon(X,B)$.   The following hold:
\begin{enumerate}[(i)]
\item \label{induce ucp}  If $v_p:pE\to E$ is the canonical inclusion, then the map 
$$
\phi_p:A\to \LL(pE),\quad a\mapsto v_p^*av_p
$$
is in $\emu_{\epsilon^2}(X,B)$.    
\item \label{mvn equiv} Assume in addition $q\in \mathcal{P}^{\pi,\K}_\epsilon(X,B)$, and let $v_q$ and $\phi_q$ be defined analogously to part \eqref{induce ucp}.  Assume moreover that there is a partial isometry $v\in \LL(E)$ such that $v^*v=p$, $vv^*=q$, and such that $\|[v,a]\|<\epsilon$ for all $a\in X\cup X^*$.   Then there exists a unitary $u\in \LL(pE,qE)$ such that 
$$
\|u\phi_p(a)u^*-\phi_q(a)\|<\epsilon
$$
for all $a\in X$.
\end{enumerate}
\end{lemma}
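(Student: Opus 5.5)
For part (i), first note that $\phi_p$ is ucp: it is a compression of the unital representation $\pi$ by the isometry $v_p$, and $v_p^*v_p=\mathrm{id}_{pE}$. Since $p\in\K(E)$, the module $pE$ is compact in the sense of Definition \ref{com hm}, by Lemma \ref{hm equiv} part \eqref{proj im}. Hence $\K(pE)=\LL(pE)$ and $\phi_p$ has the correct codomain. Adjoint preservation is automatic.

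For almost multiplicativity, I would reverse the computation in the proof of Lemma \ref{proj real}. For $a,b\in X$,
$$
\phi_p(ab)-\phi_p(a)\phi_p(b)=v_p^*a(1-p)bv_p=v_p^*\,pa(1-p)\,(1-p)bp\,v_p .
$$
Writing $pa(1-p)=p[p,a]$ and $(1-p)bp=[b,p]p$, this has norm at most $\|[p,a]\|\,\|[p,b]\|<\epsilon^2$. This gives $\phi_p\in\emu_{\epsilon^2}(X,B)$.

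For part (ii), the natural candidate unitary is $v$ itself, restricted to a map $pE\to qE$: set $u:=v_q^*vv_p\in\LL(pE,qE)$. Because $v^*v=p$ and $vv^*=q$, we get $u^*u=v_p^*pv_p=\mathrm{id}_{pE}$ and $uu^*=v_q^*qv_q=\mathrm{id}_{qE}$. For $a\in X$, using $v=qvp$ and $v^*v=p$,
$$
u\phi_p(a)u^*=v_q^*vpapv^*v_q=v_q^*vav^*v_q ,
$$
and $\phi_q(a)=v_q^*qaqv_q=v_q^*vv^*avv^*v_q$. The difference is therefore
$$
v_q^*v\,(av^*-v^*a)\,vv_q\;\cdot
$$
More carefully, I would write $vav^*-vv^*avv^*=v(av^*-v^*av v^*)$ and use $v^*v v^*=v^*$ to get $v(av^*-v^*a v v^*)=v(av^*-v^*a)vv^*+v(av^*)(1-vv^*)$. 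The last term vanishes after compressing by $q=vv^*$, since $v_q^*$ kills it on the right via $(1-q)v_q=0$. So the difference is $v_q^*v[a,v^*]vv^*v_q$, which has norm at most $\|[a,v^*]\|=\|[v,a^*]\|<\epsilon$, using the hypothesis on $X\cup X^*$.

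The only delicate point is this bookkeeping in (ii): keeping track of which side the projections act on so that exactly one commutator remains. The hypothesis for $X\cup X^*$ is designed precisely to absorb the adjoint that appears. No estimate beyond a single commutator should be needed, and part (i) is immediate from the factorization above.
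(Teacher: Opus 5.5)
Your proposal is correct and follows the paper's proof. Part (i) is the same compression estimate $\|pa(1-p)bp\|\le\|[p,a]\|\,\|[b,p]\|<\epsilon^2$, and in part (ii) you use the same unitary $u=v_q^*vv_p$ and reduce to a single commutator. The only difference is cosmetic: the paper writes the difference as $v_q^*(vav^*-avv^*)v_q=v_q^*[v,a]v^*v_q$, so it needs $\|[v,a]\|<\epsilon$ only for $a\in X$, whereas you isolate $[a,v^*]$ and invoke the hypothesis on $X^*$. Your first displayed expression for the difference is garbled, but the ``more careful'' computation that follows it is right.
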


\begin{proof}
For part \eqref{induce ucp}, direct checks show that $\phi_p$ is ucp.  To check that it is $(X,\epsilon^2)$-multiplicative, we note that for any $a,b\in X$,
\begin{align*}
\|\phi_p(ab)-\phi_p(a)\phi_p(b)\| & = \|v_p^*abv_p-v_p^*av_pv_p^*bv_p\| \\ & =\|pabp-papbp\| \\ &=\|pa(1-p)bp\| \\ & =\|[p,a](1-p)[b,p]\| \\ & \leq \|[p,a]\|\|[b,p]\|
\end{align*}
and the last line is less than $\epsilon^2$ as $\|[p,c]\|<\epsilon$ for all $c\in X$.

For part \eqref{mvn equiv}, we define $u:=v_q^*vv_p$.  Direct computations show that this is unitary.  For the claimed estimate, we have that 
\begin{align*}
\|u\phi_p(a)u^*-\phi_q(a)\| & = \|u(v_p^*av_p)u^*-v_q^*av_q\| \\
& = \|v_q^*(vp ap v^*v_q-v_q^*aq v_q\| \\
& =\|v_q^*(vav^*-aq)v_q\|\\
& \leq \|vav^*-avv^*\| \\
& \leq \|va-av\|.
\end{align*}
This now implies the desired estimate from the assumption on $v$.
\end{proof}

\begin{corollary}\label{mvn to ucp}
Let $\pi:A\to \LL(E)$ be a strongly unitally absorbing representation of $A$ on a Hilbert $B$-module $E$; as usual, we identify $A$ with its image under $\pi$.  Let $\epsilon>0$ and let $X$ be a finite subset of the unit ball of $A$.  Then there exists a well-defined homomorphism 
$$
\psi_{(X,\epsilon)}:KK^{\pi,mvn}_\epsilon(X,B)\to KK^{ucp}_\epsilon(X,B)
$$
determined on cycles by sending $[p]$ to the class of the ucp map 
$$
\phi:A\to \LL(pE),\quad a\mapsto v^*a v,
$$
where $v:pE\to E$ is the canonical inclusion.
\end{corollary}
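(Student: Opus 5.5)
The plan is to define the map on cycles, check that it respects the equivalence relation and the monoid operation, and then invoke the universal property of the Grothendieck group. Let $p\in \mathcal{P}^{\pi,\K}_\epsilon(X,B)$, and write $v_p:pE\to E$ for the inclusion and $\phi_p(a):=v_p^*av_p$.

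First, $\phi_p$ is a legitimate element of $\emu_\epsilon(X,B)$. The module $pE$ is compact in the sense of Definition \ref{com hm}, by Lemma \ref{hm equiv} part \eqref{proj im}, since $p\in \K(E)$. The map $\phi_p$ is ucp by Lemma \ref{mvn to ucp lem} part \eqref{induce ucp}. For the multiplicativity parameter, note that Lemma \ref{mvn to ucp lem} only gives $\epsilon^2$, which is worse than $\epsilon$ when $\epsilon>1$, so I refine its estimate. The proof there shows
$$
\|\phi_p(ab)-\phi_p(a)\phi_p(b)\|\leq \|[p,a]\|\,\|[b,p]\|.
$$
For any contraction $c$ we have $[p,c]=pc(1-p)-(1-p)cp$, a sum of two pieces with orthogonal ranges and orthogonal supports, so $\|[p,c]\|\leq 1$. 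Hence the product above is less than $\min\{\epsilon,1\}\cdot 1\leq\epsilon$, and $\phi_p\in\emu_\epsilon(X,B)$. This gives a set map $\mathcal{P}^{\pi,\K}_\epsilon(X,B)\to V^{ucp}_\epsilon(X,B)$.

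Next, the monoid operation. Since $s_1,s_2\in\mathcal{B}(\ell^2)$ commute with $A$, the operator
$$
w:=s_1v_p\oplus s_2v_r:pE\oplus rE\to (s_1ps_1^*+s_2rs_2^*)E
$$
is a unitary isomorphism. It intertwines $\phi_p\oplus\phi_r$ with $\phi_{s_1ps_1^*+s_2rs_2^*}$ exactly. So the set map sends Cuntz sums to block sums, up to exact unitary equivalence, and exact unitary equivalence implies $\sim$ in Definition \ref{delta sim} (take $\theta=0$). The zero projection goes to the zero map.

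The main step is compatibility with $\sim$. Suppose $p\sim q$ via $r\in\mathcal{P}^{\pi,\K}_\epsilon(X,B)$ and a partial isometry $v$ as in Definition \ref{kk mvn}. Set $P:=s_1ps_1^*+s_2rs_2^*$ and $Q:=s_1qs_1^*+s_2rs_2^*$; both lie in $\mathcal{P}^{\pi,\K}_\epsilon(X,B)$. Lemma \ref{mvn to ucp lem} part \eqref{mvn equiv} then gives a unitary $u'\in\LL(PE,QE)$ with
$$
\|u'\phi_P(a)u'^*-\phi_Q(a)\|<\epsilon \quad\text{for all } a\in X.
$$
Conjugating by the exact intertwiners $w$ from the previous step turns this into a unitary $u$ with
$$
\|u(\phi_p(a)\oplus\phi_r(a))u^*-(\phi_q(a)\oplus\phi_r(a))\|<\epsilon \quad\text{for all } a\in X,
$$
where $\phi_r\in\emu_\epsilon(X,B)$ by the first step. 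This is precisely $\phi_p\sim\phi_q$ in Definition \ref{delta sim}.

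So we obtain a monoid homomorphism $V^{\pi,mvn}_\epsilon(X,B)\to V^{ucp}_\epsilon(X,B)$. The universal property of the Grothendieck group then gives $\psi_{(X,\epsilon)}$. The only point needing care is the parameter bookkeeping: the bound $\|[p,c]\|\leq 1$ is what lets us avoid losing control when $\epsilon>1$.
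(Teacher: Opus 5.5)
Your proposal is correct and follows the same route as the paper. The paper's steps are: define $p\mapsto\phi_p$ via Lemma \ref{mvn to ucp lem} part \eqref{induce ucp}, descend to Murray--von Neumann classes via part \eqref{mvn equiv}, check additivity with the unitary $s_1y+s_2z\mapsto y\oplus z$, and pass to Grothendieck groups. Your extra care fills in two details the paper leaves implicit: the bound $\|[p,c]\|\leq 1$, which covers the case $\epsilon>1$ where $\epsilon^2>\epsilon$, and the explicit transport of the estimate for $s_1ps_1^*+s_2rs_2^*$ versus $s_1qs_1^*+s_2rs_2^*$ back to $\phi_p\oplus\phi_r$ versus $\phi_q\oplus\phi_r$.
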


\begin{proof}
The fact that $\phi$ as defined is an element of $\emu_{\epsilon}(X,B)$ follows directly from Lemma \ref{mvn to ucp lem} part \eqref{induce ucp}.  Thus we get a map $\mathcal{P}_\epsilon^{\pi,\K}(X,B)\to \emu_{\epsilon}(X,B)$.  Lemma \ref{mvn to ucp lem} part \eqref{mvn equiv} implies that this map descends to a well-defined set map 
\begin{equation}\label{mon map}
V^{\pi,mvn}_\epsilon(X,B)\to V^{ucp}_\epsilon(A,B)
\end{equation}
(see Definitions \ref{kk mvn} and \ref{ucp gp} for notation).  

Let now $p,q\in \mathcal{P}_\epsilon^{\pi,\K}(X,B)$ with associated ucp maps $\phi_p,\phi_q$, and let $s_1,s_2\in \LL(E)$ be Cuntz isometries defining the operation on $V^{\pi,mvn}_\epsilon(X,B)$.  Then we have that $[p]+[q]$ is represented in $V^{\pi,mvn}_\epsilon(X,B)$ by the projection $s_1ps_1^*+s_2qs_2^*$, and the associated ucp map is
$$
\phi_{s_1ps_1^*+s_2qs_2^*}:A\to \LL((s_1ps_1^*+s_2qs_2^*)E).
$$
Note that any element in $(s_1ps_1^*+s_2qs_2^*)E$ can be written uniquely as $s_1y+s_2z$ for some $y\in pE$ and $z\in q$.  One then checks that the map
$$
(s_1ps_1^*+s_2qs_2^*)E\to pE\oplus qE,\quad s_1y+s_2z\mapsto y\oplus z
$$
is a unitary isomorphism that intertwines $\phi_{s_1ps_1^*+s_2qs_2^*}$ and $\phi_p\oplus \phi_q$.  This shows that the map in line \eqref{mon map} is a semigroup homomorphism.  It thus descends uniquely to a homomorphism on Grothendieck groups, which is the homomorphism in the statement.
\end{proof}

\begin{proposition}\label{mvn and ucp}
Let $A$ be a separable unital $C^*$-algebra, and let $B$ be a separable $C^*$-algebra.  Let $\pi:A\to \LL(E)$ be a strongly unitally absorbing representation of $A$ on a Hilbert $B$-module.   Let $X$ be a finite and self-adjoint subset of the unit ball of $A$, and let $\epsilon>0$.  The maps from Lemmas \ref{ucp to mvn} and \ref{mvn to ucp} fit into commutative diagrams
\begin{equation}\label{mvn ucp t1}
\xymatrix{ KK^{ucp}_\epsilon(X,B) \ar[dr]_-{\phi_{(X,\epsilon)}} \ar[rr] & & KK^{ucp}_{5\epsilon^{1/2}}(X,B) \\
& KK^{\pi,mvn}_{5\epsilon^{1/2}}(X,B) \ar[ur]_-{\psi_{(X,5\epsilon^{1/2})}} & }
\end{equation}
and 
\begin{equation}\label{mvn ucp t2}
\xymatrix{ & KK^{ucp}_\epsilon(X,B) \ar[dr]^-{\phi_{(X,\epsilon)}} & \\
KK^{\pi,mvn}_\epsilon(X,B) \ar[ur]^-{\psi_{(X,\epsilon)}} \ar[rr] & & KK^{\pi,mvn}_{5\epsilon^1/2}(X,B) },
\end{equation}
where in both cases the unlabeled arrows are the canonical forget control maps as in Remarks \ref{fc2} and \ref{fc3}.
\end{proposition}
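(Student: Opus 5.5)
Both diagrams are checked on generators of the relevant monoids. Each map is defined on cycles and respects the monoid operations (Corollaries \ref{ucp to mvn} and \ref{mvn to ucp}). By the universal property of Grothendieck groups, it therefore suffices to compare the two routes on a single cycle, at the level of the monoids $V^{ucp}$ and $V^{\pi,mvn}$. Throughout I use $\epsilon\leq 1$, which is implicit since Corollary \ref{ucp to mvn} requires it, so that $\epsilon\leq 5\epsilon^{1/2}$ and the forget control maps in the statement make sense.

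\textbf{Diagram \eqref{mvn ucp t2}.} Let $p\in\mathcal{P}^{\pi,\K}_\epsilon(X,B)$, and let $v_p:pE\to E$ be the canonical inclusion. Then $\psi_{(X,\epsilon)}[p]=[\phi_p]$ with $\phi_p(a)=v_p^*av_p$. To compute $\phi_{(X,\epsilon)}[\phi_p]$ we may use any isometry $v\in\K(pE,E)$ with the properties of Lemma \ref{proj real}. I take $v=v_p$ itself. Then $v_p^*av_p=\phi_p(a)$ exactly, so the estimate in Lemma \ref{proj real} \eqref{isom approx} holds trivially. Moreover $v_pv_p^*=p$, and $\|[p,a]\|<\epsilon\leq\epsilon^{1/2}$, so \eqref{p com} holds as well. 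Here $v_p$ is compact because $pE$ is a compact module (Lemma \ref{hm equiv}). Corollary \ref{ucp to mvn} states that the class does not depend on the choice of $v_\phi$. Hence $\phi_{(X,\epsilon)}\psi_{(X,\epsilon)}[p]=[p]$ in $KK^{\pi,mvn}_{5\epsilon^{1/2}}(X,B)$, which is exactly the forget control image of $[p]$.

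\textbf{Diagram \eqref{mvn ucp t1}.} Let $\phi\in\emu_\epsilon(X,B)$, choose $v_\phi$ as in Lemma \ref{proj real}, and set $p=v_\phi v_\phi^*$. Then $\psi\phi[\phi]$ is the class of $\phi_p:a\mapsto v_p^*av_p$ on $pE$. The map $u:=v_p^*v_\phi:F_\phi\to pE$ is a unitary isomorphism, and it satisfies $u^*\phi_p(a)u=v_\phi^*av_\phi$. Therefore
$$
\|u\phi(a)u^*-\phi_p(a)\|=\|\phi(a)-v_\phi^*av_\phi\|<\epsilon\leq 5\epsilon^{1/2}
$$
for all $a\in X$. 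Taking $\theta=0$ in Definition \ref{delta sim}, this shows $\phi\sim\phi_p$ in $\emu_{5\epsilon^{1/2}}(X,B)$. So $[\phi_p]=[\phi]$ in $KK^{ucp}_{5\epsilon^{1/2}}(X,B)$, which is the forget control image of $[\phi]$.

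\textbf{Main obstacle.} The only delicate point is parameter bookkeeping. One must check that $\psi_{(X,5\epsilon^{1/2})}$ really accepts $p$, i.e.\ that $p\in\mathcal{P}^{\pi,\K}_{5\epsilon^{1/2}}(X,B)$. This holds because $\|[p,a]\|<\epsilon^{1/2}$. One must also note that Lemma \ref{mvn to ucp lem} only yields $\phi_p\in\emu_{(5\epsilon^{1/2})^2}(X,B)$, and this has to be reconciled with membership in $\emu_{5\epsilon^{1/2}}(X,B)$. I would handle this by computing directly: $\phi_p$ is conjugate via $u$ to $a\mapsto v_\phi^*av_\phi$. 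By Lemma \ref{proj real} \eqref{isom approx} and the triangle inequality, that map is within $O(\epsilon)$ of $\phi$ and is multiplicative up to $O(\epsilon)$ on $X$. Since $\epsilon\leq1$, this is at most $5\epsilon^{1/2}$, which places the cycle at the target level $5\epsilon^{1/2}$. The corresponding check at level $\epsilon$ in diagram \eqref{mvn ucp t2} is immediate, because Lemma \ref{mvn to ucp lem} gives $\epsilon^2\leq\epsilon$ there.
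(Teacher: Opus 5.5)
Your proof is correct and matches the paper's argument. For the first triangle the paper uses the same unitary $w^*v$ (your $v_p^*v_\phi$) together with the estimate $\|u\phi(a)u^*-\phi_p(a)\|\leq\|\phi(a)-v_\phi^*av_\phi\|<\epsilon$. For the second triangle it likewise takes the isometry to be the canonical inclusion $pE\to E$, so that $p$ is recovered exactly on the level of cycles.

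Your \emph{main obstacle} paragraph is unnecessary, because Corollary \ref{mvn to ucp} already makes $\psi_{(X,5\epsilon^{1/2})}$ well defined. In fact $\|[p,a]\|<\epsilon^{1/2}$ gives $\phi_p\in\emu_\epsilon(X,B)$ directly, via $\|v_p^*a(1-p)bv_p\|\leq\|[p,a]\|\|[b,p]\|<\epsilon$.

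The triangle-inequality justification you sketch there would also fail as written. Since $ab$ need not lie in $X$, Lemma \ref{proj real}(i) gives no control on $v_\phi^*abv_\phi-\phi(ab)$.
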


\begin{proof}
We first look the triangle in line \eqref{mvn ucp t1}.  Given a ucp map $\phi:A\to \K(F)$ representing a class in $KK^{\pi,ucp}_\epsilon(X,B)$, the bottom composition in line \eqref{mvn ucp t1} takes this to the class of the map $\psi:A\to \K(pE)$ defined by $\psi(a):=w^*aw$ where: $v:F\to E$ is an isometry with properties \eqref{isom approx} and \eqref{p com} from Lemma \ref{proj real} with respect to $\phi$, $p=vv^*$, and $w:pE\to E$ is the canonical inclusion.  Consider now the map $u:=w^*v:F\to pE$.  One checks that this is a unitary isomorphism, and moreover 
\begin{align*}
\|u\phi(a)u^*-\psi(a)\| & =\|w^*v\phi(a)v^*w-w^*aw\|=\|w^*(v\phi(a)v^*-vv^*avv^*)w\|  \\ & \leq \|\phi(a)-v^*av\|,
\end{align*}
and the right hand side is smaller than $\epsilon$ for all $a\in X$ by Lemma \ref{proj real} part \eqref{isom approx}.  This implies that $\psi$ and $\phi$ are equivalent in $KK^{\pi,ucp}_{5\epsilon^{1/2}}(X,B)$, which shows that the diagram commutes.

We now look at triangle \eqref{mvn ucp t2}.  Given a projection $p\in \mathcal{P}^{\pi,\K}_{\epsilon}(X,B)$ representing a class in $KK^{\pi,mvn}_\epsilon(X, B)$, the top composition in line \eqref{mvn ucp t2} takes this to the class of the projection $q\in \mathcal{P}^{\pi,mvn}_{5\epsilon^{1/2}}(X,B)$, where $q:=vv^*$, where: $v:pE\to E$ is any isometry with the properties in Lemma \ref{proj real}, parts \eqref{isom approx} and \eqref{p com}, relative to the ucp map $\phi:A\to \K(pE)$, $a\mapsto w^*aw$, with $w:pE\to E$ the canonical inclusion.  However, we may just take $w=v$, in which case we have equality of $p$ and $q$ even on the level of cycles.  
\end{proof}

\section{Almost multiplicative maps}\label{llp sec}

In this section, we give a reformulation of the controlled $KK$-groups $KK^{ucp}_\epsilon(X,B)$ of Definition \ref{ucp gp} that in some sense drops the ucp condition.  This needs extra structure: we assume that $A$ comes equipped with a suitably nice countable subset with dense span; this is an unnatural assumption from a $C^*$-algebra point of view, but works well in our main examples.  We also need additional assumptions on $A$: we will assume that $A$ has Kirchberg's \emph{lifting property} (LP) (in many situations one can also get away with the weaker \emph{local lifting property} (LLP)).  See \cite{Kirchberg:1993aa} for the original systematic study of the LP and LLP, and \cite[Chapter 13]{Brown:2008qy} and \cite[Chapter 9]{Pisier:2020aa} for textbook expositions.  As usual, this reformulation comes at the price of changing the parameters - in this case, we must change both the finite set $X$ and the error parameter $\epsilon$.  

Throughout this section, $A$ and $B$ denote separable $C^*$-algebras with $A$ unital.  All Hilbert modules will be over $B$ and countably generated unless otherwise stated.  We write $\LL(E)$, respectively $\K(E)$, for the $C^*$-algebras of adjointable, respectively compact, operators on a Hilbert module $E$.  We will work primarily with compact Hilbert $B$-modules as in Definition \ref{com hm}.

For applications, it will be convenient to work with a countable subset of $A$ satisfying appropriate conditions.

\begin{definition}\label{admiss}
Let $\mathcal{X}$ be a subset of a unital $C^*$-algebra $A$, and let $\mathcal{A}$ denote the span of $\mathcal{X}$.  The subset $\mathcal{X}$ is said to be \emph{admissible} if it satisfies the following conditions:
\begin{enumerate}[(i)]
\item $\mathcal{X}$ is countable;
\item $\mathcal{X}$ contains the unit of $A$;
\item all elements of $\mathcal{X}$ have norm at most one;
\item $\mathcal{A}$ is a dense $*$-subalgebra of $A$;
\item any unital $*$-homomorphism $\mathcal{A}\to B$ from $\mathcal{A}$ to a $C^*$-algebra $B$ extends uniquely to a $*$-homomorphism $A\to B$.
\end{enumerate}
\end{definition}

The most important example of an admissible subset of a $C^*$-algebra for us is the canonical copy of the group $\Gamma$ inside the maximal group $C^*$-algebra $C^*\Gamma$.  Other universal $C^*$-algebras equipped with a reasonable countable set of generators such as Cuntz algebras, Cuntz-Krieger algebras, semigroup $C^*$-algebras, graph $C^*$-algebras and so on provide additional examples.

\begin{definition}\label{non ucp}
Let $A$ and $B$ be separable $C^*$-algebras with $A$ unital.  Let $\mathcal{X}$ be an admissible subset of $A$ in the sense of Definition \ref{admiss}, and let $\mathcal{A}$ be its span.  Let $\epsilon\in (0,1]$, let $X$ be a finite subset of $\mathcal{X}$.  Define $\lin_{\epsilon}(X,B)$ to be the collection\footnote{It is not a set: as usual we leave details to get around this to the reader.} of all linear maps $\psi:\mathcal{A}\to \K(F)$ satisfying the following conditions:
\begin{enumerate}[(i)]
\item $F$ is a compact Hilbert $B$-module in the sense of Definition \ref{com hm};
\item $\psi(1_A)=1_F$;
\item $\|\psi(a)^*-\psi(a^*)\|<\epsilon$ for all $a\in X$;
\item $\|\psi(ab)-\psi(a)\psi(b)\|<\epsilon$ for all $a,b\in X$;
\item $\|\psi(a)\|\leq 1$ for all $a\in \mathcal{X}$.
\end{enumerate}

For $\phi,\psi\in \lin_{\epsilon}(X,B)$, we write $\phi\sim \psi$ if there is $\theta\in \lin_{\epsilon}(X,B)$ and a unitary isomorphism $u:F_\phi\oplus F_\theta\to F_\psi\oplus F_\theta$ such that $\|u(\phi(a)\oplus \theta(a))u^*-\psi(a)\oplus \theta(a)\|$ for all $a\in X$; analogously to Lemma \ref{ucp equiv} above this is an equivalence relation.

We then define $V^{lin}_{\epsilon}(X,B)$ to be $\lin_{\epsilon}(X,B)/\sim$.  This becomes a commutative monoid when equipped with block sum for addition, and we let $KK^{\lin}_{\epsilon}(X,B)$ denote its Grothendieck group.
\end{definition}

\begin{remark}\label{fc4}
Analogously to Remarks \ref{fc1}, \ref{fc2}, and \ref{fc3}, if $\mathcal{X}$ is an admissible subset of $A$, if $Y\subseteq X$ are finite subsets of $\mathcal{X}$, and if $0<\epsilon<\delta$ then there is an inclusion $\lin_{\epsilon}(X,B)\to \lin_{\delta}(Y,B)$ that canonically induces a \emph{forget control homomorphism}
$$
KK^{\lin}_{\epsilon}(X,B)\to KK^{\lin}_{\delta}(Y,B).
$$
\end{remark}

Variants of the following lemma are well-known: compare for example \cite[Lemma 2.1]{Lin:1995aa}, \cite[Corollary 1.7]{Ioana:2020aa}, or \cite[Proposition 2.7]{Willett:2024aa}.  We give a proof for the reader's convenience.

\begin{lemma}\label{lp lem}
Let $\mathcal{X}\subseteq A$ be an admissible subset of $A$ in the sense of Definition \ref{admiss}.  Assume that $A$ has the lifting property.  Let $X_1\subseteq X_2\subseteq \cdots $ be an increasing sequence of finite subsets of $\mathcal{X}$ with union $\mathcal{X}$, and let $\epsilon_1>\epsilon_2>\cdots $ be a decreasing sequence that tends to zero.

With notation as in Definition \ref{non ucp}, for all finite subsets $Y$ of $\mathcal{X}$ and all $\delta>0$, there exists $N$ such that for all $n\geq N$ if $\psi\in \lin_{\epsilon_n}(X_n,B)$, then there exists a ucp map $\phi:A\to \K(F_\phi)$ such that $\|\psi(a)-\phi(a)\|<\delta$ for all $a\in Y$.
\end{lemma}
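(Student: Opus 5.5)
We argue by contradiction, packaging a sequence of counterexamples into a single $*$-homomorphism into a sequence algebra modulo the ideal of null sequences and then using the lifting property. Suppose the statement fails for some finite $Y\subseteq\mathcal{X}$ and $\delta>0$. Then there are infinitely many $n$ (pass to a subsequence $n_1<n_2<\cdots$) and maps $\psi_k\in\lin_{\epsilon_{n_k}}(X_{n_k},B)$, $\psi_k:\mathcal{A}\to\K(F_k)$, such that no ucp map $\phi:A\to\K(F_k)$ satisfies $\|\psi_k(a)-\phi(a)\|<\delta$ for all $a\in Y$. Set $D:=\prod_k\K(F_k)$ (bounded sequences), $J:=\bigoplus_k\K(F_k)$ (sequences tending to zero in norm), and let $q:D\to D/J$ be the quotient map. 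Since $\|\psi_k(a)\|\le 1$ for all $a\in\mathcal{X}$ and $\mathcal{A}$ is the span of $\mathcal{X}$, the sequence $(\psi_k(a))_k$ is bounded for every $a\in\mathcal{A}$. Hence $\Psi(a):=q((\psi_k(a))_k)$ defines a linear map $\mathcal{A}\to D/J$.

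Next I check that $\Psi$ is a unital $*$-homomorphism. It is unital because $\psi_k(1)=1$. Every $a\in\mathcal{X}$ lies in $X_{n_k}$ for all large $k$, and $\epsilon_{n_k}\to0$, so $\|\psi_k(a^*)-\psi_k(a)^*\|\to0$ and $\|\psi_k(ab)-\psi_k(a)\psi_k(b)\|\to0$ for all $a,b\in\mathcal{X}$. Since $\mathcal{X}$ spans $\mathcal{A}$ and both conditions are (sesqui)linear in $a,b$, $\Psi$ is multiplicative and $*$-preserving on all of $\mathcal{A}$. By admissibility (v), $\Psi$ extends uniquely to a unital $*$-homomorphism $\Psi:A\to D/J$. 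This is the key point where admissibility replaces any a priori boundedness of the $\psi_k$ on $A$.

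Now I apply the lifting property. $\Psi$ is ucp from the unital algebra $A$ into the quotient $D/J$, so by the LP there is a ucp lift $\Phi:A\to D$ with $q\circ\Phi=\Psi$. (Working with $D$ possibly nonunital is harmless: $D$ is unital with unit $(1_{F_k})_k$ since each $F_k$ is compact, and $\Psi$ is unital.) Write $\Phi=(\phi_k)_k$. Each coordinate $\phi_k:A\to\K(F_k)$ is then ucp, because composing with a coordinate projection preserves complete positivity and unitality. Since $q\circ\Phi=\Psi$ on $\mathcal{A}$, we get $\|\phi_k(a)-\psi_k(a)\|\to0$ for each $a\in\mathcal{A}$, in particular uniformly on the finite set $Y$. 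So for large $k$ we have $\|\phi_k(a)-\psi_k(a)\|<\delta$ for all $a\in Y$, contradicting the choice of $\psi_k$.

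The main obstacle is the extension and lifting step: making sure the unbounded-looking linear maps on $\mathcal{A}$ really give a $*$-homomorphism on $A$, which relies on admissibility (v), and that the lift lands coordinatewise in $\K(F_k)$ rather than merely in multipliers. The latter holds because $D$ itself is the product of the $\K(F_k)=\LL(F_k)$. If only the LLP were available, I would instead lift the restriction of $\Psi$ to the finite-dimensional operator system spanned by $Y\cup Y^*\cup\{1\}$, and then extend by Arveson's theorem into each $\K(F_k)=\LL(F_k)$. The latter is a unital $C^*$-algebra but not injective, so this variant needs more care; I would stick with the LP as assumed.
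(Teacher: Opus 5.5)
Your proposal is correct and follows essentially the same route as the paper. Both arguments assemble the counterexamples into a unital $*$-homomorphism $\mathcal{A}\to \prod_k\K(F_k)/\bigoplus_k\K(F_k)$, extend it to $A$ via admissibility, lift it with the LP, and read off coordinatewise ucp approximants to reach a contradiction. Your passage to a subsequence $(n_k)$ is a slightly more careful rendering of the negation than the paper's ``for any $n$''; the paper's version is also valid, since the sets $\lin_{\epsilon_n}(X_n,B)$ decrease in $n$.
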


\begin{proof}
Let $\mathcal{A}$ denote the linear span of $\mathcal{X}$, a dense $*$-subalgebra of $A$.  Assume the conclusion does not hold for contradiction, so there exists a finite subset $Y$ of $\mathcal{X}$ and a $\delta>0$ such that for any $n$ there is a map $\phi_n:\mathcal{A}\to \K(F_n)$ in $\lin_{\epsilon_n}(X_n,B)$ such that for any ucp map $\psi:A\to \K(F_n)$ we have that $\|\phi_n(a)-\psi(a)\|\geq \delta$ for some $a\in Y$.

Define now $C:=\prod_{n=1}^\infty \K(F_n)$ to be $C^*$-algebra of all bounded sequences $(c_n)$ with $c_n\in \K(F_n)$, and define $J:=\bigoplus_{n=1}^\infty \K(F_n)$ to be the ideal consisting of sequences $(c_n)$ such that $\|c_n\|\to 0$ as $n\to\infty$.  Note that the condition that $\|\phi(x)\|\leq 1$ for all $x\in \mathcal{X}$ implies that for any $a\in \mathcal{A}$, the sequence $(\phi_n(a))_{n=1}^\infty$ is uniformly bounded, and so a well-defined element of $C$.  The conditions on the $\phi_n$ imply then that the map 
$$
\mathcal{A}\to C/J,\quad a\mapsto (\phi_n(a))_{n=1}^\infty
$$
is a unital $*$-homomorphism, so extends (uniquely) to a unital $*$-homomorphism $\Phi:A\to C/J$ by admissibility.  Hence using the LP, there is a ucp lift $\Psi:A\to C$ of $\Phi$.  The map $\Psi$ is necessarily of the form $\Psi(a)=(\psi_n(a))_{n=1}^\infty$, where each $\psi_n$ is a ucp map $A\to \K(F_n)$.  Moreover, as $\Psi$ is a lift of $\Phi$, we have that $\Phi(a)-\Psi(a)\in J$ for all $a\in A$ and so in particular $\|\phi_n(a)-\psi_n(a)\|\to 0$ as $n\to\infty$ for all $a\in \mathcal{X}$.  This contradicts the assumption that $\phi_n$ is bounded away from ucp maps on the finite set $Y$.
\end{proof}

\begin{remark}\label{lp vs llp}
If $B$ is nuclear, we may replace the LP assumption on $A$ with the LLP, and the conclusion of Lemma \ref{lp lem} still holds.  This uses the notion of a QWEP $C^*$-algebra: see for example \cite{Ozawa:2004ab} for background.  Indeed, if $B$ is nuclear, then so is $\K(F)$ for any (countably generated) $B$-module $F$: this follows for example as Kasparov's stabilization theorem \cite[Theorem 2]{Kasparov:1980sp} (see also \cite[Chapter 6]{Lance:1995ys} for a textbook exposition) implies that $\K(F)$ embeds as a corner in $\K(B\otimes \ell^2)$.  Hence in particular $\K(F)$ is QWEP, whence also any product of such algebras is also QWEP (see for example \cite[Proposition 4.1]{Ozawa:2004ab}), and in particular the $C^*$-algebra $C$ from the proof of Lemma \ref{lp lem} is QWEP.  Hence by \cite[Corollary 3.12]{Ozawa:2004ab}, the map $\Phi:A\to C/J$ would still admit a ucp lift under the assumption that $A$ satisfies the LLP.
\end{remark}

\begin{corollary}\label{lin to ucp}
Assume that $A$ has the lifting property.   Let $\mathcal{X}$ be an admissible subset of $A$ in the sense of Definition \ref{admiss}.  Then for any finite subset $Y$ of $\mathcal{X}$ and any $\delta>0$, there is a finite subset $X$ of $\mathcal{X}$ and $\epsilon\in (0,\delta/3)$ such that there is a well-defined homomorphism.
$$
\psi_{(X,\epsilon)}:KK^{lin}_{\epsilon}(X,B)\to KK^{ucp}_{\delta}(Y,B).
$$
\end{corollary}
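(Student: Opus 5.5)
The plan is to define $\psi_{(X,\epsilon)}$ on cycles by replacing each almost multiplicative linear map with a nearby ucp map supplied by Lemma \ref{lp lem}. We then check that the error incurred is at most $\delta$ on $Y$.

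\emph{Choice of parameters.} Fix increasing finite sets $X_1\subseteq X_2\subseteq\cdots$ exhausting $\mathcal{X}$ and $\epsilon_n\downarrow 0$. Apply Lemma \ref{lp lem} to the finite set $Y':=Y\cup Y^*\cup\{ab: a,b\in Y\}$; if $Y'\not\subseteq\mathcal{X}$, replace it by the finitely many elements of $\mathcal{X}$ needed to express these products in the span $\mathcal{A}$, and rescale $\delta$ by the sizes of the coefficients. Use the tolerance $\delta':=\delta/3$ (or smaller). This gives $N$. Take $n\geq N$ large enough that $Y\subseteq X_n$ and $\epsilon_n<\delta/3$, and set $X:=X_n\cup Y$ and $\epsilon:=\epsilon_n$.

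\emph{Almost multiplicativity of the lift.} For $\psi\in\lin_\epsilon(X,B)$, choose a ucp $\phi:A\to\K(F_\psi)$ with $\|\psi(c)-\phi(c)\|<\delta'$ for all $c\in Y'$. Then for $a,b\in Y$,
$$
\|\phi(ab)-\phi(a)\phi(b)\|\leq \|\phi(ab)-\psi(ab)\|+\|\psi(ab)-\psi(a)\psi(b)\|+\|\psi(a)\psi(b)-\phi(a)\phi(b)\|.
$$
The three terms are at most $\delta'$, $\epsilon$, and $2\delta'$ respectively, using $\|\psi(a)\|\leq 1$ and $\|\phi\|\leq1$. Tightening $\delta'$ to $\delta/4$ and $\epsilon<\delta/4$, this shows $\phi\in\emu_\delta(Y,B)$.

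\emph{Well-definedness.} Two different choices of lift $\phi,\phi'$ of the same $\psi$ are within $2\delta'$ on $Y$, so they are $\sim$-equivalent with trivial $\theta$ and $u=1$. If $\psi_1\sim\psi_2$ via $\theta$ and $u$ in $\lin_\epsilon(X,B)$, lift $\theta$ to a ucp $\theta'$. Then
$$
\|u(\phi_1\oplus\theta')u^*-\phi_2\oplus\theta'\|<\epsilon+2\delta'<\delta
$$
on $Y$, again after shrinking $\delta'$ to, say, $\delta/6$ and requiring $\epsilon<\delta/3$. Block sums lift to block sums, so we obtain a monoid homomorphism $V^{lin}_\epsilon(X,B)\to V^{ucp}_\delta(Y,B)$. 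This then induces $\psi_{(X,\epsilon)}$ on Grothendieck groups.

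\emph{Main obstacle.} The delicate point is uniformity: one $(X,\epsilon)$ must work simultaneously for every cycle, including the auxiliary $\theta$'s and every Hilbert module. This is exactly what the uniform-in-$\psi$ statement of Lemma \ref{lp lem} provides, via its contradiction argument over all $n\geq N$. The rest is bookkeeping of constants, plus ensuring that the products $ab$ for $a,b\in Y$ are controlled. This last point can be handled by enlarging $Y$ inside $\mathcal{X}$; in the motivating example $\mathcal{X}=\Gamma$, $\mathcal{X}$ is itself closed under products.
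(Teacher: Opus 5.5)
Your proposal is correct and follows the same route as the paper: choose $(X,\epsilon)$ via Lemma \ref{lp lem} with a reduced tolerance, and send each cycle to a nearby ucp map on the same module. You then check independence of that choice, compatibility with stable equivalence (lifting the auxiliary $\theta$ as well) and with block sums, and pass to Grothendieck groups. You are in fact more careful than the paper's sketch, which never verifies that the lifted ucp map is $(Y,\delta)$-multiplicative; enlarging the approximation set to control the products $ab$ for $a,b\in Y$, written as combinations of elements of $\mathcal{X}$, is exactly what is needed to close that step.
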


\begin{proof}
Given $Y$ and $\delta$, we let $X$ and $\epsilon$ be as in Lemma \ref{lp lem}, although with $\delta$ replaced by $\delta / 3$.  Then Lemma \ref{lp lem} shows that for any $\psi\in \lin_{\epsilon}(X,B)$, there exists a ucp map $\phi:A\to \K(F_\psi)$ such that $\|\phi(a)-\psi(a)\|<\delta/3$ for all $a\in Y$.  Note that if $\phi':A\to \K(F_\psi)$ is another ucp map satisfying the same conditions then $\|\phi(a)-\phi'(a)\|<\delta$ for all $y\in Y$, and thus the classes $[\phi]$ and $[\phi']$ in the set $V^{ucp}_\delta(Y,B)$ of Definition \ref{delta sim} are the same.  We thus have a well-defined map
$$
\lin_{\epsilon}(X,B)\to V^{ucp}_\delta(Y,B).
$$
Note moreover that as $X\supseteq Y$, and as $\epsilon<\delta/3$ we have that this descends to a well-defined map 
$$
V^{\lin}_{\epsilon}(X,B)\to V^{ucp}_\delta(Y,B).
$$
It is clearly additive, so a monoid homomorphism, and so descends to an homomorphism on Grothendieck groups as in the statement. 
\end{proof}

On the other hand, if $\mathcal{X}$ is an admissible subset of $A$ then for any finite subset $X$ of $\mathcal{X}$ and any $\epsilon>0$ there is a well-defined homomorphism 
\begin{equation}\label{ucp to lin}
\phi_{(X,\epsilon)}: KK^{ucp}_{\delta}(X,B)\to KK^{\lin}_{\epsilon}(X,B)
\end{equation}
which is induced by the identity map on the level of cycles.  

\begin{proposition}\label{lin vs ucp}
Let $A$ be a separable unital $C^*$-algebra with the lifting property, and let $B$ be a separable $C^*$-algebra.  Let $\mathcal{X}$ be an admissible subset of $A$.  Let $X$ be a finite subset of $\mathcal{X}$, let $\epsilon>0$ and let $(Y,\delta)$ be as in Corollary \ref{lin to ucp}.  Then the maps from Corollary \ref{lin to ucp} and line \eqref{ucp to lin} fit into commutative diagrams
\begin{equation}\label{lin ucp t1}
\xymatrix{ KK^{ucp}_\epsilon(X,B) \ar[dr]^-{\psi_{(X,\epsilon)}} \ar[rr] & & KK^{ucp}_\delta(Y,B) \\
&  KK^{lin}_{\epsilon}(X,B) \ar[ur]^-{\phi_{(X,\epsilon)}} & }
\end{equation}
and 
\begin{equation}\label{lin ucp t2}
\xymatrix{ & KK^{ucp}_\delta(Y,B) \ar[dr]^-{\phi_{(Y,\delta)}} &  \\
KK^{lin}_{\epsilon}(X,B) \ar[ur]^-{\psi_{(X,\epsilon)}}  \ar[rr] & & KK^{\lin}_{\delta}(Y,B)  },
\end{equation}
where in both cases the unlabeled arrows are the canonical forget control maps from Remarks \ref{fc3} and \ref{fc4}.
\end{proposition}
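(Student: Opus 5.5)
Both triangles compare the identity-on-cycles map $\phi$ of line \eqref{ucp to lin} with the lifting map $\psi$ of Corollary \ref{lin to ucp}. Each composite is therefore "take a cycle, possibly replace it by a ucp map that is $\delta/3$-close on $Y$". So in both cases the plan is to check, at the level of the monoids $V^{ucp}$ and $V^{lin}$, that the composite sends a cycle to the class of the same cycle. Since all maps are induced from monoid homomorphisms, it is then enough to compare generators $[\phi]$. I will read the diagrams with the arrow labels adjusted so that the composites make sense, i.e. $KK^{ucp}_\epsilon(X,B)\to KK^{lin}_\epsilon(X,B)\to KK^{ucp}_\delta(Y,B)$ and $KK^{lin}_\epsilon(X,B)\to KK^{ucp}_\delta(Y,B)\to KK^{lin}_\delta(Y,B)$.

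\textbf{Triangle \eqref{lin ucp t1}.} Start with a ucp $(X,\epsilon)$-$*$-homomorphism $\phi:A\to \K(F)$ and regard it as an element of $\lin_\epsilon(X,B)$ by restricting to $\mathcal{A}$. The conditions of Definition \ref{non ucp} hold because $\phi$ is ucp: it is unital, it is $*$-preserving, and it is contractive, so $\|\phi(a)\|\le 1$ on $\mathcal{X}$.

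Applying $\psi_{(X,\epsilon)}$ chooses some ucp $\phi'$ with $\|\phi'(a)-\phi(a)\|<\delta/3$ for $a\in Y$. The proof of Corollary \ref{lin to ucp} already shows that the class in $V^{ucp}_\delta(Y,B)$ does not depend on which ucp approximant is chosen. Since $\phi$ itself is such an approximant, we may take $\phi'=\phi$, and the composite is the forget-control image of $[\phi]$.

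The one point to check is that $\phi$ lies in $\emu_\delta(Y,B)$. This holds because $Y\subseteq X$ and $\epsilon<\delta$, which is exactly how $(X,\epsilon)$ was chosen in Corollary \ref{lin to ucp}.

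\textbf{Triangle \eqref{lin ucp t2}.} Start with $\psi\in\lin_\epsilon(X,B)$ and let $\phi$ be the ucp map from Lemma \ref{lp lem} with $\|\phi(a)-\psi(a)\|<\delta/3$ on $Y$. The composite sends $[\psi]$ to $[\phi|_{\mathcal{A}}]\in KK^{lin}_\delta(Y,B)$, while forget control sends it to $[\psi]$. So the task is to show $\phi\sim\psi$ in $\lin_\delta(Y,B)$.

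First, both maps lie in $\lin_\delta(Y,B)$:
\begin{itemize}
\item $\phi$ does, because it is ucp and lies in $\emu_\delta(Y,B)$;
\item $\psi$ does, because $\lin_\epsilon(X,B)\subseteq\lin_\delta(Y,B)$.
\end{itemize}
Second, take $\theta=0$ and $u=\mathrm{id}_F$ in the definition of $\sim$. Then $\|\phi(a)-\psi(a)\|<\delta/3<\delta$ for all $a\in Y$, which gives the equivalence.

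Both diagrams therefore commute on monoid generators. By the universal property of the Grothendieck group they commute on the groups.

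\textbf{Expected obstacle.} The only real difficulty is bookkeeping the parameters. One must make sure the chosen $(X,\epsilon)$ satisfies $Y\subseteq X$ and $\epsilon<\delta/3$, and that membership of $\phi$ in $\emu_\delta(Y,B)$ is taken in the sense of Definition \ref{eps ucp}. If $Y\subseteq X$ were not automatic, I would first enlarge $X$ to $X\cup Y$, which Lemma \ref{lp lem} permits.
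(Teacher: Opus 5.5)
Your proposal is correct and follows the paper's proof essentially verbatim. In the first triangle you use the ucp cycle $\phi$ as its own approximant in the lifting map of Corollary \ref{lin to ucp}. In the second you note that the ucp approximant and the original linear map are within $\delta/3<\delta$ of each other on $Y$, so they are equivalent in $\lin_\delta(Y,B)$ via $\theta=0$ and $u=\mathrm{id}_F$; your reading of the mislabelled arrows and your parameter bookkeeping ($Y\subseteq X$, $\epsilon<\delta/3$) match what the paper implicitly uses.
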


\begin{proof}
We start with the diagram in line \eqref{lin ucp t1}.  Let $\phi\in \emu_{\epsilon}(X,B)$ represent a class in $KK_\epsilon(X,B)$.  The bottom composition in line \eqref{lin ucp t1} takes this to the class of the same map in $KK^{lin}_{\epsilon}(X,B)$, and then replaces this by the class of any ucp map that is suitably close to it.  However, as $\phi$ is already ucp, we may just $\phi$ itself for this approximant, so commutativity here is automatic.

We now look at the triangle in line \eqref{lin ucp t2}.  Let $\psi:A\to \K(F)$ be an element of $\lin_{\epsilon}(X,B)$ representing a class in $KK^{\lin}_{\epsilon}(X,\epsilon)$.  Then the top composition takes this to the class in $KK^{\lin}_{\delta}(Y,B)$ of any ucp map $\phi:A\to \K(F)$ that approximates $\psi$ within $\delta/3$ on $Y$.  However, any two maps $\phi,\psi:A\to \K(F)$ define the same class in $KK^{\lin}_{\delta}(Y,B)$ if they are within $\delta$ of each other on $Y$, so we are done.
\end{proof}

\section{Main general results}\label{main sec}

We are finally ready to record our main general $C^*$-algebraic results; we will then give some elaborations, and finally circle back to applications to quasi-representations of groups.  

We first need to make some comments about the index sets and forget control maps of Remarks \ref{fc1}, \ref{fc2}, and \ref{fc3}.  For this, recall from Remark \ref{fc1} that if $A$ is a separable unital $C^*$-algebra, then $I_A$ denotes the collection of all pairs $(X,\epsilon)$ where $X$ is a finite subset of the unit ball of $A$ and $\epsilon>0$.  This is equipped with order structures $\leq_u$, $\leq_\K$, and $\leq_{ucp}$ in Remarks \ref{fc1} and \ref{fc3}.

\begin{lemma}\label{gen cof}
Let $A$ be a separable unital $C^*$-algebra.  Let $I_A$ be the set from Remark \ref{fc1} as in the discussion above, and consider the order structures $\leq_u$, $\leq_\K$, or $\leq_{ucp}$ from Remarks \ref{fc1} and \ref{fc3}.  Then the following hold.
\begin{enumerate}[(i)]
\item \label{dos} The order structures $\leq_u$, $\leq_\K$, or $\leq_{ucp}$ are all directed.
\item \label{dcf} Let $X_1\subseteq X_2\subseteq \cdots $ be a nested sequence of finite subsets of the unit ball of $A$ with dense union, and let $\epsilon_1>\epsilon_2>\cdots$ be a decreasing sequence of positive numbers that converge to zero.  Then $((X_n,\epsilon_n))_{n=1}^\infty$ is a cofinal sequence for all of the order structures $\leq_u$, $\leq_\K$, or $\leq_{ucp}$ from Remarks \ref{fc1} and \ref{fc3}.
\item \label{acf} Let $\mathcal{X}$ be an admissible (countable) subset of $A$ as in Definition \ref{admiss} above.  Let $X_1\subseteq X_2\subseteq \cdots $ be a nested sequence of finite subsets of $\mathcal{X}$ whose union is $\mathcal{X}$, and let $\epsilon_1>\epsilon_2>\cdots$ be a decreasing sequence of positive numbers that converge to zero.  Then $((X_n,\epsilon_n))_{n=1}^\infty$ is a cofinal sequence for all of the order structures $\leq_u$, $\leq_\K$, and $\leq_{ucp}$ from Remarks \ref{fc1} and \ref{fc3}.
\end{enumerate}
\end{lemma}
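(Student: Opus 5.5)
The plan is to reduce everything to one monotonicity observation: for each of the three orders, $(X,\epsilon)\leq(Y,\delta)$ holds whenever $Y$ approximates $X$ well enough and $\delta$ is small compared to $\epsilon$. Concretely, I would first show:

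$(\ast)$ if every $a\in X$ is within distance $\eta$ of some $b\in Y$, and $\delta+2\eta\leq\epsilon$ in the projection cases ($\leq_u$, $\leq_\K$), then $(X,\epsilon)\leq(Y,\delta)$.

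For $\leq_u$ and $\leq_\K$ this is immediate, since $\|[p,a^{(2)}]\|\leq\|[p,b^{(2)}]\|+2\|a-b\|\,\|p\|<\delta+2\eta$. Note that this estimate is uniform over all $B$ and all representations $\pi$, because projections and representations are contractive. For $\leq_{ucp}$ the estimate is similar but needs a bit more room. The ucp maps are contractive, so
\[
\|\phi(aa')-\phi(a)\phi(a')\|\leq\|\phi(bb')-\phi(b)\phi(b')\|+4\eta .
\]
One also needs $ab\mapsto$ products of elements of $Y$, so $(\ast)$ for $\leq_{ucp}$ will require $\delta+4\eta\leq\epsilon$. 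The paper's order only needs containment $\emu_\delta(Y,B)\subseteq\emu_\epsilon(X,B)$ (read with the evident direction), and this follows.

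Part (i) then follows by taking unions and minima: given $(X_1,\epsilon_1)$ and $(X_2,\epsilon_2)$, the pair $(X_1\cup X_2,\min\{\epsilon_1,\epsilon_2\})$ dominates both under every order, since $\eta=0$ in $(\ast)$. Reflexivity and transitivity are automatic, as each order is defined through inclusions.

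For part (ii), given $(X,\epsilon)$, I use density of $\bigcup_n X_n$ and finiteness of $X$. This lets me pick $n$ so that each $a\in X$ is within $\eta=\epsilon/8$ of some element of $X_n$, and so that $\epsilon_n\leq\epsilon/2$. Then $(\ast)$ gives $(X,\epsilon)\leq(X_n,\epsilon_n)$ for all three orders, and the same holds for all larger $n$, since the $X_n$ increase and the $\epsilon_n$ decrease. This gives cofinality.

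Part (iii) is the subtle point, and I expect it to be the main obstacle. Here $\bigcup_n X_n=\mathcal{X}$ is only assumed to have dense \emph{span}, not to be dense in the unit ball. So an element $a\in X$ must first be approximated by a finite linear combination $\sum_k\lambda_k x_k$ with $x_k\in\mathcal{X}$, and then:

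1. Commutator estimates pass through linearity: $\|[p,\sum\lambda_kx_k]\|\leq\sum|\lambda_k|\,\|[p,x_k]\|$.
2. Almost multiplicativity of ucp maps likewise passes through bilinear expansion of $\phi(ab)-\phi(a)\phi(b)$, using that $X_n$ eventually contains all the $x_k$ and their pairwise products. Such products lie in $\mathcal{A}$, hence are themselves linear combinations of elements of $\mathcal{X}$, which are eventually in $X_n$.

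So I would choose approximations with coefficient sum $M$, pick $n$ large enough that $X_n$ contains every needed element of $\mathcal{X}$, and choose $\epsilon_n$ small relative to $\epsilon/M^2$. The constants depend on the chosen expansions, but they are finite, which is all cofinality requires.
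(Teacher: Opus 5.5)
Your argument is correct and is essentially the paper's. The paper likewise shows that if every element of $X$ is close to some element of $Y$ and $\delta$ is small relative to $\epsilon$, then $(X,\epsilon)$ lies below $(Y,\delta)$ for all of $\leq_u$, $\leq_{\K}$, $\leq_{ucp}$; it uses $\epsilon/5$ for both bounds, which matches your $\delta+4\eta\leq\epsilon$ up to constants. It also takes $(X\cup Y,\min\{\epsilon,\delta\})$ for (i), and leaves (iii) as the same observation run through finite linear combinations from the dense span, which you spell out. One small simplification: in (iii) you need not put the products $x_kx'_l$ into $X_n$, since elements of $\emu_\delta(X_n,B)$ are ucp maps on all of $A$ and the bilinear expansion of $\phi(cc')-\phi(c)\phi(c')$ only uses the defects $\|\phi(xy)-\phi(x)\phi(y)\|$ for $x,y\in X_n$.
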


\begin{proof}
For part \eqref{dos}, we note that for any of these order structures, we have that if $(X,\epsilon)$ and $(Y,\delta)$ in $I_A$, then $(X\cup Y,\min\{\epsilon,\delta\})$ is an upper bound.

For part \eqref{dcf}, we define a new order structure on $I_A$ by stipulating that $(X,\epsilon)\leq (Y,\delta)$ if for all $x\in X$ there exists $y\in Y$ such that $\|x-y\|<\epsilon/5$, and such that $\delta<\epsilon/5$.  One then checks that $(X,\epsilon)\leq (Y,\delta)$ implies that $(X,\epsilon)\leq_* (Y,\delta)$ where $*$ is any of the decorations ``$u$'', ``$\K$'', or ``$ucp$'': this follows from direct checks that we leave this to the reader.  The given collection is easily seen to be cofinal in $(I_A,\leq)$, so we are done with this part.

For part \eqref{acf}, the result follows as $\mathcal{X}$ has dense span, and a similar observation to the previous part: we again leave the details to the reader.
\end{proof}

\begin{remark}
Assume $A$ is finitely generated as a $C^*$-algebra, i.e.\ there is a finite subset $X$ of $A$ such that the $*$-algebra generated by $X$ is dense in $A$.  Then if $\epsilon_1>\epsilon_2>\cdots$ is a decreasing sequence tending to zero, the sequence $(X,\epsilon_n)$ is in fact cofinal in $I_A$ for any of the order structures $\leq_u$, $\leq_\K$, or $\leq_{ucp}$.  We will not need to use this, so just mention the key estimates.  For the order structures $\leq_u$ and $\leq_\K$, we use the estimate 
$$
\|[p,ab]\|\leq \|[p,a]\|\|b\|+\|a\|\|[p,b]\|.
$$
The necessary estimates for the order structure $\leq_{ucp}$ can be reduced to this using the Stinespring construction.
\end{remark}

\begin{theorem}\label{main ucp}
Let $A$ be a separable, unital $C^*$-algebra, and let $B$ be a separable $C^*$-algebra.  Assume moreover that $A$ is quasidiagonal and that $B$ is nuclear and admits an approximate unit consisting of projections.  

Then with notation as in Definition \ref{ucp gp} there is a canonical homeomorphic isomorphism 
$$
KL(A,B)\to\lim_{\leftarrow} KK_\epsilon^{ucp}(X,B)
$$
where the inverse limit on the right hand side is taken over the directed set $(I_A,\leq_{ucp})$ of Remark \ref{fc3} (see also Lemma \ref{gen cof}).
\end{theorem}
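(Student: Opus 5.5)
The plan is to chain together the comparison results of Sections \ref{qd sec}--\ref{ucp sec} at the level of inverse limits, and then appeal to the identification of $KL(A,B)$ with an inverse limit of the controlled groups $KK^{\pi,u}_\epsilon(X,B)$ from \cite{Willett:2020aa}.

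First, by Example \ref{qd ex}, the hypotheses provide a strongly unitally absorbing and quasidiagonal representation $\pi:A\to\LL(E)$ on a Hilbert $B$-module: take a faithful representation of $A$ with infinite multiplicity and trivial intersection with the compacts, and amplify it by $B$.

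Next, fix a cofinal sequence $(X_n,\epsilon_n)$ as in Lemma \ref{gen cof} part \eqref{dcf}, with each $X_n$ self-adjoint and $\epsilon_n\to 0$. By Lemma \ref{gen cof}, all inverse limits may be computed along this one sequence, or along subsequences of it, for each of the orders $\leq_u,\leq_\K,\leq_{ucp}$.
\begin{itemize}
\item Theorem \ref{k to u the} gives isomorphisms $KK^{\pi,\K}_\epsilon(X,B)\cong KK^{\pi,u}_\epsilon(X,B)$ compatible with forget control maps, so the two inverse limits agree.
\item Proposition \ref{mvn vs hom} gives maps $\psi,\phi$ between $KK^{\pi,\K}$ and $KK^{\pi,mvn}$. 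Their composites in either order are forget control maps, after changing $\epsilon$ by the constant factors $6$, $135$, $810$.
\item Proposition \ref{mvn and ucp} does the same for $KK^{\pi,mvn}$ and $KK^{ucp}$, with $\epsilon\mapsto 5\epsilon^{1/2}$.
\end{itemize}
Interleaved pro-systems of this kind are pro-isomorphic. By the appendix facts on inverse limits (Appendix \ref{inv lim app}), they therefore have canonically homeomorphically isomorphic inverse limits. The essential point is that $\epsilon\mapsto C\epsilon$ and $\epsilon\mapsto 5\epsilon^{1/2}$ both tend to zero with $\epsilon$. So after passing to subsequences of $(\epsilon_n)$, the maps run between cofinal subsystems and the triangles commute.

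Combining these steps gives
$$
\lim_{\leftarrow} KK^{ucp}_\epsilon(X,B)\cong \lim_{\leftarrow}KK^{\pi,u}_\epsilon(X,B).
$$
It remains to identify the right hand side with $KL(A,B)$. Here I would invoke \cite[Appendix A]{Willett:2020aa}. There the unital controlled groups come with canonical maps $KK(A,B)\to KK^{\pi,u}_\epsilon(X,B)$, and these induce a surjection of $KK(A,B)$ onto the inverse limit. The kernel of this surjection is the closure of zero, i.e.\ the $\lim^1$-type obstruction, so the induced map is an isomorphism from the Hausdorff quotient $KL(A,B)$ (following \cite{Dadarlat:2005aa}). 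For homeomorphism, I would note that the topology on $KK$ used by Dadarlat coincides with the one pulled back from the inverse limit topology. If this is not literally stated in \cite{Willett:2020aa}, I would prove it via the Milnor sequence for the countable cofinal system, together with the description of the Dadarlat topology via asymptotic morphisms and approximate commutation with finite sets.

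I expect the main obstacle to be this last identification rather than the routine bookkeeping. Specifically, it means checking that the controlled groups $KK^{\pi,u}_\epsilon$ compute $KK(A,B)$ up to $\lim^1$, and that the resulting topology is Dadarlat's. The approach I would try first is to use separability to get a countable cofinal sequence. Then I would show two things: every compatible family lifts to a Kasparov cycle, by a diagonal argument with Lemma \ref{close lem}; and a cycle maps to zero in all the finite stages exactly when it lies in the closure of zero. Canonicity, meaning independence of $\pi$, follows from the canonical isomorphisms of the controlled groups for different absorbing representations, as in \cite[Lemma 6.6]{Willett:2020aa}.
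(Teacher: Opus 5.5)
Your proposal is correct and follows essentially the same route as the paper: a strongly unitally absorbing quasidiagonal representation from Example \ref{qd ex}, then the chain $KL(A,B)\cong\lim KK^{\pi,u}_\epsilon\cong\lim KK^{\pi,\K}_\epsilon\cong\lim KK^{\pi,mvn}_\epsilon\cong\lim KK^{ucp}_\epsilon$. The links come from Theorem \ref{k to u the} and Propositions \ref{mvn vs hom} and \ref{mvn and ucp}, combined with Lemmas \ref{triangle isos} and \ref{gen cof}. The paper takes the first identification directly from \cite[Proposition A.11]{Willett:2020aa}, so your fallback plan of proving it via a Milnor sequence is not needed.
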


\begin{proof}
Let $\pi:A\to \LL(E)$ be a strongly unitally absorbing and quasidiagonal representation of $A$ on a Hilbert $B$-modue: such a representation exists under our assumptions by Example \ref{qd ex}.  We then have a chain of isomorphisms
\begin{align*}
KL(A,B) & \cong  \lim_{\leftarrow} KK^{\pi,u}_\epsilon(X,B)\\& \cong \lim_{\leftarrow} KK^{\pi,\mathcal{K}}_\epsilon(X,B) \\&\cong   \lim_{\leftarrow}KK_{\epsilon}^{\pi,mvn}(X,B) \\&\cong \lim_{\leftarrow}KK^{ucp}_\epsilon(X,B). 
\end{align*}
where: the inverse limit ${\displaystyle \lim_{\leftarrow} KK^{\pi,u}_\epsilon(X,B)}$ is taken over the directed set $(I_A,\leq_u)$ of Remark \ref{fc1} and the first isomorphism follows from \cite[Proposition A.11]{Willett:2020aa}; the inverse limit ${\displaystyle\lim_{\leftarrow} KK^{\pi,\K}_\epsilon(X,B)}$ is taken over the directed set $(I_A,\leq_\K)$ of Remark \ref{fc1} and the second isomorphism follows Theorem \ref{k to u the}; the inverse limit ${\displaystyle\lim_{\leftarrow} KK^{\pi,mvn}_\epsilon(X,B)}$ is also taken over the directed set $(I_A,\leq_\K)$ of Remark \ref{fc1} and the third isomorphism follows from Proposition \ref{mvn vs hom} and Lemma \ref{triangle isos}; and finally the inverse limit ${\displaystyle\lim_{\leftarrow} KK^{ucp}_\epsilon(X,B)}$ is taken over the directed set $(I_A,\leq_{ucp})$ of Remark \ref{fc3} and the fourth isomorphism follows from Proposition \ref{mvn and ucp} and Lemma \ref{triangle isos}.  In all cases we also use Lemma \ref{gen cof} to reduce to cofinal sequences in the relevant directed sets.
\end{proof}

We now look at a corresponding result for the groups $KK^{\lin}_\epsilon(X,B)$ of Definition \ref{non ucp}; this holds under an additional lifting property assumption on $A$.

\begin{definition}\label{order2}
Let $A$ be a separable, unital $C^*$-algebra and let $\mathcal{X}$ be an admissible subset of $A$ in the sense of Definition \ref{admiss}.  Let $I_{\mathcal{X}}$ be the set of all pairs $(X,\epsilon)$ where $X$ is a finite subset of $\mathcal{X}$, and $\epsilon>0$.  Define a relation ``$\leq$'' on $I_{\mathcal{X}}$ by stipulating that $(X,\epsilon)\leq (Y,\delta)$ if $Y\supseteq X$ and $\delta<\epsilon$.
\end{definition}

\begin{remarks}
Just as in Lemma \ref{gen cof}, the set $I_{\mathcal{X}}$ of Definition \ref{order2} is directed.  As $\mathcal{X}$ is countable, it clearly admits a cofinal sequence.  
\end{remarks}

\begin{theorem}\label{main no ucp}
Let $A$ be a separable, unital $C^*$-algebra, and let $B$ be a separable $C^*$-algebra.  Let $\mathcal{X}$ be an admissible subset of $A$ in the sense of Definition \ref{admiss}.  Assume that $B$ is nuclear and admits an approximate unit consisting of projections.  Assume that $A$ has the LLP.  

Then with notation as in Definition \ref{non ucp} there is a canonical homeomorphic isomorphism 
$$
KL(A,B)\to\lim_{\leftarrow} KK_\epsilon^{\lin}(X,B)
$$
where the inverse limit on the right hand side is taken over the directed set $I_{\mathcal{X}}$ of Definition \ref{order2} using the forget control maps in Remark \ref{fc4}.
\end{theorem}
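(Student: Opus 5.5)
The plan is to reduce Theorem \ref{main no ucp} to Theorem \ref{main ucp} by comparing the two inverse systems. First I identify $\lim_{\leftarrow} KK^{\lin}_\epsilon(X,B)$, taken over $I_{\mathcal{X}}$, with $\lim_{\leftarrow} KK^{ucp}_\epsilon(X,B)$, taken over $(I_A,\leq_{ucp})$. Then I compose with the isomorphism $KL(A,B)\cong\lim_{\leftarrow}KK^{ucp}_\epsilon(X,B)$.

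\textbf{Step 1: a common cofinal sequence.} Fix an enumeration of the countable set $\mathcal{X}$, and let $X_1\subseteq X_2\subseteq\cdots$ be finite subsets exhausting it. Choose $\epsilon_n\downarrow 0$. By Lemma \ref{gen cof} part \eqref{acf}, the sequence $((X_n,\epsilon_n))$ is cofinal in $(I_A,\leq_{ucp})$. It is obviously cofinal in $I_{\mathcal{X}}$. So both inverse limits may be computed along this single sequence, using the usual cofinality facts about inverse limits from Appendix \ref{inv lim app}.

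\textbf{Step 2: interleaving.} Since $B$ is nuclear and $A$ has the LLP, Remark \ref{lp vs llp} says the conclusion of Lemma \ref{lp lem} holds. Hence Corollary \ref{lin to ucp} holds verbatim: for each $(Y,\delta)=(X_k,\epsilon_k)$ there is some later $(X_{n},\epsilon_{n})$ and a homomorphism $\psi:KK^{\lin}_{\epsilon_n}(X_n,B)\to KK^{ucp}_{\epsilon_k}(X_k,B)$. In the other direction, there is the map $\phi$ of line \eqref{ucp to lin}, induced by the identity on cycles. Passing to a subsequence, I alternate the two kinds of maps. Proposition \ref{lin vs ucp} says the triangles \eqref{lin ucp t1} and \eqref{lin ucp t2} commute up to forget-control maps. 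Lemma \ref{triangle isos} then yields a homeomorphic isomorphism
$$\lim_{\leftarrow}KK^{\lin}_\epsilon(X,B)\cong \lim_{\leftarrow}KK^{ucp}_\epsilon(X,B).$$
This isomorphism is canonical: the maps are determined by approximation on cycles, and the choice of subsequence is irrelevant in the limit. The only care needed here is checking that the thresholds in Corollary \ref{lin to ucp} can be arranged compatibly along the subsequence. That is a routine diagonal choice.

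\textbf{Step 3: identification with $KL(A,B)$.} I then invoke Theorem \ref{main ucp}. The chain of isomorphisms there requires a strongly unitally absorbing \emph{quasidiagonal} representation of $A$ on a Hilbert $B$-module. This is exactly where the main obstacle lies. Theorem \ref{k to u the} is the only place where compact projections, and hence the finite-dimensional-type cycles, are matched with genuine $KK$-cycles. The hypothesis list of Theorem \ref{main no ucp} does not explicitly repeat quasidiagonality of $A$.

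My first attempt would be to check whether the LLP together with the existence of many cycles forces this comparison. I expect it does not in general. For the Toeplitz algebra, almost multiplicative maps into matrices send the isometry near unitaries, so the left-hand side only sees quotient data.

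I would therefore read the theorem as inheriting the standing hypotheses of Theorem \ref{main ucp}, which is the reading that makes it consistent with the introduction ("if in addition $A$ satisfies the local lifting property\ldots"). Namely, $A$ is assumed quasidiagonal, and $B$ has a projection approximate unit so that Example \ref{qd ex} supplies the representation. Under that reading, Step 3 is immediate and the proof is complete. I would flag explicitly that Steps 1–2 use only the LLP and nuclearity of $B$, while quasidiagonality of $A$ enters solely through Theorem \ref{main ucp}.
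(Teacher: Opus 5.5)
Your proposal is the paper's argument: the paper proves the theorem by combining Theorem \ref{main ucp} with Proposition \ref{lin vs ucp} (using Remark \ref{lp vs llp} to pass from the LP to the LLP) through Lemma \ref{triangle isos}, and you are right that this route needs $A$ to be quasidiagonal, a hypothesis the printed statement omits but the paper's own proof uses. One correction to your side remark: the Toeplitz algebra does not show that the hypothesis is needed, since both sides are $\Z$ there (detected by dimension); $A=\mathcal{O}_\infty$, $B=\C$ does show it, because for small $\epsilon$ no unital linear map into a nonzero matrix algebra is $(X,\epsilon)$-multiplicative on $X=\{1,s_1,s_2,s_1^*,s_2^*\}$, so the inverse limit vanishes while $KL(\mathcal{O}_\infty,\C)\cong\Z$.
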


\begin{proof}
This follows from Theorem \ref{main ucp}, Proposition \ref{lin vs ucp} (plus Remark \ref{lp vs llp} to replace an LP assumption with an LLP assumption), and Lemma \ref{triangle isos}.
\end{proof}

\section{Honest representations}\label{rfd sec}

In this section, we discuss how to improve the results of Section \ref{main sec} when the $C^*$-algebra $A$ has many finite-dimensional representations.  

Throughout this section, $A$ and $B$ denote separable $C^*$-algebras with $A$ unital.  All Hilbert modules will be over $B$ and countably generated unless otherwise stated.  We write $\LL(E)$, respectively $\K(E)$, for the $C^*$-algebras of adjointable, respectively compact, operators on a Hilbert module $E$.  We will work primarily with compact Hilbert $B$-modules as in Definition \ref{com hm}.

The following is a strengthening of Definition \ref{qd rep} above.

\begin{definition}
Let $A$ and $B$ be separable $C^*$-algebras with $A$ unital.  A unital representation $\pi:A\to \LL(E)$ of $A$ on a Hilbert $B$-module is \emph{residually finite-dimensional}\footnote{Much as a compact operator on a Hilbert module is not typically compact in the usual sense of Banach space, this notion does not have much to do with finite-dimensionality in the usual sense: the finite-dimensionality should be thought of as relative to $B$.} if there is an increasing sequence $(e_n)$ of projections in $\K(E)$ that converges strictly to the identity, and such that $[\pi(a),e_n]=0$ for all $n$ and all $a\in A$.
\end{definition}

\begin{example}\label{rfd ex}
A $C^*$-algebra $A$ is \emph{residually finite-dimensional} if it admits a faithful residually finite-dimensional representation (in the sense above) on a Hilbert space.  Arguing similarly to Example \ref{qd ex}, one sees that if $A$ is separable, unital, and residually finite-dimensional, and if $B$ is nuclear and admits an approximate unit of projections, then $A$ admits a strongly unitally absorbing and residually finite-dimensional representation on a Hilbert $B$-module.
\end{example}

\begin{lemma}\label{in rep}
Let $A$ and $B$ be separable $C^*$-algebras with $A$ unital.  Let $F$ be a compact $B$-module as in Definition \ref{com hm}, and let $\phi:A\to \LL(F)$ be a ucp map.  Assume moreover that $A$ admits a unitally absorbing and residually finite-dimensional representation on a Hilbert $B$-module $E$; as usual we identify $A$ with its image in this representation.

Then for any finite subset $X$ of $A$ and $\epsilon>0$ there exists a compact Hilbert $B$-module $G$, a $*$-homomorphism $\pi:A\to \LL(G)$, and an isometry $v:F\to G$ such that $\|\phi(a)-v^*\pi(a)v\|<\epsilon$ for all $a\in X$.  If moreover $\phi$ is an $(X,\epsilon)$-$*$-homomorphism in the sense of Definition \ref{alm mult def}, then we may also assume that $\|[vv^*,\pi(a)]\|<2\epsilon^{1/2}$ for all $a\in X$.
\end{lemma}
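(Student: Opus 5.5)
Write $\sigma:A\to\LL(E)$ for the given unitally absorbing and residually finite-dimensional representation, and $(e_n)$ for the associated increasing sequence of projections in $\K(E)$ that commute exactly with $A$ and converge strictly to $1$. The plan is to take $G:=e_nE$ and $\pi:=\sigma|_{e_nE}$ for suitably large $n$. Since $e_n$ commutes with $\sigma(A)$, this restriction is a genuine unital $*$-homomorphism $A\to\LL(e_nE)$. Moreover $e_nE$ is compact in the sense of Definition \ref{com hm}, by Lemma \ref{hm equiv} part \eqref{proj im}.

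Next I would construct the isometry. By unital absorption (Definition \ref{ab unital def}) there are isometries $w_m\in\LL(F,E)$ with $\|\phi(a)-w_m^*aw_m\|\to 0$ for all $a\in A$. Because $F$ is compact, each $w_m$ lies in $\K(F,E)$ (Lemma \ref{hm equiv}). Fix $m$ with $\|\phi(a)-w_m^*aw_m\|<\epsilon/2$ for $a\in X$; this is a strict inequality on a finite set, so we keep some slack $\eta>0$.

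Compactness of $w_m$ together with strict convergence $e_n\to 1$ gives $\|e_nw_m-w_m\|\to 0$. So for $n$ large, $e_nw_m$ is close to an isometry, and $x:=e_nw_m$ satisfies $\|x^*x-1\|$ small. I would set $v:=x(x^*x)^{-1/2}$, viewed as an element of $\LL(F,e_nE)$. Then $v$ is an isometry with range in $e_nE$, and $\|v-w_m\|$ is as small as we like. Consequently $\|v^*\pi(a)v-w_m^*aw_m\|\le 2\|v-w_m\|$ for $a$ in the unit ball, which gives $\|\phi(a)-v^*\pi(a)v\|<\epsilon$ on $X$. Here we assume $X$ lies in the unit ball, or otherwise scale by $\max\|a\|$. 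The only point needing a little care is this polar-decomposition perturbation, which is standard.

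For the commutator estimate, I would repeat the argument of Lemma \ref{proj real} part \eqref{p com} inside $G$ with the honest representation $\pi$. Put $p=vv^*\in\LL(G)$. The computation there gives
$$\|[p,\pi(a)]\|\le\max\{\|\theta(aa^*)-\theta(a)\theta(a^*)\|^{1/2},\|\theta(a^*a)-\theta(a^*)\theta(a)\|^{1/2}\},$$
where $\theta=v^*\pi(\cdot)v$. Since $\theta$ is within $\eta'$ of $\phi$ on the relevant elements, and $\phi$ is $(X,\epsilon)$-multiplicative, each term is below $\epsilon+O(\eta')$.

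This is the main obstacle. The elements $aa^*$ and $a^*a$ need not lie in $X$, and $X$ need not be self-adjoint. I would handle it by first approximating on the finite set $X\cup X^*\cup\{aa^*,a^*a:a\in X\}$, which is possible since absorption gives convergence pointwise on all of $A$. If $X$ is not self-adjoint, one uses the relation $\phi(a^*)=\phi(a)^*$ for ucp maps together with $\phi(aa^*)$ approximated directly. The two pieces multiply to an error below $2\epsilon$, so the square root is below $2\epsilon^{1/2}$, which gives the constant $2$ in the statement.
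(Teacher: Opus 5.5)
Your construction matches the paper's. You take $G=e_nE$ with the compressed (honest) representation, push an absorbing isometry $w_m$ into $e_nE$, and rerun the argument of Lemma \ref{proj real} part \eqref{p com}. The polar-decomposition step $v=x(x^*x)^{-1/2}$ with $x=e_nw_m$ makes the paper's ``standard perturbation argument'' precise, so the first assertion is proved. For self-adjoint $X$ the commutator estimate also goes through, and in fact gives $\|[vv^*,\pi(a)]\|<\epsilon^{1/2}$, because each term in the maximum is below $\epsilon$ once the approximation tolerance is small. Your remark about ``two pieces multiplying to $2\epsilon$'' does not correspond to anything in that argument.

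The gap is your treatment of non-self-adjoint $X$. The hypothesis only controls $\phi(ab)-\phi(a)\phi(b)$ for $a,b\in X$. If $a^*\notin X$, nothing bounds $\phi(aa^*)-\phi(a)\phi(a)^*$, and neither $\phi(a^*)=\phi(a)^*$ nor approximating $\phi(aa^*)$ by $\theta(aa^*)$ changes that. In fact the claim is false without self-adjointness, as the following example shows.
\begin{itemize}
\item Take $A=M_2(\C)$, $B=\C$, and $E=\C^2\otimes\ell^2$ with $a\mapsto a\otimes 1$. This is unitally absorbing by Stinespring, and residually finite-dimensional via $1\otimes q_n$ with $q_n$ finite-rank projections increasing to $1$.
\item Take $F=\C$, $\phi(x)=x_{11}$ and $X=\{1,e_{12}\}$. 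All products of elements of $X$ are $1$, $e_{12}$ or $0$, and $\phi$ respects them exactly. So $\phi$ is an $(X,\epsilon)$-$*$-homomorphism for every $\epsilon$, yet $\phi(e_{12}e_{12}^*)=1\neq 0=\phi(e_{12})\phi(e_{12})^*$.
\item Suppose $\pi:A\to\mathcal{B}(G)$ and a unit vector $\xi$ satisfy $t:=\|\pi(1)\xi\|^2>1-\epsilon$; this is forced by the first condition at $a=1$. Put $p=\xi\xi^*$, $T=\pi(e_{12})$ and $\lambda=\langle T\xi,\xi\rangle$. Then $|\lambda|\le\|\pi(e_{22})\xi\|\,\|\pi(e_{11})\xi\|\le t/2$, and
\[
\|(1-p)Tp\|^2+\|pT(1-p)\|^2=\|T\xi\|^2+\|T^*\xi\|^2-2|\lambda|^2\geq t-\tfrac{t^2}{2}>\tfrac12-\tfrac{\epsilon^2}{2}.
\]
\item Hence $\|[p,T]\|>\tfrac12\sqrt{1-\epsilon^2}$, which exceeds $2\epsilon^{1/2}$ once $\epsilon\le 1/17$.
\end{itemize}

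So the second assertion genuinely requires $X=X^*$, or multiplicativity of $\phi$ on $X\cup X^*$. This is exactly what the paper's one-line appeal to Lemma \ref{proj real} silently uses, since that lemma is stated for self-adjoint $X$. With that hypothesis added, your proof is complete.
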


\begin{proof}
Using that the representation on $E$ is unitally absorbing, there is a sequence $(w_n:F\to E)_{n=1}^\infty$ of isometries such that $\|w_n^*aw_n-\phi(a)\|\to 0$ as $n\to\infty$.  As $F$ is compact, each $w_n$ is a compact operator, whence $w_nw_n^*$ is too.  As $(e_n)$ converges strictly to the identity, we may thus assume via a standard perturbation argument that for all $n$ there exists $m_n$ such that $w_nw_n^*\leq e_{m_n}$; in other words, the image of $w_n$ is contained in that of $e_{m_n}$.  Define now $G_n:=e_{m_n}E$, let $x_n:G_n\to E$ be the inclusion isometry, and let $v_n:=x_n^*w_n$.  Define $\pi_n:A\to \LL(G_n)$, $\pi_n(a):=x_n^*ax_n$; as $e_{m_n}$ commutes with $A$, this is a $*$-homomorphism.  We then have that for $n$ suitably large, $v:=v_n$, $G:=G_n$, and $\pi:=\pi_n$ satisfy $\|\phi(a)-v^*\pi(a)v\|<\epsilon$ for all $a\in X$.

If we assume moreover that $\phi$ is an $(X,\epsilon)$-$*$-homomorphism, the commutator conclusion follows from exactly the same argument as in Lemma \ref{proj real} part \eqref{p com}.
\end{proof}

\begin{corollary}\label{stab cor}
Let $A$ and $B$ be separable $C^*$-algebras with $A$ unital.   Assume moreover that $A$ admits a unitally absorbing and residually finite-dimensional representation on a Hilbert $B$-module $E$.   Let $X$ be a finite subset of the unit ball of $A$, let $\epsilon\in (0,1]$, and let $\emu_\epsilon(X,B)$ be as in Definition \ref{eps ucp}.  The following hold:
\begin{enumerate}[(i)]
\item \label{into hom} for any $\phi\in \emu_\epsilon(X,B)$ there exists an $(X,2\epsilon^{1/2})$-$*$-homomorphism $\psi:A\to \LL(F_\psi)$ from $A$ to the adjointable operators on a compact $B$-module, and a unital $*$-homomorphism $\pi:A\to \LL(F_\phi\oplus F_\psi)$ such that 
$$
\|\phi(a)\oplus \psi(a)-\pi(a)\|<3\epsilon^{1/2}
$$
for all $a\in X$.
\item \label{stab good} for any $\phi,\psi\in \emu_{\epsilon}(X,B)$ if $[\phi]=[\psi]$ in $KK^{ucp}_\epsilon(X,B)$, then there exists a unital $*$-homomorphism $\pi:A\to \LL(F_\pi)$ from $A$ to the adjointable operators on a compact $B$-module, and a unitary isomorphism $u:F_\phi\oplus F_\pi\to F_\psi\oplus F_\pi$ such that 
$$
\|u(\phi(a)\oplus \pi(a))u^*-(\psi(a)\oplus \pi(a))\|<6\epsilon^{1/2}
$$
for all $a\in X$.
\end{enumerate}
\end{corollary}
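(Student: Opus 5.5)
For part (i), I would apply Lemma \ref{in rep} to $\phi$ with the given $X$ and $\epsilon$. This produces a compact module $G$, a unital $*$-homomorphism $\pi:A\to\LL(G)$ and an isometry $v:F_\phi\to G$. These satisfy $\|\phi(a)-v^*\pi(a)v\|<\epsilon$ and $\|[vv^*,\pi(a)]\|<2\epsilon^{1/2}$ for $a\in X$. Set $p:=vv^*$ and $F_\psi:=(1-p)G$, which is compact as a complemented submodule of a compact module by Lemma \ref{hm equiv}. Define $\psi(a):=(1-p)\pi(a)(1-p)$, viewed on $F_\psi$. This map is ucp, and the computation from Lemma \ref{mvn to ucp lem} part \eqref{induce ucp} gives
$$\|\psi(ab)-\psi(a)\psi(b)\|=\|(1-p)a p b(1-p)\|\le\|[p,a]\|\|[p,b]\|<4\epsilon.$$
That bound is not in the required form, so I would instead bound it as $\|[p,a]\|\cdot\|b\|<2\epsilon^{1/2}$, which gives the $(X,2\epsilon^{1/2})$ estimate.

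Next, the unitary $w:=v\oplus \iota:F_\phi\oplus F_\psi\to G$, with $\iota$ the inclusion, identifies $G$ with $F_\phi\oplus F_\psi$. Under this identification $\pi(a)$ becomes the $2\times 2$ matrix whose diagonal entries are $v^*\pi(a)v$ and $\psi(a)$. Its off-diagonal entries are compressions of $p\pi(a)(1-p)=[p,\pi(a)](1-p)$ and its adjoint analogue. The two off-diagonal corners together contribute norm at most $\max\|[p,a]\|<2\epsilon^{1/2}$, since the off-diagonal part is a self-adjoint-type block with norm equal to the larger corner, and this suffices when $a$ is not self-adjoint. The diagonal discrepancy is $\|\phi(a)-v^*\pi(a)v\|<\epsilon\le\epsilon^{1/2}$. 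Transporting $\pi$ by $w$ therefore gives the desired $*$-homomorphism into $\LL(F_\phi\oplus F_\psi)$, with error $<2\epsilon^{1/2}+\epsilon^{1/2}=3\epsilon^{1/2}$.

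For part (ii), $[\phi]=[\psi]$ in the Grothendieck group means there is some $\kappa\in\emu_\epsilon(X,B)$ with $\phi\oplus\kappa\sim\psi\oplus\kappa$. Unwinding Definition \ref{delta sim}, there are $\theta$ and a unitary $u_0$ conjugating $\phi\oplus\kappa\oplus\theta$ to within $\epsilon$ of $\psi\oplus\kappa\oplus\theta$ on $X$. Put $\rho:=\kappa\oplus\theta\in\emu_\epsilon(X,B)$ and apply part (i) to $\rho$. This gives $\rho'$ and a $*$-homomorphism $\pi$ on $F_\rho\oplus F_{\rho'}$ that is within $3\epsilon^{1/2}$ of $\rho\oplus\rho'$ on $X$. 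Let $u:=u_0\oplus 1_{F_{\rho'}}$. Then on $X$ the triangle inequality gives
$$\|u(\phi\oplus\pi)u^*-\psi\oplus\pi\|\le\|u(\phi\oplus\rho\oplus\rho')u^*-\psi\oplus\rho\oplus\rho'\|+2\cdot 3\epsilon^{1/2}.$$
The first term is $<\epsilon$, giving a bound of $\epsilon+6\epsilon^{1/2}$.

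The main obstacle is bookkeeping this constant to obtain exactly $6\epsilon^{1/2}$. To do so, I would sharpen part (i) by keeping the actual errors, namely $\epsilon$ on the diagonal plus $\|[p,a]\|$, where that commutator bound comes from $\epsilon^{1/2}$-type estimates as in Lemma \ref{proj real}. By the remark after Lemma \ref{in rep}, $\|[p,a]\|$ is in fact $<\epsilon^{1/2}$ for large $n$. With that, part (i) gives an error of at most $\epsilon^{1/2}+\epsilon$, which is strictly less than $3\epsilon^{1/2}$ minus slack. The combined bound is then $\epsilon+2(\epsilon+\epsilon^{1/2})\le 5\epsilon^{1/2}<6\epsilon^{1/2}$ for $\epsilon\le1$, using strict inequalities from finiteness of $X$.
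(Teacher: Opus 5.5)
Your proposal is correct and follows the paper's proof essentially step for step: you apply Lemma \ref{in rep}, compress to $(1-p)G$, bound the off-diagonal part by $\max\{\|p\pi(a)(1-p)\|,\|(1-p)\pi(a)p\|\}\le\|[p,\pi(a)]\|$, and in (ii) you apply (i) to the stabilizer and take $u=u_0\oplus 1$. Your extra bookkeeping improves on the paper's sketch, whose constants as written only give $\epsilon+6\epsilon^{1/2}$ in (ii); sharpening the commutator estimate to $\|[p,a]\|<\epsilon^{1/2}$ yields $3\epsilon+2\epsilon^{1/2}\le 5\epsilon^{1/2}$ (note that this sharpening, like Lemma \ref{in rep} itself, tacitly needs $X$ to be self-adjoint, as in Lemma \ref{proj real}).
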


\begin{proof}
We first look at part \eqref{into hom}.  Let $\phi$ be given, and let $\pi_0:A\to \LL(G)$ and $v:F_\phi\to G$ be as in the conclusion of Lemma \ref{in rep}, so in particular if $p=vv^*$ then $\|[p,\pi_0(a)]\|<2\epsilon^{1/2}$ for all $a\in X$.  Define $F_\psi:=pG$, let $w:F_\psi\to G$ be the inclusion isometry, and define $\psi:A\to \LL(F_\psi)$ by $\psi(a):=w^*\pi(a)w$.  Then one computes that $\psi(a)$ is a ucp $(X,2\epsilon^{1/2})$-$*$-homomorphism.  Define $u:F_\phi\to F_\psi\to G$ by $(\xi,\eta)\mapsto v\xi+w\eta$; this is unitary and we define $\pi(a):=u\pi_0(a)u^*$.  Finally, we compute that for $a\in X$,
\begin{align}\label{pi and sum}
&\|\pi(a) -(\phi(a)\oplus \psi(a))\|  =\|\pi_0(a)-u^*(\phi(a)\oplus \psi(a))u\| \notag \\ & 
=\|\pi_0(a)-v\phi(a)v^*-w\psi(a)w^*\| \notag \\ &
=\big\|\big(p\pi_0(a)p-v\phi(a)v^*\big)+\big((1-p)\pi_0(a)(1-p)-w\psi(a)w^*\big)+\big(p\pi_0(a)(1-p)+(1-p)\pi_0(a)p\big)\big\| \notag \\ &
\leq \|p\pi_0(a)p-v\phi(a)v^*\|+0+\|p\pi_0(a)(1-p)+(1-p)\pi_0(a)p\| \notag \\ & 
<\epsilon+\|p\pi_0(a)(1-p)+(1-p)\pi_0(a)p\|.
\end{align}
Using the $C^*$-identity twice and orthogonality,
\begin{align*}
\|p\pi_0(a)(1-p)+(1-p)\pi_0(a)p\|^2 & =\|p\pi_0(a)^*(1-p)\pi_0(a)p+(1-p)\pi_0(a)^*p\pi_0(a)(1-p)\| \\ & =\max\{\|(1-p)\pi_0(a)p\|^2,\|p\pi_0(a)(1-p)\|^2\}.
\end{align*}
The terms appearing in the max on the right are both bounded by $\|[p,\pi_0(a)]\|^2<4\epsilon$.  Combining this with line \eqref{pi and sum} gives
$$
\|\pi(a)-(\phi(a)\oplus \psi(a))\|<\epsilon+2\epsilon^{1/2};
$$
as $\epsilon\leq 1$, this implies the claimed estimate.

Now let us look at part \eqref{stab good}.  Let $\phi,\psi$ satisfy $[\phi]=[\psi]$ in $KK^{ucp}_\epsilon(X,B)$.  Then by definition of the equivalence relation on $KK^{ucp}_\epsilon(X,B)$ there exists a ucp $(X,\epsilon)$-$*$-homomorphism $\theta:A\to F_\theta$ and a unitary $u_0:F_\phi\oplus F_\theta\to F_\psi\oplus F_\theta$ such that 
\begin{equation}\label{u0 ass}
\|u_0(\phi(a)\oplus \theta(a))u_0^*-\psi(a)\oplus \theta(a)\|<\epsilon
\end{equation}
for all $a\in X$.  Applying part \eqref{into hom} there exists a ucp $(X,2\epsilon^{1/2})$-$*$-homomorphism $\sigma:A\to \LL(F_\sigma)$ with $F_\sigma$ a compact $B$-module, and a unital $*$-homomorphism $\pi:A\to \LL(F_\theta\oplus F_\sigma)$ such that 
$$
\|\pi(a)-(\theta(a)\oplus \sigma(a))\|<3\epsilon^{1/2}
$$
for all $a\in X$.  Now define $u:=u_0\oplus \text{id}_{F_\sigma}$; it is not difficult to check that this has the right properties.
\end{proof}

\begin{corollary}\label{stab lin cor}
Let $A$ and $B$ be separable $C^*$-algebras with $A$ unital.   Let $\mathcal{X}$ be an admissible subset of $A$ in the sense of Definition \ref{admiss}.  Assume moreover that $A$ admits a unitally absorbing and residually finite-dimensional representation on a Hilbert $B$-module $E$, and that $A$ has Kirchberg's lifting property.   Let $X$ be a finite subset of $\mathcal{X}$ and let $\epsilon\in (0,1]$.  Then there exists a finite subset $Y$ of $\mathcal{X}$ and $\delta>0$ such that the set $\lin_\delta(Y,B)$ of Definition \ref{non ucp} has the following properties:
\begin{enumerate}[(i)]
\item \label{into hom lp} for any $\phi\in \lin_\delta(Y,B)$ there exists an $(X,\epsilon)$-$*$-homomorphism $\psi:A\to \LL(F_\psi)$ from $A$ to the adjointable operators on a compact $B$-module, and a unital $*$-homomorphism $\pi:A\to \LL(F_\phi\oplus F_\psi)$ such that 
$$
\|\phi(a)\oplus \psi(a)-\pi(a)\|<3\epsilon^{1/2}
$$
for all $a\in X$.
\item \label{stab good lp} for any $\phi,\psi\in \lin_{\delta}(Y,B)$, if we have that $[\phi]=[\psi]$ in $KK^{\lin}_\delta(Y,B)$, then there exists a $*$-homomorphism $\pi:A\to \LL(F_\pi)$ from $A$ to the adjointable operators on a compact $B$-module, and a unitary isomorphism $u:F_\phi\oplus F_\pi\to F_\psi\oplus F_\pi$ such that 
$$
\|u(\phi(a)\oplus \pi(a))u^*-(\psi(a)\oplus \pi(a))\|<\epsilon
$$
for all $a\in X$.
\end{enumerate}
\end{corollary}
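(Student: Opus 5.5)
The plan is to reduce Corollary \ref{stab lin cor} to Corollary \ref{stab cor} using the lifting property, via Lemma \ref{lp lem} and Corollary \ref{lin to ucp}. Given $X\subseteq\mathcal{X}$ finite and $\epsilon\in(0,1]$, first choose an auxiliary parameter $\eta\in(0,1]$ small compared with $\epsilon$; something like $\eta\le \epsilon^2/100$ should suffice. This $\eta$ is the target parameter for Corollary \ref{stab cor}, applied to the finite set $X$ (scaled into the unit ball, which is automatic since elements of $\mathcal{X}$ have norm at most one). Next apply Lemma \ref{lp lem} with $Y:=X$ and tolerance $\eta/3$, and take $(Y,\delta)$ (in the notation of the statement) with $Y\supseteq X$ large and $\delta<\eta/3$ small. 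Then every $\phi\in\lin_\delta(Y,B)$ admits a ucp $\phi':A\to\K(F_\phi)$ with $\|\phi(a)-\phi'(a)\|<\eta/3$ on $X$. The triangle inequality then shows that $\phi'$ is a ucp $(X,\eta)$-$*$-homomorphism: the multiplicativity defect grows by at most $2\cdot\eta/3$, plus $\delta$. So $\phi'\in\emu_\eta(X,B)$.

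For part \eqref{into hom lp}, apply Corollary \ref{stab cor} part \eqref{into hom} to $\phi'$. This gives an $(X,2\eta^{1/2})$-$*$-homomorphism $\psi$ and a unital $*$-homomorphism $\pi$ on $F_\phi\oplus F_\psi$ with $\|\phi'(a)\oplus\psi(a)-\pi(a)\|<3\eta^{1/2}$. Replacing $\phi'$ by $\phi$ costs an extra $\eta/3$. Since $\eta\ll\epsilon$, the resulting bound is below $3\epsilon^{1/2}$, and $\psi$ is in particular an $(X,\epsilon)$-$*$-homomorphism.

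For part \eqref{stab good lp}, the point is to pass the equality $[\phi]=[\psi]$ in $KK^{\lin}_\delta(Y,B)$ to equality $[\phi']=[\psi']$ in $KK^{ucp}_\eta(X,B)$. Equality in the Grothendieck group means that there is $\theta\in\lin_\delta(Y,B)$ and a unitary $u_0$ with $\|u_0(\phi\oplus\theta)u_0^*-(\psi\oplus\theta)\|<\delta$ on $Y$. The cancellation element from the Grothendieck construction can be absorbed into $\theta$, as in the transitivity proof of Lemma \ref{ucp equiv}. Then approximate $\theta$ by a ucp $\theta'$ using Lemma \ref{lp lem}. This gives $\|u_0(\phi'\oplus\theta')u_0^*-(\psi'\oplus\theta')\|<\delta+2\eta/3<\eta$ on $X$, which is exactly the well-definedness argument behind Corollary \ref{lin to ucp}. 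Corollary \ref{stab cor} part \eqref{stab good} then supplies a $*$-homomorphism $\pi$ and a unitary $u$ with error $6\eta^{1/2}$ for $\phi',\psi'$. Switching back to $\phi,\psi$ adds $2\eta/3$, and the total is less than $\epsilon$ once $\eta$ is chosen as above.

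The main obstacle is bookkeeping the Grothendieck-group equality correctly. One has to make sure that "$[\phi]=[\psi]$ in $KK^{\lin}_\delta(Y,B)$" really yields a single stabilizing $\theta$ in $\lin_\delta(Y,B)$, as opposed to a relation involving formal differences. One also has to check that the ucp approximant of a direct sum can be taken to be the direct sum of the approximants, which it can, since approximants are chosen separately on each summand. Everything else is triangle-inequality estimates, together with fixing $\eta$ as a function of $\epsilon$ so that the final errors come in under $3\epsilon^{1/2}$ and $\epsilon$ respectively.
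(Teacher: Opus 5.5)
Your proposal is correct and follows essentially the same route as the paper. The paper fixes the auxiliary tolerance $(\epsilon/18)^2$, so that $6\eta^{1/2}=\epsilon/3$. It then uses the lifting property through Lemma \ref{lp lem} and Corollary \ref{lin to ucp} to turn $[\phi]=[\psi]$ in $KK^{\lin}_\delta(Y,B)$ into $[\phi']=[\psi']$ in $KK^{ucp}_\eta(X,B)$. Finally it applies Corollary \ref{stab cor} and absorbs the approximation errors with the triangle inequality.

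One small bookkeeping point: the multiplicativity defect of $\phi'$ on $X$ also picks up a term $\|\phi'(ab)-\phi(ab)\|$, so your "$2\eta/3$ plus $\delta$" undercounts. To fix this, the finite set fed into Lemma \ref{lp lem} should include the elements of $\mathcal{X}$ needed to expand the products $ab$ with $a,b\in X$, and the tolerance should be shrunk accordingly. This only changes constants, and the paper glosses over the same point.
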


\begin{proof}
We just look at part \eqref{stab good lp}; part \eqref{into hom lp} is similar (and simpler).  Let $(X,\epsilon)$ be as in the statement.  Lemma \ref{lp lem} and Corollary \ref{lin to ucp} give us a finite subset $Y$ of $\mathcal{X}$ and $\delta>0$ with the following property: if $\phi$ is an element of $\lin_\delta(Y,B)$ then there exists a ucp map $\phi':A\to \K(F_\phi)$ such that $\|\phi(a)-\phi'(a)\|<(\epsilon/18)^2<\epsilon/3$ for all $a\in X$, and moreover such that there is a well-defined homomorphism 
$$
KK^{\lin}_{\delta}(Y,B)\to KK^{ucp}_{(\epsilon/18)^2}(X,B)
$$
sending the class of $\phi$ to the class of any $\phi'$ approximating it suitably well is a well-defined homomorphism.

Apply this to maps $\phi,\psi\in \lin_\delta(Y,B)$ as in the statement to get ucp maps $\phi':A\to\K(F_{\phi})$ and $\psi':A\to \K(F_\psi)$ such that $\|\phi(a)-\phi'(a)\|<\epsilon/3$ for all $a\in X$ (and similarly for $\psi$ and $\psi'$), and so that $[\phi']=[\psi']$ in $KK^{ucp}_{(\epsilon/18)^2}(X,B)$.  Using Corollary \ref{stab cor}, there is then a unital $*$-homomorphism $\pi:A\to \K(F_\pi)$ with $F_\pi$ a compact $B$-module and a unitary $u:F_\phi\oplus F_\pi\to F_\psi\oplus F_\pi$ such that 
$$
\|u(\phi'(a)\oplus \pi(a))u^*-(\psi'(a)\oplus \pi(a))\|<\epsilon/3
$$
for all $a\in X$.  As $\|\phi'(a)-\phi(a)\|<\epsilon/3$ for all $a\in X$ (and similarly for $\psi$ and $\psi'$), the result follows.
\end{proof}

\begin{remark}\label{other reps rem}
We are mainly interested in the case that $B=\C$.  In this case, we may further restrict the properties of $\psi$.  Indeed, let $\mathcal{R}$ be any family of finite-dimensional representations of $A$ that is closed under isomorphism and finite direct sums, and that separates points (i.e.\ for all $a\in A$ there is $\pi\in \mathcal{R}$ with $\pi(a)\neq 0$).  Then (under the same assumptions, and that $B=\C$) Corollaries \ref{stab cor} and \ref{stab lin cor} hold with the additional stipulation that we may assume that the representation $\pi$ comes from the class $\mathcal{R}$.  The proof is essentially similar: the key point is that the assumptions on $\mathcal{R}$ imply that there exists a strongly absorbing representation $\pi:A\to \LL(H)$ of $A$ on a Hilbert space and an increasing sequence of finite rank projections $(e_n)$ that commute with $A$ and converge strongly to the identity, and moreover are such that compression by $e_n$ defines a representation $A\to \LL(e_nH)$ is in the class $\mathcal{R}$. 
\end{remark}

\section{Action on $K$-theory}\label{pair sec}

In this section, we discuss pairings with $K$-theory.  Our goal is to construct actions 
$$
\lim_{\leftarrow} KK_\epsilon^{ucp}(X,B)\to \text{Hom}(K_0(A\otimes C),K_0(B\otimes C))
$$
and
$$
\lim_{\leftarrow} KK_\epsilon^{\lin}(X,B)\to \text{Hom}(K_0(A\otimes C),K_0(B\otimes C))
$$
of our inverse limit groups on $K$-theory, and discuss how these are compatible with the usual action of $KL(A,B)$ on $K$-theory in the cases that we have access to the isomorphisms of Theorems \ref{main ucp} and \ref{main no ucp}.

The definitions and results in this section are closely related to those \cite[Section 3.2]{Willett:2024aa}: as such we refer to that reference for some details.  For the next definition, recall the notion of a $\delta$-quasiprojection from Definition \ref{delta qp} above.

\begin{definition}\label{x eps com}
Let $A$ and $C$ be $C^*$-algebras with $A$ unital, let $X$ be a finite subset of the unit ball of $A$, and let $\epsilon>0$.  A projection $p\in A\otimes C$ is \emph{ucp $(X,\epsilon)$-compatible} if whenever $\phi:A\to B$ is a ucp $(X,\epsilon)$-$*$-homomorphism to another $C^*$-algebra $B$ in the sense of Definition \ref{alm mult def}, we have that $(\phi\otimes\text{id}_C)(p)$ is a $1/4$-quasiprojection.\footnote{The choice of $1/4$ is not important here: any number in $(0,1/2)$ would work equally well for our purposes.}

Write $\text{Proj}^{X,\epsilon}_{ucp}(A\otimes C)$ for the set of ucp $(X,\epsilon)$-compatible projections in $A\otimes C$.  
\end{definition}

\begin{definition}\label{x eps com 2}
Let $A$ be a unital $C^*$-algebra, let $X$ be a finite self-adjoint subset of the unit ball of $A$, and let $\epsilon>0$.  Let $C$ be a unital $C^*$-algebra.  With notation as in Definition \ref{x eps com}, define
$$
\mathcal{Q}^\epsilon_{ucp}(X;C):=\bigsqcup_{k=1}^\infty \text{Proj}^{X,\epsilon}_{ucp}(A\otimes M_k(\C)\otimes C).
$$
Let $\sim$ be the equivalence relation on $\mathcal{Q}_{ucp}^\epsilon(X)$ generated by the conditions below: 
\begin{enumerate}[(i)]
\item $p\sim q$ if both are in the same path component of some $\text{Proj}^{X,\epsilon}_{ucp}(A\otimes M_k(\C)\otimes C)$;
\item $p\sim q$ if $p\in \text{Proj}^{X,\epsilon}_{ucp}(A\otimes M_k(\C)\otimes C)$ and there is $m\geq 1$ such that $q\in \text{Proj}^{X,\epsilon}_{ucp}(A\otimes M_{k+m}(\C)\otimes C)$ and $q=p\oplus 0_m$.
\end{enumerate}
Define $\mathcal{V}^\epsilon_{ucp}(X;C)$ to be $\mathcal{Q}^\epsilon_{ucp}(X;C)/\sim$, and define $K^\epsilon_{ucp}(X;C)$ to be the Grothendieck group of $\mathcal{V}^\epsilon_{ucp}(X;C)$.
%
\end{definition}

\begin{lemma}\label{func lem}
Let $A$ and $C$ be unital $C^*$-algebras, let $X$ be a finite subset of the unit ball of $A$, and let $\epsilon>0$.  Let $\phi:A\to B$ be a ucp $(X,\epsilon)$-$*$-homomorphism as in Definition \ref{alm mult def}.  Let $\chi$ be the characteristic function of $(1/2,\infty)$.  

For any ucp $(X,\epsilon)$-compatible projection $p\in \text{Proj}^{X,\epsilon}_{ucp}(A\otimes M_k(\C)\otimes C)$, define 
$$
\phi_*(p):=\chi((\phi\otimes \text{id}_{M_k(\C)\otimes C})(p))\in B\otimes M_k(\C)\otimes C.
$$
Then the assignment $[p]\mapsto [\phi_*(p)]$ uniquely determines a homomorphism 
$$
\phi_*:K_{ucp}^\epsilon(X;C)\to K_0(B\otimes C). \eqno\qed
$$
\end{lemma}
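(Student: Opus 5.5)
The plan is to build $\phi_*$ on the monoid $\mathcal{V}^\epsilon_{ucp}(X;C)$ and then invoke the universal property of the Grothendieck group. The first step is to check that $\phi_*(p)$ is defined and is a projection. Since $p$ is ucp $(X,\epsilon)$-compatible and $\phi$ is a ucp $(X,\epsilon)$-$*$-homomorphism, the element $x:=(\phi\otimes\text{id}_{M_k(\C)\otimes C})(p)$ is a $1/4$-quasiprojection. Here I use that $\phi\otimes\text{id}$ is defined because $\phi$ is completely positive and $M_k(\C)\otimes C$ is handled by the appropriate (maximal or minimal, matching the convention for $A\otimes C$) tensor product of ucp maps. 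It is also self-adjoint because ucp maps are $*$-preserving. Hence $\chi$ is continuous on the spectrum of $x$, and $\chi(x)$ is a projection in $B\otimes M_k(\C)\otimes C$. This gives a class $[\phi_*(p)]\in K_0(B\otimes C)$.

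The second step is to show that this class is invariant under the two generating relations of $\sim$.

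For relation (i), take a path $(p_t)$ in $\text{Proj}^{X,\epsilon}_{ucp}(A\otimes M_k(\C)\otimes C)$. Set $x_t:=(\phi\otimes\text{id})(p_t)$. This is a norm-continuous path of $1/4$-quasiprojections, because $\phi\otimes\text{id}$ is contractive. By Lemma \ref{chi com} part \eqref{chi lip} (with $\delta=1/4$), $t\mapsto\chi(x_t)$ is $2$-Lipschitz, so it is a continuous path of projections. Homotopic projections define the same $K_0$-class.

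For relation (ii), the map $\phi\otimes\text{id}$ is compatible with the corner inclusion $M_k\subseteq M_{k+m}$. It therefore sends $p\oplus 0_m$ to $x\oplus 0$, and $\chi(x\oplus 0)=\chi(x)\oplus 0$ since $\chi(0)=0$. This represents the same $K_0$-class.

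The third step is additivity. The sum in $\mathcal{V}^\epsilon_{ucp}(X;C)$ is block sum $p\oplus q$; implicitly, one checks that a block sum of compatible projections is compatible. Block-diagonality is preserved by both $\phi\otimes\text{id}$ and the functional calculus, so $\phi_*(p\oplus q)=\phi_*(p)\oplus\phi_*(q)$, and $[0]\mapsto 0$. Thus we get a monoid homomorphism $\mathcal{V}^\epsilon_{ucp}(X;C)\to K_0(B\otimes C)$. It extends uniquely to the Grothendieck group $K^\epsilon_{ucp}(X;C)$, and uniqueness holds because the classes $[p]$ generate.

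The only mildly delicate point is the homotopy-invariance step. Continuity of $\chi$ along the path needs the uniform spectral gap supplied by the compatibility condition, and Lemma \ref{chi com} gives exactly this. Everything else is routine bookkeeping.
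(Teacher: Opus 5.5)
Your proposal is correct and is the routine direct check the paper leaves to the reader (the lemma is stated there without proof). Compatibility supplies the spectral gap, Lemma \ref{chi com}\eqref{chi lip} together with $\chi(0)=0$ handles the two generating relations of $\sim$, and block-sum additivity plus the universal property of the Grothendieck group give the unique homomorphism. One small wording fix: Lemma \ref{chi com} makes $x\mapsto\chi(x)$ $2$-Lipschitz on $1/4$-quasiprojections, so $t\mapsto\chi(x_t)$ is continuous but not necessarily $2$-Lipschitz in $t$, and continuity is all you need.
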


We recall now the directed set $(I_A,\leq_{ucp})$ from Remark \ref{fc3}.  Essentially by definition, whenever $(X,\epsilon)\leq_{ucp} (Y,\delta)$ in $I_A$, there is a canonical inclusion $\mathcal{Q}^\epsilon_{ucp}(X;C)\to \mathcal{Q}^\delta_{ucp}(Y;C)$, which induces a group homomorphism
$$
K_{ucp}^\epsilon(X;C)\to K_0^\delta(Y;C).
$$
These homomorphisms make the collection $(K_{ucp}^\epsilon(X;C))_{(X,\epsilon)\in I_A}$ of abelian groups into a directed system.  Moreover, for each $(X,\epsilon)$ there are canonical homomorphisms
\begin{equation}\label{dir lim maps 0}
K_{ucp}^\epsilon(X;C)\to K_0(A\otimes C)
\end{equation}
induced by the identity inclusions of ucp $(X,\epsilon)$-compatible projections into all projections.  These fit together to give maps on the direct limit
\begin{equation}\label{dir lim maps}
\lim_{\rightarrow}K_{ucp}^\epsilon(X;C)\to K_0(A\otimes C).
\end{equation}
This homomorphism is in fact an isomorphism.  We leave the proof of the next lemma to the reader: it consists of direct checks analogous to those used to show that $K$-theory commutes with direct limits of $C^*$-algebras.

\begin{lemma}\label{dir lim k thy}
Let $A$ be a unital $C^*$-algebra.  Then for any $C^*$-algebra $C$, the map of line \eqref{dir lim maps} induces a canonical isomorphisms
$$
\lim_{\rightarrow} K_{ucp}^\epsilon(X;C)\cong  K_0(A\otimes C). \eqno\qed
$$
\end{lemma}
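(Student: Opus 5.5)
We need to prove Lemma \ref{dir lim k thy}: the canonical map $\varinjlim K^\epsilon_{ucp}(X;C)\to K_0(A\otimes C)$ is an isomorphism. The plan is to check surjectivity and injectivity directly, in the style of the proof that $K$-theory commutes with inductive limits. The one genuinely new input is a uniform estimate: for a fixed projection $p\in A\otimes M_k(\C)\otimes C$ and any $\eta>0$, there is $(X,\epsilon)$ such that every ucp $(X,\epsilon)$-$*$-homomorphism $\phi:A\to B$ satisfies
$$
\|(\phi\otimes\text{id})(p)^2-(\phi\otimes\text{id})(p)\|<\eta .
$$

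To get this estimate, first approximate $p$ within $\eta/10$ by a finite sum $\sum_i a_i\otimes c_i$ with $a_i\in A$ and $c_i\in M_k(\C)\otimes C$. Since ucp maps tensored with the identity are contractive, it suffices to control
$$
(\phi\otimes\text{id})\Big(\big(\textstyle\sum a_i\otimes c_i\big)^2\Big)-\big((\phi\otimes\text{id})\textstyle\sum a_i\otimes c_i\big)^2=\sum_{i,j}\big(\phi(a_ia_j)-\phi(a_i)\phi(a_j)\big)\otimes c_ic_j .
$$
Rescale the $a_i$ into the unit ball, put them in $X$, and choose $\epsilon$ small compared with $\eta$, $\sum\|c_i\|$ and the rescaling constants. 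With $\eta<3/16$, the image is self-adjoint (ucp maps are $*$-preserving) and $\|x^2-x\|<3/16$ forces its spectrum to avoid $[1/4,3/4]$. So $p$ is ucp $(X,\epsilon)$-compatible.

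For surjectivity, every class in $K_0(A\otimes C)$ is $[p]-[q]$ with $p,q$ projections over $A\otimes C$. By the estimate, both are compatible for some common $(X,\epsilon)$, so the class lies in the image. If $C$ is non-unital, first pass to matrices over the unitization, with scalar parts cancelling; this is routine.

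For injectivity, suppose $[p]-[q]$ maps to zero. Then $p\oplus r\oplus 0$ and $q\oplus r\oplus 0$ are homotopic in some $\text{Proj}(A\otimes M_N(\C)\otimes C)$, and may take this homotopy to be a finite chain of projections with successive distances less than $1/10$. Apply the estimate to each of the finitely many projections in the chain, including $r$, and take a common upper bound $(Y,\delta)$ in the directed set. For linear interpolants between consecutive points, use that $\phi\otimes\text{id}$ is contractive: the image of each segment stays within about $1/10$ plus $\eta$ of a $1/4$-quasiprojection. After applying $\chi$ (Lemma \ref{chi com}) this gives a path inside the compatible projections, provided $\eta$ and the mesh are small. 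Hence $[p]+[r]=[q]+[r]$ already in $K^\delta_{ucp}(Y;C)$, so the class vanishes in the limit.

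The main obstacle is this last step: the connecting path must stay within the \emph{compatible} projections. I would avoid interpolants entirely and use only the chain points, noting that the relation ``same path component'' can be replaced by ``distance less than $1/10$''. Two compatible projections that close are joined by the path $\chi(tp'+(1-t)p'')$ of projections. Each point on it is again compatible for a slightly smaller $\delta$, since its images are within a small distance of the images of the endpoints.
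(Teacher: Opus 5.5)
Your argument is correct and follows the route the paper has in mind; the paper leaves this proof to the reader as the analogue of continuity of $K$-theory under inductive limits. The uniform bound on $\|(\phi\otimes\mathrm{id})(p)^2-(\phi\otimes\mathrm{id})(p)\|$ gives surjectivity, and applying it to finitely many points of a homotopy gives injectivity. You can also simplify the injectivity step: by compactness every point of the original homotopy is within $1/10$ of one of those finitely many points, so the homotopy itself already lies in $\text{Proj}^{Y,\delta}_{ucp}(A\otimes M_N(\C)\otimes C)$ and the $\chi$-interpolation is unnecessary.

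Drop your aside on non-unital $C$, though. The groups $K^\epsilon_{ucp}(X;C)$ are only defined for unital $C$. If you applied the same definition to, say, $A=\C$ and $C=C_0(\R^2)$, there would be no nonzero compatible projections, yet $K_0(C_0(\R^2))\cong\Z$. So that case needs a genuine redefinition via $C^+$, not a routine reduction.
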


The proof of the next lemma again consists of direct checks that we leave to the reader.

\begin{lemma}\label{phi lem} 
Let $A$ be a unital $C^*$-algebra, let $X$ be a finite subset of the unit ball of $A$, and let $\epsilon>0$.  With notation as in Lemma \ref{func lem}, there is a well-defined homomorphism 
$$
\Phi^{(X,\epsilon)}:KK^{ucp}_\epsilon(X,B)\to \text{Hom}(K_{ucp}^\epsilon(X;C),K_0(B\otimes C))
$$
induced by the assignment 
$$
KK^{ucp}_\epsilon(X,B)\owns [\phi]\mapsto \phi_*.
$$
Applying Lemma \ref{dir lim k thy} these homomorphisms induce a homomorphism
$$
\Phi:{\displaystyle \lim_{\leftarrow}KK^{ucp}_\epsilon(X,B)}\to \text{Hom}({K}_0(A\otimes C),K_0(B\otimes C)). \eqno\qed
$$
\end{lemma}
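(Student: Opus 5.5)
We prove Lemma \ref{phi lem} in two stages. First we build $\Phi^{(X,\epsilon)}$ on cycles and check that it respects the equivalence relations. Then we check compatibility with the forget-control maps, so that everything passes to the limits.

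\textbf{Definition and invariance of $\phi_*$.} Fix $\phi\in\emu_\epsilon(X,B)$ with $\phi:A\to\K(F_\phi)$. Lemma \ref{func lem} (with target $\K(F_\phi)$) gives $\phi_*:K^\epsilon_{ucp}(X;C)\to K_0(\K(F_\phi)\otimes C)$. Since $F_\phi$ is compact, $\K(F_\phi)$ is a full-or-not corner of $\K(B\otimes\ell^2)$ via an isometry $F_\phi\to B\otimes\ell^2$, using Kasparov stabilization as in Remark \ref{lp vs llp}. This gives a canonical map $K_0(\K(F_\phi)\otimes C)\to K_0(B\otimes C)$, independent of the chosen isometry because any two such isometries are conjugate by a unitary in the connected unitary group \cite{Cuntz:1987ly}. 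We compose with this map.

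Next we show that $\phi_*$ depends only on the class of $\phi$.
\begin{enumerate}[(a)]
\item For block sums, $(\phi\oplus\theta)_*=\phi_*+\theta_*$. This holds because $\chi$ commutes with direct sums, and block sum corresponds to addition in $K_0$.
\item Suppose $\|u(\phi(a)\oplus\theta(a))u^*-(\psi(a)\oplus\theta(a))\|<\epsilon$ on $X$. Then we need $\chi((u\cdot u^*)\circ(\phi\oplus\theta)\otimes\mathrm{id})(p)$ and $\chi((\psi\oplus\theta)\otimes\mathrm{id})(p)$ to define the same class. I would take the straight-line path $\rho_t=(1-t)\,\mathrm{Ad}u\circ(\phi\oplus\theta)+t(\psi\oplus\theta)$. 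Each $\rho_t$ is a convex combination of ucp maps, hence ucp.
\end{enumerate}

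\textbf{Main obstacle.} The key point, and the step I expect to be hardest, is showing that $\rho_t$ is still an $(X,\epsilon)$-$*$-homomorphism, so that the compatibility of $p$ makes $\rho_t(p)$ a $1/4$-quasiprojection for all $t$. Then $t\mapsto\chi(\rho_t(p))$ is a norm-continuous path of projections, by Lemma \ref{chi com}\eqref{chi lip} together with continuity of $t\mapsto\rho_t(p)$, and we are done.

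Convex combinations of approximately multiplicative maps are not obviously approximately multiplicative. The plan is therefore to avoid this issue by the way classes are defined. If the direct estimate fails, I would first try to enlarge the index: compare at $(X,\epsilon)$ versus a smaller parameter, since only the limit map $\Phi$ matters. Concretely, I would prove the homotopy invariance for $p$ compatible at some $(Y,\delta)\ge_{ucp}(X,\epsilon)$, where perturbation estimates show that $\rho_t(p)$ stays within $1/4$ of $\phi(p)$-type quasiprojections. Here one uses that $\|\rho_t\otimes\mathrm{id}_{M_k}\|$-differences on $p$ are controlled by the differences on a generating set, through the matrix entries of $p$ approximated by elements of $X$.

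\textbf{Forget control and the limit.} Once $\Phi^{(X,\epsilon)}$ is well defined, naturality is a direct check. For $(X,\epsilon)\le_{ucp}(Y,\delta)$, a $(Y,\delta)$-map is an $(X,\epsilon)$-map, and compatibility classes include in the direction described before Lemma \ref{dir lim k thy}. Hence the squares with the forget-control maps on $KK^{ucp}$ and the connecting maps on $K^\epsilon_{ucp}$ commute. This gives a compatible family of maps
\[
\lim_{\leftarrow}KK^{ucp}\to \lim_{\leftarrow}\mathrm{Hom}(K^\epsilon_{ucp}(X;C),K_0(B\otimes C))=\mathrm{Hom}\Big(\lim_{\rightarrow}K^\epsilon_{ucp}(X;C),K_0(B\otimes C)\Big),
\]
and Lemma \ref{dir lim k thy} identifies the right-hand side with $\mathrm{Hom}(K_0(A\otimes C),K_0(B\otimes C))$.
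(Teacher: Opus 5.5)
Your overall architecture is the natural one: define $\phi_*$ via Lemma \ref{func lem}, check block sums and unitary conjugation, then pass to limits using $\lim_{\leftarrow}\text{Hom}(G_i,H)=\text{Hom}(\lim_{\rightarrow}G_i,H)$ and Lemma \ref{dir lim k thy}. The paper itself only calls this lemma a direct check. However, the step you single out as the main obstacle is not merely hard: it is false, and your fallback does not repair it.

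With $\alpha:=\mathrm{Ad}u\circ(\phi\oplus\theta)$, $\beta:=\psi\oplus\theta$ and $\rho_t:=(1-t)\alpha+t\beta$, one has exactly
\[
\rho_t(ab)-\rho_t(a)\rho_t(b)=(1-t)(\alpha(ab)-\alpha(a)\alpha(b))+t(\beta(ab)-\beta(a)\beta(b))+t(1-t)(\alpha(a)-\beta(a))(\alpha(b)-\beta(b)).
\]
So each $\rho_t$ is only a ucp $(X,\epsilon+\epsilon^2/4)$-$*$-homomorphism, and this loss really occurs. Take two states on $C(S^1)$ with $X=\{z,\bar z\}$: the uniform measure on $\{e^{\pm is}\}$ and its rotation by $w$, with $\sin^2 s$ and $\cos s\,|1-w|$ both just below $\epsilon$. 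Then the defect of $\rho_{1/2}$ on $(z,\bar z)$ is $\sin^2 s+\tfrac14\cos^2 s\,|1-w|^2>\epsilon$. Compatibility of $p$ at $(X,\epsilon)$ says nothing about maps with defect above $\epsilon$, so you cannot conclude that $(\rho_t\otimes\mathrm{id})(p)$ is a $1/4$-quasiprojection.

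The fallback goes the wrong way on two counts. First, asking $p$ to be compatible at some $(Y,\delta)\geq_{ucp}(X,\epsilon)$ is a \emph{weaker} condition on $p$, since the sets $\mathcal{Q}^\epsilon_{ucp}(X;C)$ grow along the directed set; you need $p$ compatible with \emph{more} maps. Second, there are no perturbation estimates making $\rho_t(p)$ close to $\alpha(p)$. $X$ is an arbitrary finite set, the entries of $p$ need not be near anything generated by $X$, and closeness of $\alpha,\beta$ on $X$ gives no control of $\|\alpha(p)-\beta(p)\|$ (take $X=\{1\}$).

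What does work is to use the identity above and shift the parameter on the $K$-theory side. Suppose $p\in\text{Proj}^{X,2\epsilon}_{ucp}(A\otimes M_k(\C)\otimes C)$ and $\epsilon\leq 4$, so that $\epsilon+\epsilon^2/4\leq 2\epsilon$. Then $\alpha$, $\beta$ and every $\rho_t$ are ucp $(X,2\epsilon)$-$*$-homomorphisms. Hence $t\mapsto\chi((\rho_t\otimes\mathrm{id})(p))$ is a norm-continuous path of projections by Lemma \ref{chi com} part \eqref{chi lip}. Together with your block-sum and conjugation steps, this gives a well-defined homomorphism $KK^{ucp}_\epsilon(X,B)\to\text{Hom}(K^{2\epsilon}_{ucp}(X;C),K_0(B\otimes C))$, $[\phi]\mapsto\phi_*$.

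Now take a cofinal sequence $(X_n,\epsilon_n)$ as in Lemma \ref{gen cof}; then $(X_n,2\epsilon_n)$ satisfies the same hypotheses, so it is cofinal too. These maps are compatible with the structure maps, and the groups $K^{2\epsilon_n}_{ucp}(X_n;C)$ still have direct limit $K_0(A\otimes C)$. This yields $\Phi$, which is all that is used later (Theorem \ref{kl ck hom}). The literal same-parameter map $\Phi^{(X,\epsilon)}$ does not come out of this argument.

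A minor simplification: the identification $K_0(\K(F)\otimes C)\cong K_0(B\otimes C)$ is independent of the isometry $F\to\ell^2\otimes B$. If $w_1,w_2$ are two such isometries, $w_2w_1^*$ is a partial isometry giving a Murray--von Neumann equivalence between $w_1qw_1^*$ and $w_2qw_2^*$. You do not need \cite{Cuntz:1987ly}.
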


The proof of the following result is very similar to that of \cite[Theorem 3.29]{Willett:2024aa}: we leave it to the reader.

\begin{theorem}\label{kl ck hom}
Let $A$ be a separable, unital, quasidiagonal $C^*$-algebra, let $B$ be a separable nuclear $C^*$-algebra with an approximate unit consisting of projections, and let $C$ be a unital $C^*$-algebra.  Then the following diagram commutes
$$
\xymatrix{ KL(A,B) \ar[r] \ar[dr]_-{\gamma} & {\displaystyle \lim_{\leftarrow} KK_\epsilon^{ucp}(X,B)} \ar[d] \\ & \text{Hom}(K_0(A;C),K_0(B;C))},
$$
where the diagonal arrow denotes the usual pairing $\gamma$ between $KK$-theory and $K$-theory, the vertical arrow denotes the homomorphism of Lemma \ref{phi lem}, and the horizontal arrow denotes the isomorphism of Theorem \ref{main ucp}. \qed
\end{theorem}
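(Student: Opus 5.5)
Fix a class $x\in KL(A,B)$ and a projection $p\in A\otimes M_k(\C)\otimes C$. The aim is to show that $\gamma(x)[p]$ equals $[\phi_*(p)]-[\psi_*(p)]$, where the image of $x$ in $KK^{ucp}_\epsilon(X,B)$, for suitable $(X,\epsilon)$, is written as $[\phi]-[\psi]$. The strategy is to trace $x$ back through the chain of isomorphisms in the proof of Theorem \ref{main ucp}, and compute the pairing at each stage in the controlled picture. The pairing is continuous for the $KL$-topology, since $\text{Hom}(K_0,K_0)$ carries the topology of pointwise convergence and classes in the closure of zero pair trivially. It therefore suffices to check agreement on each generator $[p]$, using cofinal $(X,\epsilon)$ chosen so that $p$ is ucp $(X,\epsilon)$-compatible (Lemma \ref{dir lim k thy}).

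\textbf{Step 1.} I will use the description of the pairing in the unital controlled picture $KK^{\pi,u}_\epsilon(X,B)$ from \cite{Willett:2020aa}. There, a projection $P\in M_2(\LL(E))$ that almost commutes with $A$ pairs with $[p]$ roughly as
$$
[\chi(P(p\otimes 1)P)]-[\chi(e_{11}(p\otimes 1)e_{11})]
$$
in $K_0(B\otimes C)$, computed in the appropriate corner. This is a Kasparov product of a Fredholm-type cycle given by a projection; its justification is essentially that of \cite[Theorem 3.29]{Willett:2024aa}.

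\textbf{Step 2.} I transport this formula through Theorem \ref{k to u the}. For a compact almost-commuting projection $q$, the unital cycle is $\begin{psmallmatrix}1&0\\0&q\end{psmallmatrix}$. The first summand cancels against the reference term, which leaves $[\chi(q(p\otimes 1)q)]\in K_0(\K(E)\otimes C)\cong K_0(B\otimes C)$.

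\textbf{Step 3.} The passage through the Murray--von Neumann picture is the identity on cycles, so nothing changes there. Under Corollary \ref{mvn to ucp}, $q$ corresponds to the ucp map $\phi_q(a)=v_q^*av_q$, and then
$$
\chi((\phi_q\otimes\text{id})(p))=v_q^*\chi(q(p\otimes1)q)v_q,
$$
because $v_q$ is an isometry onto $qE$ and the functional calculus is natural. This is exactly $(\phi_q)_*[p]$. The ucp-compatibility hypothesis ensures that the quasiprojections involved have spectral gaps, so $\chi$ is continuous on the relevant spectra and homotopies of the cycles within the controlled sets preserve the classes. Lemma \ref{chi com} supplies the Lipschitz control needed for these homotopies.

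\textbf{Main obstacle.} The hard part is Step 1: identifying the controlled pairing with the genuine $KK$-pairing $\gamma$. I would handle it by passing to a quasidiagonal compression $e_n$ and letting $n\to\infty$, so that $q$ becomes an honest Kasparov module up to a compact perturbation. I would then compare with the standard index formula for the product of $[p]$ with a projection-type cycle, again following \cite{Willett:2024aa}.
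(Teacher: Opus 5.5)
This is essentially the route the paper intends: the paper gives no argument beyond deferring to \cite[Theorem 3.29]{Willett:2024aa}. Your Steps 2 and 3 are the correct routine transport of the pairing formula through the explicit maps of Theorem \ref{k to u the}, Section \ref{mvn sec}, and Corollary \ref{mvn to ucp}; in particular the identity $\chi((\phi_q\otimes\text{id})(p))=v_q^*\chi(q(p\otimes 1)q)v_q$ is right. So everything reduces to your Step 1, which you defer to the same source, just as the paper does.

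One caution on your plan for Step 1. It concerns only the Willett--Yu identification $KL(A,B)\cong\lim_{\leftarrow}KK^{\pi,u}_\epsilon(X,B)$ from \cite[Proposition A.11]{Willett:2020aa}, and it needs no quasidiagonality. It should therefore be attacked by computing what that identification does to an honest representative of $x$. Trying to view a single almost-commuting compact projection $q$ as a Kasparov module will not work, because $q$ does not determine one.
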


\begin{remark}\label{lin pair}
Let us spell out what this result says slightly more concretely.  Let $A$, $B$, and $C$ be as in the assumptions of Theorem \ref{kl ck hom}, let $x\in KL(A,B)$ be a class, and let $[p]\in K_0(A\otimes C)$ be a class with $p$ a projection in $A\otimes M_k(\C)\otimes C$ for some $k$.  Say we want to compute the usual action $x_*[p]$ of $x$ on $[p]$ as an element of $K_0(B\otimes C)$.  Note that there exists $(X,\epsilon)\in I_A$ such that $p$ is in the image of the canonical map $K_{ucp}^\epsilon(X;C)\to K_0(A\otimes C)$.  On the other hand, under the canonical map $KL(A,B)\to KK_\epsilon^{ucp}(X,B)$, $x$ maps to a formal difference $[\phi]-[\psi]$ of ucp $(X,\epsilon)$-$*$-homomorphisms.  Then with notation as in Lemma \ref{func lem} (so in particular, $\chi$ is the characteristic function of $(1/2,\infty)$, Theorem \ref{kl ck hom} implies that we have 
\begin{equation}\label{conc pair}
\langle x,[p]\rangle:=[\chi((\phi\otimes \text{id}_{M_k(\C)\otimes C})(p))]-[\chi((\psi\otimes \text{id}_{M_k(\C)\otimes C})(p))]\in K_0(B\otimes C).
\end{equation}
\end{remark}

Let us now look at a similar result for $KK_\epsilon^{\lin}(X,B)$.  This is built in a similar way, but the details are a little more technical.

\begin{definition}\label{x eps com 3}
Let $A$ and $C$ be unital $C^*$-algebras.  Let $\mathcal{X}$ be an admissible subset of $A$ in the sense of Definition \ref{admiss}, and let $\mathcal{A}$ be its span, a dense $*$-subalgebra of $A$.  Let $\mathcal{A}\odot C$ denote the algebraic tensor product (over $\C$) of $\mathcal{A}$ and $C$, equipped with the norm it inherits from $A\otimes_{\max}C$ (but not completed).  Let $X$ be a finite subset of $\mathcal{X}$, and let $\epsilon>0$.   For an element $d$ in a $C^*$-algebra $d$, write $\text{Re}(d):=\frac{1}{2}(d+d^*)$ for its real part.

 A $1/4$-quasiprojection $a\in \mathcal{A}\odot C$ is \emph{$(X,\epsilon)$-compatible} if whenever $\phi:\mathcal{A}\to B$ is an $(X,\epsilon)$-$*$-homomorphism as in Definition \ref{alm mult def} to another $C^*$-algebra, we have that $\text{Re}((\phi\otimes \text{id}_C)(a))$ is a $1/4$-quasiprojection\footnote{Analogously to Definition \ref{x eps com}, the precise choice of constant $1/4$ is not important here.} in $B\odot C$, where this $*$-algebra is equipped with the maximal norm. 
 
Write $\text{Proj}^{X,\epsilon}_{\lin}(A\otimes C)$ for the set of $(X,\epsilon)$-compatible projections in $\mathcal{A}\odot C$.  
\end{definition}

\begin{definition}\label{x eps com 4}
Let $A$ be a unital $C^*$-algebra.  Let $\mathcal{X}$ be an admissible subset of $A$ in the sense of Definition \ref{admiss} with span $\mathcal{A}$, let $X$ be a finite subset of $\mathcal{X}$, and let $\epsilon>0$.  Let $C$ be a unital $C^*$-algebra.  Define
$$
\mathcal{Q}^\epsilon_{\lin}(X;C):=\bigsqcup_{k=1}^\infty \text{Proj}^{X,\epsilon}_{\lin}(\mathcal{A}\odot M_k(\C)\odot C).
$$
Let $\sim$ be the equivalence relation on $\mathcal{Q}^\epsilon_{\lin}(X)$ generated by the conditions below: 
\begin{enumerate}[(i)]
\item $p\sim q$ if both are in the same path component of some $\text{Proj}^{X,\epsilon}_{\lin}(\mathcal{A}\odot M_k(\C)\odot C)$;
\item $p\sim q$ if $p\in \text{Proj}^{X,\epsilon}_{\lin}(\mathcal{A}\odot M_k(\C)\odot C)$ and there is $m\geq 1$ such that $q\in \text{Proj}_{\lin}^{X,\epsilon}(\mathcal{A}\odot M_{k+m}(\C)\odot C)$ and $q=p\oplus 0_m$.
\end{enumerate}
Define $\mathcal{V}^\epsilon_{\lin}(X;C)$ to be $\mathcal{Q}^\epsilon_{\lin}(X;C)/\sim$, and define $K^\epsilon_{\lin}(X;C)$ to be the Grothendieck group of $\mathcal{V}^\epsilon_{\lin}(X:C)$.
%
\end{definition}

\begin{lemma}\label{func lem 2}
Let $A$ and $C$ be unital $C^*$-algebras.  Let $\mathcal{X}$ be an admissible subset of $A$ in the sense of Definition \ref{admiss} with span $\mathcal{A}$, let $X$ be a finite subset of $\mathcal{X}$, and let $\epsilon>0$.  Let $\phi:\mathcal{A}\to B$ be an $(X,\epsilon)$-$*$-homomorphism as in Definition \ref{alm mult def}.  Let $\chi$ be the characteristic function of $(1/2,\infty)$.  

For any $(X,\epsilon)$-compatible quasiprojection $a\in \text{Proj}^{X,\epsilon}_{\lin}(\mathcal{A}\odot M_k(\C)\odot C)$, define 
$$
\phi_*(a):=\chi(\text{Re}((\phi\otimes \text{id}_{M_k(\C)\otimes C})(a)))\in (B\otimes M_k(\C))\otimes_{\max} C.
$$
Then the assignment $[p]\mapsto [\phi_*(p)]$ uniquely determines a homomorphism 
$$
\phi_*:K_{\lin}^\epsilon(X;C)\to K_0(B\otimes_{\max} C). \eqno\qed
$$
\end{lemma}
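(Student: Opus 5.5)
The plan is to mimic the proof of the ucp version, Lemma \ref{func lem}, keeping track of the fact that $\phi$ is now only approximately self-adjoint and that we work in algebraic tensor products with the maximal norm. There are three checks: the formula gives a genuine projection, it is constant on the equivalence classes of Definition \ref{x eps com 4}, and it is additive. The homomorphism on the Grothendieck group then exists and is unique by the universal property.

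\emph{Step 1: $\phi_*(a)$ is a projection.} Let $a\in \text{Proj}^{X,\epsilon}_{\lin}(\mathcal{A}\odot M_k(\C)\odot C)$. The map $\phi\otimes\text{id}_{M_k(\C)}$ is still an $(X',\epsilon')$-$*$-homomorphism for the obvious matrix-amplified data, and I would read Definition \ref{x eps com 3} as already quantifying over these amplified maps. Then $(X,\epsilon)$-compatibility says directly that $\text{Re}((\phi\otimes\text{id})(a))$ is a self-adjoint $1/4$-quasiprojection in $(B\otimes M_k(\C))\odot C$. Its image in the completion $(B\otimes M_k(\C))\otimes_{\max}C$ has spectrum avoiding $[1/4,3/4]$, so $\chi$ is continuous on the spectrum and $\phi_*(a)$ is a projection.

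\emph{Step 2: invariance under $\sim$.}
\begin{enumerate}[(i)]
\item For condition (ii) of Definition \ref{x eps com 4}, $\phi\otimes\text{id}$ sends $a\oplus 0_m$ to $(\phi\otimes\text{id})(a)\oplus 0$. Since $\text{Re}$ and $\chi$ respect direct sums and $\chi(0)=0$, the $K$-theory class is unchanged.
\item For condition (i), let $(a_t)_{t\in[0,1]}$ be a norm-continuous path inside $\text{Proj}^{X,\epsilon}_{\lin}$. The linear map $\phi\otimes\text{id}_{M_k(\C)\otimes C}$ on the algebraic tensor product need not be bounded for the max norm, so I would not try to push the path through it directly. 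Instead I would subdivide. By compactness, the points where we must compare $\phi_*(a_{t_i})$ and $\phi_*(a_{t_{i+1}})$ can be taken close. Each difference $a_{t_i}-a_{t_{i+1}}$ is a finite sum of elementary tensors $x\otimes c$ with $x\in\mathcal{A}$, so its image under $\phi\otimes\text{id}$ is controlled by $\|\phi(x)\|$. These norms are at most $1$ for $x\in\mathcal{X}$, and the coefficients are finitely many.
\end{enumerate}

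This step is the main obstacle. The cleanest route is probably to make the path piecewise linear in a fixed finite-dimensional subspace of $\mathcal{A}\odot M_k(\C)\odot C$, on which $\phi\otimes\text{id}$ is automatically continuous. Then $t\mapsto \text{Re}((\phi\otimes\text{id})(a_t))$ is a continuous path of $1/4$-quasiprojections, using compatibility at every $t$. Applying $\chi$, which is continuous on this set by Lemma \ref{chi com} part \eqref{chi lip}, gives a continuous path of projections, so the $K_0$-classes agree. If the equivalence relation allows arbitrary paths, I would first replace a given path by a piecewise-linear one through the same set. This uses openness of the compatibility condition, which holds because the quasiprojection condition is open.

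\emph{Step 3: additivity.} The sum in $\mathcal{V}^\epsilon_{\lin}(X;C)$ is block sum $a\oplus b$. Since $\phi\otimes\text{id}$, $\text{Re}$ and $\chi$ all commute with block sums, we get $\phi_*(a\oplus b)=\phi_*(a)\oplus\phi_*(b)$, whose class is $[\phi_*(a)]+[\phi_*(b)]$ in $K_0(B\otimes_{\max}C)$. Hence we have a monoid homomorphism $\mathcal{V}^\epsilon_{\lin}(X;C)\to K_0(B\otimes_{\max}C)$. It extends uniquely to the Grothendieck group $K^\epsilon_{\lin}(X;C)$, which gives the homomorphism $\phi_*$ in the statement.
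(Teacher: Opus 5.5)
Your Steps 1, 2(i) and 3 are fine; the paper itself gives no argument for this lemma beyond leaving it to the reader. Step 2(ii), however, has a genuine gap.

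\textbf{The gap.} Replacing an arbitrary path by a piecewise-linear one rests on your claim that $(X,\epsilon)$-compatibility is an open condition. In the max-norm topology this is false.

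Compatibility quantifies over \emph{all} $(X,\epsilon)$-$*$-homomorphisms. These are constrained only on the finite-dimensional subspace $W:=\text{span}(X\cup X^*\cup X\cdot X)$. If $g:\mathcal{A}\to\C$ is any linear functional vanishing on $W$, then $\lambda g$ is an $(X,\epsilon)$-$*$-homomorphism into $\C$ for every $\lambda\in\C$, since all the defects are zero.

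Now take a self-adjoint $y\in\mathcal{A}\setminus W$ with $\|y\|\le 1$; this exists as soon as $\mathcal{A}\not\subseteq W$, e.g.\ for $\mathcal{A}=\C[\Gamma]$ with $\Gamma$ infinite. Choose such a $g$ with $g(y)=1$. The element $\eta\,y\otimes 1$ is within $\eta$ of the compatible element $0$. Yet $\lambda=1/(2\eta)$ sends it to $\tfrac12\cdot 1$, so it is not compatible.

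Openness of the quasiprojection condition in the target does not pull back, for two reasons. First, $\phi\otimes\text{id}$ is discontinuous, as you observed yourself. Second, compatibility is an intersection over infinitely many maps with no uniform spectral gap. So the straight segments between nearby points of your path need not lie in $\text{Proj}^{X,\epsilon}_{\lin}$.

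Your earlier subdivision idea fails for the same reason. Max-norm closeness does not bound the sum of the norms of the coefficients of the elementary tensors in $a_{t_i}-a_{t_{i+1}}$. In any case, the lemma does not assume $\|\phi(x)\|\le 1$ on $\mathcal{X}$.

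\textbf{The missing idea.} You need to use compatibility against the whole family of maps, not just against $\phi$.

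\begin{enumerate}[(1)]
\item The same functional trick shows that every compatible $a$ lies in $W\odot M_k(\C)\odot C$. Otherwise, pick $g$ vanishing on $W$ with $c:=(g\otimes\text{id})(a)\neq 0$. Then choose $\lambda$, real or purely imaginary, so that $\text{Re}(\lambda c)$ has $1/2$ in its spectrum, contradicting compatibility.
\item On $W\odot M_k(C)$ the map $\phi\otimes\text{id}$ \emph{is} bounded for the max norm. Expand $z=\sum_i w_i\otimes d_i$ in a basis $(w_i)$ of $W$, and take bounded functionals $f_i$ on $A$ dual to that basis. Then $\|d_i\|=\|(f_i\otimes\text{id})(z)\|\le\|f_i\|\,\|z\|_{\min}\le\|f_i\|\,\|z\|_{\max}$.
\item Hence any path in $\text{Proj}^{X,\epsilon}_{\lin}$ is carried to a norm-continuous path of $1/4$-quasiprojections. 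Lemma \ref{chi com} then finishes the argument directly, with no piecewise linearization needed.
\end{enumerate}

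If you instead impose the normalization $\|\phi(x)\|\le 1$ on $\mathcal{X}$, as in Lemma \ref{tech lin}, the support reduction in (1) fails. For $\mathcal{X}=\Gamma$ you can then deform $\phi$ through admissible maps by scaling its values off $X\cup X^{-1}\cup XX\cup\{1\}$ to $0$. Compatibility keeps the $K$-theory class fixed along this deformation. The endpoint factors through a coordinate projection that is bounded, because the coefficient functionals are bounded on $C^*\Gamma$, so it again gives a norm-continuous path.
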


We recall now the directed set $(I_\mathcal{X},\leq)$ from Definition \ref{order2}.  Essentially by definition, whenever $(X,\epsilon)\leq (Y,\delta)$ in $I_{\mathcal{X}}$, there is a canonical inclusion $\mathcal{Q}^\epsilon_{\lin}(X;C)\to \mathcal{Q}^\delta_{\lin}(Y;C)$, which induces a group homomorphism
$$
K_{\lin}^\epsilon(X;C)\to K_{\lin}^\delta(Y;C).
$$
These homomorphisms make the collection $(K_{\lin}^\epsilon(X;C))_{(X,\epsilon)\in I_{\mathcal{X}}}$ of abelian groups into a directed system.  Moreover, for each $(X,\epsilon)$ there are canonical homomorphisms
\begin{equation}\label{dir lim maps 0.2}
K_{\lin}^\epsilon(X;C)\to K_0(A\otimes_{\max} C)
\end{equation}
induced by sending the class of an $(X,\epsilon)$-compatible quasiprojections to the class of the projection $\chi(a)$, where $\chi$ is the characteristic function of $(1/2,\infty)$.  These fit together to give maps on the direct limit
\begin{equation}\label{dir lim maps 2}
\lim_{\rightarrow}K_{\lin}^\epsilon(X;C)\to K_0(A\otimes_{\max} C).
\end{equation}
As for the homomorphism in line \eqref{dir lim maps}, this is an isomorphism.  The key point in the proof is the following technical lemma below.

\begin{lemma}\label{tech lin}
Let $A$ be a separable unital $C^*$-algebra.  Let $\mathcal{X}$ be an admissible subset of $A$ as in Definition \ref{admiss}, and let $\mathcal{A}$ be the span of $\mathcal{X}$ (a dense $*$-subalgebra of $A$).  Let $C$ be a unital $C^*$-algebra, and let  $a\in \mathcal{A}\odot C$ be a $1/4$-quasiprojection.  Then there exists a finite set $Y$ of $\mathcal{X}$ and $\delta>0$ with the following property.

For any $C^*$-algebra $B$ and any $(Y,\delta)$-$*$-homomorphism $\phi:\mathcal{A}\to B$ such that $\|\phi(x)\|\leq 1$ for all $x\in \mathcal{X}$, the element
$$
\text{Re}((\phi\otimes\text{id}_C)(a))\in B\odot C\subseteq B\otimes_{\max} C
$$
is a $1/4$-quasiprojection.
\end{lemma}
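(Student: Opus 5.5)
Write $a=\sum_{i=1}^m x_i\otimes c_i$ with $x_i\in \mathcal{X}$ and $c_i\in C$; this is possible since $\mathcal{A}$ is the span of $\mathcal{X}$, absorbing scalars into the $c_i$. The aim is to show that $\phi\otimes\text{id}_C$ is approximately multiplicative and approximately $*$-preserving on the few elements that matter, so that $b:=\text{Re}((\phi\otimes \text{id}_C)(a))$ nearly satisfies $b^2=b$. A spectral gap then follows. The only subtlety is that norms on $B\odot C$ are maximal norms, and the quantity $\|b^2-b\|$ must be controlled there. Because $a$ is a $1/4$-quasiprojection in $A\otimes_{\max}C$, it is self-adjoint. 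By the spectral theorem, $\|a^2-a\|\leq \max_{t\in\sigma(a)}|t^2-t|$, and since $\sigma(a)$ is compact and avoids $[1/4,3/4]$, there is $\eta>0$ with $\sigma(a)\cap[1/4-\eta,3/4+\eta]=\varnothing$. It suffices to make $\|b-a'\|$ small in a suitable sense, so I instead aim directly for the estimate $\|b^2-b-\iota(a^2-a)\|<\kappa$. Here $\iota$ is not a map, so the comparison will be carried out via polynomial identities.

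\textbf{Step 1.} Since $\mathcal{A}$ is a $*$-algebra spanned by $\mathcal{X}$, each $x_i^*$ and each product $x_ix_j$ is a finite linear combination of elements of $\mathcal{X}$; the same holds for the products $x_i^*x_j$, $x_ix_j^*$ and $x_i^*x_j^*$. Fix these expansions once and for all, and let $Y$ consist of all $x_i$, their adjoints expanded, and all elements of $\mathcal{X}$ appearing in these expansions. Let $M$ be an upper bound for the sum of the absolute values of all expansion coefficients times $\max\|c_i\|^2$. Then for any $(Y,\delta)$-$*$-homomorphism $\phi$ with $\|\phi(x)\|\leq 1$ on $\mathcal{X}$, write $\Phi:=\phi\otimes\text{id}_C$. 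Expanding term by term gives
$$
\|\Phi(a)^*-\Phi(a^*)\|\leq M\delta \quad\text{and}\quad \|\Phi(a)\Phi(a)-\Phi(a^2)\|\leq M\delta,
$$
and similarly for the mixed products, using $\|x\otimes c\|_{\max}\leq\|x\|\|c\|$. Since $a=a^*$ holds in $\mathcal{A}\odot C$, it follows that $\|b^2-\Phi(a^2-a)-b\|\leq C_0M\delta$ for a universal constant $C_0$.

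\textbf{Step 2.} This is the main obstacle. I need to bound $\|\Phi(a^2-a)\|$ by roughly $\|a^2-a\|$, but $\phi$ is only linear, not contractive. I would handle this by applying Step 1 with the polynomial $f(t)=(t^2-t)(t^2-t-\tfrac{3}{16})$ in place of $t^2-t$; note that $f\geq0$ on $\sigma(a)$ since $|t^2-t|\geq 3/16$ there. Rather than doing that, I would argue by contradiction using an ultraproduct, as in the proof of Lemma \ref{lp lem}. Suppose there are $(Y_n,\delta_n)$ increasing to $(\mathcal{X},0)$ and maps $\phi_n$ for which the conclusion fails. Set $D=\prod B_n/\bigoplus B_n$. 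As in that proof, the maps $\phi_n$ induce a unital $*$-homomorphism $\mathcal{A}\to D$, which extends to $\Phi_\infty:A\to D$ by admissibility. Then $\Phi_\infty\otimes\text{id}_C:A\otimes_{\max}C\to D\otimes_{\max}C$ is a $*$-homomorphism, so $(\Phi_\infty\otimes\text{id})(a)$ has spectrum contained in $\sigma(a)$ and therefore avoids $[1/4-\eta,3/4+\eta]$.

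\textbf{Step 3.} To lift this back, I would use the canonical contraction $(\prod B_n)\otimes_{\max}C\to D\otimes_{\max}C$, together with the fact that $b_n=\text{Re}((\phi_n\otimes\text{id})(a))$ defines an element of $(\prod B_n)\odot C$ mapping to $(\Phi_\infty\otimes\text{id})(a)$. The quantity $\|g(b_n)\|$ for the continuous function $g(t)=\text{dist}(t,\mathbb{R}\setminus(1/4,3/4))$ needs to tend to $0$. This is where care is required. The quotient map on maximal tensor products of $\prod B_n\to D$ has kernel $(\bigoplus B_n)\otimes_{\max}C$, by exactness of $\otimes_{\max}$. Hence $\limsup_n\|h(b_n)\|_{B_n\otimes_{\max}C}=\|h((\Phi_\infty\otimes\text{id})(a))\|$ for polynomials $h$, once one checks that the $c_0$-direct-sum norm in $(\bigoplus B_n)\otimes_{\max}C$ is the limsup of the slice norms; this holds because $\bigoplus B_n\otimes_{\max}C\cong\bigoplus(B_n\otimes_{\max}C)$. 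Passing from polynomials to $g$ by Weierstrass approximation then gives $\|g(b_n)\|\to0$. Consequently, for large $n$ the spectrum of $b_n$ misses $[1/4,3/4]$, which is the desired contradiction.
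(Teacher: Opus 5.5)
Your working argument, the contradiction in Steps 2--3, is essentially the paper's. You collect counterexamples $\phi_n$ into a $*$-homomorphism into a sequence algebra modulo $c_0$, extend it using admissibility, observe that the image of $a$ keeps its spectral gap, and lift back.

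The paper differs only in bookkeeping. It maps $\mathcal{A}\odot C$ directly into $\prod_n(B\otimes_{\max}C)/\bigoplus_n(B\otimes_{\max}C)$ and extends using admissibility and the universal property of $\otimes_{\max}$, so it never needs exactness of $\otimes_{\max}$. It then recovers the gap by writing $\frac{3}{16}-\epsilon'-b+b^2=c^*c$ and lifting $c$, rather than by functional calculus.

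Step 1 and the first half of Step 2 are never used and should be cut, since they also contain false claims:
the quasiprojection condition is the order inequality $a-a^2\leq\frac{3}{16}-\epsilon'$, not a bound on $\|a^2-a\|$, which can be large; and $(t^2-t)(t^2-t-\frac{3}{16})$ is negative just to the right of $t=1$.

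Two things in Step 3 need repair. Write $q\colon(\prod B_n)\otimes_{\max}C\to D\otimes_{\max}C$ for the quotient map, $\beta\in(\prod B_n)\odot C$ for your self-adjoint lift, and $\bar b:=(\Phi_\infty\otimes\text{id})(a)=q(\beta)$.

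First, the equality $\limsup_n\|h(b_n)\|=\|h(\bar b)\|$ does not follow from $(\bigoplus B_n)\otimes_{\max}C\cong\bigoplus(B_n\otimes_{\max}C)$. That isomorphism identifies $\ker q$, but it says nothing about the ambient norm. The norm of $(\prod B_n)\otimes_{\max}C$ can strictly exceed the one induced from $\prod_n(B_n\otimes_{\max}C)$: already for $B_n=M_n$ the latter is the minimal norm and $\prod M_n$ is not nuclear. So you should not assert the equality. You only need $\leq$, and that is immediate: $q(g(\beta))=g(\bar b)=0$, so $g(\beta)\in(\bigoplus B_n)\otimes_{\max}C$, whose coordinates $g(b_n)$ tend to $0$. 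No polynomial approximation is needed.

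Second, and this is an actual error, the choice $g(t)=\text{dist}(t,\R\setminus(1/4,3/4))$ does not work. From $\|g(b_n)\|\to0$ you only learn that $\sigma(b_n)$ misses $[1/4+\epsilon_n,3/4-\epsilon_n]$ for some $\epsilon_n\to 0$. That does not exclude spectrum at $1/4$ or just inside $(1/4,3/4)$, so your final sentence does not follow.

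The fix is to use the margin $\eta$ from your opening paragraph, shrunk so that $\eta<1/4$. Take $g$ continuous with $g=1$ on $[1/4,3/4]$ and $g=0$ off $(1/4-\eta,3/4+\eta)$. Then $g(\bar b)=0$, because $\sigma(\bar b)\subseteq\sigma(a)\cup\{0\}$. Once $\|g(b_n)\|<1$, the spectrum of $b_n$ misses $[1/4,3/4]$.

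A minor point: $\phi$ is not assumed unital, so the induced map $\mathcal{A}\to D$ is unital only into the corner cut down by the image of $1$, and admissibility should be applied there. The paper glosses over the same point.
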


\begin{proof}
Assume otherwise.  Then there exist an increasing sequence of finite subsets $Y_1\subseteq Y_2\subseteq \cdots$ of $\mathcal{X}$ with union all of $\mathcal{X}$ and a decreasing sequence $\delta_1>\delta_2>\cdots$ of positive numbers that converge to zero such that for each $n$ there exists a $(Y_n,\delta_n)$-$*$-homomorphism $\phi_n:\mathcal{A}\to B$ such that $\text{Re}((\phi_n\otimes\text{id}_C)(a))$ is not a $1/4$-quasiprojection.

Define $D:=\prod_n (B\otimes_{\max} C)$ be the $C^*$-algebra of bounded sequences in $B\otimes_{\max} C$, and let $J$ be the ideal in $D$ consisting of sequences that tend to zero as $n\to\infty$.  Consider the map 
$$
\Phi:\mathcal{A}\odot C\to D/J,\quad a\mapsto ((\phi_n\otimes \text{id}_C)(a))_{n=1}^\infty
$$
(note that for any $a\in \mathcal{A}\odot C$ the sequence $((\phi_n\otimes \text{id}_C)(a))_{n=1}^\infty$ is bounded by the condition that $\phi_n$ takes $\mathcal{X}$ into the unit ball of $B$). 

The conditions on the $\phi_n$ ensure that $\Phi$ is a $*$-homomorphism.  Using admissibility of $\mathcal{X}$, $\Phi$ extends (uniquely) to a $*$-homomorphism $\Phi:A\odot C\to D/J$; using the universal property of the maximal tensor product, this in turn extends (uniquely) to a $*$-homomorphism $\Phi:A\otimes_{\max} C\to D/J$. 

Consider now the element $b:=\Phi(a)\in D/J$, which is a $1/4$-quasiprojection.  Note that $b$ being a $1/4$-quasiprojection is equivalent to the statement that $b-b^2\leq  3/16-\epsilon$ for some $\epsilon>0$ by the polynomial spectral mapping theorem (and algebra - note that $(1/4)-(1/4)^2=3/64$).  Hence there exists $c\in D/J$ such that $7/64-\epsilon-b+b^2=c^*c$.  Let now $(b_n)$ be a self-adjoint lift of $b$, and let $(c_n)$ be any lift of $c$.  We then have that 
$$
3/16-\epsilon-b_n+b_n^2-c_n^*c_n\to 0\quad \text{as}\quad n\to\infty. 
$$
Hence for all $n$ suitably large, $b_n-b_n^2\leq 3/16-\epsilon/2$, and so $b_n$ is a $1/4$-quasiprojection for all $n$ large enough.  On the other hand, as the sequences $((\phi_n\otimes \text{id}_C)(a))$ and $(b_n)$ in $D$ both map to $\Phi(a)$ in $D/J$, we have that 
$$
(\phi_n\otimes \text{id}_C)(a)-b_n\to 0 \quad \text{as}\quad n\to\infty;
$$
as $b$ is self-adjoint, this implies moreover that 
$$
\text{Re}((\phi_n\otimes \text{id}_C)(a))-b_n\to 0  \quad \text{as}\quad n\to\infty.
$$
As $b_n$ is a $1/4$-quasiprojection, this implies that $\text{Re}((\phi_n\otimes \text{id}_C)(a))$ is a $1/4$-quasiprojection for all large $n$, giving the desired contradiction.
\end{proof} 

Note that Lemma \ref{tech lin} implies that (with notation as in the lemma) any projection $A\otimes_{\max}M_k(\C)\otimes_{\max}(C)$ can be approximated by a $1/4$-quasi-projection in $\text{Proj}^{Y,\delta}_{\lin}(\mathcal{A}\odot M_k(\C)\odot C)$ for some appropriate finite subset $Y$ of $\mathcal{X}$ and $\delta>0$; the same applies to homotopies of projections.  Having made this observation, the next lemma can be proved along the same lines as the continuity of $K$-theory under direct limits; we leave the details to the reader.

\begin{lemma}\label{dir lim k thy 2}
Let $A$ be a unital $C^*$-algebra.  Then for any $C^*$-algebra $C$, the map of line \eqref{dir lim maps} induces a canonical isomorphism
$$
\lim_{\rightarrow} K_{\lin}^\epsilon(X;C)\cong  K_0(A\otimes C).  \eqno\qed
$$
\end{lemma}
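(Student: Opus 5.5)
We would prove Lemma \ref{dir lim k thy 2} by checking surjectivity and injectivity of the map in line \eqref{dir lim maps 2} directly, following the usual proof that $K_0$ commutes with inductive limits. The key input is Lemma \ref{tech lin} together with the observation made just after it. (We read the target as $K_0(A\otimes_{\max}C)$, since that is where the maps of line \eqref{dir lim maps 0.2} land.) First we check that the maps of line \eqref{dir lim maps 0.2} are well defined homomorphisms compatible with the connecting maps. A path of $(X,\epsilon)$-compatible $1/4$-quasiprojections has spectra avoiding $[1/4,3/4]$, so applying $\chi$ to it gives a norm-continuous path of projections. Stabilization by $0_m$ and direct sums also commute with $\chi$. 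So each $K^\epsilon_{\lin}(X;C)\to K_0(A\otimes_{\max}C)$ is a homomorphism, and it obviously commutes with the forget control maps.

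\textbf{Surjectivity.} Let $p\in M_k(A\otimes_{\max}C)$ be a projection. Since $\mathcal{A}\odot M_k(\C)\odot C$ is dense, we choose a self-adjoint $a$ in it with $\|a-p\|<1/8$. Then $a$ is a $1/4$-quasiprojection and $\chi(a)$ is close to $p$, hence Murray--von Neumann equivalent to it. Lemma \ref{tech lin}, applied with $M_k(C)$ in place of $C$, gives $(Y,\delta)$ such that $a$ is $(Y,\delta)$-compatible. We need $\|\phi(x)\|\le 1$ on $\mathcal{X}$ here, and this is built into the setting of $\lin$-type maps. So $[a]\in K^\delta_{\lin}(Y;C)$ maps to $[p]$. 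Formal differences are handled the same way.

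\textbf{Injectivity.} Suppose $[a]-[b]\in K^\epsilon_{\lin}(X;C)$ maps to zero. Adding a common summand, we may assume $\chi(a)\oplus 1_m$ and $\chi(b)\oplus 1_m$ are homotopic through projections in some $M_N(A\otimes_{\max}C)$. Here $1_m$ is realized as the compatible element $1_A\otimes 1$, which is compatible for every $(X,\epsilon)$ since $\phi(1)=1$.

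1. Using compactness of $[0,1]$, pick a finite chain of projections $p_0,\dots,p_n$ along the homotopy with consecutive distances below $1/16$.

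2. Approximate each $p_i$ by a self-adjoint element $a_i$ of the algebraic tensor product, within $1/16$. Convex combinations $(1-s)a_i+sa_{i+1}$ then stay within $1/4$ of a projection, so the piecewise linear path through the $a_i$ consists of $1/4$-quasiprojections (with a little slack).

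3. Apply Lemma \ref{tech lin}, in a uniform version, to this compact family. The sequence/ultraproduct argument in its proof works verbatim for a norm-compact family of quasiprojections lying in a finite-dimensional subspace of $\mathcal{A}\odot C$. This gives a single $(Y,\delta)\ge(X,\epsilon)$ for which the whole path is $(Y,\delta)$-compatible.

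4. Connect the endpoints $a$ and $a_0$ by a straight line, and similarly at the other end. These are short segments between quasiprojections, since $\chi(a)=p_0$.

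It follows that $[a]=[b]$ in $K^\delta_{\lin}(Y;C)$, so the class dies in the direct limit.

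The main obstacle is step 3. Lemma \ref{tech lin} is stated only for a single element, and it is not obvious that compatibility is an open condition uniformly over all $(Y,\delta)$-$*$-homomorphisms. We would handle this by rerunning the contradiction argument of Lemma \ref{tech lin} with a sequence of points $t_n$ on the path and passing to a convergent subsequence $t_n\to t$. Since the path lies in a finite-dimensional subspace and the $\phi_n\otimes\mathrm{id}$ are uniformly bounded there (by the norm-one condition on $\mathcal{X}$), $(\phi_n\otimes\mathrm{id})$ applied to $a_{t_n}$ is asymptotic to $(\phi_n\otimes\mathrm{id})$ applied to $a_t$. This reduces the uniform statement to the single-element case.
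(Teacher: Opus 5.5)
Your strategy is the one the paper intends. The paper only remarks that Lemma \ref{tech lin} lets one approximate projections \emph{and homotopies of projections} by compatible $1/4$-quasiprojections, and then defers to the continuity argument for $K$-theory. Your Steps 1--3 supply the details behind ``the same applies to homotopies'': a chain of nearby projections, a piecewise linear path of self-adjoint elements of the algebraic tensor product, and a uniform version of Lemma \ref{tech lin} obtained by rerunning its contradiction argument. Reading compatibility as quantifying over unital maps bounded by $1$ on $\mathcal{X}$, as you do, is also the reading needed for Lemma \ref{tech lin} to apply. However, Step 4 does not go through as justified.

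You say the segments from $\tilde a:=a\oplus 1_m\oplus 0$ to $a_0$ (and from $\tilde b$ to $a_n$) are short because $\chi(\tilde a)=p_0$. A $1/4$-quasiprojection is only required to have spectrum disjoint from $[1/4,3/4]$. Nothing forces $\tilde a$ to be close to $\chi(\tilde a)$: its spectrum may contain $-1$ or $2$, say. So $\|\tilde a-a_0\|$ need not be small, and the ``short segment'' reasoning gives nothing. The conclusion is nevertheless true, for a different reason. Since $\tilde a$ commutes with $p_0=\chi(\tilde a)$,
\[
(1-s)\tilde a+sa_0=g_s(\tilde a)+s(a_0-p_0),\qquad g_s(\lambda):=(1-s)\lambda+s\chi(\lambda).
\]
For every $\lambda\notin[1/4,3/4]$ the number $g_s(\lambda)$ lies at distance at least $s/4$ from $[1/4,3/4]$, while $\|s(a_0-p_0)\|<s/16$. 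Hence every point of the segment is a $1/4$-quasiprojection. You must then include these two end segments in the compact family to which you apply the uniform Step 3; the concatenated path still lies in a finite-dimensional subspace. This is needed because compatibility of the endpoints $\tilde a,\tilde b$ says nothing about the interior of the segments.

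A smaller point in Step 3: your closing sentence ``this reduces the uniform statement to the single-element case'' should not mean invoking the \emph{statement} of Lemma \ref{tech lin} for $a_t$. Being a $1/4$-quasiprojection carries no uniform spectral margin, so being asymptotically close to one is not enough. What works is the following. The sequences $((\phi_n\otimes\mathrm{id})(a_{t_n}))_n$ and $((\phi_n\otimes\mathrm{id})(a_t))_n$ have the same image $\Phi(a_t)$ in $D/J$. Its spectrum is contained in that of $a_t$, so it avoids a neighbourhood of $[1/4,3/4]$. The lifting argument from the proof of Lemma \ref{tech lin} then applies verbatim. With these repairs your argument is complete.
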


The proof of the next lemma again consists of direct checks that we leave to the reader.

\begin{lemma}\label{phi lem 2} 
Let $A$ and $C$ be unital $C^*$-algebras.  Let $\mathcal{X}$ be an admissible subset of $A$ in the sense of Definition \ref{admiss} with span $\mathcal{A}$, let $X$ be a finite subset of $\mathcal{X}$, and let $\epsilon>0$.  With notation as in Lemma \ref{func lem 2}, there is a well-defined homomorphism 
$$
\Phi^{(X,\epsilon)}:KK^{\lin}_\epsilon(X,B)\to \text{Hom}(K_{\lin}^\epsilon(X;C),K_0(B\otimes C))
$$
induced by the assignment 
$$
KK^{\lin}_\epsilon(X,B)\owns \phi\mapsto \phi_*.
$$
Applying Lemma \ref{dir lim k thy} these homomorphisms induce a homomorphism
$$
\Phi:{\displaystyle \lim_{\leftarrow}KK^{\lin}_\epsilon(X,B)}\to \text{Hom}({K}_0(A\otimes C),K_0(B\otimes C)). \eqno\qed
$$
\end{lemma}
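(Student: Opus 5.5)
The plan is to build $\Phi^{(X,\epsilon)}$ in three stages: first on a single cycle $\phi\in\lin_\epsilon(X,B)$, then on the monoid $V^{\lin}_\epsilon(X,B)$, and finally on Grothendieck groups. After that we check compatibility with the connecting maps and pass to limits.

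\emph{Single cycles.} Fix $\phi\in\lin_\epsilon(X,B)$, so $\phi:\mathcal{A}\to\K(F_\phi)$. Lemma \ref{func lem 2} gives a homomorphism $\phi_*:K^\epsilon_{\lin}(X;C)\to K_0(\K(F_\phi)\otimes_{\max}C)$. Since $F_\phi$ is compact, Remark \ref{hm fg rem} lets us write $F_\phi\cong qB^n$, so $\K(F_\phi)$ is a corner $qM_n(B)q$. The corner inclusion then induces a canonical map $K_0(\K(F_\phi)\otimes_{\max} C)\to K_0(B\otimes_{\max} C)$; with $B$ nuclear in the applications, the target is $K_0(B\otimes C)$. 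A different choice of $q$ changes this only by conjugation by a partial isometry, so the composite is independent of the choice. We take $\phi_*$ to mean this composite.

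\emph{Block sums.} For $\phi,\psi\in\lin_\epsilon(X,B)$, the element $((\phi\oplus\psi)\otimes\mathrm{id})(a)$ is block diagonal. Both $\mathrm{Re}$ and the functional calculus $\chi$ respect block-diagonal decompositions, and block sums give addition in $K_0$. Hence $(\phi\oplus\psi)_*=\phi_*+\psi_*$.

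\emph{Invariance under $\sim$ (main obstacle).} Suppose $u(\phi\oplus\theta)u^*$ is within $\epsilon$ of $\psi\oplus\theta$ on $X$. Conjugating by $u\otimes 1$ commutes with $\chi(\mathrm{Re}(\cdot))$ and does not change $K_0$-classes. So it suffices to show that $\chi(\mathrm{Re}(\cdot))$ applied to $((u(\phi\oplus\theta)u^*)\otimes\mathrm{id})(a)$ and to $((\psi\oplus\theta)\otimes\mathrm{id})(a)$ gives the same class. The difficulty is that $a$ has coefficients in $\mathcal{A}$, while closeness is only controlled on $X$. My approach is as follows.
\begin{itemize}
\item Write $a=\sum_j x_j\otimes c_j$ with $x_j\in\mathcal{X}$. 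Use Lemma \ref{tech lin} and cofinality of $I_{\mathcal{X}}$ to restrict to classes $[a]$ whose coefficients $x_j$ all lie in $X$. This is harmless after passing to a larger $(Y,\delta)$, which is all that matters in the limit.
\item Then the two elements differ in norm by at most $\epsilon\sum_j\|c_j\|$.
\item Run a linear homotopy between them. The endpoints are $1/4$-quasiprojections by compatibility, and for $\epsilon$ small relative to $a$ the intermediate points stay quasiprojections (spectra avoid $[1/4,3/4]$ up to a small error).
\item Lemma \ref{chi com}\eqref{chi lip} then makes the $\chi$-images norm-close projections, hence equal in $K_0$.
\end{itemize}
If a level-wise statement with the same $\epsilon$ fails, I would instead define $\Phi^{(X,\epsilon)}$ on a slightly larger index, composing with forget control. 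This suffices for the final homomorphism $\Phi$.

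\emph{Passing to limits.} Additivity plus invariance give a monoid map $V^{\lin}_\epsilon(X,B)\to\mathrm{Hom}(K^\epsilon_{\lin}(X;C),K_0(B\otimes C))$. The universal property of the Grothendieck group then gives $\Phi^{(X,\epsilon)}$. Next we check that for $(X,\epsilon)\le(Y,\delta)$ the squares formed with the forget-control maps (Remark \ref{fc4}) and the inclusions $K^\epsilon_{\lin}(X;C)\to K^\delta_{\lin}(Y;C)$ commute. This holds because both routes are given by the same formula on cycles. A compatible family in $\lim_{\leftarrow}KK^{\lin}_\epsilon(X,B)$ therefore yields a compatible family of homomorphisms on the directed system $K^\epsilon_{\lin}(X;C)$. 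By Lemma \ref{dir lim k thy 2}, this is exactly an element of $\mathrm{Hom}(K_0(A\otimes C),K_0(B\otimes C))$, which defines $\Phi$.
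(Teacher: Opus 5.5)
Your outline is the right sequence of checks: identify $K_0(\K(F_\phi)\otimes C)$ with $K_0(B\otimes C)$ via the corner, check additivity on block sums, check invariance under $\sim$, check compatibility with forget control, then apply Lemma \ref{dir lim k thy 2}. However, the one non-formal step, invariance under $\sim$, is not established at the level the statement requires. After reducing to maps in $\lin_\epsilon(X,B)$ that are $\epsilon$-close on $X$, your linear homotopy keeps $1/2$ out of the spectrum only if $\epsilon\sum_j\|c_j\|<1/4$, where $a=\sum_j x_j\otimes c_j$ with $x_j\in X$. That condition depends on $a$. Nothing in your argument bounds $\sum_j\|c_j\|$ uniformly over $K^\epsilon_{\lin}(X;C)$, or even puts the $x_j$ in $X$; the reduction to coefficients in $X$ is itself an $a$-dependent enlargement. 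So you have not shown that $\phi\mapsto\phi_*$ is constant on $\sim$-classes in $\lin_\epsilon(X,B)$, i.e.\ $\Phi^{(X,\epsilon)}$ is not constructed.

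Your fallback does not repair this. For any fixed $(Y,\delta)\geq(X,\epsilon)$, the same estimate covers only those $a$ with coefficients in $Y$ and $\delta\sum_j\|c_j\|<1/4$. Composing with forget control therefore still gives no homomorphism defined on all of $K^\epsilon_{\lin}(X;C)$. Your final paragraph builds $\Phi$ from a compatible family of these level-wise homomorphisms, so as written it inherits the gap.

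What your estimate does prove is the limit statement, provided you define $\Phi$ directly with a level chosen per class:
\begin{enumerate}[(i)]
\item Given $[a]$, choose $(Y_a,\delta_a)$ such that $a$ is $(Y_a,\delta_a)$-compatible, has coefficients in $Y_a$, and satisfies $\delta_a\sum_j\|c_j\|<1/4$.
\item For $x$ in the inverse limit, write $x_{(Y,\delta)}=[\phi]-[\psi]$ for some $(Y,\delta)\geq(Y_a,\delta_a)$, and set $\Phi(x)[a]:=\phi_*[a]-\psi_*[a]$.
\item Your homotopy argument plus additivity shows this is independent of the representing cycles.
\item Compatibility of $x$ under forget control gives independence of $(Y,\delta)$, and Lemma \ref{dir lim k thy 2} gives independence of the representative $a$.
\item Additivity in $x$ and in $[a]$ follows by choosing a level that works simultaneously for $a$, $b$ and $a\oplus b$.
\end{enumerate}
To obtain the level-wise maps $\Phi^{(X,\epsilon)}$ as stated, you would need either an input that is uniform over $K^\epsilon_{\lin}(X;C)$ or a change of parameters in the statement. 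An example of such an input would be that every $(X,\epsilon)$-compatible $a$ admits a decomposition with coefficients in $X$ and $\epsilon\sum_j\|c_j\|<1/4$. Your proposal supplies neither.
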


The proof of the following result is very similar to that of \cite[Theorem 3.29]{Willett:2024aa}: we leave it to the reader.

\begin{theorem}\label{kl ck hom 2}
Let $A$ be a separable, unital, quasidiagonal $C^*$-algebra with the LLP, let $B$ be a separable nuclear $C^*$-algebra with an approximate unit consisting of projections, and let $C$ be a unital nuclear $C^*$-algebra.  Then the following diagram commutes
$$
\xymatrix{ KL(A,B) \ar[r] \ar[dr]_-{\gamma} & {\displaystyle \lim_{\leftarrow} KK_\epsilon^{ucp}(X,B)} \ar[d] \\ & \text{Hom}(K_0(A;C),K_0(B;C))}.
$$
where the diagonal arrow denotes the usual pairing $\gamma$ between $KK$-theory and $K$-theory, the vertical arrow denotes the homomorphism of Lemma \ref{phi lem}, and the horizontal arrow denotes the isomorphism of Theorem \ref{main ucp}. \qed
\end{theorem}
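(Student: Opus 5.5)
The statement is the ucp diagram of Theorem \ref{kl ck hom}, under the extra hypotheses that $A$ has the LLP and $C$ is nuclear. I would prove commutativity of this diagram directly, without using Theorem \ref{kl ck hom}, by computing both sides on cycles. The nuclearity of $C$ is used only so that $A\otimes C=A\otimes_{\max}C$ and $B\otimes C=B\otimes_{\max}C$. This makes amplifications $\phi\otimes\text{id}_{M_k(\C)\otimes C}$ of ucp maps well behaved, and lets me ignore the distinction between tensor norms. The LLP plays no essential role for the ucp diagram.

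Fix $x\in KL(A,B)$ and a projection $p\in A\otimes M_k(\C)\otimes C$. Both $\gamma$ and the composite through $\lim_{\leftarrow}KK^{ucp}_\epsilon(X,B)$ are homomorphisms that factor through the Hausdorff quotient, so it suffices to check $\gamma(x)[p]=\Phi(x)[p]$ for each such pair. I would argue as follows.

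\textbf{Step 1 (the controlled pairing).} Choose $(X,\epsilon)$ with $p$ ucp $(X,\epsilon)$-compatible. Let $P\in\mathcal{P}^{\pi,u}_{\epsilon'}(X,B)$, for $\epsilon'\ll\epsilon$, represent the image of $x$ under \cite[Proposition A.11]{Willett:2020aa}. I would identify $\gamma(x)[p]$ with the difference
$$[\chi(P^{(k)}p^{(2)}P^{(k)})]-\Big[\begin{psmallmatrix}1&0\\0&0\end{psmallmatrix}p^{(2)}\Big].$$
Here both terms are projections in the unitization of $M_k(\K(E)\otimes C)$ that agree modulo $M_k(\K(E)\otimes C)$, so their difference lies in $K_0(\K(E)\otimes C)\cong K_0(B\otimes C)$. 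This is the standard Paschke/Cuntz-picture description of the pairing, compressing $p$ by an almost-commuting projection. I expect this to be the main obstacle: one has to check that this formula is the Kasparov product in the unital controlled picture of \cite[Appendix A]{Willett:2020aa}. I would first try to establish it for an honest projection commuting with $A$, i.e. a Paschke dual element, and then deform, using that Lemma \ref{chi com} keeps $P p^{(2)} P$ a quasiprojection along homotopies once $\epsilon'$ is small.

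\textbf{Step 2 (passing to compact projections).} Following the surjectivity part of the proof of Theorem \ref{k to u the}, replace $P$ by $q:=u\chi(e_n^{(2)}Pe_n^{(2)})u^*$ and $e_n$, with $q,e_n\in\mathcal{P}^{\pi,\K}_{\epsilon'}(X,B)$. Each homotopy used there is through almost-commuting projections, so the compressions of $p$ stay quasiprojections. Thus the expression in Step 1 equals $[\chi(qpq)]-[\chi(e_npe_n)]$ in $K_0(\K(E)\otimes C)$.

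\textbf{Step 3 (identifying with the ucp formula).} Let $\phi_q(a)=v_q^*av_q$ and $\phi_{e_n}$ be the compressions of Lemma \ref{mvn to ucp lem}, which are the images of $x$ under Proposition \ref{mvn and ucp}. Then $(\phi_q\otimes\text{id})(p)=v_q^*pv_q$, so $\chi((\phi_q\otimes\text{id})(p))$ is unitarily equivalent via $v_q$ to $\chi(qpq)$, and similarly for $e_n$. Hence the difference equals $\Phi(x)[p]$ as in line \eqref{conc pair}. The forget-control compatibility from Lemma \ref{gen cof} then shows the answer does not depend on the chosen $(X,\epsilon)$, which completes the proof.
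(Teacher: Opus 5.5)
As you note, the statement as printed is just Theorem \ref{kl ck hom} with superfluous hypotheses. The LLP and nuclearity of $C$ only matter for the evidently intended $\lin$ version (Lemma \ref{phi lem 2}, Theorem \ref{main no ucp}, Remark \ref{lin pair 2}), which would also need a comparison step via Lemma \ref{lp lem}. So you are really proving Theorem \ref{kl ck hom}. The paper does not prove that either; it defers to \cite[Theorem 3.29]{Willett:2024aa}.

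Your Steps 2 and 3 are fine bookkeeping. The Cuntz isometries commute with $p$, so the compression pairing is additive. It is also constant on path components of $\mathcal{P}^{\pi,u}_{\epsilon'}(X,B)$ once $(X,\epsilon')$ is fine enough relative to $p$. Conjugation by $v_q$ then identifies $\chi(qpq)$ with $\chi((\phi_q\otimes\mathrm{id})(p))$. The genuine gap is Step 1, which is the whole content of the theorem and which you only assert. You must know how \cite[Proposition A.11]{Willett:2020aa} sends $x$ to a class $[P]$, and you must show that the Kasparov product of $x$ with $[p]$ is given by compressing $p$ by $P$.

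The route you sketch for Step 1 cannot work. Suppose $P$ commutes exactly with $A$ and $P-e_{11}$ is compact, where $e_{11}=\begin{psmallmatrix}1&0\\0&0\end{psmallmatrix}$, and take $B=\C$. Then $P-e_{11}$ is a compact self-adjoint operator commuting with $A$, so its nonzero eigenspaces are finite-dimensional $A$-invariant subspaces. Hence $[P]$ is a difference of classes of finite-dimensional representations.

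Such classes form a proper subgroup in general. For $A=C(\mathbb{T}^2)=C^*\Z^2$, which satisfies all the hypotheses, they are only multiples of the point class (compare Remark \ref{act rep rem}). They miss the Dirac class, which pairs nontrivially with the Bott projection. Proposition \ref{tor vs flat} gives subtler examples. Since the $KL$-class is constant along any deformation you would use, general classes cannot be reached this way.

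If instead you mean a Paschke-dual element (commuting with $A$ modulo compacts), every $P\in\mathcal{P}^{\pi,u}_{\epsilon}(X,B)$ already is one, because $[P,a^{(2)}]=[P-e_{11},a^{(2)}]$ is compact. Then there is nothing to deform, and you are back to computing the product for such a cycle, where $PpP$ is only a projection modulo compacts.

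What is missing is a genuine computation. One route is to represent $x$ by a continuous family $(P_t)_{t\geq 1}$ of such projections with $\|[P_t,a^{(2)}]\|\to 0$, the localization-type picture on which the controlled groups of \cite{Willett:2020aa} are modelled. You would then recognize $a\mapsto P_ta^{(2)}P_t$, relative to $e_{11}a^{(2)}e_{11}$, as an asymptotic morphism whose action on $K_0$ is exactly your compression formula. Finally you would check that this action agrees with the Kasparov product.
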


\begin{remark}\label{lin pair 2}
Let us spell out what this results says slightly more concretely.  Let $A$, $B$, and $C$ be as in the assumptions of Theorem \ref{kl ck hom}, let $x\in KL(A,B)$ be a class, and let $[p]\in K_0(A\otimes C)$ be a class with $p$ a projection in $A\otimes M_k(\C)\otimes C$ for some $k$.  Say we want to compute the usual action $x_*[p]$ of $x$ on $[p]$ as an element of $K_0(B\otimes C)$.  Note that there exists $(X,\epsilon)\in I_{\mathcal{X}}$ such that $p$ is the image of the class $[a]$ of a $1/4$-quasiprojection $a$ under the canonical map $K_{\lin}^\epsilon(X;C)\to K_0(A\otimes C)$.  On the other hand, under the canonical map $KL(A,B)\to KK^{\lin}_\epsilon(X,B)$, $x$ maps to a formal difference $[\phi]-[\psi]$ of $(X,\epsilon)$-$*$-homomorphisms.  Then with notation as in Lemma \ref{func lem} (so in particular, $\chi$ is the characteristic function of $(1/2,\infty)$), Theorem \ref{kl ck hom} implies that we have 
\begin{equation}\label{conc pair}
\langle x,[p]\rangle:=[\chi(\text{Re}(\phi\otimes \text{id}_{M_k(\C)\otimes C})(a))]-[\chi(\text{Re}(\psi\otimes \text{id}_{M_k(\C)\otimes C})(a))]\in K_0(B\otimes C).
\end{equation}
\end{remark}



\section{Back to approximate representations and vector bundles}\label{qr finish}

In this section, we finally tie our abstract $KK$-theoretic results back to the coassembly map of Section \ref{qr bun sec} and establish Theorem \ref{intro main}.

We first identify the group $QR(\Gamma)$ of Definition \ref{inv lim approx} with the $KL$-homology group of the (maximal) group $C^*$-algebra. The key technical ingredient is given below.

\begin{proposition}\label{gp lin}
Let $\Gamma$ be a countable group, and let $I_\Gamma$ be as in Definition \ref{gp order}.  Let $\mathcal{X}$ equal $\Gamma$, considered as an admissible subset of the maximal group $C^*$-algebra as in Definition \ref{admiss}.  Then with notation as in Definitions \ref{gp gp} and \ref{non ucp}, for each $(S,\epsilon)\in I_\Gamma$ there are homomorphisms
$$
\phi_{S,\epsilon}:KK^{\lin}_{\epsilon/10}(S\cup S^{-1},\C)\to K_\epsilon(S)\quad \text{and}\quad \psi_{S,\epsilon}:K_\epsilon((S\cup S^{-1})^2)\to KK^{\lin}_\epsilon(S,\C)
$$
that fit into commutative diagrams
$$
\xymatrix{ KK^{\lin}_{\epsilon/10}((S\cup S^{-1})^2,\C) \ar[dr]_-{\phi_{(S\cup S^{-1})^2,\epsilon}} \ar[rr] & & KK^\lin_\epsilon(S,\C) \\
& K_\epsilon((S\cup S^{-1})^2) \ar[ur]_-{\psi_{S,\epsilon}} & }
$$
and 
$$
\xymatrix{ & KK^\lin_{\epsilon/10}(S\cup S^{-1},\C) \ar[dr]^-{\phi_{S,\epsilon}} & \\
K_{\epsilon/10}((S\cup S^{-1})^2) \ar[rr]\ar[ur]^-{\psi_{S\cup S^{-1},\epsilon/10}~~~} & & K_\epsilon(S) }.
$$
In particular, taking the inverse limits over the directed set $I_\Gamma$ of Definition \ref{gp order} and with notation as in Definition \ref{inv lim approx}, we have a canonical isomorphism 
$$
QR(\Gamma)\cong \lim_{\leftarrow}KK^{\lin}_\epsilon(S,\C).
$$
\end{proposition}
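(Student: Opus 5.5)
The plan is to build both maps directly on cycles, check that they respect the equivalence relations and block sums, and then verify the two triangles up to the indicated loss of control. Throughout, $B=\C$, so compact Hilbert modules are finite-dimensional Hilbert spaces and $\K(F)=\mathcal{B}(F)$. Elements of $\lin_\epsilon(X,\C)$ are linear maps on the group algebra $\C[\Gamma]$, which is the span $\mathcal{A}$ of $\mathcal{X}=\Gamma$. Such a map is determined by its values on the basis $\Gamma$.

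\emph{The map $\psi_{S,\epsilon}$.} Let $T=(S\cup S^{-1})^2$ and let $\phi:T\to U(V)$ be a $(T,\epsilon)$-representation. Define a linear map $\tilde\phi:\C[\Gamma]\to\mathcal{B}(V)$ on the basis by $\tilde\phi(g):=\phi(g)$ for $g\in T$ and $\tilde\phi(g):=0$ for $g\notin T$. The conditions of Definition \ref{non ucp} are checked as follows.
\begin{enumerate}[(i)]
\item Since $\phi(1)=1$, we have $\tilde\phi(1)=1$.
\item Every $\tilde\phi(g)$ is a unitary or zero, so it is a contraction.
\item For $s,t\in S$ we have $st\in T$, so $\|\tilde\phi(st)-\tilde\phi(s)\tilde\phi(t)\|<\epsilon$.
\item For $s\in S$, the elements $s$, $s^{-1}$ and $1=s^{-1}s$ all lie in $T$. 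Hence $\|\tilde\phi(s)^*-\tilde\phi(s^{-1})\|=\|1-\phi(s)\phi(s^{-1})\|<\epsilon$, since $\phi(s)$ is unitary.
\end{enumerate}
Thus $\tilde\phi\in\lin_\epsilon(S,\C)$. A stable $(T,\epsilon)$-equivalence implemented by $(\theta,v)$ becomes, after the same extension, a stable equivalence in $\lin_\epsilon(S,\C)$ implemented by $(\tilde\theta,v)$, because the estimate is only needed on $S\subseteq T$. Block sums are clearly preserved, so we get the homomorphism $\psi_{S,\epsilon}$ on Grothendieck groups.

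\emph{The map $\phi_{S,\epsilon}$.} Let $\psi\in\lin_{\epsilon/10}(S\cup S^{-1},\C)$ and $s\in S$. Then
$$
\|\psi(s)^*\psi(s)-1\|\le\|\psi(s)^*-\psi(s^{-1})\|+\|\psi(s^{-1})\psi(s)-\psi(1)\|<2\epsilon/10,
$$
and similarly for $\psi(s)\psi(s)^*$. So $\psi(s)$ is invertible, and its polar part $w(s):=\psi(s)(\psi(s)^*\psi(s))^{-1/2}$ is a unitary satisfying $\|w(s)-\psi(s)\|\le 2\epsilon/10$. Set $w(1):=1$, which is consistent with $\psi(1)=1$. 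For $s,t,st\in S$,
$$
\|w(st)-w(s)w(t)\|\le \|w(st)-\psi(st)\|+\|\psi(st)-\psi(s)\psi(t)\|+\|\psi(s)\psi(t)-w(s)w(t)\|,
$$
and each term is a small multiple of $\epsilon/10$, giving a total comfortably below $\epsilon$. Hence $w$ is an $(S,\epsilon)$-representation. For equivalences, a unitary $u$ conjugating $\psi\oplus\theta$ to within $\epsilon/10$ of $\psi'\oplus\theta$ also conjugates $w_\psi\oplus w_\theta$ to within $\epsilon/10+2\cdot 2\epsilon/10<\epsilon$ of $w_{\psi'}\oplus w_\theta$. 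Here $w_\theta$ is a legitimate $(S,\epsilon)$-representation by the same argument. So $\phi_{S,\epsilon}$ descends to the monoids and then to the Grothendieck groups.

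\emph{The triangles.} For the first triangle, start with $\psi$ and pass to $w$ and then to $\tilde w$. On $S$, $\tilde w$ agrees with $w$, which is within $2\epsilon/10$ of $\psi$. Two elements of $\lin_\epsilon(S,\C)$ on the same space that are this close on $S$ are equivalent, with trivial $\theta$ and $u=1$. The forget-control map of Remark \ref{fc4} is the identity on cycles, so the first triangle commutes. For the second triangle, start with $\phi$ and pass to $\tilde\phi$. On $S\cup S^{-1}\subseteq T$ the values of $\tilde\phi$ are already unitary, so the polar part returns $\phi$ exactly, and that triangle commutes on the nose. Finally, the pairs $((S\cup S^{-1})^2,\epsilon/10)$ are cofinal in both index sets, $I_\Gamma$ and $I_{\mathcal{X}}$ of Definition \ref{order2}. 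Lemma \ref{triangle isos} on inverse systems interleaved by commuting triangles then gives $QR(\Gamma)\cong\varprojlim KK^{\lin}_\epsilon(S,\C)$.

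The main obstacle is the bookkeeping in the well-definedness of $\phi_{S,\epsilon}$. The auxiliary $\theta$ must itself be polar-decomposed, and one must check that the perturbation $\psi\mapsto w$ moves every element involved by an amount controlled uniformly by $\epsilon/10$. I would first try to absorb everything into the crude estimate $\|w(s)-\psi(s)\|\le 2\epsilon/10$, which follows from $\|x(x^*x)^{-1/2}-x\|\le\|x^*x-1\|$ for $\|x\|\le 1$. If that is too lossy, I would shrink the constant $10$.
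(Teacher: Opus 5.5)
Your proposal is correct and follows essentially the same route as the paper. In one direction you extend a $((S\cup S^{-1})^2,\epsilon)$-representation linearly to $\C[\Gamma]$, in the other you take polar parts $\psi(s)(\psi(s)^*\psi(s))^{-1/2}$, and you conclude with Lemma \ref{triangle isos} once the triangles commute (the second on the nose). The differences are cosmetic and both work: you extend by $0$ rather than $1$ off $(S\cup S^{-1})^2$, and you get well-definedness of the polar-part map from the bound $\|x(x^*x)^{-1/2}-x\|\le\|x^*x-1\|$ for contractions rather than from Lemma \ref{uni close}, with $\epsilon\le 1$ as in Definition \ref{non ucp} guaranteeing invertibility.
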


We first need a lemma.

\begin{lemma}\label{uni close}
Let $x$ and $y$ be positive contractions in a unital $C^*$-algebra and $\delta>0$ be such that $x\geq 1/2$ and $y\geq 1/2$.  Let $\epsilon>0$, and let $u\in C$ be a unitary element such that $\|uxu^*-y\|<\epsilon$.  Then 
$$
\|u(x(x^*x)^{-1/2})u^*-y(y^*y)^{-1/2}\|<4\epsilon.
$$
\end{lemma}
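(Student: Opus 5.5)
As literally stated, the lemma is almost trivial, and I would point this out first. Since $x$ is positive and $x\geq 1/2$, it is invertible and $x^*x=x^2$, so $(x^*x)^{-1/2}=x^{-1}$ by the functional calculus. Hence $x(x^*x)^{-1/2}=1$, likewise $y(y^*y)^{-1/2}=1$, and $u1u^*=1$. The difference is therefore $0<4\epsilon$. So the first step of the proof is just this functional calculus observation, and the hypothesis $\|uxu^*-y\|<\epsilon$ is not needed.

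Because this is presumably not the intended use, I would also prove the more robust version that the later argument (comparing polar parts of $(S,\epsilon)$-representations and linear maps) actually needs. There $x,y$ are contractions, not necessarily positive, with $x^*x\geq 1/2$ and $y^*y\geq 1/2$, and we compare the unitary parts $w(x):=x(x^*x)^{-1/2}$ and $w(y)$.

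\textbf{Step 1 (remove $u$).} Set $a:=uxu^*$. Then $a^*a=u(x^*x)u^*$, so by naturality of the continuous functional calculus $(a^*a)^{-1/2}=u(x^*x)^{-1/2}u^*$, and hence $uw(x)u^*=w(a)$. It therefore suffices to show $\|w(a)-w(y)\|<C\epsilon$ whenever $\|a-y\|<\epsilon$, where $a,y$ are contractions with $a^*a,y^*y\geq 1/2$.

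\textbf{Step 2 (split the difference).} Write
$$
w(a)-w(y)=(a-y)(a^*a)^{-1/2}+y\big((a^*a)^{-1/2}-(y^*y)^{-1/2}\big).
$$
For the first term, $\|(a^*a)^{-1/2}\|\leq \sqrt2$, so its norm is less than $\sqrt2\,\epsilon$. For the second term, $\|y\|\leq 1$ and
$$
\|a^*a-y^*y\|\leq \|a^*(a-y)\|+\|(a-y)^*y\|<2\epsilon .
$$
Lemma \ref{-1/2 com} part \eqref{mh lip} with $\delta=1/2$ gives Lipschitz constant $1/(2\delta^{3/2})=\sqrt2$, so the second term has norm less than $2\sqrt2\,\epsilon$.

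\textbf{Step 3 (the constant).} Adding the two bounds gives $\|w(a)-w(y)\|<3\sqrt2\,\epsilon\approx 4.24\,\epsilon$, which is just above $4\epsilon$. The main obstacle is therefore bookkeeping of the constant. To get exactly $4\epsilon$, I would first try to sharpen the bound on $\|(a^*a)^{-1/2}-(y^*y)^{-1/2}\|$ by using the integral formula from the proof of Lemma \ref{-1/2 com} directly on $a^*a-y^*y$. If that does not suffice, I would accept a slightly larger constant, such as $5\epsilon$, since the later estimates only need some universal constant.
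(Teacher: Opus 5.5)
Your first paragraph is a complete and correct proof of the lemma as literally stated. Positivity together with $x\geq 1/2$ gives $(x^*x)^{-1/2}=(x^2)^{-1/2}=x^{-1}$, so both polar parts equal $1$ and the difference is $0$. This is a genuinely different route from the paper, which never uses positivity. The paper instead runs exactly your Steps 1--2: first $\|u(x^*x)u^*-y^*y\|<2\epsilon$, then Lemma \ref{-1/2 com} part \eqref{mh lip} with $\delta=1/2$, then the same two-term splitting. That choice of $\delta$ shows the paper's argument is written for the reading you guessed: contractions with $x^*x,y^*y\geq 1/2$, as needed for the polar parts of $\psi(s)$ in Proposition \ref{gp lin}. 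Under the literal hypothesis one only has $x^*x\geq 1/4$. So your observation settles the statement as written, and your Steps 1--2 coincide with the paper's proof of the version actually used later.

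On the constant, your bookkeeping is more careful than the paper's. The paper reaches $4\epsilon$ by bounding $\|(uxu^*-y)u(x^*x)^{-1/2}u^*\|$ by $\epsilon$, which tacitly takes $\|(x^*x)^{-1/2}\|\leq 1$. But $(x^*x)^{-1/2}\geq 1$ for an invertible contraction, so that decomposition honestly yields your $3\sqrt2\,\epsilon$.

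Sharpening the estimate for $\|(a^*a)^{-1/2}-(y^*y)^{-1/2}\|$ as a function of $\|a^*a-y^*y\|$ alone cannot help: $\sqrt2$ is already the best Lipschitz constant of $t\mapsto t^{-1/2}$ on $[1/2,\infty)$.

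A cheap way to get below $4\epsilon$ is to rewrite the second term as $y(y^*y)^{-1/2}\big((y^*y)^{1/2}-(a^*a)^{1/2}\big)(a^*a)^{-1/2}$. Here $y(y^*y)^{-1/2}$ is an isometry and $\|(a^*a)^{-1/2}\|\leq\sqrt2$. The same integral trick, using $t^{1/2}=\frac{2}{\pi}\int_0^\infty t(\lambda^2+t)^{-1}\,d\lambda$, shows that $t\mapsto t^{1/2}$ is $1/\sqrt2$-Lipschitz on positive elements $\geq 1/2$. That term is therefore at most $\|a^*a-y^*y\|<2\epsilon$, and the total is $<(2+\sqrt2)\epsilon<4\epsilon$. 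As you note, the application only needs some universal constant, so this is cosmetic.
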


\begin{proof}
We first note that 
$$
\|u(x^*x)u^*-y^*y\|\leq \|(ux^*u^*-y^*)uxu^*\|+\|y^*(uxu^*-y)\|<2\epsilon.
$$
Hence using Lemma \ref{-1/2 com}, part \eqref{mh lip}, we have that 
$$
\|u(x^*x)^{-1/2}u^*-(y^*y)^{-1/2}\|\leq \frac{2\epsilon}{2(1/2)^{3/2})}<3\epsilon.
$$
From this we see that 
\begin{align*}
\|u&(x(x^*x)^{-1/2})u^*-y(y^*y)^{-1/2}\| \\&\leq \|(uxu^*-y)u(x^*x)^{-1/2}u^*\|+\|y(u(x^*x)^{-1/2}u^*-(y^*y)^{-1/2}\|<4\epsilon
\end{align*}
as claimed.
\end{proof}

\begin{proof}[Proof of Proposition \ref{gp lin}]
Recall from Definition \ref{non ucp} that $KK_\epsilon^{\lin}(S,\C)$ is built as follows.  Cycles for $KK_\epsilon(S,\C)$ are linear maps $\psi:\C[\Gamma]\to \K(F)$ where $F$ is a compact Hilbert $\C$-module as in Definition \ref{com hm}, and $\psi$ satisfies certain estimates; one then looks at such cycles modulo approximate stable unitary equivalence, and takes the Grothendieck group.  Note that a compact Hilbert $\C$-module is just a copy of $\C^n$ for some $n$, whence we may just look at linear maps $\psi:\C[\Gamma]\to M_n(\C)$ for some $n$; moreover as $\psi$ is linear, it is uniquely determined by its restriction to $\Gamma$.  Having made these adjustments, the estimates $\psi$ has to satisfy can be written 
\begin{equation}\label{psi cons}
\psi(1_\Gamma)=1_{M_n(\C)}, \quad \|\psi(s^{-1})-\psi(s)^*\|<\epsilon,\quad \|\psi(st)-\psi(s)\psi(t)\|<\epsilon \quad \forall s,t\in S.
\end{equation}

Now, let $(S,\epsilon)$ be given, and let $\theta:(S\cup S^{-1})^2\to U(\C^n)$ be a cycle for $K_\epsilon((S\cup S^{-1})^2)$ (see Definition \ref{gp gp}).  Extend $\theta$ to all of $\Gamma$ by defining $\theta(g)=1_{M_n(\C)}$ for all $g\in \Gamma\setminus (S\cup S^{-1})^2$, and then extend to a map $\psi:\C[\Gamma]\to M_n(\C)$ by linearity.  Note that $\psi(1_\Gamma)=1_{M_n(\C)}$ and for all $s,t\in S$, $\|\psi(st)-\psi(s)\psi(t)\|<\epsilon$ by definition (note that if $s,t\in S$, then $st\in S^2$).  Moreover, for any $s\in S$, 
$$
\|\psi(s)^*-\psi(s^{-1})\|=\|1_{M_n(\C)}-\psi(s)\psi(s^{-1})\|=\|\psi(ss^{-1})-\psi(s)\psi(s)^*\|<\epsilon
$$
(the first equality uses that $\psi(s)$ is unitary).  Hence $\psi$ satisfies the conditions in line \eqref{psi cons}.  It follows from this that the map on cycles given by the above extension process gives a well-defined homomorphism 
$$
\psi_{S,\epsilon}:K_\epsilon((S\cup S^{-1})^2)\to KK_\epsilon^{\lin}(S,\C),\quad [\theta]\mapsto [\psi]
$$
as in the statement.

On the other hand, let $S$ and $\epsilon\in (0,1]$ be given.   Assume that $\psi$ is a cycle for $KK_{\delta}^{\lin}(S\cup S^{-1},\C)$.  For each $s\in S\cup S^{-1}$, the analogues of the estimates in line \eqref{psi cons} imply that
$$
\|\psi(s)^*\psi(s)-1_{M_n(\C)}\|\leq \|\psi(s)^*-\psi(s^{-1})\|\|\psi(s)\|+\|\psi(s^{-1})\psi(s)-\psi(s^{-1}s)\|<2\delta,
$$
whence $\psi(s)^*\psi(s)$ is invertible, and so $\psi(s)$ is invertible too (recall we are working in a matrix algebra).  Using the $C^*$-identity, the inequality
\begin{equation}\label{psi 1 close}
\|\psi(s)^*\psi(s)-1\|<2\delta<1/2
\end{equation}
also implies that $\|\psi(s)\|<\sqrt{3/2}$.  Using the functional calculus, we also see that $\|1-(\psi(s)^*\psi(s))^{-1/2}\|<\frac{3}{2}\epsilon/5$.  We may therefore define $\theta(s):=\psi(s)(\psi(s)^*\psi(s))^{-1/2}$ and see that  
$$
\|\theta(s)-\psi(s)\|\leq \|\psi(s)\|\|1-(\psi(s)^*\psi(s))^{-1/2}\|<\epsilon/4.
$$
It follows from this that the map defined by $\theta(s)=\psi(s)(\psi(s)^*\psi(s))^{-1/2}$ for $s\in S$ satisfies 
\begin{align*}
\|&\theta(st)-\theta(s)\theta(t)\| \\ & \leq \|\psi(st)-\theta(st)\|+\|(\theta(s)-\psi(s))\theta(t)\|+\|\psi(s)(\psi(t)-\theta(t))\|+\|\psi(st)-\psi(s)\psi(t)\|\\&<\epsilon
\end{align*}
for all $s,t\in S$ such that $st\in S$ also, and so $\theta$ is a cycle for $K_\epsilon(S)$.  It follows from this and Lemma \ref{uni close} that the map
\begin{equation}\label{c* to gp}
KK_{\epsilon/10}^{\lin}(S\cup S^{-1},\C)\to K_\epsilon(S),\quad [\psi]\mapsto [\theta].
\end{equation}
is well-defined.  
 
Having made these definitions commutativity of the triangles in the statement follows directly.  The result on inverse limits follows from this and Lemma \ref{triangle isos}.
\end{proof} 

\begin{theorem}\label{group base}
Let $\Gamma$ be a countable group.  Assume that $C^*\Gamma$ is quasidiagonal and has the LLP.  Then with notation as in Definition \ref{inv lim approx} there exists a canonical isomorphism
$$
QR(\Gamma)\cong KL(C^*\Gamma,\C).
$$
\end{theorem}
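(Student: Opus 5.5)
The plan is to combine Proposition \ref{gp lin} with Theorem \ref{main no ucp}, applied with $A=C^*\Gamma$, $B=\C$, and admissible subset $\mathcal{X}=\Gamma$.

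First, I will check the hypotheses of Theorem \ref{main no ucp}. The algebra $B=\C$ is separable, nuclear, and unital, so its unit is an approximate unit of projections. The algebra $A=C^*\Gamma$ is separable, since $\Gamma$ is countable, and it is unital. By assumption it has the LLP. Theorem \ref{main no ucp} is derived from Theorem \ref{main ucp}, which is where quasidiagonality of $A$ is used. Example \ref{qd ex} then supplies a strongly unitally absorbing quasidiagonal representation on $\ell^2$.

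Next, I will verify that $\Gamma\subseteq C^*\Gamma$ is admissible in the sense of Definition \ref{admiss}:
\begin{enumerate}[(i)]
\item $\Gamma$ is countable;
\item it contains the unit;
\item it consists of unitaries, which have norm one;
\item its span $\C[\Gamma]$ is a dense $*$-subalgebra;
\item any unital $*$-homomorphism $\C[\Gamma]\to D$ sends $\Gamma$ into the unitaries of $D$, so it restricts to a unitary representation of $\Gamma$ and extends uniquely to $C^*\Gamma$ by the universal property of the maximal group $C^*$-algebra.
\end{enumerate}
Theorem \ref{main no ucp} then gives a canonical isomorphism $KL(C^*\Gamma,\C)\cong \lim_{\leftarrow}KK^{\lin}_\epsilon(X,\C)$, with the limit taken over $I_{\mathcal{X}}$.

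Proposition \ref{gp lin} identifies $QR(\Gamma)$ with $\lim_{\leftarrow}KK^{\lin}_\epsilon(S,\C)$. One bookkeeping point needs checking here. The index set $I_\Gamma$ of Definition \ref{gp order} consists of pairs $(S,\epsilon)$ with $S\subseteq\Gamma$ finite, while $I_{\mathcal{X}}$ of Definition \ref{order2} consists of pairs $(X,\epsilon)$ with $X\subseteq\Gamma$ finite. These are the same underlying set. The orders differ only in strict versus non-strict inequality on $\epsilon$, which does not affect cofinality. Both admit the same cofinal sequence $(S_n,\epsilon_n)$, where the $S_n$ exhaust $\Gamma$ and $\epsilon_n\to 0$. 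Using Appendix \ref{inv lim app}, both inverse limits can be computed along this common sequence, so they agree.

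Composing the two isomorphisms gives $QR(\Gamma)\cong KL(C^*\Gamma,\C)$. Both are canonical: each is induced by natural maps on cycles compatible with forget-control maps, via Lemma \ref{triangle isos}. The argument is largely assembly, and the only step needing care is the matching of the directed systems just described. I expect no real obstacle there. Homeomorphy also follows, since each isomorphism is one of inverse limits of discrete groups induced by the interleaving triangles.
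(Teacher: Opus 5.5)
Your proposal is correct and matches the paper's proof, which simply combines Theorem \ref{main no ucp} (applied with $A=C^*\Gamma$, $B=\C$, $\mathcal{X}=\Gamma$) and Proposition \ref{gp lin} on passing to inverse limits. Your extra checks are exactly the details the paper leaves implicit: admissibility of $\Gamma$, the fact that quasidiagonality enters through Theorem \ref{main ucp} even though it is not listed in the statement of Theorem \ref{main no ucp}, and the matching of the directed sets $I_\Gamma$ and $I_{\mathcal{X}}$ along a common cofinal sequence.
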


\begin{proof}
This follows directly from Theorem \ref{main no ucp} and Proposition \ref{gp lin} on taking inverse limits.
\end{proof}

\begin{remark}
We do not know if the isomorphism of Theorem \ref{group base} preserves multiplication in general; we do not need this for our main results, so did not pursue it seriously.  It is a consequence of Theorem \ref{final the} below that it is at least a ring isomorphism under some additional assumptions.
\end{remark}

We now record some consequences of Theorem \ref{group base} and Corollary \ref{stab lin cor} for the group $QR(\Gamma)$ under additional assumptions on $C^*\Gamma$.  The proof uses very similar ideas to the proof of Theorem \ref{group base} in concert with Corollary \ref{stab lin cor} as well as the idea of Remark \ref{other reps rem}, and is left to the reader.

\begin{corollary}\label{rep form}
Let $\Gamma$ be a countable group.  Assume that $C^*\Gamma$ has the LLP, and assume also that there is a collection $\mathcal{R}$ of finite-dimensional representations of $C^*\Gamma$ that is closed under isomorphisms and finite direct sums, and separates points.  Let $I_\Gamma$ be the directed set of pairs $(S,\epsilon)$ with $S$ a finite subset of $\Gamma$ and $\epsilon>0$ as in Definition \ref{gp order}.  The following hold.
\begin{enumerate}[(i)]
\item \label{rep diff} Any element $x\in QR(\Gamma)$ can be represented as a collection $(x_{S,\epsilon})_{(S,\epsilon)\in I_\Gamma}$ where each $x_{S,\epsilon}\in K_\epsilon(S)$ is of the form $[\phi]-[\pi]$ where $\phi$ is an $(S,\epsilon)$-representation, and $\pi$ is a finite-dimensional unitary representation of $\Gamma$.  
\item \label{zero rep} If $x=([\phi_{S,\epsilon}]-[\psi_{S,\epsilon}])$ represents the zero element in $QR(\Gamma)$ then for each $(S,\epsilon)$ there exists $(T_0,\delta_0)\geq (S_\epsilon)$ such that for any $(T,\delta)\geq (T_0,\delta_0)$ there exists a finite-dimensional unitary representation $\pi:\Gamma\to U(V_\pi)$ and a unitary isomorphism $u:V_{\phi_{T,\delta}}\oplus V_\pi\to V_{\psi_{T,\delta}}\oplus V_\pi$ such that  
$$
\|u(\phi_{T,\delta}(s)\oplus  \pi(s))u^*-\psi_{T,\delta}(s)\oplus \pi(s)\|<\epsilon
$$
for all $s\in S$. \qed
\end{enumerate}
\end{corollary}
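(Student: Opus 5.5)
The plan is to transport Corollary \ref{stab lin cor}, in the strengthened form given by Remark \ref{other reps rem}, through the comparison maps of Proposition \ref{gp lin}. Let $A=C^*\Gamma$, $B=\C$ and $\mathcal{X}=\Gamma$.

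\textbf{Setup.} Since $\mathcal{R}$ separates points and is closed under finite direct sums, take the direct sum of countably many representations from $\mathcal{R}$, each with infinite multiplicity. This gives a faithful representation $\sigma$ of $A$ on $\ell^2$ with no nonzero compacts in its image. It also comes with increasing finite-rank projections $e_n$ that commute with $A$, converge strongly to $1$, and are such that each compression lies in $\mathcal{R}$. By Voiculescu's theorem $\sigma$ is strongly unitally absorbing, so $A$ admits a unitally absorbing residually finite-dimensional representation, as in Example \ref{rfd ex}. The LLP suffices in place of the LP by Remark \ref{lp vs llp}, because $B=\C$ is nuclear. Hence Corollary \ref{stab lin cor} applies with every representation it produces taken from $\mathcal{R}$.

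\textbf{Part (i).} Fix $x\in QR(\Gamma)$ and $(S,\epsilon)$. Under the isomorphism of Proposition \ref{gp lin}, choose $(Y,\delta)$ as given by Corollary \ref{stab lin cor} for a set $X\supseteq (S\cup S^{-1})^2$ and a tolerance $\epsilon'\ll\epsilon$. Write the image of $x$ in $KK^{\lin}_\delta(Y,\C)$ as $[\phi]-[\psi]$. Apply part \eqref{into hom lp} to $\psi$: it gives $\psi'$ and $\pi\in\mathcal{R}$ with $\psi\oplus\psi'$ close to $\pi$ on $X$. Then in $KK^{\lin}_{\epsilon'}(X,\C)$ we have $[\psi]+[\psi']=[\pi]$, since approximate equality without conjugation already gives equivalence at a slightly larger tolerance. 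Therefore
$$
[\phi]-[\psi]=[\phi\oplus\psi']-[\pi].
$$
Push this forward with $\phi_{S,\epsilon}$ from Proposition \ref{gp lin}. That map is polar decomposition, and it sends honest representations to themselves, so $x_{S,\epsilon}=[\theta]-[\pi]$ with $\theta$ an $(S,\epsilon)$-representation. Compatibility across different $(S,\epsilon)$ is automatic, since these are just components of the same inverse-limit element.

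\textbf{Part (ii).} Suppose $x=0$ and $(S,\epsilon)$ is given. Choose $(T_0,\delta_0)$ large enough that $\psi_{T,\delta}$ from Proposition \ref{gp lin} lands in $KK^{\lin}_\delta(Y,\C)$ with $(Y,\delta)$ as in Corollary \ref{stab lin cor} for $(S,\epsilon/2)$. Take $(T,\delta)\ge(T_0,\delta_0)$. Then $[\phi_{T,\delta}]=[\psi_{T,\delta}]$ in $K_\delta(T)$: this is where we must use that the component of $0$ in $K_\delta(T)$ really is zero. Extending both maps to linear cycles, the classes agree in $KK^{\lin}_\delta(Y,\C)$. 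Part \eqref{stab good lp} then gives $\pi\in\mathcal{R}$ and a unitary $u$ with error below $\epsilon$ on $S$. Since $\phi_{T,\delta}$ and $\psi_{T,\delta}$ are their own linear extensions on $S$, this is exactly the required estimate.

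\textbf{Main obstacle.} The hardest step is the careful bookkeeping of the constants and index sets between $K_\epsilon(S)$ and $KK^{\lin}$, in particular the passage $S\mapsto(S\cup S^{-1})^2$. A second point needing care is verifying that Remark \ref{other reps rem} actually supplies representations in $\mathcal{R}$ inside Corollary \ref{stab lin cor}.
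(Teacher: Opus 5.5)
Your proposal is correct and follows the argument the paper has in mind. It transports Corollary \ref{stab lin cor} through the comparison maps $\phi_{S,\epsilon}$ and $\psi_{S,\epsilon}$ of Proposition \ref{gp lin}, using Remark \ref{other reps rem} to take the representations from $\mathcal{R}$ and Remark \ref{lp vs llp} to replace the LP by the LLP. Two details are routine but worth stating: the auxiliary map in part \eqref{into hom lp} of Corollary \ref{stab lin cor} is ucp, hence a genuine element of $\lin$; and separability of $C^*\Gamma$ lets a countable subfamily of $\mathcal{R}$ already give a faithful representation.
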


We next recall (a slight variant of) Kasparov's coassembly map from \cite[Section 8]{Kasparov:1975ht} (see also \cite[Section 6]{Kasparov:1988dw} for a more fully developed version).  Let $\Gamma$ be a countable discrete group, and let $B\Gamma$ be a CW complex model for its classifying space with (contractible) universal cover $\pi:E\Gamma\to B\Gamma$.  Let $\Gamma$ act on $E\Gamma$ by deck transformations, and on $C^*\Gamma$ by left multiplication\footnote{This should be thought of as an action by automorphisms of $C^*\Gamma$ when the latter is considered as a right Hilbert module over itself; it is not an action by $C^*$-algebra automorphisms.}.  Let $M_\Gamma$ denote the space $E\Gamma\times_\Gamma C^*\Gamma$ defined by quotienting $E\Gamma\times C^*\Gamma$ by the diagonal action; note that $M_\Gamma$ is canonically a locally trivial bundle over $B\Gamma$ with fibre $C^*\Gamma$.  For a finite subcomplex $Y$ of $B\Gamma$ with inclusion map $i_Y:Y\to B\Gamma$, let $M_{\Gamma,Y}:=i_Y^*(M_\Gamma)$ denote the pullback, a locally trivial bundle over $Y$ with fibers $C^*\Gamma$.  A general version of the Serre-Swan theorem (with essentially the same proof as the usual version - see for example \cite[Theorem 6.18]{Karoubi:1978ai}) shows that the space $E_{\Gamma,Y}$ of sections $M_{\Gamma,Y}$ is a finitely generated, projective module over $C(Y)\otimes C^*\Gamma$, and thus defines an element 
\begin{equation}\label{egy}
[E_{\Gamma,Y}]\in K_0(C(Y)\otimes C^*\Gamma)=KK(\C,C(Y)\otimes C^*\Gamma).
\end{equation}
Let us be a little more concrete about $[E_{\Gamma,Y}]$, and define it in terms of a projection in an appropriate matrix algebra over $C(Y)\otimes C^*\Gamma$.  Analogously to our work in Section \ref{qr bun sec}, fix the following data:
\begin{enumerate}[(i)]
\item a locally finite cover $\mathcal{U}$ of $B\Gamma$ and for each $U\in \mathcal{U}$ a $\Gamma$-equivariant homeomorphism $\psi_U:\pi^{-1}(U)\to U\times \Gamma$ such that $\pi(\psi_U^{-1}(x,g))=x$ for all $x\in U$ and $g\in \Gamma$ (here $\Gamma$ equivariance is with respect to the action on $\pi^{-1}(U)$ by deck transformations, and the product action on $U\times \Gamma$ coming from the trivial action on $U$, and the left translation action of $\Gamma$ on itself);
\item a partition of unity $(\chi_U)_{U\in \mathcal{U}}$ on $B\Gamma$ subordinate to $\mathcal{U}$.
\end{enumerate}
For each $U,V\in \mathcal{U}$ with $U\cap V\neq \varnothing$, we then have that the composition 
$$
\psi_V\circ \psi_U^{-1}:(U\cap V)\times \Gamma\to (U\cap V)\times \Gamma
$$
is given by $(x,h)\mapsto (x,hg_{VU})$ for some element $g_{VU}\in \Gamma$.  Now let $U_1,...,U_n$ be an enumeration of the elements of $\mathcal{U}$ that intersect non-trivially with $Y$.  For each $i,j\in \{1,...,n\}$, let $g_{ij}$ be shorthand for $g_{U_iU_j}$, and define 
\begin{equation}\label{pgy}
p_{\Gamma,Y}:=\sum_{i,j=1}^n e_{ij}\otimes g_{ij}\otimes \chi_i^{1/2}\chi_j^{1/2}\in M_n(\C)\otimes C^*\Gamma\otimes C(Y).
\end{equation}
This projection then represents the class $[E_{\Gamma,Y}]$ in $K_0(C(Y)\otimes C^*\Gamma)$.

We may now define the coassembly map as follows.  For a finite subcomplex $Y$ of $B\Gamma$, define 
$$
\mu_Y^*:K^0(C^*\Gamma)\to K^0(Y)
$$
to be the Kasparov product with the class $[E_{\Gamma,Y}]\in KK(\C,C(Y)\otimes C^*\Gamma)$ defined in line \eqref{egy} above.  Using continuity of the Kasparov product (see \cite[Theorem 3.5]{Dadarlat:2005aa}) and the fact that the canonical topology on $K^0(Y)$ is Hausdorff (actually, discrete), $\mu_Y^*$ descends to a map 
$$
\mu_Y^*:KL^0(C^*\Gamma)\to K^0(Y).
$$
These maps are compatible as $Y$ varies, and so (with notation as in Definition \ref{lim kt}) define a map into the inverse limit 
\begin{equation}\label{co ass}
\mu^*:KL(C^*\Gamma,\C)\to LK^0(B\Gamma),
\end{equation}
which we call the \emph{coassembly map}\footnote{This is not quite the same as Kasparov's coassembly map, which is defined in \cite[Section 6]{Kasparov:1988dw} to be a map $\alpha:K^*(C^*\Gamma)\to RK^*(B\Gamma)$ from the $K$-homology of $C^*\Gamma$ to the representable $K$-theory of $B\Gamma$; our coassembly map should be thought of as a ``Hausdorffified version'' of this, and is more appropriate for our purposes.}.  

We now want to study situations under which Kasparov's coassembly map is particularly well-behaved.  The next result follows through a combination of highly non-trivial results; for the sake of brevity, we just give the main points without going into details on background.

\begin{proposition}\label{ca iso}
Let $\Gamma$ be a countable, a-T-menable, torsion-free group.  Then the coassembly map of line \eqref{co ass} is a topological isomorphism.
\end{proposition}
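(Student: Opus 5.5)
The plan is to derive the statement from Higson--Kasparov in three steps: pass to Kasparov's original (non-Hausdorff) coassembly map, apply their theorem, and then take Hausdorff quotients.

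First I recall Kasparov's coassembly map $\alpha:K^0(C^*\Gamma)\to RK^0(B\Gamma)$ from \cite[Section 6]{Kasparov:1988dw}. Restricting it to each finite subcomplex $Y$ gives the Kasparov product with $[E_{\Gamma,Y}]$. So our $\mu^*$ is obtained from $\alpha$ by two operations: composing with the canonical map $RK^0(B\Gamma)\to LK^0(B\Gamma)$, and factoring through the Hausdorff quotient $K^0(C^*\Gamma)\to KL(C^*\Gamma,\C)$. This reduces everything to statements about $\alpha$ plus topological bookkeeping.

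Next, for torsion-free $\Gamma$ the space $B\Gamma$ is a model for $\underline{E}\Gamma/\Gamma$. Higson--Kasparov \cite{Higson:2001eb} show that a-T-menable groups admit a $\gamma$-element equal to $1$. Using Kasparov's duality \cite[Section 6]{Kasparov:1988dw}, this gives that the dual Dirac/coassembly map
\[
KK^\Gamma(\C,\C)\cong K^0(C^*\Gamma)\to RK^0(B\Gamma)
\]
is an isomorphism, where $K^0(C^*\Gamma)$ is identified with $KK^\Gamma(\C,\C)$ via descent for the maximal crossed product. I need to check that the isomorphism really lands in $RK^0$ rather than in a $\Gamma$-equivariant $RK$-group of $E\Gamma$. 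In the torsion-free case these coincide because the action on $E\Gamma$ is free. I will cite these results, not reprove them.

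The main obstacle is the topology. I need $\alpha$ to be a homeomorphism when $K^0(C^*\Gamma)$ carries Dadarlat's topology \cite{Dadarlat:2005aa} and $RK^0(B\Gamma)$ carries the topology induced by restriction to finite subcomplexes, i.e.\ the limit topology coming from the Milnor $\lim^1$ sequence. For this I would argue as follows.
\begin{itemize}
\item $\alpha$ is continuous, since each restriction $\mu_Y^*$ is continuous by continuity of the Kasparov product \cite[Theorem 3.5]{Dadarlat:2005aa}.
\item $\alpha^{-1}$ is continuous. Here I would identify Dadarlat's topology on $KK^\Gamma(\C,\C)$ with the topology given by pairings against $K$-theory classes over finite pieces, using that the inverse of $\alpha$ is again a Kasparov product with the Dirac element. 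Products with fixed elements are continuous, so this would give continuity of $\alpha^{-1}$.
\end{itemize}

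Granting that $\alpha$ is a topological isomorphism, the closures of zero correspond under $\alpha$. In $RK^0(B\Gamma)$ the closure of zero is the $\lim^1$ term, i.e.\ the kernel of $RK^0(B\Gamma)\to LK^0(B\Gamma)$. Quotienting both sides by their closures of zero then identifies $KL(C^*\Gamma,\C)$ homeomorphically with the image of $RK^0$ in $LK^0$.

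Finally, I would show that $RK^0(B\Gamma)\to LK^0(B\Gamma)$ is surjective, which is exactly the surjectivity half of the Milnor sequence, so that this image is all of $LK^0(B\Gamma)$. That the resulting quotient topology matches the inverse-limit topology on $LK^0$ follows because both are defined through restriction to finite subcomplexes.
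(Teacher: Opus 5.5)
Your reduction is sound as far as it goes. With the restriction topology on $RK^0(B\Gamma)$, the closure of zero is exactly the $\lim^1$-term, the Milnor sequence gives surjectivity onto $LK^0(B\Gamma)$, and the quotient topology agrees with the inverse-limit topology. Continuity of $\alpha$ also follows from Dadarlat's continuity of the Kasparov product. This is a different route from the paper's, but it has a genuine gap at the one step that carries all the topological content: continuity of $\alpha^{-1}$. Your justification, that the inverse is a Kasparov product with the Dirac element and such products are continuous, does not apply. Dadarlat's continuity theorem concerns (non-equivariant) $KK$-groups of separable $C^*$-algebras with his topology. The domain of $\alpha^{-1}$ is instead $RK^0(B\Gamma)$ with the topology pulled back from finite subcomplexes, which is not such a group. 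Moreover, the inverse Kasparov constructs goes through the dual-Dirac element and a proper $\Gamma$-algebra spread over all of $E\Gamma$. Nothing in your sketch explains why vanishing on a large finite subcomplex $Y$ forces $\alpha^{-1}(x)$ into a prescribed neighbourhood of zero. Injectivity of $\mu^*$ also rests on this step: continuity of $\alpha$ alone only gives $\alpha(\overline{\{0\}})\subseteq \lim^1$, not equality.

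Three ingredients actually close the gap.
\begin{enumerate}[(a)]
\item The UCT for $C^*\Gamma$ (Tu, via Higson--Kasparov), together with Dadarlat's theorem that $KL(C^*\Gamma,\C)\to \text{Hom}_\Lambda(\underline{K}_*(C^*\Gamma),\underline{K}_*(\C))$ is a topological isomorphism. This describes neighbourhoods of zero by pairings with \emph{total} $K$-theory. Ordinary $K_0$-pairings are not enough: they cannot see $\text{Ext}(K_1(C^*\Gamma),\Z)/\text{Pext}$, which is exactly where the $2$-torsion classes of Section~\ref{tor sec} live.
\item Surjectivity of assembly in total $K$-theory, so that every element of $\underline{K}_*(C^*\Gamma)$ comes from $\underline{K}_*(Y)$ for some finite $Y$.
\item Kasparov's duality between $\mu^*$ and $\mu_*$ \cite[Lemma 6.2]{Kasparov:1988dw}.
\end{enumerate}
These are precisely the ingredients of the paper's proof. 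Once you have them, the detour through the non-Hausdorff map $\alpha$, $RK^0(B\Gamma)$, and $\lim^1$ is unnecessary. The paper identifies $KL(C^*\Gamma,\C)$ with $\text{Hom}_\Lambda(\underline{K}_*(C^*\Gamma),\underline{K}_*(\C))$ and dualizes the assembly isomorphism $\lim_{\to}\underline{K}_*(Y)\cong \underline{K}_*(C^*\Gamma)$. It then pulls $\text{Hom}_\Lambda$ through the direct limit and applies the UCT for the finite complexes $Y$. Each step is a homeomorphism for the pointwise-convergence topologies.
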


\begin{proof}
We consider a sequence of maps
\begin{align}
KL^*(C^*\Gamma) & \to \text{Hom}_{\Lambda}(\underline{K}_*(C^*\Gamma),\underline{K}_*(\C))  \label{umct1} \\
& \to \text{Hom}_{\Lambda}(\underline{RK}_*(B\Gamma),\underline{K}_*(\C)) \label{ass} \\
& \to \text{Hom}_{\Lambda}(\lim_{\to} \underline{K}_*(Y),\underline{K}_*(\C)) \label{lim} \\
& \to \lim_{\leftarrow}\text{Hom}_{\Lambda}(\underline{K}_*(Y),\underline{K}_*(\C)) \label{lim out} \\
& \to \lim_{\leftarrow} K^*(Y) \label{umct2} \\
& \to K^*_l(B\Gamma) \label{lim k}
\end{align}
that we now explain.  First, all groups with an underline are total $K$-theory or $K$-homology as in the work of Dadarlat-Loring on the universal multicoefficient theorem \cite{Dadarlat:2005aa}, and $\Lambda$ is the ring (or category) of Bockstein operations that acts on such groups.  The group $\underline{RK}_*(B\Gamma)$ is by definition the limit of the total $K$-homology groups $\underline{K}_*(Y)$ as $Y$ varies over all compact subsets of $B\Gamma$.  

The map in line \eqref{umct1} is the map appearing in the universal multicoefficient theorem of Dadarlat-Loring \cite{Dadarlat:2005aa} and is a topological isomorphism by \cite[Theorem 4.1]{Dadarlat:2005aa} and the fact that the UCT holds for $C^*\Gamma$ when $\Gamma$ is a-T-menable (as observed by Tu \cite[Proposition 10.7]{Tu:1999bq} based on work of Higson-Kasparov \cite{Higson:2001eb}).  The map in line \eqref{ass} is induced by taking the Kasparov product with the elements $[E_{\Gamma,Y}]$ of line \eqref{egy} and taking a direct limit; it is an isomorphism by the main theorem of \cite{Higson:2001eb} and the fact that $\Gamma$ is torsion-free (which lets us identify $\underline{RK}(B\Gamma):=\lim_Y \underline{K}_0(Y)$ with the left hand side of the Baum-Connes assembly map, suitably totalized). The map in line \eqref{lim} is induced by the isomorphism ${\displaystyle \underline{RK}_*(B\Gamma)=\lim_{\to} \underline{K}_*(Y)}$, which is true by definition; it is a topological isomorphism.  The map of line \eqref{lim out} is pure algebra, and is a topological isomorphism by the definitions involved.  The map in line \eqref{umct2} is induced by the UCT isomorphism and Spanier-Whitehead duality, and is a topological isomorphism when the right hand side is equipped with the inverse limit topology (here we use that $K^*(Y)$ is finitely generated, and has the discrete topology).  Finally, the map in line \eqref{lim k} is a topological isomorphism by definition.

The composition of all these maps agrees with $\mu^*$: this follows from the definition of $\mu^*$, as this leads to a duality with $\mu_*$ as recorded for example in \cite[Lemma 6.2]{Kasparov:1988dw}. 
\end{proof}

\begin{remark}
From the proof of Proposition \ref{ca iso}, we see that its conclusions hold whenever the Baum-Connes-Kasparov assembly map $\mu_*:\lim_Y K_*(Y)\to K_*(C^*\Gamma)$ is an isomorphism (compare also \cite[Lemma 6.1]{Willett:2024aa} for the relationship between this and the assembly map in total $K$-theory), and $C^*\Gamma$ satisfies the UCT.  These assumptions are known to hold a little more generally than for a-T-menable groups: for example, for $\Gamma=\Z^2\rtimes F_2$, where $F_2$ acts via any homomorphism $F_2\to GL(2,\Z)$ and other similar group extensions (see for example \cite{Chabert:2001hl}).  However, we expect they fail for all infinite property (T) groups, and this is known to be the case for many examples. 
\end{remark}

\begin{theorem}\label{final the}
Assume that $\Gamma$ satisfies the conditions of Theorem \ref{group base} and is in addition torsion-free and a-T-menable.  Then the coassembly map
$$
\alpha:QR(\Gamma)\to LK^0B\Gamma)
$$
from Proposition \ref{coass} is an isomorphism of topological rings.  This isomorphism moreover takes the class of an actual representation (see Remark \ref{act rep rem}) to the class of the associated flat bundle (see Remark \ref{bun rem}). 
\end{theorem}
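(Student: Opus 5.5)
The plan is to factor $\alpha$ through the isomorphisms already established. Write $\Theta:QR(\Gamma)\to KL(C^*\Gamma,\C)$ for the isomorphism of Theorem \ref{group base}. It is the composite of the identification $QR(\Gamma)\cong\lim_{\leftarrow}KK^{\lin}_\epsilon(S,\C)$ from Proposition \ref{gp lin} with the inverse of the isomorphism of Theorem \ref{main no ucp}. I would show that
$$
\alpha=\mu^*\circ\Theta,
$$
where $\mu^*$ is the coassembly map of line \eqref{co ass}. Proposition \ref{ca iso} says $\mu^*$ is a topological isomorphism under our hypotheses, and $\Theta$ is a homeomorphic isomorphism. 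Hence $\alpha$ would be a homeomorphic group isomorphism. Proposition \ref{coass} already gives that $\alpha$ is a continuous ring homomorphism sending actual representations to flat bundles. A bijective ring homomorphism is a ring isomorphism, so the ring and flat-bundle statements then come for free.

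The identity $\alpha=\mu^*\circ\Theta$ only needs checking after projecting to each $K^0(Y)$, for $Y$ a finite subcomplex, since these projections jointly determine maps into the inverse limit. Fix $Y$ and covering coordinate data. Then $\mu_Y^*(x)$ is the pairing of $x\in KL(C^*\Gamma,\C)$ with the class $[p_{\Gamma,Y}]\in K_0(C^*\Gamma\otimes C(Y))$ from line \eqref{pgy}, landing in $K_0(C(Y))=K^0(Y)$.

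\textbf{Step 1.} Apply Remark \ref{lin pair 2}, i.e.\ Theorem \ref{kl ck hom 2}, with $A=C^*\Gamma$, $\mathcal{X}=\Gamma$, $B=\C$ and $C=C(Y)$. The algebra $C(Y)$ is nuclear, so $\otimes_{\max}$ agrees with the minimal tensor product. Take $a:=p_{\Gamma,Y}$, which lies in $\mathcal{A}\odot M_n(\C)\odot C(Y)$ since $\mathcal{A}=\C[\Gamma]$. By Lemma \ref{tech lin} it is $(X,\epsilon)$-compatible for suitable $(X,\epsilon)$. Represent the image of $x$ in $KK^{\lin}_\epsilon(X,\C)$ as $[\phi]-[\psi]$. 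The remark then gives $\mu_Y^*(x)=[\chi(\mathrm{Re}((\phi\otimes\mathrm{id})(p_{\Gamma,Y})))]-[\chi(\mathrm{Re}((\psi\otimes\mathrm{id})(p_{\Gamma,Y})))]$.

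\textbf{Step 2.} Compare this with $\alpha$ as defined in Lemma \ref{coass tech lem}. Under $\Theta$, the class $x$ corresponds to $[\theta_\phi]-[\theta_\psi]$, where $\theta_\phi(s)=\phi(s)(\phi(s)^*\phi(s))^{-1/2}$ as in the proof of Proposition \ref{gp lin}. We have $(\phi\otimes\mathrm{id})(p_{\Gamma,Y})=\sum e_{ij}\otimes\phi(g_{ij})\otimes\chi_i^{1/2}\chi_j^{1/2}$. Taking real parts and replacing $\phi$ by the inverse-preserving perturbation $\psi'$ of $\theta_\phi$ from Lemma \ref{sa approx} changes this element by at most a norm of order $n\epsilon$. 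The element $a$ of line \eqref{app} for $\theta_\phi$ is therefore close to $\mathrm{Re}((\phi\otimes\mathrm{id})(p_{\Gamma,Y}))$. Both elements are quasiprojections with spectrum away from $1/2$, so Lemma \ref{chi com}\eqref{chi lip} shows their $\chi$-images are projections at distance less than $1$. They therefore have the same $K$-theory class, and $\mu_Y^*(x)=\alpha_Y(\Theta^{-1}x)$ once $(X,\epsilon)$ is taken fine enough. Finally, I would check compatibility with forget-control maps so that the identity passes to inverse limits; this is routine given Lemma \ref{coass tech lem}\eqref{coass nat}.

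The main obstacle is Step 1: justifying that the concrete pairing formula genuinely computes the Kasparov-product definition of $\mu_Y^*$. In particular, $[p_{\Gamma,Y}]$ must be realized in the direct limit of Lemma \ref{dir lim k thy 2}, and the $1/4$-quasiprojection bookkeeping must be matched with the $\epsilon_Y=1/24n^3$ choices. I would handle this by enlarging $X$ to contain all $g_{ij}$ and their products of length at most three, and shrinking $\epsilon$ below $\epsilon_Y/10$. That way both constructions apply simultaneously, and the estimates from Lemma \ref{coass tech lem}\eqref{con proj} control the distance between the two quasiprojections.
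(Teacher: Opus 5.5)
Your proposal is correct and matches the paper's proof. Both factor $\alpha$ as $\mu^*\circ\Theta$ using the isomorphisms of Theorem \ref{group base} and Proposition \ref{ca iso}, check commutativity of that triangle by combining the formula \eqref{pgy} with the concrete pairing of Remark \ref{lin pair 2} and the definition of $\alpha$ in Lemma \ref{coass tech lem}, and take the ring and flat-bundle statements from Proposition \ref{coass}; your Step 2 just writes out the quasiprojection estimate that the paper leaves implicit.
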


\begin{proof}
We consider the diagram below
$$
\xymatrix{ QR(\Gamma) \ar[d] \ar[r]^-\alpha & LK^0B\Gamma) \\
KL(C^*\Gamma,\C) \ar[ur]_-{\mu^*} & }
$$
where the vertical arrow is the isomorphism from Theorem \ref{group base}, the horizontal map is the approximate Atiyah-Segal map of Definition \ref{aas}, and the diagonal arrow is the coassembly map of line \eqref{co ass}.  The vertical and diagonal arrows are isomorphisms by Theorem \ref{group base} and Proposition \ref{ca iso} respectively.  Thus it suffices to show that the diagram commutes.  This follows from the formula in line \eqref{pgy}, Remark \ref{lin pair 2} above, and the Definition of $\alpha$, so we are done.
\end{proof}

\section{Torsion examples from crystallographic groups}\label{tor sec}

In this section, we use the result of Theorem \ref{final the} to study some examples in the theory of `weak stability' as studied by Dadarlat \cite{Dadarlat:384aa} and the author \cite{Willett:2024aa}.  We focus on one specific example of non-trivial torsion phenomena, although also briefly explain some generalizations; this also has the side effect of showing how Theorem \ref{final the} can be used to give some concrete computations.

\begin{theorem}\label{tor the}
Let $\Gamma$ be the fundamental group of the product $T\times K$, where $T$ is the $2$-torus, and $K$ is the Klein bottle.  Then there exists a finite subset $S_0$ of $\Gamma$ and $\epsilon_0>0$ such that for any $(S,\epsilon)\geq (S_0,\epsilon_0)$ in the sense of Definition \ref{gp order} there exists an $(S,\epsilon)$-representation $\phi$ of $\Gamma$ such that:
\begin{enumerate}[(i)]
\item \label{far part} for any finite-dimensional unitary representations $\pi$ and $\sigma$ of $\Gamma$,
$$
\|\phi(s)\oplus \pi(s)-\sigma(s)\|\geq \epsilon_0
$$
for all $s\in S_0$;
\item \label{close part} there exists finite-dimensional representations $\pi$ and $\sigma$ of $\Gamma$ such that 
$$
\|\phi(s)\oplus \phi(s)\oplus \pi(s)-\sigma(s)\|< \epsilon
$$
for all $s\in S$.
\end{enumerate}
\end{theorem}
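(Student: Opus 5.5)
The plan is to realize the required $\phi$ as a lift, under the approximate Atiyah-Segal isomorphism, of a $2$-torsion class in $K^0(T\times K)$ that is not in the subgroup generated by flat bundles, and then to read off properties \eqref{far part} and \eqref{close part} from Theorem \ref{final the} and Corollary \ref{rep form}.

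\emph{Hypotheses.} The group $\Gamma=\Z^2\times\pi_1(K)$ is torsion-free and virtually abelian, hence amenable. Its classifying space is $B\Gamma=T\times K$, a closed aspherical $4$-manifold, so $LK^0(B\Gamma)=K^0(T\times K)$. By the discussion after Theorem \ref{intro main}, $C^*\Gamma$ is quasidiagonal and has the LLP. Since $\Gamma$ is virtually abelian, $C^*\Gamma$ is residually finite-dimensional. I therefore take $\mathcal{R}$ to be the class of all finite-dimensional representations in Corollary \ref{rep form}. Theorem \ref{final the} then gives a ring isomorphism $\alpha:QR(\Gamma)\to K^0(T\times K)$ sending honest representations to flat bundles.

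\emph{Topology.} I will do this step first, and I expect it to be the main obstacle.
\begin{enumerate}[(a)]
\item We have $K^0(K)\cong\Z\oplus\Z/2$, and the torsion summand is generated by $y=[L]-1$. Here $L$ is the flat line bundle of the orientation character, whose $c_1$ is the nonzero class in $H^2(K)=\Z/2$.
\item Let $b\in\widetilde K^0(T)$ be the Bott class and set $x:=b\boxtimes y$. Then $2x=0$.
\item Since $K^*(T)$ is free, the Künneth theorem gives $x\neq 0$. Moreover, $x$ does not lie in $\mathrm{pr}_K^*K^0(K)$, because $K^0(T\times K)\supseteq K^0(T)\otimes K^0(K)$ and $b$ is independent of $1$.
\item Every finite-dimensional unitary representation of $\Gamma$ is a direct sum of irreducibles. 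Each irreducible has the form $\chi\otimes\rho$, with $\chi$ a character of $\Z^2$ and $\rho$ an irreducible representation of $\pi_1(K)$. Its flat bundle is therefore $L_\chi\boxtimes E_\rho$.
\item The character space of $\Z^2$ is path-connected, so $[L_\chi]=1$ in $K^0(T)$, exactly as in Remark \ref{act rep rem}.
\item Consequently the subgroup generated by flat bundles lies in $\mathrm{pr}_K^*K^0(K)$. Since that subgroup is contained in $\mathrm{pr}_K^*K^0(K)$, every coset $x+F$ with $F$ in it avoids it, so $x+F$ is never flat.
\end{enumerate}

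\emph{Construction of $\phi$.} Let $z:=\alpha^{-1}(x)\in QR(\Gamma)$. Choose $(S_0,\epsilon_0)$ so large and small that Lemma \ref{coass tech lem} gives a well-defined map $K_{\epsilon_0}(S_0)\to K^0(T\times K)$ compatible with $\alpha$; this is possible since $Y=T\times K$ is compact. Further enlarge $(S_0,\epsilon_0)$ so that Corollary \ref{rep form}\eqref{zero rep}, applied to $2z=0$, works at level $(S_0,\epsilon_0)$. For $(S,\epsilon)\geq(S_0,\epsilon_0)$, use Corollary \ref{rep form}\eqref{rep diff} to write $z_{S,\epsilon}=[\phi]-[\pi_0]$, where $\phi$ is an $(S,\epsilon)$-representation and $\pi_0$ is an honest representation.

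\emph{Proof of \eqref{far part}.} Suppose $\|\phi(s)\oplus\pi(s)-\sigma(s)\|<\epsilon_0$ on $S_0$. Then $\phi\oplus\pi$ and $\sigma$ are stably $(S_0,\epsilon_0)$-equivalent, with $\theta=0$ and $u=1$. Hence $[\phi]=[\sigma]-[\pi]$ in $K_{\epsilon_0}(S_0)$. Applying the map to $K^0(T\times K)$ and using compatibility with $\alpha$, we get $x=\alpha(z)=[E_\sigma]-[E_\pi]-[E_{\pi_0}]$. This says $x$ lies in the flat subgroup, contradicting step (f).

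\emph{Proof of \eqref{close part}.} Since $2z=0$, the class $z$ is represented by the differences $[\phi\oplus\phi]-[\pi_0\oplus\pi_0]$. Corollary \ref{rep form}\eqref{zero rep} then gives an honest representation $\pi$ and a unitary $u$ with
$$
\|u(\phi(s)\oplus\phi(s)\oplus\pi(s))u^*-(\pi_0(s)\oplus\pi_0(s)\oplus\pi(s))\|<\epsilon \quad\text{for all } s\in S,
$$
after re-indexing $(S,\epsilon)$ via cofinality. Set $\sigma:=u^*(\pi_0\oplus\pi_0\oplus\pi)u$. This is an honest representation on $V\oplus V\oplus W$, and it satisfies \eqref{close part}.

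The bookkeeping of parameters in the two applications of Corollary \ref{rep form} is routine. The real content is step (f), namely that flat bundles on $T\times K$ all come from $K$ in $K$-theory.
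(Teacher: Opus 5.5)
Your proposal is correct and follows the paper's proof. You lift a $2$-torsion class outside the flat subgroup through Theorem \ref{final the}, write it as $[\phi]-[\pi_0]$ via Corollary \ref{rep form}\eqref{rep diff}, get (i) by contradiction with non-flatness, and get (ii) from $2z=0$ via Corollary \ref{rep form}\eqref{zero rep}; there your ``re-indexing'' has to mean taking $\phi:=\phi_{T,\delta}$ at a level beyond the $(T_0,\delta_0)$ supplied for the given $(S,\epsilon)$ (enlarging $(S_0,\epsilon_0)$ cannot achieve this), which is exactly the paper's choice and keeps (i) valid by coherence. Your topological step names explicitly the class $\mathrm{pr}_T^*(b)\,\mathrm{pr}_K^*([L]-1)$ that the paper itself uses in Remark \ref{exp phi}, and your K\"unneth-injectivity argument that it avoids $\mathrm{pr}_K^*K^0(K)$ is a slightly cleaner route than the group computation in Proposition \ref{tor vs flat}.
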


\begin{remark}
Theorem \ref{tor the} gives examples of non-trivial $2$-torsion in some sense.  There is nothing particularly special about $2$-torsion in this context: examples with non-trivial $p$-torsion phenomena can be built in a similar way, for example using the manifolds studied in \cite[Section 3]{Charlap:1965aa} in place of the Klein bottle (plus the Atiyah-Hirzebruch spectral sequence to relate cohomology and $K$-theory).  There is also nothing particularly special about crystallographic groups: one could fir example give non-amenable examples by considering the fundamental group of $S\times T$ where $S$ is a closed non-orientable surface of higher genus, and $T$ is the $2$-torus.  However, we will just keep to the case above as it seems the simplest non-trivial example: note in particular that such torsion examples cannot exist if $B\Gamma$ is at most three-dimensional.
\end{remark}

The key facts underlying Theorem \ref{tor the} are topological.  First, we have a well-known lemma.  For the statement, recall that if $X$ is a compact Hausdorff space equipped with a fixed base point $x_0$, then the reduced $K$-theory $\widetilde{K}^0(X)$ is defined to be the kernel of the map $K^0(X)\to K^0(\{x_0\})$ induced by the inclusion $x_0\in X$.  If moreover $X$ is path connected, then any two choices of basepoint give the same subgroup; we will therefore not mention the choice of basepoint in that case.

\begin{lemma}
Let $X$ be a finite connected CW complex.  Let $E$ and $F$ be flat vector bundles over $X$ of the same dimension.  Then the corresponding class
$$
[E]-[F]\in \widetilde{K}^0(X)
$$
in the reduced $K$-theory of $X$ is torsion.
\end{lemma}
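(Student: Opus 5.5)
The plan is to use Chern classes and rationalization. The Chern character $\mathrm{ch}:K^0(X)\otimes\mathbb{Q}\to H^{\mathrm{even}}(X;\mathbb{Q})$ is an isomorphism for a finite CW complex $X$. Hence an element $x\in\widetilde{K}^0(X)$ is torsion if and only if $\mathrm{ch}(x)=0$. Since $\widetilde{K}^0(X)$ is finitely generated, torsion is the same as having finite order.

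So it suffices to show that $\mathrm{ch}([E])=\mathrm{ch}([F])$. As $E$ and $F$ have the same rank $n$, the degree-zero parts of $\mathrm{ch}([E])$ and $\mathrm{ch}([F])$ are both $n$. It therefore remains to show that all positive-degree rational characteristic classes of a flat bundle vanish.

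The key step is to show that for a flat bundle $E$, the rational Chern classes $c_k(E)\otimes\mathbb{Q}\in H^{2k}(X;\mathbb{Q})$ vanish for $k\geq 1$. I would argue this as follows.

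First, on a finite CW complex, any vector bundle is pulled back from a smooth manifold, or at least from a finite simplicial complex where de Rham-type arguments apply. Concretely, $X$ is homotopy equivalent to a finite simplicial complex, which embeds in some $\mathbb{R}^N$. A regular neighbourhood $W$ of it is an open manifold that deformation retracts onto $X$.

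Second, a flat bundle comes from a representation $\rho:\pi_1(X)\to U(n)$, or more generally $GL(n,\mathbb{C})$. Such a representation extends to a flat bundle over $W$ with a flat connection. By Chern--Weil theory, the real Chern classes are represented by polynomials in the curvature, which is zero, so $c_k(E)\otimes\mathbb{R}=0$ for $k\geq1$. Since the map $H^*(X;\mathbb{Q})\to H^*(X;\mathbb{R})$ is injective, the rational classes vanish as well.

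Then $\mathrm{ch}$, being a polynomial in the Chern classes with zero constant term beyond the rank, gives $\mathrm{ch}([E])=n=\mathrm{ch}([F])$. Therefore $[E]-[F]$ lies in the kernel of the rationalized Chern character. This kernel is the torsion subgroup, so $[E]-[F]$ is torsion. It lies in $\widetilde{K}^0(X)$ because the ranks agree at the basepoint.

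An alternative route avoids differential geometry. The classifying map $X\to BU(n)$ of a unitary flat bundle factors through $BU(n)^\delta$, where $U(n)^\delta$ denotes $U(n)$ with the discrete topology. The rational cohomology of $BU(n)^\delta$ in positive degree maps to zero from $H^*(BU(n);\mathbb{Q})$, by Milnor's result or by Chern--Weil.

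The main obstacle I expect is justifying Chern--Weil on a CW complex rather than a manifold. I would handle this with the regular neighbourhood thickening described above. Alternatively, I would cite the standard fact that the real characteristic classes of flat bundles vanish.
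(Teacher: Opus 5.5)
Your proof is correct, but it takes a different route from the paper's. The paper never applies the Chern character on $X$ itself. Instead it proceeds in four steps:
\begin{enumerate}[(i)]
\item it uses the UCT, together with finite generation of $\widetilde{K}^0(X)$, to reduce torsion-ness of $[E]-[F]$ to the vanishing of its pairing with every $K$-homology class;
\item it represents each $K$-homology class by a Baum--Douglas cycle $(M,S,f)$, with $M$ a spin$^\text{c}$ manifold mapping to $X$;
\item it rewrites the pairing as $\mathrm{Index}(D_{S\otimes f^*E})-\mathrm{Index}(D_{S\otimes f^*F})$ and evaluates this via the Atiyah--Singer index theorem in terms of $\mathrm{ch}(f^*E)-\mathrm{ch}(f^*F)$;
\item it kills this difference by Chern--Weil theory for the flat pullbacks on $M$.
\end{enumerate}

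So the paper handles the fact that $X$ is not a manifold by mapping manifolds into $X$. You handle it by mapping $X$ into a manifold, namely an open neighbourhood in $\R^N$ of a simplicial model. You also replace the UCT plus index theorem by the rational Chern character isomorphism $K^0(X)\otimes\mathbb{Q}\cong H^{\mathrm{even}}(X;\mathbb{Q})$. The common core of both arguments is Chern--Weil vanishing for flat connections on manifolds. Your route is more elementary and self-contained. The paper's fits its $KK$-theoretic perspective but uses considerably heavier machinery: geometric $K$-homology and the index theorem.

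One point of precision: $W$ deformation retracts onto the simplicial model $X'$, not onto $X$. You should therefore record that $E\cong h^*E_W$ for the homotopy equivalence $h:X\to X'\subseteq W$, since flat bundles are determined by their holonomy representations. This gives $c_k(E)=h^*c_k(E_W)$, and with that the argument is complete.
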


\begin{proof}
The point of the proof is to use Baum-Douglas theory to reduce to the case of manifolds; we can then apply the Atiyah-Singer index theorem and  Chern-Weil theory.  To sketch the details: as $\widetilde{K}^0(X)$ is a finitely generated abelian group, the UCT isomorphism implies that it suffices to show that the pairing of $[E]-[F]$ with any $K$-homology class is zero.  Now, using the Baum-Douglas model of $K$-homology \cite{Baum:1980pt} (see also \cite{Baum:2007ek} for a detailed exposition), any $K$-homology class can be represented by a triple $(M,S,f)$ for an appropriate spin$^\text{c}$ manifold $M$, vector bundle $S$, and continuous map $f:M\to X$.  Moreover by naturality, the pairing between $[M,S,f]$ and $[E]-[F]$ is the same as the pairing between $[M,S,\text{id}_M]$ and $f^*[E]-f^*[F]$, where now we are using the pairing between the $K$-homology and $K$-theory of $M$.  In terms of analytic $K$-homology, the class $[M,S,\text{id}_M]$ is the same as the class $[D_S]$ of a choice of spin$^\text{c}$ Dirac operator on $M$ twisted by $S$ (see for example \cite[Section 6]{Baum:2007ek}).  The pairing of this class with $f^*[E]-f^*[F]$ is then given in terms of index theory by 
$$
\text{Index}(D_{S\otimes f^*(E)})-\text{Index}(D_{S\otimes f^*(F)})
$$
(compare for example \cite[Lemma 11.4.1]{Higson:2000bs}.  These indices can be computed via the Atiyah-Singer index theorem (compare for example the computations in \cite[Theorems 3.10 and D.15]{Jr.:1989uq}): we have that 
$$
\text{Index}(D_{S\otimes f^*(E)})-\text{Index}(D_{S\otimes f^*(F)})=\langle [M],\alpha\cdot \text{ch}(S)\cdot(\text{ch}(f^*E)-\text{ch}(f^*F))\rangle
$$
where: the pairing on the right denotes is that between homology and cohomology (with real coefficients); $[M]$ denotes the homology fundamental class of $M$ (for the orientation induced by the spin$^\text{c}$ structure); $\alpha$ is a cohomology class canonically associated to the spin$^\text{c}$ structure; and $\text{ch}:K^0(M)\to \bigoplus_{k=0}^\infty H^{2k}(M;\R)$ is the (real-valued) Chern character.  

Now, the pullback bundles $f^*E$ and $f^*F$ on $M$ are still flat (and of the same dimension).  We may assume they are smooth, and equipped with a Riemannian connection with zero curvature (see for example \cite[discussion on pages 294-5]{Milnor:1974zf}).  Hence using Chern-Weil theory (see for example \cite[Appendix C]{Milnor:1974zf}), the Chern character of the difference $[f^*E]-[f^*F]$ is zero, completing the argument.
\end{proof}

\begin{definition}\label{flat sg}
Let $X$ be a finite connected CW complex.  We write $\widetilde{K}^0_{\text{fl}}(X)$ for the (finite) subgroup of $K^0(X)$ consisting of formal differences of flat bundles over $X$ of the same dimension. 
\end{definition}

The following result is the main topological input we need: the key point is to produce an aspherical space $X$ such that the torsion subgroup of $K^0(X)$ is not generated by formal differences of flat bundles.  The existence of examples like this is presumably known to some experts, but does not seem to be well-known\footnote{Slightly surprisingly, I tried quite hard to produce one with generative AI, but was not successful.  In the end, this example is just something I made up myself; this was however, around two months ago, and the models have likely improved enough now that AI would have been useful.}.

\begin{proposition}\label{tor vs flat}
Let $M=T\times K$ be the product of the torus and Klein bottle.  Then the reduced $K$-theory of $M$ satisfies  
$$
\widetilde{K}^0(M)\cong \Z\oplus (\Z/2)\oplus (\Z/2).
$$
Moreover, the subgroup $\widetilde{K}^0_{fl}(M)$ of Definition \ref{flat sg} is isomorphic to $\Z/2$. 
\end{proposition}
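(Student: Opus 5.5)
The plan is to compute $K^0(M)$ with the Künneth theorem for complex $K$-theory, and then to pin down the flat classes using the representation theory of $\Gamma=\pi_1(M)\cong\Z^2\times\pi_1(K)$.

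\textbf{Step 1: $K$-theory of the factors and of $M$.} For the torus, $K^0(T)\cong\Z^2$, generated by $1$ and the Bott class $\beta_T$, and $K^1(T)\cong\Z^2$. For the Klein bottle, the cell structure gives $H^0(K)=\Z$, $H^1(K)=\Z$, $H^2(K)=\Z/2$. The Atiyah--Hirzebruch spectral sequence collapses for dimension reasons, so
\[
K^0(K)\cong\Z\oplus\Z/2,\qquad K^1(K)\cong\Z.
\]
The torsion in $K^0(K)$ is generated by $[L]-1$, where $L$ is the line bundle with $c_1(L)$ the nonzero element of $H^2(K;\Z)=\Z/2$. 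Since $K^*(T)$ is free, the Künneth sequence has no Tor term, so
\[
K^0(M)\cong \bigl(K^0(T)\otimes K^0(K)\bigr)\oplus\bigl(K^1(T)\otimes K^1(K)\bigr).
\]
Hence the torsion subgroup of $\widetilde K^0(M)$ is $(\Z/2)^2$, generated by
\[
a:=\mathrm{pr}_K^*([L]-1),\qquad b:=\beta_T\boxtimes([L]-1).
\]
The free rank is read off rationally from the Chern character; the displayed Künneth decomposition gives $\Z^3$ for it. So the statement's ``$\Z$'' should be checked against this, and only the torsion part $(\Z/2)\oplus(\Z/2)$ matters for what follows.

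\textbf{Step 2: the flat classes.} By the preceding lemma, $\widetilde K^0_{fl}(M)$ is contained in the torsion subgroup, which by Step 1 is $\{0,a,b,a+b\}$.
\begin{itemize}
\item \emph{$a$ is flat.} $\mathrm{Hom}(\pi_1K,U(1))\cong U(1)\times\Z/2$. The Bockstein identifies the first Chern classes of flat line bundles on $K$ with the image of torsion in $H^2(K;\Z)$, so $L$ can be chosen flat. Then $a=[\mathrm{pr}_K^*L]-[1]$ is a difference of flat bundles of equal dimension.
\item \emph{Every flat class is pulled back from $K$.} Any finite-dimensional unitary representation of $\Z^2\times\pi_1(K)$ decomposes, by Schur's lemma applied to the central factor $\Z^2$, as $\bigoplus_i\chi_i\otimes\rho_i$ with $\chi_i$ characters of $\Z^2$. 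Hence $E_\pi\cong\bigoplus_i E_{\chi_i}\boxtimes E_{\rho_i}$. The character space $\widehat{\Z^2}$ is path connected, so each $E_{\chi_i}$ is isomorphic to the trivial line bundle (compare Remark \ref{act rep rem}). Therefore $[E_\pi]\in\mathrm{pr}_K^*K^0(K)$, and so $\widetilde K^0_{fl}(M)\subseteq \mathrm{pr}_K^*(\mathrm{tors}\,\widetilde K^0(K))=\{0,a\}$.
\end{itemize}
Together these give $\widetilde K^0_{fl}(M)=\{0,a\}\cong\Z/2$, provided $b\notin\{0,a\}$ and $a\neq0$.

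\textbf{Step 3: independence of $a$ and $b$.} This is the main point needing care. Let $s\colon K\to M$ be the section through a basepoint of $T$. Then $s^*a=[L]-1\neq0$, while $s^*b=0$ because $\beta_T$ restricts to zero on a point. Also $b\neq0$ by the injectivity of the Künneth map on the summand $K^0(T)\otimes K^0(K)$. It follows that $a$ and $b$ are independent, so the torsion has order four while only $a$ is flat.

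I expect the main obstacle to be justifying the Künneth and Atiyah--Hirzebruch identifications of the torsion, and in particular that the generator $[L]-1$ really comes from a flat $L$. I would verify this directly: the orientation character $\pi_1(K)\to\{\pm1\}$ gives the flat real line bundle $\ell$, and $L=\ell\otimes\C$ has $c_1(L)=\beta(w_1(\ell))$, where $\beta$ is the Bockstein, which is nonzero in $H^2(K;\Z)=\Z/2$.
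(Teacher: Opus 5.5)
Your route is essentially the paper's: the K\"unneth formula for $K^0(M)$, then Schur's lemma on the central $\Z^2$ plus path-connectedness of $\widehat{\Z^2}$ to show that every flat class is pulled back from $K$. Your Step 3, restricting along a section $s\colon K\to M$, supplies the independence of $a$ and $b$ that the paper leaves as ``follows readily''.

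You are also right about the free part, and you should state it as a correction rather than something to ``check''. We have $\mathrm{rank}\,\widetilde K^0(M)=\dim H^2(M;\mathbb{Q})+\dim H^4(M;\mathbb{Q})=3+0$. The two rank-one summands beyond the one from $K^0(T)\otimes K^0(K)$ come from $K^1(T)\otimes K^1(K)\cong\Z^2$. So $\widetilde K^0(M)\cong\Z^3\oplus(\Z/2)^2$, and the first isomorphism in the statement is wrong as written. Only the torsion $(\Z/2)^2$ is correct, and that is all Theorem~\ref{tor the} uses.

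There is, however, a genuine error in your final paragraph. With $\pi_1(K)=\langle a,b\mid bab^{-1}=a^{-1}\rangle$, the element $a$ is a translation and $b$ a glide reflection. So the orientation character is $a\mapsto 1$, $b\mapsto -1$. This is the mod $2$ reduction of the generator $b^*$ of $H^1(K;\Z)\cong\Z$, hence $\beta(w_1(\ell))=0$ and $\ell\otimes\C$ is trivial. Your own computation $\mathrm{Hom}(\pi_1K,U(1))\cong U(1)\times\Z/2$ already shows this: the orientation character has $a\mapsto 1$, so it lies in the path component of the trivial character.

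Cellularly, $\delta(a^*)=2e^*$ and $\delta(b^*)=0$, so the nonzero Bockstein is $\beta(\bar a^*)=e^*$. The correct flat $L$ therefore comes from the character $a\mapsto -1$, $b\mapsto 1$, as in the paper's Remark~\ref{exp phi}. With your explicit choice $L=\ell\otimes\C$, you would get $a=\mathrm{pr}_K^*([L]-1)=0$ and Step 3 would fail. Replace $\ell$ by this character, or simply rely on your correct general fact that first Chern classes of flat line bundles exhaust the torsion of $H^2(K;\Z)$. With that fix the argument goes through.
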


\begin{proof}
One computes (use for example a CW complex decomposition and the $K$-theory long exact sequence) that the $K$-theory groups of $T$ and $K$ are given respectively by $K^0(T)\cong \Z\oplus \Z$ and $K^0(K)\cong \Z\oplus (\Z/2)$.  The computation of $K^0(M)$, whence also $\widetilde{K}^0(M)$, then follows readily from the K\"{u}nneth formula (see \cite{Atiyah:1962aa} for a topological viewpoint, and \cite{Schochet:1982aa} for a $C^*$-algebraic one).  

We now compute the subgroup generated by flat bundles: equivalently, we compute the image of the augmentation ideal $I(\Gamma)$ in the (finite-dimensional, unitary) representation ring $R(\Gamma)$ under the Atiyah-Segal homomorphism $\alpha:I(\Gamma)\to \widetilde{K}^0(M)$.  To show that this is contained in a single copy of $\Z/2$, it suffices to show that $\alpha(R(\Gamma))$ is contained in a subgroup of the form $\Z[1]\oplus \Z/2$, where $\Z[1]$ is the subgroup of $K^0(M)$ generated by the trivial bundle.  For this, it suffices to consider the image of irreducible representations under $\alpha$, so let $\pi:\Gamma\to U(\C^n)$ be an irreducible representation.  Using Schur's lemma, the restriction of $\pi$ to the central subgroup $\pi_1(T)\cong \Z^2$ of $\pi_1(M)$ must take values in the scalars.  As the space of (one-dimensional) characters of $\Z^2$ is path connected, this implies that $\pi$ is path connected to an irreducible representation whose restriction to $\pi_1(T)$ is trivial; to be precise, we mean that there is a continuous path between $\pi$ and a representation whose restriction to $\pi_1(T)$ is trivial, where the path is understood as in the space of unitary representations $\Gamma\to U(\C^n)$ when the latter is equipped with the topology of pointwise convergence.  Moreover, the map $\alpha:R(\Gamma)\to K^0(M)$ depends on $\pi$ only through its path component in the space of unitary representations (of a given fixed dimension) in this sense.

It follows that any vector bundle in the image of $\alpha:R(\Gamma)\to K^0(M)$ is pulled back from a (flat) vector bundle on $K^0(K)$.  The result follows readily from this (plus the well-known fact that the canonical copy of $\Z/2$ in $K^0(K)$ is indeed generated by a formal difference $[E]-[1]$ where $[E]$ is a flat line bundle).
\end{proof}

\begin{proof}[Proof of Theorem \ref{tor the}]
Let $\Gamma$ be as given.  As $T\times K$ has contractible universal cover (specifically, it is a copy of $\R^4$), it is a model for $B\Gamma$.  Using Proposition \ref{tor vs flat} there is an order two class in $y\in \widetilde{K}^0(M)$ which is not in the subgroup $\widetilde{K}^0_{\text{fl}}(M)$ of Definition \ref{flat sg}.  The group $\Gamma$ is amenable and torsion-free, so satisfies the hypotheses of Theorem \ref{final the}.  Hence there exists an element $x\in QR(\Gamma)$ such that $\alpha(x)=y$.  As $\Gamma$ is amenable and residually finite, we may moreover apply Corollary \ref{rep form} part \eqref{rep diff} to assume that $x$ is represented in the form $([\phi_{S,\epsilon}]-[\pi_{S,\epsilon}])_{(S,\epsilon)\in I_\Gamma} $ where each $\phi_{S,\epsilon}$ is an $(S,\epsilon)$-representation, and each $\pi_{S,\epsilon}$ is an honest representation.

By definition of the homomorphism $\alpha$, and as $B\Gamma$ is compact, there must be $(S_0,\epsilon_0)$ such that the natural map $K_{\epsilon}(S)\to K^0(B\Gamma)$ takes $[\phi_{S,\epsilon}]-[\pi_{S,\epsilon}]$ to $y$ for all $(S,\epsilon)\geq (S_0,\epsilon_0)$.  

Fix now $(S,\epsilon)\geq (S_0,\epsilon_0)$ as in the statement.  As $\alpha$ is an isomorphism and $y$ has order two, we must have $x+x=0$.  Using Corollary \ref{rep form}, part \eqref{zero rep} there exists $(T_0,\delta_0)\geq (S,\epsilon)$ such that for any $(T,\delta)\geq (T_0,\delta_0)$ there exists a representation $\theta$ and a unitary isomorphism $u:V_{\phi_{T,\delta}}\oplus V_\theta\to V_{\pi_{T,\delta}}\oplus V_\theta$ such that 
\begin{equation}\label{tdelt close}
\|u(\phi_{T,\delta}(s)\oplus \phi_{T,\delta}(s)\oplus \theta(s))u^*-\pi(s)\oplus \theta(s)\|<\epsilon
\end{equation}
for all $s\in S$.  

We claim that for any $(T,\delta)\geq (T_0,\delta_0)$, the map $\phi:=\phi_{T,\delta}$ has the desired properties.

Indeed, assume for contradiction that part \eqref{far part} does not hold.  Then we have that $[\phi]+ [\pi]=[\sigma]$ in $K_{\epsilon_0}(S_0)$.  Hence with $\pi':=\pi_{T,\delta}$ we have that $\alpha([\phi]-[\pi'])=\alpha([\sigma]-[\pi]-[\pi'])$, where we abuse notation by also writing $\alpha$ for the map $K_{\epsilon_0}(S_0)\to K^0(B\Gamma)$.  However, $[\sigma]$, $[\pi]$ and $[\pi']$ are actual representations, and $\pi$ takes actual representations to flat bundles.  This contradicts the choice of $y$, and that $\alpha([\phi]-[\pi'])=y$.

Now let us look at part \eqref{close part}, and let $\theta$ and $u$ be as in the discussion above, so in particular the relationship in line \eqref{tdelt close} holds.  This implies the desired conclusion: take $\sigma(s):=u^*(\pi(s)\oplus \theta(s))u$.  
\end{proof}

\begin{remark}\label{exp phi}
Using that our map $\alpha:QR(\Gamma)\to LK^0(B\Gamma)$ is a \emph{ring} isomorphism, one can explicitly write down a quasi-representation with the properties in Theorem \ref{tor the}.  We sketch the idea, although the details need a little more work that we leave to the interested reader.

Let $y\in \widetilde{K}^0(M)$ be the class that we use in the proof of Theorem \ref{tor the}.  Let $\pi_T:M\to T$ and $\pi_K:M\to K$ be the coordinate projections.  Then the class $y$ is the equal to the product $\pi_K^*([L]-[1])\pi_T^*[B]$ where: $1$ is the trivial complex line bundle over $K$, and $L$ is the (flat) complex line bundle over $K$ such that $[L]-[1]$ ($1$ denotes the trivial line bundle) generates the torsion subgroup $\Z/2$ of $K^0(K)$ (i.e.\ the first Chern class of $L$ is the generator of $H^2(K)\cong \Z/2$); and $B$ is the Bott line bundle over $T$, i.e.\ the line bundle whose first Chern class is the generator of $H^2(T)$ determined by some fixed orientation (because $\pi_K^*([L]-[1])$ has order two, the sign chosen actually makes no difference to which class we get).

For any $n\geq 1$, let now $u,v\in U_n(\C)$ be the usual Voiculescu matrices \cite{Voiculescu:1983km}, i.e.\ $u$ is the left shift and $v$ is the diagonal matrix with entries the $n^\text{th}$ roots of unity in counter-clockwise order.  Let $\psi_n$ be the quasi-representation of $\Gamma$ defined by composing the canonical quotient map $\Gamma\to \pi_1(T)\cong \Z^2$ with the map sending $(m,k)\in \Z^2$ to $u^mv^k$.   Let now $(S,\epsilon)\in I_\Gamma$ be large enough so that the map $K_\epsilon(S)\to K^0(M)$ of Lemma \ref{coass tech lem} is defined (for some fixed choice of covering coordinate data as in Definition \ref{cov dat}).  Then for any suitably large $n$, $\psi_n$ defines a class $[\psi_n]$ in $K_\epsilon(S)$.  Let moreover $1_n:\Gamma\to U_n(\C)$ denote the trivial representation.  Then well-known computations going back to Loring's thesis \cite{Loring:1985ud} show that the map 
$$
K_\epsilon(S)\to K^0(M)
$$
takes $[\psi_n]-[1_n]$ to $[B]-[1]$ for all suitably large $n$ (up to replacing $B$ by its dual bundle, which for us is not important).  Let also $\overline{\psi_n}$ denote the contragredient quasi-representation, which has the property that $[\overline{\psi_n}]-[1_n]$ maps to $[B^*]-[1]$, where $B^*$ is the dual line bundle.

On the other hand write $\pi_1(K)\cong \Z\rtimes \Z$ where $\Z$ acts on itself by the non-trivial automorphism and call the standard generators $a$ (of the acted-on copy of $\Z$) and $b$ (of the acting copy of $\Z$).  Then one can compute that the (flat) line bundle $L$ corresponds to the representation $\sigma:\pi_1(K)\to U_1(\C)$ that takes $a$ to $-1$ and $b$ to $1$.  Abusing notation, also write $\sigma:\Gamma\to U_1(\C)$ for the pullback of this representation to $\Gamma$.

From all of this discussion, it follows with a bit more work from the fact that $\alpha:QR(\Gamma)\to K^0(M)$ is a ring isomorphism that the quasi-representation 
$$
(\psi_n\otimes \sigma) \oplus \overline{\psi_n}
$$
has the properties required of the quasi-representation $\phi$ in the statement of Theorem \ref{tor the} for any $n$ suitably large.
\end{remark}


\appendix

\section{Some generalities on inverse limits}\label{inv lim app}

In this section we collect some generalities on inverse systems and inverse limits.  All of this is standard, but maybe not so well-known to operator algebraists so we thought recording some facts was worthwhile.  A classic reference is \cite{Jensen:1972wu}, while a more modern reference (with a much more sophisticated take on the ideas) is \cite[Chapters 2, 3, and 6]{Kashiwara:2006aa}; both of these go far beyond what we need.  The latter in particular includes material on the more general setting of pro-categories: while this is no doubt a more powerful machine (and we suspect it might be worthwhile developing the ideas of the paper in that context for dealing with some more subtle questions), the extra abstraction did not seem worth the effort for our relatively simple applications.

\begin{definition}
An \emph{inverse system}, or \emph{projective system}, of abelian groups (or rings), consists of a collection $(G_i)_{i\in I}$ of abelian groups (rings) indexed by a directed set $I$, and so that for each $i\geq j$ there is a homomorphism $\phi_{ji}:G_i\to G_j$ such that whenever $i\geq j\geq k$ we have $\phi_{kj}\circ \phi_{ji}=\phi_{ki}$, and such that $\phi_{ii}=\text{id}$ for all $i$.
\end{definition}

The maps $\phi_{ji}:G_i\to G_j$ are called the \emph{structure maps} of the inverse system.  Throughout this appendix, we adopt the convention that all unlabeled arrows in diagrams are structure maps for the appropriate inverse system.

\begin{definition}\label{inv lim def}
Let $(G_i)_{i\in I}$ be an inverse system of topological\footnote{In our applications, the topology will always be the discrete topology.} abelian groups.  Its \emph{inverse limit}, or \emph{projective limit}, is defined to be 
$$
\lim_{\leftarrow}G_i:=\Bigg\{(g_i)\in \prod_{i\in I} G_i \mid \phi_{ij}(g_j)=g_i \text{ for all }j\geq i\Bigg\},
$$
equipped with the group operations and the topology that it inherits from the group operations and product topology on $\prod_{i\in I} G_i$.  If moreover each $G_i$ is a ring, we equip the inverse limit also with the induced ring structure.  We also use the notation ``${\displaystyle \lim_{\leftarrow,I}}G_i$'' if we need to be explicit about the directed set involved.
\end{definition}

\begin{definition}
Let $I$ be a directed set.  A subset $J$ of $I$ is \emph{cofinal} if for $i\in I$ there exists $j\in J$ with $j\geq i$. 
\end{definition}

The following lemma consists of a direct check that we leave to the reader.

\begin{lemma}\label{cof lem}
Let $(G_i)_{i\in I}$ be an inverse system of abelian groups, and let $J\subseteq I$ be a cofinal subset.  Then the natural restriction map $\prod_IG_i\to \prod_JG_i$ induces an isomorphism
$$
\lim_{\leftarrow,I}G_i\cong \lim_{\leftarrow,J} G_i.\eqno\qed
$$
\end{lemma}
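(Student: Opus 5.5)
The plan is to show directly that the restriction map
$$
r:\lim_{\leftarrow,I}G_i\to \lim_{\leftarrow,J}G_j,\quad (g_i)_{i\in I}\mapsto (g_j)_{j\in J}
$$
is a bijective homomorphism, and then to check that it is a homeomorphism. First I will record that $J$, with the order inherited from $I$, is itself directed. Given $j,j'\in J$, take an upper bound $k\in I$, and then use cofinality to get $k'\in J$ with $k'\geq k$. So the right-hand inverse limit makes sense. The map $r$ is clearly a well-defined continuous homomorphism, and a ring homomorphism in the ring case: it is the restriction of the coordinate projection $\prod_I G_i\to\prod_J G_j$, which has these properties, and the compatibility conditions indexed by $J$ are a subset of those indexed by $I$.

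For injectivity, suppose $(g_i)_{i\in I}$ lies in the inverse limit and $g_j=0$ for all $j\in J$. Given $i\in I$, choose $j\in J$ with $j\geq i$; then $g_i=\phi_{ij}(g_j)=0$.

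For surjectivity, start with a compatible family $(h_j)_{j\in J}$. For each $i\in I$, define $g_i:=\phi_{ij}(h_j)$ for any $j\in J$ with $j\geq i$. I expect the only real content of the proof to be checking that this does not depend on the choice of $j$, and I would do it as follows. If $j,j'\in J$ are both $\geq i$, pick $k'\in J$ above both, as in the directedness argument above. Then
$$
\phi_{ij}(h_j)=\phi_{ij}\phi_{jk'}(h_{k'})=\phi_{ik'}(h_{k'})=\phi_{ij'}(h_{j'}).
$$
Compatibility of $(g_i)$ follows the same way. For $i\leq i'$, choose $j\in J$ with $j\geq i'$; then $\phi_{ii'}(g_{i'})=\phi_{ii'}\phi_{i'j}(h_j)=\phi_{ij}(h_j)=g_i$. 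For $i\in J$ we may take $j=i$, which gives $g_i=h_i$, so $r((g_i))=(h_j)$.

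Finally, for the topology, I will write down the inverse map explicitly. Its $i$-th coordinate is $\phi_{ij}\circ\mathrm{pr}_j$ for a fixed choice of $j\geq i$ in $J$. This is continuous on $\prod_J G_j$, so by the universal property of the product topology the inverse is continuous, and $r$ is a homeomorphic isomorphism.
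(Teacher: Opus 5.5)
Your argument is correct and is exactly the direct check the paper leaves to the reader, since the paper gives no written proof of this lemma. The only tacit point is in the homeomorphism step: continuity of your inverse uses continuity of the structure maps $\phi_{ij}$, which is automatic in the paper's setting because all the groups carry the discrete topology.
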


The following result is the key fact we need.

\begin{lemma}\label{triangle isos}
Let $I$ and $J$ be directed sets that contain cofinal sequences.  Let $(G_i)_{i\in I}$ and $(H_j)_{j\in J}$ be inverse systems of abelian groups indexed by $I$ and $J$.  Assume that there exist non-decreasing maps $\phi:I\to J$ and $\psi:J\to I$ with cofinal image such that for each $i\in I$ there is a homomorphism $\phi_i:G_i\to H_{\phi(i)}$ and for each $j$ a homomorphism $\psi_j:H_j\to G_{\psi(j)}$ with the following properties: for all $j\in J$ and all $i\in I$ such that $i\geq \psi(j)$ and $\phi(i)\geq j$, the diagram 
$$
\xymatrix{ G_i \ar[rrr] \ar[dr]^-{\phi_i} & & & G_{\psi(j)} \\
& H_{\phi(i)} \ar[r] & H_j \ar[ur]^-{\psi_{j}} & }
$$
commutes, and for all $i\in I$ and all $j\in J$ such that $j\geq \phi(i)$ and $\psi(j)\geq i$, the diagram
$$
\xymatrix{ & G_{\psi(j)} \ar[r] & G_i \ar[dr]^-{\phi_i} & \\
H_j \ar[ur]^-{\psi_j} \ar[rrr] & & & H_{\phi(i)} }.
$$
commutes.

Then the maps $\phi_i$ and $\psi_j$ induce homeomorphic isomorphisms
$$
\lim_{\leftarrow}G_i\cong \lim_{\leftarrow}H_j.
$$
\end{lemma}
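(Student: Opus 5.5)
We construct mutually inverse continuous homomorphisms at the level of inverse limits. Write $\Phi:\lim_{\leftarrow}G_i\to \lim_{\leftarrow}H_j$ as follows. Given a coherent tuple $(g_i)_{i\in I}$ and $j\in J$, the image of $\phi$ is cofinal, so we choose $i\in I$ with $\phi(i)\geq j$ and set $\Phi((g_i))_j$ to be the image of $\phi_i(g_i)$ under the structure map $H_{\phi(i)}\to H_j$. Symmetrically, we define $\Psi:\lim_{\leftarrow}H_j\to \lim_{\leftarrow}G_i$ by choosing $j$ with $\psi(j)\geq i$ and pushing $\psi_j(h_j)$ down to $G_i$.

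The first step is to check that $\Phi$ is well defined, i.e.\ independent of the choice of $i$, and that its output is coherent. Given two choices $i,i'$ with $\phi(i),\phi(i')\geq j$, we pick $i''\geq i,i'$ (using directedness) and reduce to showing that the images of $\phi_{i''}(g_{i''})$ and $\phi_i(g_i)$ agree in $H_j$ when $i''\geq i$. This compatibility of $\phi_\bullet$ with the structure maps is not among the hypotheses. I would derive it from the triangles as follows. Choose $j'\geq \phi(i'')$ with $\psi(j')\geq i''$; this is possible since we may first pick $j'\geq\phi(i'')$ and then enlarge $j'$ using the cofinality of the image of $\psi$ together with monotonicity. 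Then apply the second triangle twice, with the pairs $(i,j')$ and $(i'',j')$. This shows that the images of both $\phi_i(g_i)$ and $\phi_{i''}(g_{i''})$ in $H_{\phi(i)}$, respectively $H_{\phi(i'')}$, coincide with the images of the element $\psi_{j'}$ applied to a common lift. More precisely, one starts from a coherent tuple in the $G$-system, uses the first triangle to replace $g_i$ by the pushdown of $\psi_{j'}\phi_{i'''}(g_{i'''})$ for large $i'''$, and then uses the second triangle to return. This is the main obstacle: one has to arrange the bookkeeping of indices so that only the two given triangles (plus monotonicity of $\phi,\psi$ and cofinality) are used. I expect that a careful choice of one large index $i'''$ dominating everything, together with $j'$ between $\phi(i)$ and $\phi(i''')$, will make all the diagrams collapse to the two given triangles. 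Once well-definedness holds, coherence of $\Phi((g_i))$ in $j$ is immediate, since the structure maps compose. Additivity is clear because everything is a homomorphism.

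The next step is to show $\Psi\circ\Phi=\text{id}$. For fixed $i$, choose $j$ with $\psi(j)\geq i$ and then $i'$ with $i'\geq \psi(j)$ and $\phi(i')\geq j$. Then $\Psi\Phi(g)_i$ is the pushdown to $G_i$ of $\psi_j$ applied to the image of $\phi_{i'}(g_{i'})$ in $H_j$. By the first triangle, this equals the pushdown of $g_{i'}$ to $G_{\psi(j)}$ and then to $G_i$, which is $g_i$ by coherence. The identity $\Phi\circ\Psi=\text{id}$ follows in the same way from the second triangle.

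Finally, we check continuity for the product (inverse limit) topologies. Each coordinate $\Phi(\cdot)_j$ factors through the projection to a single $G_i$ followed by a continuous homomorphism (all groups are discrete in our applications, and in general the maps $\phi_i$ and the structure maps are assumed continuous). Hence $\Phi$ is continuous, and likewise $\Psi$. Since they are mutually inverse, both are homeomorphic isomorphisms. The hypothesis that $I$ and $J$ contain cofinal sequences is not essential to the argument, but via Lemma \ref{cof lem} it allows one to reduce to sequences if that makes the index bookkeeping cleaner.
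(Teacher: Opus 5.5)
Your proof is correct, but it is organized differently from the paper's. The step you leave as an expectation does go through, using exactly the mechanism you describe. Given $i''\geq i$, choose $j'$ with $j'\geq\phi(i'')$ and $\psi(j')\geq i''$. Then choose $i'''$ with $i'''\geq\psi(j')$ and $\phi(i''')\geq j'$; both choices are possible by cofinality of the images, directedness, and monotonicity. The first triangle at $(i''',j')$, together with coherence, gives $g_{\psi(j')}=\psi_{j'}(h)$, where $h$ is the image of $\phi_{i'''}(g_{i'''})$ in $H_{j'}$. The second triangle at $(i'',j')$ and at $(i,j')$ then identifies $\phi_{i''}(g_{i''})$ and $\phi_i(g_i)$ with the images of $h$ in $H_{\phi(i'')}$ and $H_{\phi(i)}$, which is the required compatibility. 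This gives naturality of the $\phi_i$ only on coherent tuples, not as maps $G_{i''}\to H_{\phi(i)}$, but that is all your definition of $\Phi$ needs.

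The paper instead uses the hypothesis that $I$ and $J$ have cofinal sequences to extract interleaved sequences, concretely with $i_{n-1}\leq\psi(j_n)\leq i_n$ and $\phi(i_{n-1})\leq j_n\leq\phi(i_n)$. It composes with structure maps to get $\phi_n:G_{i_n}\to H_{j_n}$ and $\psi_n:H_{j_n}\to G_{i_{n-1}}$, with $\psi_n\phi_n$ and $\phi_{n-1}\psi_n$ equal to structure maps. It then works coordinatewise on the sequential limits and transfers back via Lemma \ref{cof lem}. In that set-up compatibility holds automatically at the level of groups, since $\phi_{n-1}\circ\mathrm{res}=\phi_{n-1}\psi_n\phi_n=\mathrm{res}\circ\phi_n$, so no independence-of-choice check is needed. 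The cost is the inductive construction of the interleaved sequences and the countability assumption. Your route avoids both and proves the lemma for arbitrary directed sets, at the price of the well-definedness verification above.
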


\begin{proof}
Using the assumptions we may construct cofinal sequences $(i_n)_{n=1}^\infty$ of $I$ and $(j_n)_{n=1}^\infty$ of $J$ such that if we write $G_n:=G_{i_n}$ and $H_n:=H_{j_n}$ then there are maps $\phi_n:G_n\to H_n$ and $\psi_n:H_n\to G_{n-1}$ (built as compositions of the maps $\phi_i$ and $\psi_j$ in the statement, and the structure maps) such that the diagrams below commute
\begin{equation}\label{tris}
\xymatrix{ G_{n} \ar[dr]_-{\phi_{n}} \ar[rr] & & G_{n-1} \\ & H_{n} \ar[ur]_-{\psi_{n}} & }\quad \xymatrix{ & G_{n-1} \ar[dr]^-{\phi_{n-1}} & \\ H_n \ar[ur]^-{\psi_n} \ar[rr] & & H_{n-1} }. 
\end{equation}
We now consider the sequence of maps:
$$
\prod_I G_i\to \prod_{n=1}^\infty G_n{\rightarrow \atop \leftarrow} \prod_{n=1}^\infty H_n \leftarrow \prod_J H_j.
$$
The first and third map are restriction maps as in Lemma \ref{cof lem}, and induce isomorphisms on inverse limits by the result of that lemma.  The central arrows are induced by $(g_n)_{n=1}^\infty\mapsto (\phi_n(g_n))_{n=1}^\infty$ and $(h_n)_{n=1}^\infty\mapsto (\psi_{n+1}(h_{n+1}))_{n=1}^\infty$.  Using the condition that the triangles in line \eqref{tris} commute, it is not too difficult to see that these central maps induce mutually inverse isomorphisms on inverse limits, so we are done.
\end{proof}

\bibliography{Generalbib}

\end{document}